\definecolor{azure}{rgb}{0.25, 0.41, 0.88}
\normalfont\fontsize{14}{15}\bfseries}{\thesection}{1em}{}
\normalfont\fontsize{12}{15}\bfseries}{\thesubsection}{1em}{}
\newcommand{\expected}{\mathbb{E}}
\newcommand{\prob}{\mathbb{P}}
\newcommand{\eps}{\varepsilon}
\def\scal#1{\langle#1\rangle} 
\DeclareFontFamily{OMX}{MnSymbolE}{}
\DeclareSymbolFont{MnLargeSymbols}{OMX}{MnSymbolE}{m}{n}
\DeclareFontShape{OMX}{MnSymbolE}{m}{n}{
    <-6>  MnSymbolE5
   <6-7>  MnSymbolE6
   <7-8>  MnSymbolE7
   <8-9>  MnSymbolE8
   <9-10> MnSymbolE9
  <10-12> MnSymbolE10
  <12->   MnSymbolE12
}{}
\DeclareFontShape{OMX}{MnSymbolE}{b}{n}{
    <-6>  MnSymbolE-Bold5
   <6-7>  MnSymbolE-Bold6
   <7-8>  MnSymbolE-Bold7
   <8-9>  MnSymbolE-Bold8
   <9-10> MnSymbolE-Bold9
  <10-12> MnSymbolE-Bold10
  <12->   MnSymbolE-Bold12
}{}
\let\llangle\@undefined
\let\rrangle\@undefined
\DeclareMathDelimiter{\llangle}{\mathopen}
                     {MnLargeSymbols}{'164}{MnLargeSymbols}{'164}
\DeclareMathDelimiter{\rrangle}{\mathclose}%
                     {MnLargeSymbols}{'171}{MnLargeSymbols}{'171}
\DeclareMathOperator*{\mymax}{\text{\normalfont max}}
\DeclareMathOperator*{\mymin}{\text{\normalfont min}}
\DeclareMathOperator*{\mysup}{\text{\normalfont sup}}
\DeclareMathOperator*{\myinf}{\text{\normalfont inf}}
\DeclareMathOperator*{\mylim}{\text{\normalfont lim}}
\DeclareMathOperator*{\mylog}{\text{\normalfont log}}
\DeclareMathOperator*{\mylimsup}{\text{\normalfont limsup}}
\DeclareMathOperator*{\myliminf}{\text{\normalfont liminf}}
\DeclareMathOperator*{\de}{\hspace{-.1em}\text{\normalfont d\hspace{-.15em}}}
\DeclareMathOperator{\Var}{Var}
\DeclareMathOperator{\Dir}{Dir}
\numberwithin{equation}{section}
\newtheorem{theorem}{Theorem}[section]
\newtheorem{corollary}[theorem]{Corollary}
\newtheorem{lemma}[theorem]{Lemma}
\newtheorem{proposition}[theorem]{Proposition}
\theoremstyle{definition}
\newtheorem{definition}[theorem]{Definition}
\newtheorem{remark}[theorem]{Remark}
\newtheorem*{acknowledgements}{Acknowledgements}
\DeclareSymbolFont{stixletters}{LS1}{stix}{m}{it}
\DeclareMathAccent{\cev}{\mathord}{stixletters}{"91}
\DeclareMathAccent{\vec}{\mathord}{stixletters}{"92}
\newcommand\@affiliationlist{}
\renewcommand\affil[2][]{%
  \g@addto@macro\@affiliationlist{%
    \item[\textsuperscript{\scriptsize #1}] #2%
  }%
}
\newlength{\affillabelwidth}
\renewcommand{\maketitle}{
  \vspace*{0em}%
  \begin{center}{\LARGE\bfseries \@title \par}\end{center}
  \vspace{1em}%
  \begin{center}{\small \scshape\@date \par}\end{center}\par 
  \vspace{4em}
  \begin{list}{}{\setlength{\leftmargin}{1.5em}\setlength{\rightmargin}{1.5em}}
    \item[] {\raggedright\large\@author \par}
  \end{list}

  \settowidth{\affillabelwidth}{\textsuperscript{\scriptsize 8}} 
  \begin{list}{}{%
      \setlength{\leftmargin}{2.2em}
      \setlength{\rightmargin}{1.5em}
      \setlength{\labelwidth}{\affillabelwidth}
      \setlength{\labelsep}{0.2em}
      \setlength{\itemsep}{.2em}
      \setlength{\parsep}{0pt}
      \setlength{\topsep}{0pt}
  }
    {\raggedright\small \@affiliationlist}
  \end{list}\vspace{0.5em}
}
\renewenvironment{abstract}
  {\par                               
   \begin{list}{}{                        
      \setlength{\leftmargin}{1.5em}
      \setlength{\rightmargin}{1.5em}}
   \item[]                                   
   {\bfseries\abstractname}\par               
   \small\noindent\ignorespaces}             
  {\end{list}}
\begin{document}

\allowdisplaybreaks

\date{\today}

\title{Fluctuations of the Boundary-Driven Symmetric Zero-Range  Process from the NESS}

\author{\bfseries Patrícia Gonçalves\textsuperscript{1}, Adriana Neumann\textsuperscript{2}, Maria Chiara Ricciuti\textsuperscript{3}
}

\affil[1]{Center for Mathematical Analysis, Geometry and Dynamical Systems, Instituto Superior Técnico, Universidade de Lisboa, Av. Rovisco Pais, 1049-001 Lisboa, Portugal, Email: \href{mailto:pgoncalves@tecnico.ulisboa.pt}{\texttt{pgoncalves@tecnico.ulisboa.pt}}}
\affil[2]{Mathematics and Statistics Institute, Universidade Federal do Rio Grande do Sul, Av. Bento Gonçalves, 9500, Porto Alegre, RS, Brazil, Email: \href{mailto:aneumann@mat.ufrgs.br}{\texttt{aneumann@mat.ufrgs.br}}}
\affil[3]{Department of Mathematics, Imperial College London, 180 Queen's Gate, London SW7 2AZ, United Kingdom, Email: \href{mailto:maria.ricciuti18@imperial.ac.uk}{\texttt{maria.ricciuti18@imperial.ac.uk}}}

\clearpage
\maketitle
\thispagestyle{empty}

\begin{abstract}
We study the non-equilibrium stationary fluctuations of a symmetric zero-range process on the discrete interval $\{1, \ldots, N-1\}$ coupled to reservoirs at sites $1$ and $N-1$, which inject and remove particles at rates proportional to $N^{-\theta}$ for any value of $\theta\in\mathbb{R}$. We prove that, if the jump rate is bounded and under diffusive scaling, the fluctuations converge to the solution of a generalised Ornstein-Uhlenbeck equation with characteristic operators that depend on the stationary density profile. The limiting equation is supplemented with boundary conditions of Dirichlet, Robin, or Neumann type, depending on the strength of the reservoirs. We also introduce two notions of solutions to the corresponding martingale problems, which differ according to the choice of test functions.
\end{abstract}

%%%%%%%%%%%%%%%%%%%%%%%%%%%%%%%%%%%%%%%%%%%%%%%%%%
%%%CONTENTS%%%%%%%%%%%%%%%%%%%%%%%%%%%%%%%%%%%%%%%%
%%%%%%%%%%%%%%%%%%%%%%%%%%%%%%%%%%%%%%%%%%%%%%%%%%
\setcounter{tocdepth}{1}
\tableofcontents

%%%%%%%%%%%%%%%%%%%%%%%%%%%%%%%%%%%%%%%%%%%%%%%%%%
%%%INTRODUCTION%%%%%%%%%%%%%%%%%%%%%%%%%%%%%%%%%%%
%%%%%%%%%%%%%%%%%%%%%%%%%%%%%%%%%%%%%%%%%%%%%%%%%%
\section{Introduction}

Understanding the macroscopic evolution of conserved quantities in interacting particle systems from their microscopic dynamics is a cornerstone of nonequilibrium statistical mechanics. In recent decades, substantial progress has been made in establishing hydrodynamic limits, that is, laws of large numbers describing the  (typically deterministic) evolution of macroscopic quantities such as particle densities at diffusive or superdiffusive scales. Once the macroscopic behaviour has been characterised, a natural next question concerns the behaviour of the random fluctuations around this deterministic limit. The study of these fluctuations provides insight into the emergence of macroscopic stochastic partial differential equations (SPDEs) and, more broadly, into universal features of the microscopic models.

Among the classical systems in this area, the \textit{zero-range process} plays a distinguished role. In this system, each lattice site $x$ may host an arbitrary number $\eta(x)$ of indistinguishable particles, and a particle at site $x$ jumps to a site $y$ with probability $p(x,y)$ scaled by a factor $g(\eta(x))$ (with the function $g$ satisfying mild assumptions to ensure well-defined dynamics). The model was introduced by Spitzer in 1970~\cite{spi70} and has since served as a canonical example of a conservative interacting particle system with unbounded state-space, and has been extensively studied both on the discrete $d$-dimensional torus and on $\mathbb{Z}^d$.

Another widely-studied model is the \textit{exclusion process.} In this system, particles are constrained by the exclusion rule, meaning that each site may contain at most a finite number of particles. The dynamics are defined by allowing a particle at site $x$ to attempt a jump to a site $y$ at rate $p(x,y)$, with the jump being suppressed if $y$ has already attained its maximum value. Despite its simple microscopic definition, this process displays a remarkably rich macroscopic behaviour, and it has served as a fundamental testing ground for the development of techniques in hydrodynamic limits, fluctuation theory and large deviations. Owing to its simple structure and its well-behaved analytical properties, the exclusion process has been extensively analysed and remains one of the most thoroughly understood interacting particle systems.

When such a microscopic system evolves on a finite interval $I_N := \{1, \ldots, N-1\}$ and is coupled to stochastic reservoirs at the boundary, particles are injected and removed at random, breaking the conservation of mass and inducing a \textit{nonequilibrium steady state (NESS)}. For both the exclusion and the zero-range process evolving on the torus or $\mathbb{Z}$, stationary measures are translation invariant and of product form. However, when coupled with reservoirs, the NESS of the exclusion process becomes substantially more intricate. In general, the invariant measure is no longer of product form and exhibits non-trivial spatial correlations, even in one dimension. These long-range correlations, which persist uniformly in the system size, reflect the presence of a particle current induced by the boundary reservoirs, and except for some special cases, the explicit characterisation of the NESS remains unknown; see, for example, \cite{dehp93, fc23}. On the other hand, remarkably, for the boundary-driven zero-range process, the NESS is still of product form, but its parameters vary spatially, encoding the macroscopic density gradient generated by the boundary dynamics; see~\cite{lms05}.

As previously mentioned, the hydrodynamic limit of such boundary-driven systems describes the deterministic evolution of the macroscopic density profile. This problem has been extensively studied for the exclusion process (see, for example, \cite{bmns17, bgj19, gon19,  bgj21, gs22, bcgs23, gjmn20}). In this context, the interplay between bulk dynamics and boundary reservoirs leads to a rich collection of limiting behaviours, with the parameter $\theta$ governing the effective strength of the boundary interaction in the macroscopic limit. Depending on the regime, the hydrodynamic equation exhibits different boundary conditions: ranging from strong reservoirs imposing Dirichlet boundary conditions, to critical regimes yielding Robin-type conditions, and to weak reservoirs resulting in Neumann boundary conditions. These results reveal how microscopic injection and removal mechanisms translate into macroscopic boundary laws, and they play a crucial role in the analysis of stationary profiles, nonequilibrium fluctuations, and large-deviation principles for the associated empirical measures.

For the zero-range process, the hydrodynamic limit has long been well-known both on the discrete torus and on $\mathbb{Z}$, and in particular, under diffusive scaling, the empirical density evolves according to the nonlinear diffusion equation $\partial_t \rho_t(u) = \Delta\Phi(\rho_t(u))$, where the flux function $\Phi$ is determined by the expectation of the jump rate $g$ under the invariant measure. In the boundary-driven case, instead, the hydrodynamic behaviour was partially analysed in~\cite{fmn21} and then only recently rigorously established in~\cite{agnr25} for the symmetric, nearest-neighbour case (that is, for $p(x,y)=\boldsymbol{1}_{\{x\pm1\}}(y)$) in the regimes $\theta\ge1$ and under some mild assumptions on the jump rate $g$. Therein, it was shown that the empirical density evolves according to the same nonlinear diffusion equation (but where the flux function $\Phi$ is now given by the expectation of $g$ under the NESS) supplemented with boundary conditions of Robin type for $\theta=1$ and Neumann type for $\theta>1$. The result for $\theta<1$ remains an open problem, but it is conjectured to be described by the same equation with boundary conditions of Dirichlet type.

In the present article, we answer a question left open in~\cite{agnr25} and analyse the fluctuations around the stationary state of this system. Specifically, we study the sequence of processes describing the spacetime fluctuations of the empirical density field under the stationary measure in the diffusive scaling. Our main result shows that, when the jump rate $g$ is bounded from above and from below by a positive constant, these fluctuations converge, as $N\to\infty$, to the solution of the \textit{generalised Ornstein-Uhlenbeck equation} $\partial_t\mathscr{Y}_t=\Phi'(\bar\rho_\theta)\Delta \mathscr{Y}_t +\nabla \mathscr{\dot W}_t$, where $\bar\rho_\theta$ is the stationary density profile and $\mathscr{\dot W}_t$ denotes a space-time white noise. This limiting equation is supplemented with boundary conditions whose nature -- Dirichlet, Robin, or Neumann -- depends on the strength of the boundary reservoirs, namely on the value of $\theta$. Our result covers all values $\theta\in\mathbb{R}$, and thus provides a complete description -- in the case of $g$ bounded -- of the stationary fluctuations of the boundary-driven zero-range process in all regimes of boundary intensity. 

The study of fluctuations for boundary-driven systems has a long history. For the symmetric exclusion process, Gaussian fluctuations around the stationary state were established in~\cite{fgn17,fgn19}, yielding Ornstein-Uhlenbeck limits with boundary conditions depending on the reservoir strength. Beyond the stationary regime, the time-dependent nonequilibrium fluctuations have also been analysed (see~\cite{gjmn20}), revealing spacetime covariance structures different from their equilibrium counterparts. These works highlight the sensitivity of fluctuation behaviour to the scaling of the boundary dynamics and the resulting macroscopic boundary conditions. In contrast, for the zero-range process, much less is known. On the torus or infinite lattice, the equilibrium fluctuations have long been well understood (see~\cite{kl99}), but results in the open-boundary setting were lacking. Our work fills this gap by providing the first rigorous characterisation of equilibrium fluctuations in the boundary-driven setting, including the identification of the limiting Ornstein-Uhlenbeck process and its boundary conditions.

\vspace{1em}
\textbf{Structure of the Proof.}
Our strategy combines elements from the equilibrium fluctuation theory of~\cite{kl99} with adaptations to the non-translation-invariant, boundary-driven setting. The starting point is the construction of an appropriate martingale decomposition for the fluctuation field, obtained via Dynkin’s formula applied to chosen test functions. A key point is the identification of this space of test functions, which needs to be chosen in such a way that the boundary terms of the equation do not diverge and all terms of the martingale equation are well defined, yet rich enough to yield uniqueness of the martingale problem. In the case $\theta\ge0$, our chosen space of test functions is indeed large enough, and we establish uniqueness by making use of results about regular Sturm-Liouville problems; see Appendix~\ref{sec:app_uniqueness}. For $\theta<0$, instead, we need to impose that derivatives of all orders of these test functions vanish at the boundary. While this space is too small to guarantee uniqueness, similar to what was done in~\cite{bgjs22}, we prove two additional conditions on the fluctuation field which then guarantee uniqueness; see Proposition~\ref{prop:uniqueness_neg}. This means that we obtain two martingale problems in the case of Dirichlet boundary conditions.

We are also able to provide a new martingale problem for the Robin and Neumann cases, for a large space of test functions that do not satisfy any boundary condition. This new martingale problem contains an additional term with respect to the aforementioned one, reflecting the fact that no boundary conditions are imposed on the test functions. This manifests itself at the martingale level as an extra term appearing in the form of an additive functional, which depends on the function $H$ through its value at the boundary. Since the space of test functions in this case is larger than the previous one, which already guarantees uniqueness, we immediately conclude uniqueness in law for the solution of this new martingale problem. We also observe that this new finding is not restricted to the zero-range process, but also appears in other models, such as the exclusion process, although a proof of this fact has never been established. To the best of our knowledge, this is the first time that a new formulation of the martingale problem of the Robin-Neumann regimes is provided, which may be useful for both linear and nonlinear particle systems.

In all the regimes, one of the main difficulties of the proof consists in establishing a \textit{Boltzmann-Gibbs principle}, namely Theorem~\ref{thm:boltzmann_gibbs}, which allows us to close the martingale equation in terms of the density fluctuation field. Our method follows the approach of the original result of~\cite{br84} (see also~\cite{kl99}), but requires substantial adaptations due to the lack of reversibility and translation invariance of the stationary measure. Indeed, due to the non-reversibility, when localising the generator to finite boxes, these are not mean-zero: to get around this, we introduce an appropriate correction of the generator which makes it locally centred with respect to the stationary measure, and then bound this correction via the Cauchy-Schwarz inequality. The non-translation invariance, on the other hand, means that we cannot automatically reduce the proof to a single finite-size box. To this end, we define an appropriate weighted translation operator and then apply a trick inspired by the Boltzmann-Gibbs principle of~\cite{lab18}, which takes advantage of the fact that the stationary measure is ``smooth in space". This result (and its local version at the boundary for $\theta<0$) is the only instance where we need the boundedness assumption for the jump rate $g$; everything else holds in full generality, as long as $g$ satisfies standard assumptions to guarantee well-defined dynamics. 

Another difficulty in the proof is the derivation of the boundary conditions in the martingale problems. We are able to replace the boundary contributions by macroscopic averages and, from these, deduce the corresponding macroscopic terms in the martingale problem. As a consequence, we obtain two different martingale problems in each regime of $\theta$, both of which admit unique (in law) solutions. 

\vspace{1em}
\textbf{Future Work.} Several natural extensions of the present work remain open. First, it would be of interest to relax the boundedness assumption on the jump rate $g$, which is only used in the proof of the Boltzmann-Gibbs principle, and to develop techniques allowing the argument to be carried out under more general growth conditions.

Another interesting question is whether our results, in the specific case $g(k)=\boldsymbol{1}_{[1, \infty)}(k)$, can be transferred to the exclusion process, for instance by means of coupling arguments.

Finally, an important direction for future research is the study of the weakly asymmetric version of the model and the derivation of the crossover to the stochastic Burgers equation, as in~\cite{gj14}. Moreover, since our analysis is already performed at the level of the NESS, it would be of interest to extend these results to the non-equilibrium scenario, following the approach of~\cite{jm18}.

\vspace{1em}
\textbf{Outline.} In Section~\ref{sec:framework_results}, we define the model and its stationary measure, recall the result of~\cite{agnr25}, and introduce the spaces of test functions. Our main result, Theorem~\ref{thm:fluctuations}, establishes the convergence of the stationary fluctuation field to the solution of the corresponding generalised Ornstein-Uhlenbeck equation. The proof of this theorem is presented in Section~\ref{sec:proof}, which is divided into several parts. In Section~\ref{subsec:dynkin}, we derive the martingale representation and identify the limiting martingale equation. Section~\ref{subsec:bg} states and proves a Boltzmann–Gibbs principle tailored to our setting, together with a local version needed to handle the case $\theta<0$. In Section~\ref{subsec:replacement}, we prove a replacement lemma at the boundary, which allows us to control boundary terms for all regimes $\theta\in\mathbb{R}$. Section~\ref{subsec:tightness} establishes tightness of the sequence of density fluctuation fields, while Section~\ref{subsec:characterisation} proves the additional conditions required for $\theta<0$ and characterises limit points. In Section~\ref{sec:martingale_new} we derive the new formulation  of the martingale problems for the Robin and Neumann regimes. Finally, Appendix~\ref{sec:app_uniqueness} provides a proof of uniqueness for the martingale problems that characterise the limiting fluctuation fields.

\begin{acknowledgements} \small{The research of P.G. is partially funded by Fundação para a Ciência e Tecnologia (FCT), Portugal, through grant No. UID/4459/2025 and also the ERC/FCT SAUL project. M.C.R. gratefully acknowledges support from the Dean's PhD Scholarship at Imperial College London. 
A.N. acknowledges support from the National Council for Scientific and Technological Development (CNPq, Brazil) through a productivity research grant with reference  307606/2025-2.}
\end{acknowledgements}

%%%%%%%%%%%%%%%%%%%%%%%%%%%%%%%%%%%%%%%%%%%%%%%%%%
%%%FRAMEWORK AND RESULTS%%%%%%%%%%%%%%%%%%%%%%%%%%
%%%%%%%%%%%%%%%%%%%%%%%%%%%%%%%%%%%%%%%%%%%%%%%%%%
\section{Framework and Results}\label{sec:framework_results}

%%%%%%%%%%%%%%%%%%%%%%%%%%%%%%%%%%%%%%%%%%%%%%%%%%
%%%The Model%%%%%%%%%%%%%%%%%%%%%%%%%%%%%%%%%%%%%%
%%%%%%%%%%%%%%%%%%%%%%%%%%%%%%%%%%%%%%%%%%%%%%%%%%
\subsection{The Model}

Given a positive integer $N$, let $I_N:=\{1,\ldots, N-1\}$. We consider a system of indistinguishable particles moving on the set $I_N$, which we call \textit{bulk}, and we will denote by $\eta(x)$ the occupation variable at $x\in I_N$. The system we consider is known as the \textit{zero-range process} and it imposes no restriction on the total number of particles allowed per site, so that our state space is the set $\Omega_N:=\mathbb{N}^{I_N}$. The rates of the process are defined via a function $g:\mathbb{N}\to [0, \infty)$.

\vspace{1em}
\textbf{Assumptions on the Jump Rate $\boldsymbol{g}$}. Throughout this work, we assume that $g$ satisfies $g(0)=0$ (which reflects the fact that, if there are no particles at a site, then the jump rate from that site is null), and that it is strictly positive on the set of positive integers. Moreover, we assume that it has bounded variation, in the sense that
\begin{equation}\label{eq:function_g}
    g^*:=\mysup_{k\ge1}|g(k+1)-g(k)|<\infty.
\end{equation}
The condition above ensures that the process is well defined; see~\cite{lig05} for details. In addition, the proof of our main result will require the following boundedness condition for the jump rate $g$:
\begin{equation}\label{assumption:g_bdd}
0 < m_g := \myinf_{k\ge 1} g(k) \le \mysup_{k\ge 1} g(k) =: M_g < \infty.
\end{equation}
In fact, this condition will be used exclusively in Theorems~\ref{thm:boltzmann_gibbs} and \ref{thm:local_boltzmann_gibbs}, and we will indicate precisely where it is required in their proofs; no other result relies on this hypothesis.

\vspace{1em}
We consider the Markov process whose infinitesimal generator $\mathcal{L}^N$ is given by
\begin{equation*}
    \mathcal{L}^N := \mathcal{L}^N_l + \mathcal{L}^N_0 + \mathcal{L}^N_r,
\end{equation*}
with $\mathcal{L}^N_l, \mathcal{L}^N_0$ and $\mathcal{L}^N_r$ acting on functions $f:\Omega_N\to\mathbb{R}$ via
\begin{align*}
    &\mathcal{L}^N_lf(\eta) := \frac{\alpha}{N^\theta}\left[f(\eta^{1+})-f(\eta)\right]+\frac{\lambda g(\eta(1))}{N^\theta}\left[f(\eta^{1-})-f(\eta)\right],
    \\&\mathcal{L}^N_0f(\eta) := \sum_{\substack{x, y\in I_N \\ |x-y|=1}}g(\eta(x))\left[f(\eta^{x, y})-f(\eta)\right],
    \\&\mathcal{L}^N_rf(\eta) := 
    \frac{\beta}{N^\theta}\left[f(\eta^{(N-1)+})-f(\eta)\right]+\frac{\delta g(\eta(N-1))}{N^\theta}\left[f(\eta^{(N-1)-})-f(\eta)\right].
\end{align*}
To ease the notation, we only index these generators in $N$; however, note that the boundary generators $\mathcal{L}^N_l$ and $\mathcal{L}^N_r$ also depend on the parameters $\alpha, \lambda, \theta$ and $\beta, \delta, \theta$, respectively.

Here, $\eta^{x, y}$ denotes the configuration obtained from $\eta$ after a particle at $x$ jumps to $y$, namely
\begin{equation*}
    \eta^{x, y}(z):=\begin{cases} 
    \eta(z)-1 & \mbox{if } z=x,
    \\ \eta(z)+1 & \mbox{if } z=y, 
    \\\eta(z) & \mbox{if } z\ne x, y,
    \end{cases}
\end{equation*}
and $\eta^{x\pm}$ denotes the one obtained after a particle is added or removed at site $x$, namely
\begin{equation*}
    \eta^{x\pm}(z):=\begin{cases}
    \eta(z)\pm1 & \mbox{if } z=x,
    \\\eta(z) & \mbox{if } z\ne x.
    \end{cases}
\end{equation*}
The dynamics can be informally described as follows. In the bulk, particles jump from a site $x$ to a nearest-neighbour site $y$ at a rate that depends only on the occupation variable $\eta(x)$ through the function $g$ defined above, namely $g(\eta(x))$. At the boundary sites $x=1$ and $x=N-1$, particles are removed from the system at rate $g(\eta(x))$, while particles are injected at a constant rate. The parameters $\alpha, \lambda, \beta, \delta\ge 0$ represent the density of the reservoirs, while the parameter $\theta\in\mathbb{R}$ tunes the strength of the boundary dynamics; see Figure~\ref{fig:dynamics}. Throughout, we will denote by $\{\eta_t=\eta_t^N, t\ge0\}$ the continuous-time Markov process on $\Omega_N$ with generator $N^2\mathcal{L}^N$. 
%%%%%%%%%%%%%%%%%%%%%%%%%%%%%%%%%%%%%%%%%%%%%%%%%%
\begin{figure}[h!]
\begin{center}
\begin{tikzpicture}[thick, scale=0.8][h!]    
    \draw[step=1cm,gray,very thin] (-6,0) grid (6,0);   
    \foreach \y in {-6,...,6}{
    \draw[color=black,very thin] (\y,-0.2)--(\y,0.2);}    
    \draw (-6,-0.5) node [color=black] {$\scriptstyle{1}$};
    \draw (-5,-0.5) node [color=black] {$\scriptstyle{2}$};
    \draw (-4,-0.5) node [color=black] {$\scriptstyle{3}$};
    \draw (-2,-0.5) node [color=black] {$\scriptstyle{...}$};
    \draw (-1,-0.5) node [color=black] {$\scriptstyle{x-1}$};
    \draw (0,-0.5) node [color=black] {$\scriptstyle{x}$};
    \draw (1,-0.5) node [color=black] {$\scriptstyle{x+1}$};
    \draw (3,-0.5) node [color=black] {$\scriptstyle{...}$};
    \draw (5,-0.5) node [color=black] {$\scriptstyle{N\!-\!2}$};
    \draw (6,-0.5) node [color=black] {$\scriptstyle{N\!-\!1}$};
        
    \node[ball color=black!30!, shape=circle, minimum size=0.5cm] at (-6,1.2) {};
    \node[ball color=black!30!, shape=circle, minimum size=0.5cm]  at (-6.,0.4) {};
    \node[shape=circle,minimum size=0.63cm] (A) at (-6.,2) {};
    \node[shape=circle,minimum size=0.5cm] (C) at (-6.,1.3) {};
    \node[shape=circle,minimum size=0.5cm] (D) at (-4.9,1.3) {};
    \path [->] (C) edge[bend left=60] node[above right] {{\footnotesize{$g(\eta(1))$}}} (D);
    \node[shape=circle,minimum size=0.63cm] (OO) at (-7.1,1.3) {};
    \path [<-] (OO) edge[bend left=60] node[above] {{\footnotesize{$\frac{\lambda}{N^\theta}g(\eta(1))$}}} (C);	
    \node[shape=circle,minimum size=0.63cm] (OO0) at (-7.1,-0.4) {};
    \node[shape=circle,minimum size=0.63cm] (C0) at (-6,-0.4) {};		
    \path [<-] (C0) edge[bend left=60] node[below ] {{\footnotesize{$\frac{\alpha}{N^\theta}$}}} (OO0);
    
    \node[ball color=black!30!, shape=circle, minimum size=0.5cm]  at (6.,0.4) {};
    \node[ball color=black!30!, shape=circle, minimum size=0.5cm] at (6.,1.2) {};
    \node[ball color=black!30!, shape=circle, minimum size=0.5cm] at (6.,2) {};
    \node[shape=circle,minimum size=0.5cm] (E) at (6.,2.1) {};
    \node[shape=circle,minimum size=0.5cm] (F) at (4.8,2.1) {};
    \path [->] (E) edge[bend right =60] node[above left] {{\footnotesize{$g(\eta(N-1))$}}} (F);
    \node[ball color=black!30!, shape=circle, minimum size=0.5cm]  at (5.,0.4) {};
    \node[shape=circle,minimum size=0.5cm] (H) at (7.2,2.1) {};
    \path [->] (E) edge[bend left=60] node[above] {
    {{\footnotesize{$\qquad\frac{\delta }{N^\theta}g(\eta(N-1))$}}}}(H);		
    \node[shape=circle,minimum size=0.5cm] (H0) at (7.1,-0.4) {}; 
    \node[shape=circle,minimum size=0.5cm] (E0) at (6,-0.4) {};		
    \path [->] (H0) edge[bend left=60] node[below ] {{\footnotesize{$\frac{\beta}{N^\theta}$}}} (E0);
    
    \node[ball color=black!30!, shape=circle, minimum size=0.5cm] (O) at (-4,0.4) {};
    \node[ball color=black!30!, shape=circle, minimum size=0.5cm]  at (-3.,0.4) {};
    \node[ball color=black!30!, shape=circle, minimum size=0.5cm]  at (-3.,1.2) {};
    \node[ball color=black!30!, shape=circle, minimum size=0.5cm]  at (-3.,2) {};
    \node[ball color=black!30!, shape=circle, minimum size=0.5cm]  at (-1.,0.4) {};
    \node[ball color=black!30!, shape=circle, minimum size=0.5cm]  at (-1.,1.2) {};		
    \node[ball color=black!30!, shape=circle, minimum size=0.5cm]  at (0.,0.4) {};
    \node[ball color=black!30!, shape=circle, minimum size=0.5cm]  at (0.,1.2) {};
    \node[ball color=black!30!, shape=circle, minimum size=0.5cm]  at (0.,2) {};
    \node[ball color=black!30!, shape=circle, minimum size=0.5cm]  at (0.,2.8) {};		
    \node[ball color=black!30!, shape=circle, minimum size=0.5cm]  at (1.,0.4) {};
    \node[ball color=black!30!, shape=circle, minimum size=0.5cm]  at (1.,1.2) {};		
    \node[ball color=black!30!, shape=circle, minimum size=0.5cm]  at (2.,0.4) {};		
    \node[ball color=black!30!, shape=circle, minimum size=0.5cm]  at (-1,0.4) {};
        
    \node[shape=circle,minimum size=0.63cm] (G1) at (0,3) {};
    \node[shape=circle,minimum size=0.63cm] (H1) at (1,2.8) {};
    \path [->] (G1) edge[bend left =60] node[above] {{\footnotesize{$\quad g(\eta(x))$}}} (H1);		
    \node[shape=circle,minimum size=0.63cm] (J1) at (-1,2.8) {};
    \path [<-] (J1) edge[bend left =60] node[above] {{\footnotesize{$g(\eta(x))\quad$}}} (G1);
\end{tikzpicture}
\caption{Microscopic dynamics of the boundary-driven symmetric zero-range process.}\label{fig:dynamics}
\end{center}
\end{figure}
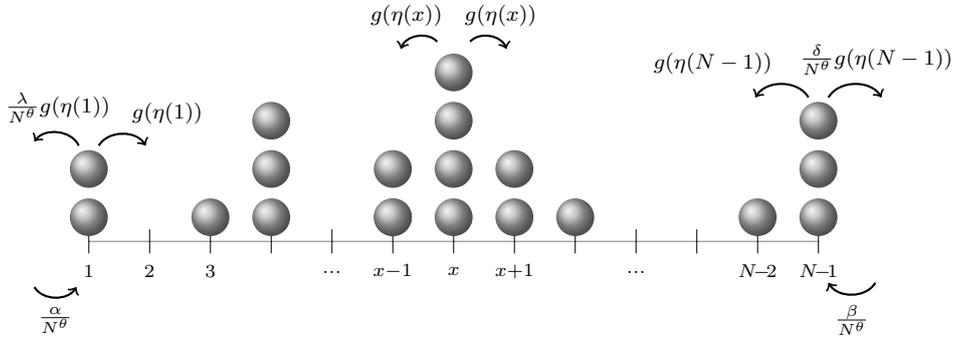
%%%%%%%%%%%%%%%%%%%%%%%%%%%%%%%%%%%%%%%%%%%%%%%%%%

%%%%%%%%%%%%%%%%%%%%%%%%%%%%%%%%%%%%%%%%%%%%%%%%%%
%%%The Invariant Measure%%%%%%%%%%%%%%%%%%%%%%%%%%
%%%%%%%%%%%%%%%%%%%%%%%%%%%%%%%%%%%%%%%%%%%%%%%%%%
\subsection{The Invariant Measure}

A remarkable property of the zero-range process is that its invariant measure can be explicitly computed even when in contact with stochastic reservoirs, and  it is of product form. In particular, let
\begin{equation}\label{eq:nu}
    \bar\nu_N\{\eta(x)=k\}:=\frac{1}{Z(\bar\varphi_N(x))}\frac{\bar\varphi_N(x)^k}{g(k)!},
\end{equation}
where $g(k)!:=\prod_{j=1}^k g(j)$, $\bar\varphi_N:I_N\to[0, \infty)$ is a map to be determined, and $Z$ is the partition function
\begin{equation}\label{eq:Z}
    Z(\varphi):=\sum_{j=0}^\infty \frac{\varphi^j}{g(j)!}.
\end{equation}
Let $\varphi^*$ denote the radius of convergence of $Z$. The following result characterises the stationary measure of $\{\eta_t^N, t\ge0\}$ and can be found in~\cite[Equation 20]{lms05} (or \cite[Lemma~2.1]{agnr25}).

\begin{lemma}\label{lemma:inv_measure} Define the map $\bar\varphi_N:I_N\to[0, \infty)$ as 
\begin{equation}
   \bar\varphi_N(x):=\frac{-(\alpha\delta-\beta\lambda)(x-1)+\alpha\delta(N-2)+(\alpha+\beta)N^\theta}{\lambda\delta(N-2)+(\lambda+\delta)N^\theta}.
\end{equation}
For $\alpha, \beta, \lambda, \delta\ge0$, $\theta\in\mathbb{R}$ and $N\in\mathbb{N}$ satisfying
\begin{equation}\label{eq:params_condition}
    \mymax\left\{\frac{\alpha\delta(N-2)+(\alpha+\beta)N^\theta}{\lambda\delta(N-2)+(\lambda+\delta)N^\theta}, \frac{\beta\lambda(N-2)+(\alpha+\beta)N^\theta}{\lambda\delta(N-2)+(\lambda+\delta)N^\theta}\right\}<\varphi^*,
\end{equation}
the measure \eqref{eq:nu} with fugacity profile $\bar\varphi_N$ is the unique invariant measure of $\{\eta_t^N, t\ge0\}$.
\end{lemma}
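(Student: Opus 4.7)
The plan is to verify the stationarity equation $\int \mathcal{L}^N f\, d\bar\nu_N = 0$ for every bounded cylinder function $f:\Omega_N\to\mathbb{R}$. The computational engine is the intertwining identity
\begin{equation*}
    \int g(\eta(x))\, h(\eta)\, d\bar\nu_N(\eta) \;=\; \bar\varphi_N(x)\int h(\eta^{x+})\, d\bar\nu_N(\eta),\qquad x\in I_N,
\end{equation*}
valid for any bounded $h$, which follows from the telescoping relation $g(k)\bar\varphi_N(x)^k/g(k)! = \bar\varphi_N(x)\cdot\bar\varphi_N(x)^{k-1}/g(k-1)!$ at the single-site marginal together with the product structure of $\bar\nu_N$.

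Applying this identity term-by-term and using the elementary fact that $(\eta^{x+})^{x,y} = \eta^{y+}$, every piece of $\int \mathcal{L}^N f\, d\bar\nu_N$ can be rewritten in terms of $F_0 := \int f\, d\bar\nu_N$ and $F(x) := \int f(\eta^{x+})\, d\bar\nu_N$. A rearrangement of the bulk double sum, swapping the roles of the two endpoints of each edge, yields
\begin{equation*}
    \int \mathcal{L}^N_0 f\, d\bar\nu_N \;=\; \sum_{x\in I_N} F(x)\Big[\sum_{y\sim x,\, y\in I_N} \bar\varphi_N(y) \;-\; d(x)\bar\varphi_N(x)\Big],
\end{equation*}
where $d(x)\in\{1,2\}$ is the number of neighbours of $x$ in $I_N$, while the reservoir generators produce $\tfrac{\alpha-\lambda\bar\varphi_N(1)}{N^\theta}[F(1)-F_0]$ and $\tfrac{\beta-\delta\bar\varphi_N(N-1)}{N^\theta}[F(N-1)-F_0]$, respectively. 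Since cylinder functions separate signed measures on $\Omega_N$, matching the coefficients of $F_0$ and of each $F(x)$ separately reduces stationarity to the discrete boundary-value problem
\begin{align*}
    \Delta^N\bar\varphi_N(x) &= 0 \qquad (x=2,\ldots,N-2),\\
    N^\theta[\bar\varphi_N(2)-\bar\varphi_N(1)] &= \lambda\bar\varphi_N(1)-\alpha,\\
    N^\theta[\bar\varphi_N(N-2)-\bar\varphi_N(N-1)] &= \delta\bar\varphi_N(N-1)-\beta,
\end{align*}
together with the global consistency relation $\alpha+\beta = \lambda\bar\varphi_N(1)+\delta\bar\varphi_N(N-1)$, which is automatic from the other three once $\bar\varphi_N$ is affine.

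The interior equation forces $\bar\varphi_N$ to be affine in $x$, and the two Robin-type boundary identities then determine its slope and intercept as the unique solution of a $2\times 2$ linear system; a direct algebraic check verifies that the closed-form expression stated in the lemma is precisely this solution. Since $\bar\varphi_N$ is affine, its maximum over $I_N$ is attained at one of the two endpoints, and the values $\bar\varphi_N(1)$ and $\bar\varphi_N(N-1)$ coincide exactly with the two expressions appearing in \eqref{eq:params_condition}; consequently that condition guarantees $\bar\varphi_N(x)<\varphi^*$ for every $x\in I_N$, so that $\bar\nu_N$ is a well-defined probability measure on $\Omega_N$. Uniqueness follows from the irreducibility of $\{\eta^N_t\}$ on the countable state space $\Omega_N$, since any two configurations are connected by a finite sequence of bulk jumps and boundary injections/removals, combined with the existence of the stationary probability just exhibited. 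The main obstacle in the proof is the algebraic bookkeeping required to isolate the coefficients of $F_0$ and $F(x)$ in the final sum, and then to verify that the explicit formula solves the resulting linear system; the rest of the argument is routine.
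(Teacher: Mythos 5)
Your proof is correct. Note that the paper itself does not prove this lemma --- it is imported by citation from Levine--Mukamel--Schütz and from \cite{agnr25} --- so there is no in-paper argument to compare against; your generator computation is the standard (and presumably the cited) route, and all the steps check out: the intertwining identity follows from the telescoping of $g(k)\varphi^k/g(k)!$ at each marginal, $(\eta^{x+})^{x,y}=\eta^{y+}$ is correct, the resulting coefficient system is exactly the discrete Robin problem whose solution is the stated affine profile, and the identity $\bar\varphi_N(x+1)-\bar\varphi_N(x)=\tfrac{\lambda\bar\varphi_N(1)-\alpha}{N^\theta}=\tfrac{\beta-\delta\bar\varphi_N(N-1)}{N^\theta}$ that you derive is precisely the property the paper records and uses later in Section~3.1. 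Two small remarks. First, the ``cylinder functions separate signed measures'' step is only needed for the converse (necessity of the boundary-value problem); for the lemma as stated you only need that vanishing of the coefficients implies $\int\mathcal L^Nf\,\de\bar\nu_N=0$, which is immediate, so nothing is at stake there. Second, your uniqueness argument via irreducibility requires the injection and removal rates to be genuinely active (at least one of $\alpha,\beta$ and one of $\lambda,\delta$ strictly positive), whereas the lemma is stated for parameters $\ge 0$; this is harmless since the paper's Remark~2.2 imposes $\alpha,\beta,\lambda,\delta>0$ throughout, but if you want the statement in full generality you should either add that caveat or treat the degenerate cases separately. One should also implicitly use non-explosiveness of the dynamics (guaranteed by the bounded-variation assumption \eqref{eq:function_g}) both to justify $\int\mathcal L^Nf\,\de\bar\nu_N=0$ as a criterion for invariance and to pass from irreducibility plus existence of a stationary law to uniqueness.
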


The condition \eqref{eq:params_condition} comes from the fact that, in order for the measure \eqref{eq:nu} to make sense, we need to have $\bar\varphi_N(x)<\varphi^*$ uniformly in $x\in I_N$ and $N\ge1$.

\begin{remark}\label{remark:limit_fugacity}
Note that, if $\alpha, \beta, \lambda, \delta>0$, the asymptotic fugacity profile $\bar\varphi_\theta:[0, 1]\to[0, \infty)$ is given by:
\begin{enumerate}[i)]
    
    \item ${\theta<1}$: $\bar\varphi_\theta(u)=\frac{-(\alpha\delta-\beta\lambda)u+\alpha\delta}{\lambda\delta}$;
    
    \item $\theta=1$: $\bar\varphi_\theta(u)=\frac{-(\alpha\delta-\beta\lambda)u+\alpha\delta+\alpha+\beta}{\lambda\delta+\lambda+\delta}$;
    
    \item $\theta>1$: $\bar\varphi_\theta(u)=\frac{\alpha+\beta}{\lambda+\delta}$;

\end{enumerate}
see Figure~\ref{fig:varphi_theta}. In fact, it is easy to see that not all parameters $\alpha, \beta, \lambda, \delta$ need to be strictly positive in order for the discrete and asymptotic fugacity profile to be well-defined; however, for simplicity, throughout we will assume $\alpha, \beta, \lambda, \delta>0$.
\end{remark}

\begin{figure}[htbp]
    \centering
    \includegraphics[width=0.5\textwidth]{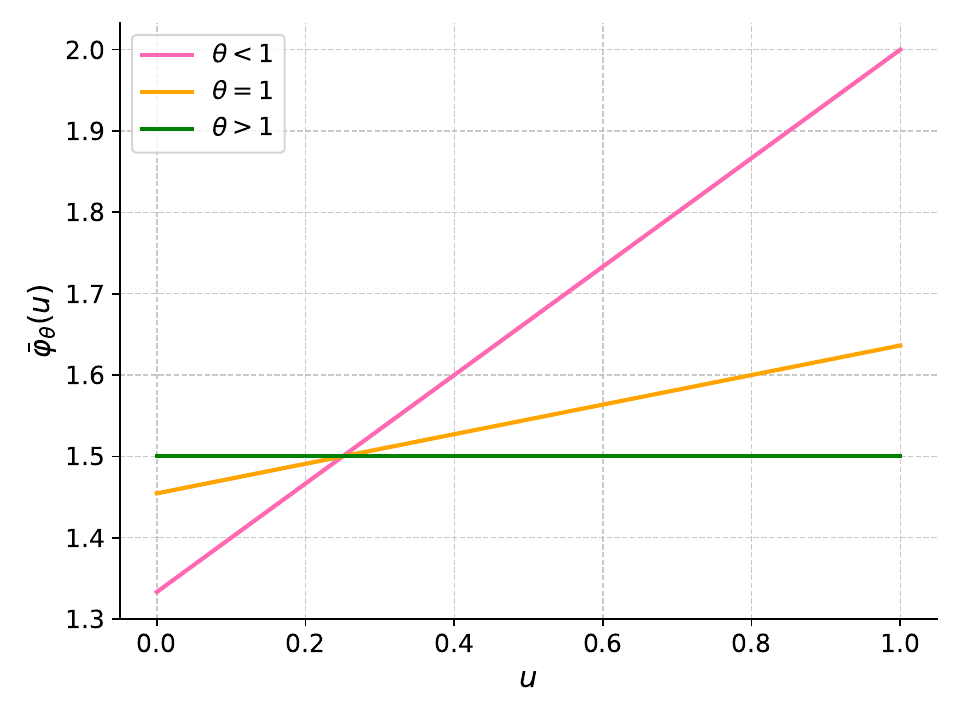}
    \caption{Plot of the asymptotic fugacity profile $\bar\varphi_\theta$ for the choice $\alpha=2, \beta=1, \lambda=\frac{3}{2}$ and $\delta=\frac{1}{2}$.}
    \label{fig:varphi_theta}
\end{figure}

Let now $R:[0,\varphi^*)\to[0, \infty)$ be the function defined as the power series
\begin{equation}\label{eq:R}
    R(\varphi ):= \frac{1}{Z(\varphi)}\sum_{j=0}^\infty \frac{j\varphi^j}{g(j)!},
\end{equation}
where $\varphi^{*}$ is again the radius of convergence of the partition function $Z$. Then, denoting by $E_{\nu}[\,\cdot\,]$ the expectation with respect to a measure $\nu$ on $\Omega_N$, we can rewrite the expected value of the occupation variable at site $x$ under $\bar{\nu}_N$ as
\begin{equation*}
    E_{\bar\nu_N}[\eta(x)] = R(\bar\varphi_N(x)), \qquad x\in I_N.
\end{equation*}

\begin{remark}\label{remark:limit_density} Under the assumptions of Remark \ref{remark:limit_fugacity}, the limit density profile $\bar\rho_\theta:[0, 1]\to[0, \infty)$ is given $\bar\rho_\theta(u)=R(\bar\varphi_\theta(u))$. In particular, a linear fugacity profile does \textit{not}, in general, imply a linear density profile. 
\end{remark}

The following result can be found in~\cite[Lemma~3.6]{agnr25}.
\begin{lemma}\label{lemma:bounded_moments}
Let $\bar{\nu}_N$ denote the invariant measure from Lemma~\ref{lemma:inv_measure}. For each positive integer $\ell$, we have that
\begin{equation*}
    \mysup_{N}\mysup_{x\in I_N}E_{\bar{\nu}_N}[(\eta(x))^\ell]<\infty.
\end{equation*}
\end{lemma}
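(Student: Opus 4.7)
\textbf{Proof plan for Lemma~\ref{lemma:bounded_moments}.}
Since $\bar\nu_N$ is a product measure with marginals as in \eqref{eq:nu}, the $\ell$-th moment at site $x$ depends only on the local fugacity $\bar\varphi_N(x)$. My first step is therefore to introduce, for $\varphi\in[0,\varphi^*)$, the auxiliary function
\begin{equation*}
    M_\ell(\varphi)\;:=\;\frac{1}{Z(\varphi)}\sum_{k=0}^{\infty}\frac{k^{\ell}\,\varphi^{k}}{g(k)!},
\end{equation*}
so that $E_{\bar\nu_N}[(\eta(x))^{\ell}]=M_\ell(\bar\varphi_N(x))$. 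The series defining $M_\ell(\varphi)Z(\varphi)$ has the same radius of convergence $\varphi^*$ as $Z$ (the polynomial factor $k^\ell$ does not alter the exponential growth rate of the coefficients), and $Z(\varphi)\ge 1>0$. Hence $M_\ell$ is well defined and continuous on $[0,\varphi^*)$, and in particular bounded on every compact subinterval $[0,c]$ with $c<\varphi^*$.

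Given this, it suffices to exhibit some $c<\varphi^*$ such that
\begin{equation*}
    \sup_{N}\sup_{x\in I_N}\bar\varphi_N(x)\;\le\;c.
\end{equation*}
The explicit formula for $\bar\varphi_N$ given in Lemma~\ref{lemma:inv_measure} shows that $x\mapsto\bar\varphi_N(x)$ is affine on $I_N$, so its maximum over $I_N$ is attained at $x=1$ or $x=N-1$. A direct computation identifies these two boundary values with the two quantities appearing on the left-hand side of \eqref{eq:params_condition}; consequently $\sup_{x\in I_N}\bar\varphi_N(x)<\varphi^*$ for every $N$. To upgrade this to uniformity in $N$, I invoke Remark~\ref{remark:limit_fugacity}: in each of the three regimes $\theta<1$, $\theta=1$, $\theta>1$, the sequences $\bar\varphi_N(1)$ and $\bar\varphi_N(N-1)$ converge to the boundary values of the asymptotic profile $\bar\varphi_\theta$, which by the standing parameter assumption are themselves strictly less than $\varphi^*$. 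Since a sequence in $[0,\varphi^*)$ that is $<\varphi^*$ for every $N$ and converges to a limit strictly less than $\varphi^*$ stays in a compact subinterval $[0,c]\subset[0,\varphi^*)$, the desired uniform upper bound $c$ exists.

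Combining the two parts gives
\begin{equation*}
    \sup_{N}\sup_{x\in I_N}E_{\bar\nu_N}[(\eta(x))^{\ell}]\;=\;\sup_{N}\sup_{x\in I_N}M_\ell(\bar\varphi_N(x))\;\le\;\sup_{\varphi\in[0,c]}M_\ell(\varphi)\;<\;\infty,
\end{equation*}
which is exactly the claim. The only nontrivial step is the uniform-in-$N$ separation of $\bar\varphi_N$ from $\varphi^*$; this is really an input from the modelling assumptions (the parameters $\alpha,\beta,\lambda,\delta,\theta$ being chosen so that \eqref{eq:params_condition} holds and the limit profile stays below $\varphi^*$) rather than an analytic difficulty, so I expect no serious obstacle.
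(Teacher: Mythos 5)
Your proof is correct. Note that the paper does not actually prove this lemma: it is imported verbatim from \cite[Lemma~3.6]{agnr25}, so there is no in-paper argument to compare against; your write-up supplies the standard self-contained proof one would expect. The reduction $E_{\bar\nu_N}[(\eta(x))^\ell]=M_\ell(\bar\varphi_N(x))$ is exact, the series $\sum_k k^\ell\varphi^k/g(k)!$ indeed has the same radius of convergence $\varphi^*$ as $Z$ (the polynomial factor does not change $\limsup_k |a_k|^{1/k}$), and $Z\ge Z(0)=1$, so $M_\ell$ is continuous hence bounded on compacts of $[0,\varphi^*)$. Your identification of $\bar\varphi_N(1)$ and $\bar\varphi_N(N-1)$ with the two quantities in \eqref{eq:params_condition}, together with affinity of $\bar\varphi_N$, is also right. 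The only point worth being explicit about is the one you already flag: condition \eqref{eq:params_condition} is stated per $N$ and, after passing to the limit, only yields $\bar\varphi_\theta(0),\bar\varphi_\theta(1)\le\varphi^*$, not strict inequality; the uniform-in-$N$ separation $\sup_N\sup_{x}\bar\varphi_N(x)<\varphi^*$ is therefore an additional standing hypothesis rather than a consequence of \eqref{eq:params_condition} alone. This is consistent with the paper, which states immediately after Lemma~\ref{lemma:inv_measure} that the condition is meant to guarantee $\bar\varphi_N(x)<\varphi^*$ \emph{uniformly} in $x$ and $N$, and which later uses $\varphi^*-\sup_{u}\bar\varphi_\theta(u)>0$ explicitly in \eqref{eq:eps_K}. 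With that hypothesis made explicit, your argument is complete.
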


Throughout, we will assume that $\mylim_{\varphi\uparrow\varphi*}Z(\varphi)=\infty$. Then, as highlighted in~\cite[Section~2.3]{kl99}, the range of $R$ is $[0, \infty)$, and $R$ is strictly increasing; then the map 
\begin{equation}\label{eq:Phi}
    \Phi:=R^{-1}:[0, \infty)\to[0, \varphi^*)
\end{equation} is well defined, and a strictly increasing function. Calling $\rho_N(x):=E_{\bar\nu_N}[\eta(x)]$ for $x\in I_N$, a simple computation shows that 
\begin{equation}\label{eq:Phi=varphi}
    E_{\bar\nu_N}[g(\eta(x))]=\Phi(\rho_N(x))=\bar\varphi_N(x).
\end{equation}
In fact, by~\cite[Lemma~3.3]{agnr25}, the following holds.
\begin{lemma}\label{lemma:g_moments}
For $\ell=1, 2$, we have that
\begin{equation}\label{eq:g_moments}
    \mysup_N\mysup_{x\in I_N} E_{\bar\nu_N}[g(\eta(x))^\ell]<\infty.
\end{equation}
\end{lemma}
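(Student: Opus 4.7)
The plan is to reduce both moment estimates to the polynomial moment bound on the occupation variables already provided by Lemma~\ref{lemma:bounded_moments}. The bridge between the two statements is the observation that the bounded-variation assumption~\eqref{eq:function_g} forces $g$ to grow at most linearly, so that $g(\eta(x))^{\ell}$ is dominated by a polynomial of degree $\ell$ in $\eta(x)$.

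First I would extract the linear bound $g(k)\le g(1)+g^{*}k$, valid for every $k\ge 0$. For $k=0$ this is trivial since $g(0)=0$, and for $k\ge 1$ the telescoping identity $g(k)=g(1)+\sum_{j=1}^{k-1}\bigl(g(j+1)-g(j)\bigr)$, combined with~\eqref{eq:function_g} and the triangle inequality, yields $g(k)\le g(1)+(k-1)g^{*}$. Raising to the power $\ell$ and using the elementary inequality $(a+b)^{\ell}\le 2^{\ell-1}(a^{\ell}+b^{\ell})$ then gives
\begin{equation*}
g(k)^{\ell}\ \le\ 2^{\ell-1}\bigl[g(1)^{\ell}+(g^{*})^{\ell}k^{\ell}\bigr],\qquad \ell=1,2.
\end{equation*}

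Taking expectations under $\bar\nu_{N}$ and then the supremum over $N\in\mathbb{N}$ and $x\in I_{N}$, I would obtain
\begin{equation*}
\sup_{N}\sup_{x\in I_{N}} E_{\bar\nu_{N}}\!\bigl[g(\eta(x))^{\ell}\bigr]\ \le\ 2^{\ell-1}g(1)^{\ell}+2^{\ell-1}(g^{*})^{\ell}\,\sup_{N}\sup_{x\in I_{N}} E_{\bar\nu_{N}}\!\bigl[\eta(x)^{\ell}\bigr],
\end{equation*}
and the right-hand side is finite by Lemma~\ref{lemma:bounded_moments}, which settles the claim.

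I do not expect any real obstacle here: once the linear upper bound on $g$ has been extracted from~\eqref{eq:function_g}, the statement reduces directly to the polynomial moment bound of Lemma~\ref{lemma:bounded_moments}, and the argument in fact produces uniform bounds for arbitrary $\ell\in\mathbb{N}$, not only $\ell=1,2$. I note in passing that for $\ell=1$ one can alternatively invoke the sharper identity~\eqref{eq:Phi=varphi}, namely $E_{\bar\nu_{N}}[g(\eta(x))]=\bar\varphi_{N}(x)$, which together with the explicit affine formula for $\bar\varphi_{N}$ in Lemma~\ref{lemma:inv_measure} and the uniform control furnished by~\eqref{eq:params_condition} immediately yields the $\ell=1$ estimate with explicit constants.
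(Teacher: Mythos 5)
Your proof is correct and is the natural argument: the bounded-variation hypothesis \eqref{eq:function_g} gives the linear bound $g(k)\le g(1)+g^{*}k$, after which the claim reduces to the uniform moment bounds of Lemma~\ref{lemma:bounded_moments}. The paper itself does not reprove this lemma but defers to the cited reference, so there is nothing to contrast against; your observation that the argument works for every $\ell\in\mathbb{N}$ is consistent with the remark following the lemma, and the alternative route for $\ell=1$ via \eqref{eq:Phi=varphi} is also valid.
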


\begin{remark}{Although we will only require \eqref{eq:g_moments} for $\ell=1$ or $2$, it is straightforward to verify that the proof of~\cite[Lemma~3.3]{agnr25} can be easily extended to any positive integer $\ell$, so that \eqref{eq:g_moments} actually holds for all $\ell$.}
\end{remark}

Finally, for each $\theta\in\mathbb{R}$, define $\chi_\theta:[0, 1]\to[0, \infty)$ via 
\begin{equation}\label{eq:chi}
    \chi_\theta(u):=\mylim_{N\to\infty} \Var_{\bar\nu_N}(\eta(Nu)),
\end{equation}
where $\Var_\nu(\cdot)$ denotes the variance under a measure $\nu$ on $\Omega_N$. Then, $\chi_\theta$ is well defined and continuous, which follows easily from Lemma~\ref{lemma:bounded_moments} and the convergence of the fugacity profile $\bar\varphi_N(\frac{\cdot}{N})$ to a continuous profile $\bar\varphi_\theta$.

%%%%%%%%%%%%%%%%%%%%%%%%%%%%%%%%%%%%%%%%%%%%%%%%%%
%%%Hydrodynamic Limit%%%%%%%%%%%%%%%%%%%%%%%%%%%%%
%%%%%%%%%%%%%%%%%%%%%%%%%%%%%%%%%%%%%%%%%%%%%%%%%%
\subsection{Hydrodynamic Limit}

The hydrodynamic limit of this model for $\theta\ge1$ was studied in~\cite{agnr25} under the following assumptions on the jump rate $g$.
\begin{enumerate}

    \item[] \textsc{Assumption 1}: $g$ is non decreasing;
    
    \item[] \textsc{Assumption 2}: denoting by $\text{gap}(j, \ell)$ the spectral gap of the generator of a symmetric zero-range process on $\{1, \ldots, \ell\}$ with $j$ particles and jump rate $g$, there exists $C=C(g)>0$ such that $\text{gap}(\ell, j)\ge \frac{C}{\ell^2(1+\frac{j}{\ell})^2}$ for all  $\ell\ge2$ and $j\ge0$. 

\end{enumerate}

We note that the hydrodynamic limit in~\cite{agnr25} requires additional assumptions on $g$. In this work, we do not assume any of the above conditions, but instead impose the purely technical assumption \eqref{assumption:g_bdd}, whose removal is left for future work.

Let $C^{m,n}([0, T]\times[0, 1])$ denote the set of continuous functions on $[0, T]\times[0, 1]$ which are $m$ times differentiable in the first variable and $n$ in the second, with continuous derivatives, where $m$ and $n$ are positive integers. Moreover, let $\mathcal{H}^1$ denote the Sobolev space $\mathcal H^1(0, 1)$, equipped with the norm
\begin{equation*}
    \|\cdot\|_{\mathcal{H}^1}^2:=\|\cdot\|^2_{L^2((0, 1))}+\|\nabla\cdot \|^2_{L^2((0, 1))},
\end{equation*}
and by $L^2([0, T], \mathcal{H}^1)$ the set of measurable functions $f:[0, T]\to\mathcal{H}^1$ with $\int_0^T\|f_t\|_{\mathcal{H}^1}^2\de t<\infty$. Let $\gamma:[0,1]\to[0, \infty)$ be a measurable and integrable function, and consider the non-linear PDE with boundary conditions 
\begin{equation}\label{eq:pde_zr}
    \left\{ 
    \begin{array}{ll}
    \partial_t \rho_t(u) = \Delta \Phi(\rho_t(u)) &\mbox{ for } u \in (0,1) \mbox{ and } t \in (0,T],
    \\\partial_u \Phi (\rho_t(0)) = \boldsymbol{1}_{\{1\}}(\theta)(\lambda \Phi(\rho_t(0))-\alpha) &\mbox{ for } t \in (0,T],
    \\\partial_u \Phi (\rho_t(1)) = \boldsymbol{1}_{\{1\}}(\theta)(\beta-\delta\Phi(\rho_t(1))) &\mbox{ for } t \in (0,T],
    \\\rho_0(u) = \gamma(u) &\mbox{ for } u \in [0,1],
    \end{array}
    \right.
\end{equation}
where the function $\Phi$ was defined in \eqref{eq:Phi}. 

\begin{definition}\label{def:weak_solutions}
 We say that $\rho:[0, T]\times [0, 1]\to[0, \infty)$ is a weak solution of the PDE \eqref{eq:pde_zr} if:
\begin{enumerate}[i)]
    
    \item $\rho \in L^2([0, T]\times [0, 1])$,
    
    \item $\Phi(\rho)\in L^2([0,T], \mathcal H^1)$,
    
    \item for any $t\in[0,T]$ and $G\in C^{1,2}([0,T]\times [0,1])$,
    \begin{equation*} 
        \begin{split}
        \langle\rho_t, G_t\rangle &- \langle\gamma, G_0\rangle - \int_0^t\{\langle \rho_s, \partial_s G_s\rangle + \langle\Phi(\rho_s), \Delta G_s\rangle\}\de s
        \\&+\int_0^t \{\Phi(\rho_s(1))\partial_u G_s(1)-\Phi(\rho_s(0))\partial_u G_s(0) \}\de s
        \\&-\boldsymbol{1}_{\{1\}}(\theta)\int_0^t \big\{(\beta-\delta\Phi(\rho_s(1)))G_s(1) - (\lambda \Phi(\rho_s(0))-\alpha)G_s(0) \big\}\de s = 0,
        \end{split}
    \end{equation*}
    where $\langle \cdot, \cdot\rangle$ denotes the inner product $\langle f, h\rangle:=\int_0^1 f(u)h(u)\de u$ on the set of measurable functions in $L^2([0,1])$.

\end{enumerate}
\end{definition}

By~\cite[Lemma~2.10]{agnr25}, there exists a unique weak solution of \eqref{eq:pde_zr} in the sense of the definition above. Given a configuration $\eta\in\Omega_N$, we associate to it the empirical measure on $[0,1]$ defined by
\begin{equation*}
    \pi^{N}_t(\de u) := \frac{1}{N} \sum_{x\in I_N} \eta_t(x) \delta_{\frac{x}{N}}(\de u),
\end{equation*}
where $\delta_u$ stands for the Dirac measure at $u\in[0,1]$. Given a measurable function $H:[0,1]\to\mathbb R$, let us denote its integral with respect to the measure $\pi_t^N$  by
\begin{equation*}
    \langle \pi_t^N, H\rangle := \frac{1}{N}\sum_{x\in I_N} \eta_t(x) H\Big(\frac xN\Big).
\end{equation*} 
For a measure $\mu^N$ on $\Omega^N$, we denote by $\prob_{\mu^N}$ the probability on the Skorokhod space of càdlàg trajectories $\mathcal{D}([0,T], \Omega_N)$ corresponding to the jump process $\{\eta_t: t\in[0, T]\}$ with initial distribution $\mu^N$. 

\begin{definition}\label{def:associated_profile}
A sequence $\{\mu^N\}_N$ of probability measures on $\Omega_N$ is said to be \textit{associated} to a measurable profile $\gamma:[0,1]\to\mathbb[0, \infty)$ if,  for any $\eps>0$ and any continuous function $H:[0,1]\to\mathbb{R}$, the following limit holds:
\begin{equation*}
    \mylim_{N\to\infty}
    \mu^N \left\{\eta\in\Omega_N: \left|\frac{1}{N} \sum_{x=1}^{N-1} H\left(\frac{x}{N}\right)\eta(x)
    - \int_0^1 H(u)\gamma(u)\de u \right| > \eps\right\} = 0.
\end{equation*}
\end{definition}

Given two measures $\mu, \nu$ on $\Omega_N$, we denote by  $H(\mu | \nu)$ their \textit{relative entropy}
\begin{equation*}
    H(\mu | \nu):=\int_{\Omega_N }\mylog\left(\frac{\mu}{\nu}\right)\text{d}\mu,
\end{equation*} 
and we say that $\mu_1\le\mu_2$ if $\int f \de \mu_1\le \int f\de \mu_2$ for all monotone\footnote{We say that a function $f:\Omega_N\to\mathbb{R}$ is \textit{monotone} if $f(\eta)\le f(\xi)$ for all $\eta\le\xi$, where this last inequality means that  $\eta(x)\le \xi(x)$ for every $x\in I_N$.} functions $f:\Omega_N\to\mathbb R$. 

\begin{theorem}\label{thm:hl}
Let $\gamma:[0,1]\to[0, \infty)$ be a  measurable function and let $\{\mu^N\}_N$ be a sequence of probability measures on $\Omega_N$ associated to $\gamma$ and satisfying the two conditions 
\begin{equation*}
    H(\mu^N|\bar\nu_N)\lesssim N \quad \text{and} \quad \mu^N\le \bar\nu_N.
\end{equation*}
Then, for $\theta\ge1$ and for every $t\in [0,T]$, the sequence of empirical measures $\{\pi^{N}_t\}_N$ converges in probability to the absolutely continuous measure $\rho(t, u)\de u$, that is, for any $\eps>0$ and for any $H\in C^2(\mathbb{T})$, we have that
\begin{equation*}
    \mylim_{N\to\infty}\prob_{\mu^N} \left\{\left|\langle \pi^{N}_{t},H\rangle-\int_0^1\rho(t,u)H(u)\de u \right|>\eps\right\} =0,
\end{equation*}
where $\rho(t,u)$ is the unique weak solution of \eqref{eq:pde_zr}.   
\end{theorem}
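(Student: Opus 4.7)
The plan is to apply the entropy method: (i) establish tightness of $\{\pi^N\}_N$ in $\mathcal{D}([0,T],\mathcal{M}_+([0,1]))$; (ii) characterise every subsequential limit as a weak solution of \eqref{eq:pde_zr}; (iii) conclude via the uniqueness from~\cite[Lemma~2.10]{agnr25} as stated in the paper. Tightness I would obtain by the Aldous-Rebolledo criterion applied to the Dynkin martingale below. The key a priori bound is that the stochastic dominance $\mu^N\le\bar\nu_N$ is preserved in time by attractivity of the zero-range process and stationarity of $\bar\nu_N$, so Lemma~\ref{lemma:bounded_moments} gives a uniform bound on $E_{\mu_t^N}[\eta_t(x)]$, which controls $\langle\pi_t^N,H\rangle$ uniformly; the martingale quadratic variation is $O(1/N)$ for smooth $H$, which yields equicontinuity in time.

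For the characterisation I would apply Dynkin's formula to $t\mapsto\langle\pi_t^N,H\rangle$ for $H\in C^2([0,1])$, obtaining
\begin{equation*}
\langle\pi_t^N,H\rangle-\langle\pi_0^N,H\rangle-\int_0^tN^2\mathcal{L}^N\langle\pi_s^N,H\rangle\de s=M_t^N(H).
\end{equation*}
A discrete integration by parts of $N^2\mathcal{L}_0^N$ yields a bulk contribution $\int_0^t\frac{1}{N}\sum_{x\in I_N}g(\eta_s(x))\Delta_N H(x/N)\de s$ together with discrete boundary fluxes involving $g(\eta_s(1))$ and $g(\eta_s(N-1))$, while the reservoir actions contribute terms of order $N^{2-\theta}$. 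To close the equation in terms of the density one needs a replacement lemma that allows substituting $g(\eta_s(x))$ by $\Phi(\bar\rho_s^\ell(x))$, where $\bar\rho_s^\ell$ is a local empirical density over a box of size $\ell$. This is the classical one-block and two-blocks programme, where \textsc{Assumption 2} provides the spectral gap required for the one-block estimate and the entropy bound $H(\mu^N|\bar\nu_N)\lesssim N$ (preserved for all $t$ by stationarity of $\bar\nu_N$) feeds into the entropy inequality. An energy estimate, obtained from a Dirichlet form bound on the Radon-Nikodym derivative of $\mu_t^N$ with respect to $\bar\nu_N$, then yields $\Phi(\rho)\in L^2([0,T],\mathcal{H}^1)$, so that letting first $N\to\infty$ and then $\ell\to\infty$ recovers $\langle\Phi(\rho_s),\Delta H\rangle$ in the bulk integrand.

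The boundary analysis depends on $\theta$: for $\theta>1$ the reservoir prefactor $N^{2-\theta}$ is subleading relative to the $N$-order bulk boundary flux, and combining the two yields the Neumann condition $\partial_u\Phi(\rho_s(0))=0$ in the weak sense; for $\theta=1$ the two contributions balance, and after a local boundary replacement of $g(\eta_s(1))$ by $\Phi(\rho_s(0))$ one recovers the Robin term $[\lambda\Phi(\rho_s(0))-\alpha]H(0)$ and its counterpart at $u=1$, matching Definition~\ref{def:weak_solutions}. The main obstacle I anticipate is the replacement lemma in this non-reversible, non-translation-invariant setting: localising the generator to a mesoscopic box is not centred under $\bar\nu_N$ because of the spatially varying fugacity $\bar\varphi_N$, so one must introduce a centred correction of the generator and control it via a Cauchy-Schwarz bound; moreover, the boundary replacement at $\theta=1$ requires an additional local argument that exploits the smoothness in $x$ of the profile $\bar\varphi_N(x/N)$, rather than translation invariance as in the classical reference~\cite{kl99}.
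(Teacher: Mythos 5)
The paper does not prove Theorem~\ref{thm:hl}: it is recalled verbatim from~\cite{agnr25} (the hydrodynamic limit for $\theta\ge1$ under \textsc{Assumptions}~1 and~2), and the uniqueness of the weak solution is likewise cited as~\cite[Lemma~2.10]{agnr25}. So there is no in-paper proof to compare your proposal against; the honest benchmark is the strategy of the cited reference.

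That said, your outline is the standard and, as far as one can tell from the paper's own description of~\cite{agnr25}, the correct programme: tightness via Aldous--Rebolledo, Dynkin's formula and discrete integration by parts, one-block/two-blocks replacement using the spectral gap of \textsc{Assumption}~2 and the entropy bound (which is indeed non-increasing in time since $\bar\nu_N$ is invariant), an energy estimate to place $\Phi(\rho)$ in $L^2([0,T],\mathcal H^1)$, and the $\theta=1$ versus $\theta>1$ dichotomy at the boundary. You also correctly identify the two genuine difficulties of this model — the localised generator is not centred under the spatially inhomogeneous $\bar\nu_N$ and must be corrected (the paper uses exactly this device, via the decomposition $\mathcal L_{B_i}=\mathcal{\widetilde L}_{B_i}^N+\mathcal{\widehat L}_{B_i}^N$ and Cauchy--Schwarz, in its Boltzmann--Gibbs principle), and the boundary replacement exploits the smoothness of $\bar\varphi_N$ rather than translation invariance. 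Your appeal to attractivity to propagate $\mu^N\le\bar\nu_N$ is legitimate precisely because \textsc{Assumption}~1 ($g$ non-decreasing) is in force for this theorem. One bookkeeping remark: with the empirical measure normalised by $1/N$, the reservoir contribution is of order $N^{1-\theta}$ and the discrete boundary flux is of order $1$ (not $N^{2-\theta}$ versus $N$); the relative comparison, and hence your conclusions for $\theta=1$ and $\theta>1$, are unaffected. The proposal remains a plan rather than a proof — the replacement lemma, the energy estimate, and uniqueness of the weak solution are named but not executed — which is appropriate for a result the paper itself imports.
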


The hydrodynamic limit of the model for $\theta<1$ remains an open problem, but it is conjectured that it should be described by the PDE \eqref{eq:pde_zr} with Dirichlet boundary conditions given by
\begin{equation*}
    \left\{ 
    \begin{array}{ll}
    \Phi(\rho_t(0))=\frac{\alpha}{\lambda} &\mbox{ for } t \in (0,T],
    \\\Phi(\rho_t(1))=\frac{\beta}{\delta} &\mbox{ for } t \in (0,T].
    \end{array}
    \right.
\end{equation*}

%%%%%%%%%%%%%%%%%%%%%%%%%%%%%%%%%%%%%%%%%%%%%%%%%%
%%%The Space of Test Functions%%%%%%%%%%%%%%%%%%%%
%%%%%%%%%%%%%%%%%%%%%%%%%%%%%%%%%%%%%%%%%%%%%%%%%%
\subsection{The Space of Test Functions}

Throughout, for $f\in C^\infty([0, 1])$ and $k$ non-negative integer, we denote by $\partial_u^kf(0)$ the right derivative $\partial_u^k f(0^+)$, and by $\partial_u^k f(1)$ the left derivative $\partial_u^k f(1^-)$; we will also use the notation $\nabla f$ for $\partial_u f$ and $\Delta f$ for $\partial_u^2 f$. 

\begin{definition}\label{def:S_theta} Let $\mathfrak{A}_\theta$ denote the operator acting on smooth functions $f\in C^\infty([0, 1])$ via $\mathfrak{A}_\theta f(u):=\Phi'(\bar\rho_\theta(u))\Delta f(u)$, with $\Phi$ defined in \eqref{eq:Phi} and $\bar\rho_\theta$ given in Remark \ref{remark:limit_density}. We define the set $\mathcal{S}_\theta$ as follows: 
\begin{enumerate}[i)]
    
    \item $\theta<0$:
    \begin{equation*}
        \mathcal{S}_\theta:=\left\{f\in C^\infty([0, 1]): \begin{array}{ll}\partial_u^k f(0)=0, \\ \partial_u^k f(1)=0\end{array}  \text{for all integers} \ k\ge0\right\};
    \end{equation*}
    
    \item $0\le \theta<1$: 
    \begin{equation*}
        \mathcal{S}_\theta:=\left\{f\in C^\infty([0, 1]): \begin{array}{ll}\mathfrak{A}_\theta^k f(0)=0, \\ \mathfrak{A}_\theta^k f(1)=0\end{array}  \text{for all integers} \ k\ge0\right\};
    \end{equation*}
    
    \item  $\theta\ge1$: 
    \begin{equation*}
        \mathcal{S}_\theta\hspace{-.1em}:=\hspace{-.1em}\left\{f\in C^\infty([0, 1]): \hspace{-.5em}\begin{array}{ll}\partial_u(\mathfrak{A}_\theta^k f)(0)=\boldsymbol{1}_{\{1\}}(\theta)\lambda \mathfrak{A}_\theta^k f(0), \\ \partial_u(\mathfrak{A}_\theta^k f)(1)=-\boldsymbol{1}_{\{1\}}(\theta)\delta \mathfrak{A}_\theta^k f(1)\end{array}  \hspace{-.1em}\text{for all integers} \ k\ge0\right\}.
    \end{equation*}
    
\end{enumerate}
\end{definition}
For $\theta\ge0$, our space of test functions can be seen as a generalisation of the space of test functions used to study the fluctuations of the boundary-driven symmetric simple exclusion; see~\cite{fgn17, fgn19, gjmn20}. 

\begin{remark}\label{remark:Phi_smooth} Note that the map $\Phi$ is indeed differentiable. In fact, it is actually smooth, as it is the inverse of the strictly increasing, analytic function $R$ given in \eqref{eq:R}.
\end{remark}

\begin{proposition}\label{prop:frechet} For each $\theta\in\mathbb{R}$, $\mathcal{S}_\theta$ is a nuclear Fréchet space.
\end{proposition}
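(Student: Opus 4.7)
The plan is to realise $\mathcal{S}_\theta$ as a closed subspace of the standard nuclear Fréchet space $C^\infty([0,1])$ (equipped with the topology generated by the seminorms $p_n(f) := \sup_{u\in[0,1], \, 0\le k\le n}|\partial_u^k f(u)|$), and then appeal to the fact that every closed subspace of a nuclear Fréchet space is itself nuclear Fréchet. Nuclearity of $C^\infty([0,1])$ is a classical result (see e.g.\ the textbook treatments of Grothendieck's theorem for $C^\infty$ spaces on compact manifolds with boundary), and the hereditary property for closed subspaces is standard.

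With this reduction in place, the task reduces to proving that in each of the three regimes of $\theta$ the defining set of constraints on $\mathcal{S}_\theta$ amounts to the simultaneous vanishing of a countable family of continuous linear functionals on $C^\infty([0,1])$. For $\theta<0$ this is immediate: for each non-negative integer $k$, the maps $f\mapsto \partial_u^k f(0)$ and $f\mapsto \partial_u^k f(1)$ are continuous linear functionals (bounded by $p_k$), and $\mathcal{S}_\theta$ is the intersection of their kernels, hence closed.

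For the remaining regimes, the key observation is that the operator $\mathfrak{A}_\theta$ is continuous from $C^\infty([0,1])$ into itself. Indeed, $\bar\varphi_\theta$ is affine (by Remark~\ref{remark:limit_fugacity}) and $R$ is real analytic on $[0,\varphi^*)$, so $\bar\rho_\theta = R\circ\bar\varphi_\theta$ is smooth on $[0,1]$; combined with the smoothness of $\Phi = R^{-1}$ (Remark~\ref{remark:Phi_smooth}), the coefficient $\Phi'(\bar\rho_\theta(\cdot))$ lies in $C^\infty([0,1])$. The Leibniz rule then yields, for each $n$, an estimate of the form $p_n(\mathfrak{A}_\theta f)\lesssim p_{n+2}(f)$, so $\mathfrak{A}_\theta$ and its iterates $\mathfrak{A}_\theta^k$ are continuous endomorphisms of $C^\infty([0,1])$. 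Composing with the evaluation functionals at $0$ and $1$ (continuous since bounded by $p_0$) and, for $\theta\ge 1$, with the boundary derivative functionals $f\mapsto \partial_u f(0), \partial_u f(1)$ (bounded by $p_1$), we see that in the case $0\le\theta<1$ the set $\mathcal{S}_\theta$ is the intersection of the kernels of the functionals $f\mapsto \mathfrak{A}_\theta^k f(0)$ and $f\mapsto \mathfrak{A}_\theta^k f(1)$, and in the case $\theta\ge 1$ it is the intersection of the kernels of $f\mapsto \partial_u(\mathfrak{A}_\theta^k f)(0)-\boldsymbol{1}_{\{1\}}(\theta)\lambda\mathfrak{A}_\theta^k f(0)$ and the symmetric functional at $1$. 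In both cases $\mathcal{S}_\theta$ is closed.

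The one point that deserves care, and is the only mildly non-trivial step, is the continuity of $\mathfrak{A}_\theta$ on $C^\infty([0,1])$; once that is in hand the rest is bookkeeping. I do not expect any genuine obstacle, so the proof is essentially a short verification built on top of the classical nuclearity of $C^\infty$ on a compact interval.
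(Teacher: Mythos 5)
Your proposal is correct and follows essentially the same route as the paper: realise $\mathcal{S}_\theta$ as the intersection of kernels of countably many continuous linear functionals on the nuclear Fréchet space $C^\infty([0,1])$, hence a closed subspace, and invoke heredity of nuclearity. Your explicit verification that $\mathfrak{A}_\theta$ is a continuous endomorphism (via smoothness of $\Phi'(\bar\rho_\theta)$ and the Leibniz rule) spells out a point the paper passes over quickly, but the argument is the same.
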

\begin{proof} The space $C^\infty([0, 1])$ equipped with the family of semi-norms
\begin{equation}\label{eq:seminorms}
    \|H\|_k:=\mysup_{u\in[0, 1]}\left|\partial^k_u H\right|, \quad k\in\mathbb{N}
\end{equation}
is a nuclear Fréchet space. For each $\theta\in\mathbb{R}$, let $\mathfrak{A}_\theta$ be as in Definition \ref{def:S_theta} and, for $k\in\mathbb{N}$ and $b\in\{0, 1\}$, define the map $G_{k, b}^\theta:C^\infty([0, 1])\to\mathbb{R}$ via
\begin{equation*}
    G_{k, b}^\theta(H):=\begin{cases} 
    {\partial_u^k H(b)} & \mbox{if } \theta<0,
    \\\mathfrak{A}_\theta^k H(b) & \mbox{if } 0\le\theta<1,
    \\ \partial_u(\mathfrak{A}_\theta^k H)(b)-\lambda \mathfrak{A}_\theta^k H(b) & \mbox{if } \theta=1 \mbox{ and } b=0,
    \\ \partial_u(\mathfrak{A}_\theta^k H)(b)+\delta \mathfrak{A}_\theta^k H(b) & \mbox{if } \theta=1 \mbox{ and } b=1,
    \\ \partial_u(\mathfrak{A}_\theta^k H)(b) & \mbox{if } \theta\ge1.
    \end{cases}
\end{equation*}
This map is continuous, as the point evaluation of any derivative is continuous for the topology coming from the semi-norms \eqref{eq:seminorms}. Thus, for each $\theta, k$ and $b$, the kernel of $G_{k, b}^\theta$ is a closed subspace of $C^\infty([0, 1])$. Now, the space of test functions $\mathcal{S}_\theta$ can be written as
\begin{equation*}    \mathcal{S}_\theta=\bigcap_{k\in\mathbb{N}, b\in\{0, 1\}} \text{ker}\left(G_{k, b}^\theta\right).
\end{equation*}
But then, $\mathcal{S}_\theta$ is the intersection of a countable family of closed subspaces, and thus it is a closed subspace of a nuclear Fréchet space, which makes it a nuclear Fréchet space.
\end{proof}

For each $\theta\in\mathbb{R}$, we will denote by $\mathcal{S}_\theta'$ the topological dual of $\mathcal{S}_\theta$ with respect to the topology generated by the semi-norms \eqref{eq:seminorms}, so that $\mathcal{S}_\theta'$ consists of all real-valued linear functionals on $\mathcal{S}_\theta$ which are continuous with respect to $\|\cdot\|_k$ for each $k\in\mathbb{N}$.

We also introduce the following inner product, which depends on the model parameters $\alpha, \lambda, \beta, \delta$ and $\theta\in\mathbb{R}$: given $H, G:[0, 1]\to\mathbb{R}$, let
\begin{align*}
    \scal{H, G}_{L^{2, \theta}}:=\;2\int_0^1 \Phi(\bar\rho_\theta(u))H(u)G(u)\de u
   & +\left\{\frac{\alpha}{\lambda^2}+\frac{\Phi(\bar\rho_\theta(0))}{\lambda}\right\}H(0)G(0)\boldsymbol{1}_{\{1\}}(\theta)
    \\&+\left\{\frac{\beta}{\delta^2}+\frac{\Phi(\bar\rho_\theta(1))}{\delta}\right\}H(1)G(1)\boldsymbol{1}_{\{1\}}(\theta),
\end{align*}
with corresponding norm
\begin{equation}\label{def:theta_norm}
    \| H\|_{L^{2, \theta}}^2:=\scal{H, H}_{L^{2, \theta}}.
\end{equation}
Then, $L^{2, \theta}([0, 1])$ denotes the space of functions $H:[0, 1]\to\mathbb{R}$ with $\| H\|_{L^{2, \theta}}^2<\infty$.

%%%%%%%%%%%%%%%%%%%%%%%%%%%%%%%%%%%%%%%%%%%%%%%%%%
%%%The Martingale Problem%%%%%%%%%%%%%%%%%%%%%%%%%
%%%%%%%%%%%%%%%%%%%%%%%%%%%%%%%%%%%%%%%%%%%%%%%%%%
\subsection{The Martingale Problems}\label{subsec:mg_problems}

\begin{definition}[Generalised Ornstein-Uhlenbeck Process] Let $\mathfrak{A}_\theta:\mathcal{S}_\theta\to\mathcal{S}_\theta$, $\mathfrak{B}_\theta:\mathcal{S}_\theta\to L^{2, \theta}([0, 1])$ {and let $\{\mathscr{W}_t, t\in[0, T]\}$ be an $\mathcal{S}_\theta'$-valued Brownian motion.} We say that an $\mathcal{S}_\theta'$-valued stochastic process $\{\mathscr{Y}_t, t\in[0, T]\}$ with continuous trajectories is a \textit{solution to the generalised Ornstein-Uhlenbeck equation}
\begin{equation}\label{eq:general_OU}
    \partial_t\mathscr{Y}_t=\mathfrak{A}_\theta \mathscr{Y}_t +\mathfrak{B}_\theta \mathscr{\dot W}_t
\end{equation}
if, for any $H\in\mathcal{S}_\theta$,
the processes $\{\mathscr{M}_t(H), t\in[0, T]\}$ and $\{\mathscr{N}_t(H), t\in[0, T]\}$ defined by
\begin{align}
    &\mathscr{M}_t(H):=\mathscr{Y}_t(H)-\mathscr{Y}_0(H)-\int_0^t\mathscr{Y}_s(\mathfrak{A}_\theta H)\de s,\label{eq:general_M}
    \\&\mathscr{N}_t(H):=\mathscr{M}_t(H)^2-t\|\mathfrak{B}_\theta H\|_{L^{2, \theta}}^2\nonumber
\end{align}
are $\mathcal{F}_t$-martingales, where $\mathcal{F}_t:=\sigma\{\mathscr{Y}_s(H): s\le t, H\in\mathcal{S}_\theta\}$.
\end{definition}

\begin{remark} Note that, for $\theta \ge 0$, the space of test functions $\mathcal{S}_\theta$ was chosen so that, whenever $H \in \mathcal{S}_\theta$, one also has $\mathfrak{A}_\theta H \in \mathcal{S}_\theta$. This ensures that the rightmost term in \eqref{eq:general_M} is well defined. Nonetheless, the same property holds for $\theta < 0$. Indeed, for each integer $k \ge 0$ and $u \in [0,1]$,
\begin{equation*}
    \partial_u^k (\mathfrak{A}_\theta H)(u)
    = \partial_u^k (\Phi'(\bar\rho_\theta)\,\Delta H)(u)
    = \sum_{j=0}^{k} \binom{k}{j}
    \partial_u^{k-j}\Phi'(\bar\rho_\theta(u))
    \partial_u^{j+2}H(u).
\end{equation*}
Hence, if $\partial_u^k H(0) = \partial_u^k H(1) = 0$ for all $k$, then also $\partial_u^k (\mathfrak{A}_\theta H)(0) = \partial_u^k (\mathfrak{A}_\theta H)(1) = 0$ for all $k$.
\end{remark}

We will show, {under the additional assumption \eqref{assumption:g_bdd},} that the stationary fluctuations of our model are described by \eqref{eq:general_OU} with operators
\begin{equation}\label{eq:OU_operators}
    \begin{split}&\mathfrak{A}_\theta H(u)=\Phi'(\bar\rho_\theta(u))\Delta H(u) \quad \textrm{and}\quad \mathfrak{B}_\theta H(u)=\nabla H(u).
    \end{split}
\end{equation}
Then, for $\theta\ge0$ and up to imposing a condition on the field at the initial time, solutions are unique in distribution, as guaranteed by the following result.

\begin{proposition}\label{prop:uniqueness}
For each $\theta\ge0$, there exists a unique (in law) stochastic process $\mathscr{Y}$ in the space of continuous distributions $\mathcal{C}([0, T], \mathcal{S}'_\theta)$ such that:    
\begin{enumerate}[i)]
    
    \item for every $H\in \mathcal{S}_\theta$, the stochastic processes $\{\mathscr{M}_t(H), t\in[0, T]\}$ and $\{\mathscr{N}_t(H), t\in[0, T]\}$ defined by
    \begin{equation}\label{eq:martingale_problem}
        \begin{split}
        &\mathscr{M}_t(H):=\mathscr{Y}_t(H)-\mathscr{Y}_0(H)-\int_0^t\mathscr{Y}_s\left(\Phi'(\bar\rho_\theta)\Delta H\right)\de s,
        \\&\mathscr{N}_t(H):=\mathscr{M}_t(H)^2-t\left\|\nabla H\right\|_{L^{2, \theta}}^2
        \end{split}
    \end{equation}
    are $\mathcal{F}_t$-martingales, where $\mathcal{F}_t:=\sigma\{\mathscr{Y}_s(H): s\le t, H\in\mathcal{S}_\theta\}$,

    \item $\mathscr{Y}_0$ is a mean-zero Gaussian field with covariance given by, for $H, G\in\mathcal{S}_\theta$,
    \begin{equation}\label{eq:Y_covariance}
        \expected\left[\mathscr{Y}_0(H)\mathscr{Y}_0(G)\right]=\int_0^1 H(u)G(u)\chi_\theta(u)\de u,
    \end{equation}
    with $\chi_\theta$ given in \eqref{eq:chi}.

\end{enumerate}
\end{proposition}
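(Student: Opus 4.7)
The plan is to employ the classical semigroup approach to uniqueness of solutions to generalised Ornstein–Uhlenbeck martingale problems: for every $H\in\mathcal{S}_\theta$ we will express $\mathscr{Y}_t(H)$ as the sum of a linear functional of the Gaussian initial datum $\mathscr{Y}_0$ and a continuous centred martingale with deterministic quadratic variation. This identifies all finite-dimensional distributions of $\mathscr{Y}$ uniquely, and by continuity of the paths, the law on $\mathcal{C}([0,T],\mathcal{S}'_\theta)$.

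First, I would invoke the regular Sturm–Liouville theory developed in Appendix~\ref{sec:app_uniqueness} to show that, for each $\theta\ge 0$, the operator $\mathfrak{A}_\theta=\Phi'(\bar\rho_\theta)\Delta$ equipped with the boundary conditions encoded in Definition~\ref{def:S_theta} generates a strongly continuous contraction semigroup $\{T_t^\theta\}_{t\ge 0}$ that leaves $\mathcal{S}_\theta$ invariant. This invariance is precisely what the definition of $\mathcal{S}_\theta$ is designed to ensure: since $T_t^\theta$ commutes with $\mathfrak{A}_\theta$ (and with the Robin boundary operator in the case $\theta=1$), iteration preserves the countable family of boundary conditions $\mathfrak{A}_\theta^k H(b)=0$ (respectively the mixed Robin counterparts) defining $\mathcal{S}_\theta$.

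Second, fix $t\in(0,T]$ and $H\in\mathcal{S}_\theta$, and apply the martingale problem to the time-dependent test function $s\mapsto T_{t-s}^\theta H$. A standard Dynkin-type extension of \eqref{eq:martingale_problem}, justified by the strong continuity of $\{T_t^\theta\}$ and by approximating the integrand by simple functions taking values in $\mathcal{S}_\theta$, shows that
\begin{equation*}
\widetilde{\mathscr{M}}_t(H):=\mathscr{Y}_t(H)-\mathscr{Y}_0(T_t^\theta H)
\end{equation*}
is a continuous $\mathcal{F}_t$-martingale with deterministic quadratic variation
\begin{equation*}
\langle\widetilde{\mathscr{M}}(H)\rangle_t=\int_0^t\left\|\nabla T_{t-s}^\theta H\right\|_{L^{2,\theta}}^2\de s.
\end{equation*}
By the time-inhomogeneous version of L\'evy's characterisation, any continuous martingale with deterministic quadratic variation is Gaussian; combined with the hypothesis that $\mathscr{Y}_0$ is a centred Gaussian field with covariance \eqref{eq:Y_covariance}, this makes $\mathscr{Y}_t(H)$ a centred Gaussian random variable.

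Third, applying the same argument conditionally on $\mathcal{F}_s$ for $0\le s\le t$ yields the identity
\begin{equation*}
\expected\!\left[\exp\!\left(i\mathscr{Y}_t(H)\right)\mid\mathcal{F}_s\right]=\exp\!\left(i\mathscr{Y}_s(T_{t-s}^\theta H)-\tfrac{1}{2}\!\int_s^t\left\|\nabla T_{t-r}^\theta H\right\|_{L^{2,\theta}}^2\de r\right),
\end{equation*}
and iterating this relation across times $0\le t_1<\cdots<t_n\le T$ against test functions $H_1,\ldots,H_n\in\mathcal{S}_\theta$ expresses the joint characteristic function of $(\mathscr{Y}_{t_1}(H_1),\ldots,\mathscr{Y}_{t_n}(H_n))$ purely in terms of $\{T_t^\theta\}$, $\chi_\theta$, and the initial covariance. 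Since $\mathscr{Y}$ has continuous trajectories in $\mathcal{S}_\theta'$, uniqueness of the finite-dimensional distributions upgrades to uniqueness of the law on $\mathcal{C}([0,T],\mathcal{S}'_\theta)$.

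The main obstacle is the first step, namely the rigorous construction of $\{T_t^\theta\}$ on $\mathcal{S}_\theta$. Since $\Phi'(\bar\rho_\theta)$ is non-constant, one cannot diagonalise $\mathfrak{A}_\theta$ by elementary Fourier methods; instead one must rely on the spectral theory of regular Sturm–Liouville operators to obtain a complete orthonormal basis of eigenfunctions satisfying the prescribed endpoint behaviour. The admissibility of $T_t^\theta H$ as a test function for every $H\in\mathcal{S}_\theta$ then follows from the rapid decay of the spectral coefficients of $H$, which is a consequence of the vanishing of $\mathfrak{A}_\theta^k H$ at the endpoints for every $k$. These technical points are precisely those addressed in Appendix~\ref{sec:app_uniqueness}.
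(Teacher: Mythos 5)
Your proposal is correct and follows essentially the same route as the paper: Sturm--Liouville spectral theory to construct the semigroup $P_t^\theta$ and show it preserves $\mathcal{S}_\theta$ (the paper's Lemmas~\ref{lemma:SL_facts}--\ref{lemma:semigroup} and Corollaries~\ref{corol:semigroup_belonging}--\ref{corol:semigroup_taylor}), then the Holley--Stroock argument with the time-dependent test function $s\mapsto P_{t-s}^\theta H$ to obtain the conditional Gaussian characteristic function and hence uniqueness of finite-dimensional distributions. The only presentational difference is that where you invoke a ``standard Dynkin-type extension'' to time-dependent test functions, the paper implements this concretely by discretising time and taking products of the exponential martingales $\mathscr{X}_{t,s}(H)$.
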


For $\theta<0$, the space of test functions $\mathcal{S}_\theta$ is not rich enough to ensure uniqueness of solutions to the martingale problem given in Proposition~\ref{prop:uniqueness}, so in this case we need some additional conditions. Define the norm $\|\cdot\|_{L^2}$ on the space of maps $H:[0,1]\to\mathbb{R}$ via
\begin{equation}\label{def:L2_norm}
    \|H\|_{L^2}^2:=\int_0^1 H(u)^2\de u,
\end{equation}
and let $L^2([0,1]):=\{H:[0,1]\to\mathbb{R}: \|H\|_{L^2}^2<\infty\}$. 

\begin{proposition}\label{prop:uniqueness_neg}
For each $\theta<0$, there exists a unique (in law) stochastic process $\mathscr{Y}$ in the space of continuous distributions $\mathcal{C}([0, T], \mathcal{S}'_\theta)$ which satisfies both items of Proposition~\ref{prop:uniqueness}, and which also satisfies the two following additional conditions:
\begin{enumerate}[i)]    

    \item $\expected[\mathscr{Y}_t(H)^2]\lesssim \| H\|_{L^2}^2$\, for every $H\in \mathcal{S}_\theta$,
    
    \item setting $\iota_\eps^0:=\frac{1}{\eps}\boldsymbol{1}_{(0, \eps]}$ and $\iota_\eps^1:=\frac{1}{\eps}\boldsymbol{1}_{[1-\eps, 1)}$ and defining $\mathscr{Y}(\iota_\eps^0)$ and $\mathscr{Y}(\iota_\eps^1)$ via Lemma~\ref{lemma:def_Y_eps}, for $j\in\{0, 1\}$, we have that
    \begin{equation}\label{eq:Y_limit_eps}
        \mylim_{\eps\to0}\expected\left[\mysup_{t\in[0, T]}\left(\int_0^t\mathscr{Y}_s\left({\Phi'(\bar\rho_\theta)}\iota_\eps^j\right)\de s\right)^2\right] = 0.
    \end{equation}
    
\end{enumerate}
Moreover, the law of $\mathscr{Y}$ coincides with that of the unique stationary solution of the martingale problem in Proposition~\ref{prop:uniqueness}, but where the space $\mathcal{S}_\theta$ is defined as that for $0\le\theta<1$ in Definition \ref{def:S_theta}.
\end{proposition}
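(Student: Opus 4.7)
The plan is to show that any $\mathscr{Y}$ satisfying the $\mathcal{S}_\theta$-martingale problem together with conditions (i) and (ii) must actually solve the martingale problem on the larger space
\[
\widetilde{\mathcal{S}}_\theta:=\{f\in C^\infty([0,1]):\mathfrak{A}_\theta^k f(0)=\mathfrak{A}_\theta^k f(1)=0\text{ for all integers }k\ge 0\}.
\]
Since $\bar\varphi_\theta$ (and hence $\bar\rho_\theta$ and $\mathfrak{A}_\theta$) agrees with its $\theta\in[0,1)$ counterpart by Remark~\ref{remark:limit_fugacity}, this space is exactly $\mathcal{S}_{\theta'}$ for any $\theta'\in[0,1)$, and Proposition~\ref{prop:uniqueness} applied to $\widetilde{\mathcal{S}}_\theta$ then delivers both the uniqueness in law and the announced identification (the initial covariance \eqref{eq:Y_covariance} is carried over as it is already among the hypotheses).

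The first step will be to use condition (i) to extend $\mathscr{Y}_t$ by density to a continuous linear functional on $L^2([0,1])$ -- a standard mollification argument shows that $\mathcal{S}_\theta$ is dense in $L^2$, and the extension agrees with the construction of $\mathscr{Y}_t(\iota_\eps^j)$ from Lemma~\ref{lemma:def_Y_eps}. Fixing $H\in\widetilde{\mathcal{S}}_\theta$, I would approximate $H$ by $H_\eps:=H\phi_\eps\in\mathcal{S}_\theta$, where $\phi_\eps\in C^\infty([0,1];[0,1])$ equals $1$ on $[\eps,1-\eps]$ and vanishes, together with all its derivatives, at $\{0,1\}$; the Leibniz rule then ensures $H_\eps\in\mathcal{S}_\theta$. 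Since $H(0)=H(1)=0$, Taylor gives $\|H_\eps-H\|_{L^2}=O(\eps^{3/2})$, so condition (i) yields $\mathscr{Y}_t(H_\eps)\to\mathscr{Y}_t(H)$ in $L^2(\mathbb{P})$, and similarly at $t=0$.

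The heart of the argument is passing to the limit in the drift $\int_0^t\mathscr{Y}_s(\mathfrak{A}_\theta H_\eps)\de s$. Expanding $\Delta H_\eps=\phi_\eps\Delta H+2\nabla\phi_\eps\nabla H+H\Delta\phi_\eps$, the bulk piece $\Phi'(\bar\rho_\theta)\phi_\eps\Delta H$ converges to $\mathfrak{A}_\theta H$ in $L^2$: the difference $\Phi'(\bar\rho_\theta)(1-\phi_\eps)\Delta H$ is supported in the $\eps$-neighbourhood of the boundary, and since $\mathfrak{A}_\theta H(b)=0$ forces $\Delta H(b)=0$ (by strict positivity of $\Phi'$, Remark~\ref{remark:Phi_smooth}), Taylor yields an $L^2$-bound of $O(\eps^{3/2})$. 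The delicate piece is the boundary correction $\mathfrak{C}^\eps:=-\Phi'(\bar\rho_\theta)[2\nabla H\nabla\phi_\eps+H\Delta\phi_\eps]$, supported in $[0,\eps]\cup[1-\eps,1]$ with $L^2$-norm only $O(\eps^{-1/2})$, so condition (i) alone will not suffice. Using the identity $2\nabla\phi_\eps+u\Delta\phi_\eps=\partial_u(\phi_\eps+u\nabla\phi_\eps)$ together with the Taylor expansions $H(u)=\nabla H(0)u+O(u^3)$ and $\Phi'(\bar\rho_\theta(u))=\Phi'(\bar\rho_\theta(0))+O(u)$ near $0$, the restriction of $\mathfrak{C}^\eps$ to $[0,\eps]$ can be recast as
\[
-\nabla H(0)\,\Phi'(\bar\rho_\theta)\,\iota_\eps^0+R_\eps^0,
\]
where the first summand matches the leading total mass $\nabla H(0)\Phi'(\bar\rho_\theta(0))$ of the correction on $[0,\eps]$ and $R_\eps^0$ is a remainder whose time-integrated contribution against $\mathscr{Y}_s$ vanishes (analogously at the right endpoint with $\iota_\eps^1$ and $\nabla H(1)$). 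Condition (ii) then kills the $\iota_\eps^j$ part of the time integral.

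Finally, the quadratic variation $\|\nabla H_\eps\|_{L^{2,\theta}}^2=2\int_0^1\Phi(\bar\rho_\theta)(\nabla H_\eps)^2\de u$ -- the boundary terms of \eqref{def:theta_norm} vanish because $\boldsymbol{1}_{\{1\}}(\theta)=0$ for $\theta<0$ -- converges to $\|\nabla H\|_{L^{2,\theta}}^2$ by dominated convergence, since the additional piece $H\nabla\phi_\eps$ in $\nabla H_\eps=\phi_\eps\nabla H+H\nabla\phi_\eps$ satisfies $\|H\nabla\phi_\eps\|_{L^2}=O(\eps^{1/2})$ via $H(0)=H(1)=0$. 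Passing to the limit in the martingale identities for $H_\eps$ promotes them to $H\in\widetilde{\mathcal{S}}_\theta$, so $\mathscr{Y}$ solves the martingale problem on $\widetilde{\mathcal{S}}_\theta$ and Proposition~\ref{prop:uniqueness} closes the argument. The hard part will be the quantitative identification of $\mathfrak{C}^\eps$ with $-\nabla H(0)\Phi'(\bar\rho_\theta)\iota_\eps^0$ (and its right-boundary analog) up to a remainder whose time-integrated contribution is negligible: the pointwise shapes of $\mathfrak{C}^\eps$ and $\iota_\eps^0$ differ at the $O(\eps^{-1})$ scale, and extracting the precise cancellation between them is exactly the mechanism for which condition (ii) has been designed.
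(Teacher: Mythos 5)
Your high-level strategy is the same as the paper's: show that conditions i) and ii) force any $\mathcal{S}_\theta$-solution to extend to a solution of the martingale problem on the Dirichlet space $\mathcal{S}_{\theta,\Dir}$ (your $\widetilde{\mathcal{S}}_\theta$), and then invoke Proposition~\ref{prop:uniqueness}. Your treatment of the bulk piece, of $\|H_\eps-H\|_{L^2}$, and of the quadratic variation is correct. However, the step you yourself flag as ``the hard part'' is a genuine gap, not a technicality. After extracting $-\nabla H(0)\Phi'(\bar\rho_\theta)\iota_\eps^0$ from $\mathfrak{C}^\eps$, the remainder $R_\eps^0$ still contains the piece $-\nabla H(0)\Phi'(\bar\rho_\theta)\bigl[2\nabla\phi_\eps+u\,\Delta\phi_\eps-\iota_\eps^0\bigr]$, which is mean zero and supported in $[0,\eps]$ but has $L^2$-norm of order $\eps^{-1/2}$. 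Condition i) applied to the time integral gives a bound of order $T^2\eps^{-1}$, which diverges, and condition ii) controls \emph{only} the exact profile $\Phi'(\bar\rho_\theta)\iota_\eps^0$, not an arbitrary mean-zero function living at the same singular scale. There is no hypothesis in the proposition that kills this remainder, and no obvious way to represent it as a superposition of the $\iota_\delta^0$ with quantitative control (condition ii) carries no rate). So the argument does not close with a generic smooth cutoff $\phi_\eps$.

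The paper avoids this by never producing such a remainder: the regularisation is chosen so that the singular boundary term is \emph{exactly} the object of condition ii). Concretely, one first extends the martingale problem to the intermediate space $\mathcal{\tilde S}\subset\mathcal{H}^2$ of functions with $f$ and $\partial_u f$ vanishing at both endpoints (Lemma~\ref{lemma:mg_problem_tilde_S}, using only condition i)); then one treats the special functions $\psi_{\alpha,\beta}(u)=u\,\phi(\alpha(u-\beta))$ by approximating the factor $u$ with $h_\eps(u)=\tfrac{u^2}{2\eps}$ on $[0,\eps]$ and $u-\tfrac{\eps}{2}$ on $[\eps,1]$, so that $\partial_u^2 h_\eps=\iota_\eps^0$ identically and the problematic term is precisely $\Phi'(\bar\rho_\theta)\iota_\eps^0$, with the rest converging in $L^2$ (Lemma~\ref{lemma:mg_problem_psi}); finally a general $H\in\mathcal{S}_{\theta,\Dir}$ is decomposed as $\partial_u H(0)\psi_{\alpha,\beta}-\partial_u H(1)\tilde\psi_{\alpha,\beta}+G_{\alpha,\beta}$ with $G_{\alpha,\beta}\in\mathcal{\tilde S}$, and one sends $\alpha\to\infty$ with $\beta=1/\alpha$. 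If you want to salvage your one-step mollification, you would need to replace the generic $\phi_\eps$ by a cutoff engineered so that $2\nabla H\,\nabla\phi_\eps+H\,\Delta\phi_\eps$ equals $\nabla H(0)\,\iota_\eps^0$ plus an $L^2$-small error, which is essentially what the paper's $h_\eps$ accomplishes.
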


The result above is reminiscent of~\cite[Theorem~2.13]{bgjs22}. Note, however, that in item ii) the field is not only tested against $\iota_\eps^0$ and $\iota_\eps^1$, but rather against these multiplied by $\Phi'(\bar\rho_\theta)$. As we will see in Section~\ref{subsec:characterisation} and Appendix~\ref{sec:app_uniqueness}, this choice is natural for our field and will allow us to extend both the derivation of this condition and the uniqueness result of~\cite{bgjs22} to our setting. The proof of Propositions~\ref{prop:uniqueness} and \ref{prop:uniqueness_neg} is postponed to Appendix~\ref{sec:app_uniqueness}.

We note that, in the Dirichlet regime, we obtain two distinct martingale problems associated with the same Ornstein-Uhlenbeck equation. When $\theta \in [0,1)$, the fluctuation field can be tested against functions satisfying boundary conditions that require all even-order derivatives to vanish at the boundary points of $[0,1]$. However, when $\theta<0$, we are no longer able to control the boundary terms arising in Dynkin's formula. To overcome this difficulty, we restrict the class of test functions to a smaller space and instead derive the Dirichlet boundary conditions directly from the particle system, by proving \eqref{eq:Y_limit_eps}. 

We further observe that, in the regime $\theta \ge 1$, it is also possible to formulate two notions of the martingale problem. In this setting, however, both notions can be defined on the \textit{larger} space of test functions $\widetilde{S} := C^\infty([0,1])$. In the next proposition, we introduce the alternative martingale problem, and then state the corresponding convergence result in Proposition~\ref{prop:fluctuations_new}.

\begin{proposition}\label{prop:uniqueness_new} For $\theta\ge1$, there exists a unique (in law) stochastic process $\mathscr{Y}$ in the space of continuous distributions $\mathcal{C}([0, T], \widetilde{S}')$ such that \eqref{eq:Y_covariance} holds for functions in $\widetilde{S}$ and, for every $H\in \widetilde{S}$, the stochastic processes $\{\mathscr{M}^*_t(H), t\in[0, T]\}$ and $\{\mathscr{N}^*_t(H), t\in[0, T]\}$ defined by
\begin{equation}\label{eq:martingale_problem_new}
    \begin{split}
    &\mathscr{M}^*_t(H):=\mathscr{Y}_t(H)-\mathscr{Y}_0(H)-\int_0^t\mathscr{Y}_s\left(\Phi'(\bar\rho_\theta)\Delta H\right)\de s-\mathscr{Z}^{\theta, 0}_t(H)-\mathscr{Z}_t^{\theta, 1}(H),
    \\&\mathscr{N}^*_t(H):=\mathscr{M}_t^*(H)^2-t\left\| H\right\|_{L^{2, \theta,*}}^2
    \end{split}
\end{equation}
are $\mathcal{F}_t$-martingales, with the ${L^{2, \theta,*}}$-norm  defined through the inner product
\begin{align*}
    \scal{H, G}_{L^{2, \theta, *}} := \;2\int_0^1 \Phi(\bar\rho_\theta(u))H'(u)G'(u)\de u&+\left\{\alpha+\lambda\Phi(\bar\rho_\theta(0))\right\}H(0)G(0)\boldsymbol{1}_{\{1\}}(\theta)
    \\&+\left\{\beta+\delta\Phi(\bar\rho_\theta(1))\right\}H(1)G(1)\boldsymbol{1}_{\{1\}}(\theta).
\end{align*}
Here, for $i\in\{0, 1\}$, the processes $\mathscr{Z}_t^{\theta, i}$ are defined by 
\begin{equation}
    \mathscr{Z}_t^{\theta, i}(H) := \mylim_{\eps\to 0}\int_0^t c_N^{\theta, i} (H,s)
    \mathscr Y_s(\Phi'(\bar\rho_\theta)\iota_\eps^0) \de s,
\end{equation} 
with $c_N^{\theta, 0}(H, s):=H_s'(0)-\boldsymbol{1}_{\{1\}}(\theta)\lambda H_s(0)$ and $c_N^{\theta, 1}(H,s):=H_s'(1)+\boldsymbol{1}_{\{1\}}(\theta)\delta H_s(1)$.
\end{proposition}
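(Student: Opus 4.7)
My plan is to reduce the uniqueness on $\widetilde{S}$ to the uniqueness on the smaller test function space $\mathcal{S}_\theta$ (which is already provided by Proposition~\ref{prop:uniqueness}), and then lift the result using the linear, self-referential form of \eqref{eq:martingale_problem_new}.

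\textbf{Step 1: restriction of \eqref{eq:martingale_problem_new} to $\mathcal{S}_\theta$.} For $\theta\ge 1$ and any $H\in\mathcal{S}_\theta$, the defining boundary conditions immediately give $c_N^{\theta,0}(H,s)=\nabla H(0)-\boldsymbol{1}_{\{1\}}(\theta)\lambda H(0)=0$ and $c_N^{\theta,1}(H,s)=\nabla H(1)+\boldsymbol{1}_{\{1\}}(\theta)\delta H(1)=0$; consequently $\mathscr{Z}^{\theta,0}_t(H)=\mathscr{Z}^{\theta,1}_t(H)=0$. Next, I would verify that the two quadratic variations $\|H\|^2_{L^{2,\theta,*}}$ and $\|\nabla H\|^2_{L^{2,\theta}}$ agree on $\mathcal{S}_\theta$: the bulk contribution $2\int_0^1\Phi(\bar\rho_\theta(u))(\nabla H(u))^2\de u$ is common, and for $\theta>1$ the boundary contributions vanish in both norms via $\boldsymbol{1}_{\{1\}}$. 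For $\theta=1$, substituting $\nabla H(0)=\lambda H(0)$ and $\nabla H(1)=-\delta H(1)$ into the boundary terms of $\|\nabla H\|^2_{L^{2,\theta}}$ recovers exactly the boundary terms of $\|H\|^2_{L^{2,\theta,*}}$; for instance, at $u=0$,
\begin{equation*}
    \Big(\frac{\alpha}{\lambda^2}+\frac{\Phi(\bar\rho_\theta(0))}{\lambda}\Big)(\nabla H(0))^2 = (\alpha+\lambda\Phi(\bar\rho_\theta(0)))H(0)^2,
\end{equation*}
and analogously at $u=1$. Hence $\mathscr{M}^*_t(H)=\mathscr{M}_t(H)$ and $\mathscr{N}^*_t(H)=\mathscr{N}_t(H)$ for every $H\in\mathcal{S}_\theta$: any solution of \eqref{eq:martingale_problem_new} restricts to a solution of \eqref{eq:martingale_problem}, and Proposition~\ref{prop:uniqueness} yields uniqueness in law of $\{\mathscr{Y}_t(H):t\in[0,T],\ H\in\mathcal{S}_\theta\}$.

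\textbf{Step 2: extension to $\widetilde{S}$.} To pass from uniqueness on $\mathcal{S}_\theta$ to uniqueness on the larger space $\widetilde{S}$, I would exploit the explicit decomposition
\begin{equation*}
    \mathscr{Y}_t(H)=\mathscr{Y}_0(H)+\int_0^t\mathscr{Y}_s(\Phi'(\bar\rho_\theta)\Delta H)\de s+\mathscr{Z}^{\theta,0}_t(H)+\mathscr{Z}^{\theta,1}_t(H)+\mathscr{M}^*_t(H),
\end{equation*}
which is valid for every $H\in\widetilde{S}$. Each of the five terms can be controlled: $\mathscr{Y}_0$ is a Gaussian field with prescribed covariance \eqref{eq:Y_covariance} on all of $\widetilde{S}$; $\mathscr{M}^*_t(H)$ is a continuous martingale with deterministic quadratic variation $t\|H\|^2_{L^{2,\theta,*}}$, hence a centred Gaussian by Lévy's characterisation; and the drift together with the boundary processes are linear functionals of $\mathscr{Y}$ tested against specific deterministic functions. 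Applying Itô's formula to $\exp\bigl(i\sum_j \xi_j\mathscr{Y}_{t_j}(H_j)\bigr)$ and iterating as in the classical Holley-Stroock argument for linear martingale problems produces a closed equation for the finite-dimensional characteristic functional, whose solution is uniquely determined by the data fixed in Step~1. Since any two candidate solutions $\mathscr{Y},\tilde{\mathscr{Y}}$ then share the same characteristic functional for every finite collection $(t_j,H_j)_{j=1}^k$ with $H_j\in\widetilde{S}$, their laws on $\mathcal{C}([0,T],\widetilde{S}')$ coincide.

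\textbf{Main obstacle.} The delicate point is giving a rigorous meaning to the boundary processes $\mathscr{Z}^{\theta,i}_t(H)$ and showing that the $\eps\to 0$ limit actually exists and defines a continuous (in $t$), deterministic additive functional of $\mathscr{Y}$ for every $H\in\widetilde{S}$. Without this, the martingale problem on the enlarged space would involve ill-defined quantities and the self-referential decomposition used in Step~2 would not close. The existence and path-regularity of $\mathscr{Z}^{\theta,i}_t(H)$ must therefore be read off from the boundary replacement machinery developed in Section~\ref{subsec:replacement}, which identifies the limit $\mylim_{\eps\to0}\int_0^t c_N^{\theta,i}(H,s)\mathscr{Y}_s(\Phi'(\bar\rho_\theta)\iota_\eps^i)\de s$ as a tractable functional of the trajectory; this is the ingredient that distinguishes the new formulation from the classical one and is the only non-formal step in the proof.
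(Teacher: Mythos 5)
Your Step 1 is precisely the paper's own argument: the paper disposes of uniqueness in the remark immediately following the proposition, by observing that $c_N^{\theta,i}(H,\cdot)\equiv 0$ and $\|H\|_{L^{2,\theta,*}}=\|\nabla H\|_{L^{2,\theta}}$ for $H\in\mathcal{S}_\theta$, so that any solution of the new problem restricts to a solution of the problem of Proposition~\ref{prop:uniqueness}; your verification of the norm identity at $\theta=1$ is correct. Existence is obtained in the paper from Proposition~\ref{prop:fluctuations_new} (Section~\ref{sec:martingale_new}), where the processes $\mathscr{Z}^{\theta,i}_t(H)$ are constructed as $L^2$-limits of the particle-system quantities using the estimates behind \eqref{eq:new_1}, \eqref{eq:new_2} and \eqref{eq:long_term}; your proposal does not treat existence separately, which is acceptable only if you take that construction as given.

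Where you diverge from the paper is Step 2, and that is also where the gap sits. You are right that restriction to $\mathcal{S}_\theta$ does not by itself determine the law on $\mathcal{C}([0,T],\widetilde{S}')$: $\mathcal{S}_\theta$ is a proper closed subspace of $\widetilde{S}$ of infinite codimension, hence not dense, so some extension argument is needed (the paper's ``we immediately obtain uniqueness'' elides this). But your proposed closure does not close. In the decomposition of $\mathscr{Y}_t(H)$, the drift is tested against $\Phi'(\bar\rho_\theta)\Delta H$, which is again a general element of $\widetilde{S}\setminus\mathcal{S}_\theta$, and the boundary processes are limits of $\mathscr{Y}_s(\Phi'(\bar\rho_\theta)\iota_\eps^i)$, where $\Phi'(\bar\rho_\theta)\iota_\eps^i$ does not even belong to $\widetilde{S}$. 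To express these observables in terms of the field restricted to $\mathcal{S}_\theta$ one needs an a priori moment bound of the form $\expected[\mathscr{Y}_t(G)^2]\lesssim\|G\|_{L^2}^2$ (as in item i) of Proposition~\ref{prop:uniqueness_neg}) so that the $L^2$-extension of Lemma~\ref{lemma:def_Y_eps} is available; no such hypothesis appears in Proposition~\ref{prop:uniqueness_new}, so the ``closed equation for the characteristic functional'' in your Holley--Stroock iteration is not actually closed in terms of the data fixed in Step 1. Relatedly, the boundary replacement machinery of Section~\ref{subsec:replacement} that you invoke to give meaning to $\mathscr{Z}^{\theta,i}_t(H)$ is a statement about the microscopic dynamics; it can be used to show that the limit of the particle system solves the new martingale problem, but it cannot be applied to an abstract solution, which is what a uniqueness argument must handle.
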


\begin{remark} We observe that, if $H \in \mathcal{S}_\theta$, then the coefficients $c_N^{\theta, i}(H, s)$ vanish identically, as they correspond to the boundary conditions appearing in the definition of $\mathcal{S}_\theta$ for $\theta\ge1$ and $k=0$. As a consequence, the martingale $\mathscr{M}^*_t(H)$ coincides with $\mathscr{M}_t(H)$. Moreover, for $H\in\mathcal{S}_\theta$, the norm $\|H\|_{L^{2,\theta,*}}$ defined above coincides with $\|\nabla H\|_{L^{2,\theta}}$. It follows that the martingale $\mathscr{N}^*_t(H)$ coincides with $\mathscr{N}_t(H)$. Finally, since $\widetilde{S} \subset \mathcal S_\theta$ for $\theta\ge1$, and since uniqueness of the martingale problem holds in $\mathcal S_\theta$, we immediately obtain uniqueness for the martingale problem defined in the previous proposition.
\end{remark}

%%%%%%%%%%%%%%%%%%%%%%%%%%%%%%%%%%%%%%%%%%%%%%%%%%
%%%Convergence of Stationary Fluctuations%%%%%%%%
%%%%%%%%%%%%%%%%%%%%%%%%%%%%%%%%%%%%%%%%%%%%%%%%%%
\subsection{Convergence of Stationary Fluctuations}

We now introduce the \textit{density fluctuation field} $\{\mathscr{Y}_t^N, t\ge0\}$ as the $\mathcal{S}'_\theta$-valued field such that, for each $H\in\mathcal{S}_\theta$,
\begin{equation}\label{eq:pre_fluct_field}
       \mathscr{Y}_t^N(H)=\frac{1}{\sqrt{N}} \sum_{x=1}^{N-1}H\left(\frac{x}{N}\right)\bar\eta_t(x),
\end{equation}
where we recall that  $\rho_N(x)=E_{\bar\nu_N}[\eta(x)]$,  and  $\bar\eta(x)$ denotes the centred random variable $\eta(x)-\rho_N(x)$.
Let $\mathcal{D}([0, T], \mathcal{S}_\theta')$ denote the space of càdlàg functions on $[0, T]$ taking values in $\mathcal{S}_\theta'$, equipped with the Skorokhod topology. Let $\mathcal{Q}^N $ denote the probability measure on $\mathcal{D}([0, T], \mathcal{S}_\theta')$ given by the law of the field $\{\mathscr{Y}_t^N, t\in [0, T]\}$. Our main result is the following.

\begin{theorem}\label{thm:fluctuations} Assume \eqref{assumption:g_bdd}, that the process $\{\eta_t, t\ge0\}$ starts from the invariant measure $\bar\nu_N$, and that the parameters $\alpha, \lambda, \beta, \delta$ are all strictly positive. Then, for each $\theta\in\mathbb{R}$, the sequence of probability measures $\{\mathcal{Q}^N\}_N$ converges weakly in $\mathcal{D}([0, T], \mathcal{S}_\theta')$, and its limit $\mathcal{Q}$ is given by the law of the unique stationary solution to:

\begin{enumerate}[i)]
    
    \item the martingale problem of Proposition~\ref{prop:uniqueness} for $\theta\ge0$; 
   
    \item the martingale problem of Proposition~\ref{prop:uniqueness_neg} for $\theta<0$.

\end{enumerate}
\end{theorem}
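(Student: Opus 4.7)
The plan is to follow the standard ``martingale + tightness + uniqueness'' scheme for fluctuation limits of interacting particle systems, adapted to the boundary-driven, non-reversible, non-translation-invariant setting. The starting point is Dynkin's formula: for each test function $H\in\mathcal{S}_\theta$, the process
\begin{equation*}
    M_t^N(H) \;:=\; \mathscr{Y}_t^N(H)-\mathscr{Y}_0^N(H)-\int_0^t N^2\mathcal{L}^N\mathscr{Y}_s^N(H)\de s
\end{equation*}
is a martingale with an explicitly computable predictable quadratic variation $\langle M^N(H)\rangle_t$. A direct computation shows that $N^2\mathcal{L}^N\mathscr{Y}_s^N(H)$ splits into a discrete Laplacian piece acting on $g(\eta_s(x))$ through a summation by parts, plus boundary correction terms at $x=1$ and $x=N-1$ that depend on $\theta$ and on the reservoir parameters. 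The key analytic input is then the Boltzmann-Gibbs principle (Theorem~\ref{thm:boltzmann_gibbs}) which replaces the nonlinear microscopic function $g(\eta_s(x))-\Phi(\rho_N(x))$ by its linearisation $\Phi'(\rho_N(x))\bar\eta_s(x)$, up to an error that vanishes in $L^2(\bar\nu_N)$. Combined with a discrete integration by parts and the smoothness of $\Phi$ and $\bar\rho_\theta$ (Remarks~\ref{remark:limit_density} and \ref{remark:Phi_smooth}), this yields
\begin{equation*}
    \int_0^t N^2\mathcal{L}^N\mathscr{Y}_s^N(H)\de s \;\longrightarrow\; \int_0^t \mathscr{Y}_s\bigl(\Phi'(\bar\rho_\theta)\Delta H\bigr)\de s
\end{equation*}
along any subsequential limit, provided the boundary contributions are absorbed correctly.

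The treatment of the boundary terms is where the choice of $\mathcal{S}_\theta$ becomes crucial. For $\theta\ge 1$ the conditions $\nabla H(0)=\boldsymbol{1}_{\{1\}}(\theta)\lambda H(0)$ and $\nabla H(1)=-\boldsymbol{1}_{\{1\}}(\theta)\delta H(1)$ are tailored to cancel exactly the Robin/Neumann boundary contribution coming from $\mathcal{L}^N_l+\mathcal{L}^N_r$; for $0\le\theta<1$ the iterated vanishing conditions $\mathfrak{A}_\theta^k H(0)=\mathfrak{A}_\theta^k H(1)=0$ suffice to make the associated boundary prefactors negligible in the diffusive scaling. For the quadratic variation, an analogous computation together with a second application of the Boltzmann-Gibbs principle and the identity $E_{\bar\nu_N}[g(\eta(x))]=\Phi(\rho_N(x))$ from \eqref{eq:Phi=varphi} produces, after summation by parts, the deterministic limit $t\|\nabla H\|_{L^{2,\theta}}^2$. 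I would then use the replacement lemma of Section~\ref{subsec:replacement} to close the boundary contributions to the quadratic variation, so that the two processes in \eqref{eq:martingale_problem} arise as genuine martingales in the limit.

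Tightness of $\{\mathcal{Q}^N\}_N$ on $\mathcal{D}([0,T],\mathcal{S}_\theta')$ is obtained via Mitoma's criterion, so that it is enough to establish tightness of each real-valued process $\{\mathscr{Y}_t^N(H),t\in[0,T]\}$ for a fixed $H\in\mathcal{S}_\theta$. Using stationarity under $\bar\nu_N$, Lemma~\ref{lemma:bounded_moments} gives a uniform second-moment bound on $\mathscr{Y}_t^N(H)$; Aldous' criterion is then verified termwise on the martingale decomposition, controlling the integral term via the Boltzmann-Gibbs principle and the martingale term via Doob's inequality applied to the quadratic variation computed above. Once tightness and the identification of the limiting martingale equation are in hand, Proposition~\ref{prop:uniqueness} gives uniqueness for $\theta\ge 0$, after checking that $\mathscr{Y}_0$ is a centred Gaussian field with covariance \eqref{eq:Y_covariance}; this last check follows from the product form of $\bar\nu_N$ \eqref{eq:nu}, Lemma~\ref{lemma:bounded_moments}, the continuity of $\chi_\theta$ in \eqref{eq:chi}, and a classical Lindeberg CLT applied to the independent array $\{\bar\eta(x)\}_{x\in I_N}$.

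For $\theta<0$ the space $\mathcal{S}_\theta$ is too small to uniquely identify the limit through the martingale problem alone, so I would additionally verify the two hypotheses of Proposition~\ref{prop:uniqueness_neg}. The bound $\expected[\mathscr{Y}_t(H)^2]\lesssim\|H\|_{L^2}^2$ passes to the limit from its $N$-level counterpart, which in turn is a consequence of stationarity, the independence under $\bar\nu_N$ and Lemma~\ref{lemma:bounded_moments}. The vanishing condition \eqref{eq:Y_limit_eps} is the genuinely delicate point: it plays the role of a Dirichlet boundary condition and it is precisely what encodes the conjectural hydrodynamic behaviour of~\cite{agnr25} at the fluctuation level. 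I would derive it by first replacing the microscopic boundary values $g(\eta_s(1))$ and $g(\eta_s(N-1))$ by spatial averages over mesoscopic boxes near the boundary (the local Boltzmann-Gibbs principle of Theorem~\ref{thm:local_boltzmann_gibbs} together with the replacement lemma of Section~\ref{subsec:replacement}), which yields an $N$-level analogue of \eqref{eq:Y_limit_eps} with an error that vanishes first as $N\to\infty$ and then as $\eps\to 0$. Here the boundedness assumption \eqref{assumption:g_bdd} enters essentially, because the local Boltzmann-Gibbs principle near the boundary requires uniform control on $g$ to compensate for the lack of reversibility and translation invariance. This is the main obstacle of the proof: the Boltzmann-Gibbs principle (and its local version at the boundary) must be re-established in a setting where the stationary measure is inhomogeneous and the generator is not self-adjoint, which forces the introduction of the weighted translation operator and of a compensating correction to the localised generator, as outlined in the structure-of-the-proof discussion. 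With these ingredients in place, Propositions~\ref{prop:uniqueness} and \ref{prop:uniqueness_neg} yield uniqueness of the limit point and hence the asserted weak convergence $\mathcal{Q}^N\to\mathcal{Q}$.
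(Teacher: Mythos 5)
Your proposal is correct and follows essentially the same route as the paper: Dynkin's formula, the Boltzmann--Gibbs principle and the boundary replacement lemma to close the martingale equation (with the boundary terms handled case-by-case through the definition of $\mathcal{S}_\theta$), Mitoma's and Aldous's criteria for tightness, a CLT for the initial Gaussian field, and Propositions~\ref{prop:uniqueness} and \ref{prop:uniqueness_neg} for uniqueness, with the local Boltzmann--Gibbs principle (Theorem~\ref{thm:local_boltzmann_gibbs}) supplying condition ii) when $\theta<0$. The only minor deviation is that the quadratic variation does not need a second application of the Boltzmann--Gibbs principle: since it carries a prefactor $N^{-1}$ rather than $N^{-1/2}$, a direct Cauchy--Schwarz variance bound together with Lemma~\ref{lemma:g_moments} and the boundary replacement lemma suffices, which is how the paper proceeds in Lemmas~\ref{lemma:martingale_2} and \ref{lemma:martingale_3}.
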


Consider now the case $\theta\ge1$ and take the space of test functions to be $\widetilde S$. In this setting, we can conclude that the fluctuation field converges to the unique solution of the martingale problem introduced in Proposition~\ref{prop:uniqueness_new}. This result is stated in the following proposition.
\begin{proposition}\label{prop:fluctuations_new} Under the assumption of Theorem~\ref{thm:fluctuations}, for each $\theta\ge1$, the sequence of probability measures $\{\mathcal{Q}^N\}_N$ converges weakly in $\mathcal{D}([0, T], \widetilde{S}')$, and its limit $\mathcal{Q}$ is given by the law of the unique stationary solution to the martingale problem of Proposition~\ref{prop:uniqueness_new}.
\end{proposition}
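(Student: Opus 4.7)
The plan is to run the same machinery developed in the proof of Theorem~\ref{thm:fluctuations} for $\theta\ge 1$ (Dynkin decomposition, Boltzmann-Gibbs, boundary replacement, Mitoma tightness) while carefully tracking the boundary contributions that no longer vanish when the test function does not satisfy the boundary relations encoded in $\mathcal{S}_\theta$. The single new ingredient is the identification of these residual boundary contributions with $\mathscr{Z}_t^{\theta,0}(H)+\mathscr{Z}_t^{\theta,1}(H)$.

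First I would repeat the Dynkin decomposition of Section~\ref{subsec:dynkin} for an arbitrary $H\in\widetilde S$. After the usual double discrete summation by parts on the bulk generator, and after isolating the reservoir contributions coming from $\mathcal{L}^N_l+\mathcal{L}^N_r$ (whose prefactor $N^{2-\theta}$ makes them of order one only when $\theta=1$), one centres the jump rate $g(\eta_s(1))$ by $\Phi(\bar\rho_\theta(0))=E_{\bar\nu_N}[g(\eta(1))]+O(N^{-1})$ via~\eqref{eq:Phi=varphi}, and analogously at $x=N-1$; the divergent boundary pieces then cancel exactly. What remains, on top of the integrated drift $\int_0^t\mathscr Y_s(\Phi'(\bar\rho_\theta)\Delta H)\de s$ produced by Boltzmann-Gibbs (Theorem~\ref{thm:boltzmann_gibbs}, where~\eqref{assumption:g_bdd} is used), are two boundary terms whose multiplicative coefficients are precisely the $c_N^{\theta,0}(H,s)=\nabla H_s(0)-\boldsymbol{1}_{\{1\}}(\theta)\lambda H_s(0)$ and $c_N^{\theta,1}(H,s)=\nabla H_s(1)+\boldsymbol{1}_{\{1\}}(\theta)\delta H_s(1)$ of Proposition~\ref{prop:uniqueness_new}. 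For $H\in\mathcal{S}_\theta$ these coefficients vanish by Definition~\ref{def:S_theta}, recovering the original martingale equation of Proposition~\ref{prop:uniqueness}; for $H\in\widetilde S$ they survive and must be identified.

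The central new step is the identification of these residual boundary terms with $\mathscr Z_t^{\theta,i}(H)$. Applying the local Boltzmann-Gibbs estimate at the boundary (Theorem~\ref{thm:local_boltzmann_gibbs}) linearises $g(\eta_s(1))-\Phi(\bar\rho_\theta(0))$ as $\Phi'(\bar\rho_\theta(0))\bar\eta_s(1)$ up to a negligible error, and then the boundary replacement lemma of Section~\ref{subsec:replacement} replaces $\bar\eta_s(1)$ by a spatial average over an $\eps N$-box at the left edge, which, upon rescaling, is identified with $\mathscr Y_s^N(\Phi'(\bar\rho_\theta)\iota_\eps^0)$ in the limit $N\to\infty$. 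Sending $\eps\downarrow 0$ afterwards yields $\mathscr Z_t^{\theta,0}(H)$, with the existence and control of the $\eps$-limit guaranteed by the same replacement estimates; the analogous argument at $x=N-1$ produces $\mathscr Z_t^{\theta,1}(H)$.

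The quadratic variation of $\mathscr{M}^*_t(H)$ is computed via the carré du champ: the bulk piece $\frac{2}{N}\sum_{x=1}^{N-2}(\nabla^N H(x/N))^2 g(\eta_s(x))$ converges in stationary expectation to $2\int_0^1(H'(u))^2\Phi(\bar\rho_\theta(u))\de u$ by Lemma~\ref{lemma:g_moments} and~\eqref{eq:Phi=varphi}, while the reservoir contributions of order $N^{2-\theta}$ survive only when $\theta=1$ and produce the remaining $\alpha+\lambda\Phi(\bar\rho_\theta(0))$ and $\beta+\delta\Phi(\bar\rho_\theta(1))$ pieces, together matching $t\|H\|_{L^{2,\theta,*}}^2$. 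Tightness of $\{\mathcal{Q}^N\}_N$ in $\mathcal{D}([0,T],\widetilde{S}')$ follows from Mitoma's criterion combined with Aldous-Rebolledo, essentially as in Section~\ref{subsec:tightness}, since the new boundary corrections are controlled by the same replacement estimates; uniqueness in law of the limit is then immediate from the remark following Proposition~\ref{prop:uniqueness_new}. The principal technical obstacle I anticipate is the interchange of the limits $N\to\infty$ and $\eps\downarrow 0$ needed to identify $\mathscr Z_t^{\theta,i}(H)$, which requires $\eps$-uniform $L^2$-estimates on $\mathscr Y_s^N(\Phi'(\bar\rho_\theta)\iota_\eps^i)$ obtained from the boundary Boltzmann-Gibbs.
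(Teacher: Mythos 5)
Your overall strategy is the paper's: redo the Dynkin decomposition for $H\in\widetilde S$, observe that the surviving boundary terms carry exactly the coefficients $c_N^{\theta,i}(H,s)$, identify $\sqrt N[g(\eta_s(1))-\bar\varphi_N(1)]$ with $\mathscr Y_s^N(\Phi'(\bar\rho_\theta)\iota_\eps^0)$ up to an error of order $\eps$, obtain $\mathscr Z_t^{\theta,0}(H)$ as an $L^2$-limit in $\eps$ via a Cauchy argument, and treat quadratic variation and tightness as in Section~\ref{sec:proof}. This is what the paper does (writing out only $\theta=1$, the case $\theta>1$ being simpler), and uniqueness is indeed immediate from the remark after Proposition~\ref{prop:uniqueness_new}.

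However, the key identification step is described in a way that, taken literally, does not go through. You propose to first ``linearise'' $g(\eta_s(1))-\Phi(\bar\rho_\theta(0))$ as $\Phi'(\bar\rho_\theta(0))\bar\eta_s(1)$ via the local Boltzmann--Gibbs principle, and then to replace $\bar\eta_s(1)$ by a spatial average using the boundary replacement lemma. Neither tool does what you ask of it: Theorem~\ref{thm:local_boltzmann_gibbs} is an averaged statement over a box of $\eps N$ sites and yields no single-site linearisation at $x=1$ (the Kipnis--Varadhan gain there comes precisely from the spatial averaging), while Lemma~\ref{lemma:repl_boundaries} only shows that $\int_0^t(g(\eta_s(1))-\bar\varphi_N(1))\de s$ is small in $L^2$ and performs no replacement by a box average. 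The correct order (the paper's) is the reverse: first replace $g(\eta_s(1))-\bar\varphi_N(1)$ by the box average $\frac{1}{\eps N}\sum_{x\le\eps N}\big(g(\eta_s(x))-\bar\varphi_N(x)\big)$ with an $O(\eps)$ error, which is the content of the estimate \eqref{eq:long_term} and Lemma~\ref{lemma:local_limit}; then apply Theorem~\ref{thm:local_boltzmann_gibbs} to linearise the box average; finally replace $\Phi'(\rho_N(x))$ by $\Phi'(\bar\rho_\theta(\tfrac xN))$. With that reordering your argument coincides with the paper's, including the triangle-inequality/Cauchy argument in $\eps$ that defines $\mathscr Z_t^{\theta,i}(H)$ and resolves the limit interchange you flag. (Minor: the reservoir prefactors are $N^{3/2-\theta}$ in the drift and $N^{1-\theta}$ in the quadratic variation, not $N^{2-\theta}$; your conclusions about which terms survive are nevertheless correct.)
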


%%%%%%%%%%%%%%%%%%%%%%%%%%%%%%%%%%%%%%%%%%%%%%%%%%
%%%PROOF OF THE THE MAIN THEOREM%%%%%%%%%%%%%%%%%%
%%%%%%%%%%%%%%%%%%%%%%%%%%%%%%%%%%%%%%%%%%%%%%%%%%
\section{Proof of Theorem~\ref{thm:fluctuations}}\label{sec:proof}

Throughout, we will denote by $\mathbb{P}_{\bar\nu_N}$ the probability measure on the space of càdlàg trajectories $\mathcal{D}([0, T], \Omega_N)$ corresponding to the process $\{\eta_t, t\in[0, T]\}$ started at its stationary measure $\bar\nu_N$, and by $\expected_{\bar\nu_N}[\,\cdot\,]$ expectations with respect to $\mathbb{P}_{\bar\nu_N}$.

The proof is divided into two main steps. First, we establish tightness of the sequence of density fluctuation fields, and second, we identify the unique limiting process. Once these two points are established, convergence follows immediately. For clarity of exposition, we begin with the second step and discuss some computations involving the discrete analogue of the martingale appearing in the first line of \eqref{eq:martingale_problem}.

%%%%%%%%%%%%%%%%%%%%%%%%%%%%%%%%%%%%%%%%%%%%%%%%%%
%%%Dynkin's Martingales%%%%%%%%%%%%%%%%%%%%%%%%%%%
%%%%%%%%%%%%%%%%%%%%%%%%%%%%%%%%%%%%%%%%%%%%%%%%%%
\subsection{Dynkin's Martingales}\label{subsec:dynkin}

By Dynkin's formula~\cite[Appendix~1]{kl99}, for each $H\in \mathcal{S}_\theta$ we have that the processes
\begin{align}
    &\mathscr{M}_t^N(H):=\mathscr{Y}_t^N(H)-\mathscr{Y}_0^N(H)-\int_0^t N^2 \mathcal{L}^N\mathscr{Y}_s^N(H)\de s,\label{eq:dynkin_formula}
    \\&\mathscr{N}_t^N(H):=\mathscr{M}_t^N(H)^2-\int_0^t \left\{N^2\mathcal{L}^N\mathscr{Y}_s^N(H)^2-2N^2\mathscr{Y}_s^N(H)\mathcal{L}^N\mathscr{Y}_s^N(H)\right\}\de s\label{eq:dynkin_qv}
\end{align}
are $\mathcal{F}_t^N$-martingales, with $\mathcal{F}_t^N:=\sigma\{\eta_s^N: s\le t\}$. Let
\begin{alignat*}{2}
    &\nabla_N^+H\left(\frac{x}{N}\right):=N\left[H\left(\frac{x+1}{N}\right)-H\left(\frac{x}{N}\right)\right], &&x\in I_N\cup\{0\},
    \\&\nabla_N^-H\left(\frac{x}{N}\right):=N\left[H\left(\frac{x}{N}\right)-H\left(\frac{x-1}{N}\right)\right], && x\in I_N\cup \{N\},
    \\&\Delta_NH\left(\frac{x}{N}\right):=N^2\left[H\left(\frac{x+1}{N}\right)-2H\left(\frac{x}{N}\right)+H\left(\frac{x-1}{N}\right)\right], \quad && x\in I_N.
\end{alignat*}
 A simple computation shows that we can write
\begin{align*}
    &N^2\mathcal{L}_l^N\mathscr{Y}^N(H)=
    \frac{N^2}{N^\theta\sqrt{N}}\big(\alpha-\lambda g(\eta(1))\big)H\left(\frac{1}{N}\right),
    \\&N^2\mathcal{L}_0^N\mathscr{Y}^N(H)=\frac{1}{\sqrt{N}}\sum_{x=2}^{N-2}g(\eta(x))\Delta_N H\left(\frac{x}{N}\right)
    \\&\phantom{N^2\mathcal{L}_0^N\mathscr{Y}^N(H)=}+\frac{N}{\sqrt{N}}g(\eta(1))\nabla_N^+ H\left(\frac{1}{N}\right)-\frac{N}{\sqrt{N}}g(\eta(N-1))\nabla_N ^-H\left(\frac{N-1}{N}\right),
    \\&N^2\mathcal{L}_r^N\mathscr{Y}^N(H)=\frac{N^2}{N^\theta\sqrt{N}}\big(\beta-\delta g(\eta(N-1))\big)H\left(\frac{N-1}{N}\right).
\end{align*}
Summing and subtracting $\frac{1}{\sqrt{N}}\sum_{x=2}^{N-2}\bar\varphi_N(x)\Delta_N H\left(\frac{x}{N}\right)$, the bulk action can be rewritten as
\begin{equation}\label{eq:bulk_action}
    \begin{split}
    N^2\mathcal{L}_0^N\mathscr{Y}^N(H)=&\frac{1}{\sqrt{N}}\sum_{x=2}^{N-2}\left[g(\eta(x))-\bar\varphi_N(x)\right]\Delta_N H\left(\frac{x}{N}\right)  
    \\&+\frac{1}{\sqrt{N}}\sum_{x=2}^{N-2} \bar\varphi_N(x)\Delta_N H\left(\frac{x}{N}\right)
    \\&+\sqrt{N}g(\eta(1))\nabla_N^+H\left(\frac{1}{N}\right)
    \\&-\sqrt{N}g(\eta(N-1))\nabla_N^-H\left(\frac{N-1}{N}\right).
    \end{split}
\end{equation}
Now, note that
\begin{equation}\label{eq:expectation_corr}
    \begin{split}
    \frac{1}{\sqrt{N}}\sum_{x=2}^{N-2} \bar\varphi_N(x)\Delta_N H\left(\frac{x}{N}\right)&=\sqrt{N}\sum_{x=2}^{N-2} \bar\varphi_N(x)\left\{ \nabla_N^+H\left(\frac{x}{N}\right)-\nabla_N^+H\left(\frac{x-1}{N}\right)\right\}
    \\&=\sqrt{N}\bar\varphi_N(N-2) \nabla_N^- H\left(\frac{N-1}{N}\right) 
    \\&\phantom{=}-\sqrt{N}\bar\varphi_N(2) \nabla_N^+ H\left(\frac{1}{N}\right) 
    \\&\phantom{=}+\sqrt{N}\sum_{x=2}^{N-3} \nabla_N^+H\left(\frac{x}{N}\right)\left\{\bar\varphi_N(x)-\bar\varphi_N(x+1)\right\}.
    \end{split}
\end{equation}
We now make use of the following property of the profile $\bar\varphi_N$: for each $x=1, \ldots, N-2$, one can check that
\begin{equation}
    \bar\varphi_N(x+1)-\bar\varphi_N(x)=\frac{\lambda \bar\varphi_N(1)-\alpha}{N^\theta}=\frac{\beta-\delta\bar\varphi_N(N-1)}{N^\theta}.
\end{equation}
Hence, \eqref{eq:expectation_corr} can be rewritten as
\begin{equation}
    \begin{split}
    &\sqrt{N}\bar\varphi_N(N-1) \nabla_N^- H\left(\frac{N-1}{N}\right) +\frac{\sqrt{N}}{N^\theta}\big(\beta -\delta\bar\varphi_N(N-1)\big)\nabla_N^- H\left(\frac{N-1}{N}\right) 
    \\&-\sqrt{N}\bar\varphi_N(1) \nabla_N^+ H\left(\frac{1}{N}\right) -\frac{\sqrt{N }}{N^\theta}\big(\lambda \bar\varphi_N(1)-\alpha\big)\nabla_N^+ H\left(\frac{1}{N}\right)
    \\&+\sqrt{N}\sum_{x=2}^{N-3} \nabla_N^+H\left(\frac{x}{N}\right)\left\{\bar\varphi_N(x)-\bar\varphi_N(x+1)\right\}
    \\&=\sqrt{N}\bar\varphi_N(N-1) \nabla_N^- H\left(\frac{N-1}{N}\right) -\sqrt{N}\bar\varphi_N(1) \nabla_N^+ H\left(\frac{1}{N}\right)
    \\&\phantom{=}-\frac{N^{3/2}}{N^\theta}\big(\beta -\delta\bar\varphi_N(N-1)\big)H\left(\frac{N-1}{N}\right)+\frac{N^{3/2}}{N^\theta}\big(\lambda \bar\varphi_N(1)-\alpha\big)H\left(\frac{1}{N}\right).
    \end{split}
\end{equation}
Finally, combining everything together and recalling \eqref{eq:Phi=varphi}, we get
\begin{equation}\label{eq:dynkin_terms}
    \begin{split}N^2\mathcal{L}^N \mathscr{Y}_s^N(H)=\,&\frac{1}{\sqrt{N}}\sum_{x=2}^{N-2}\left[g(\eta_s(x))-\Phi(\rho_N(x))\right]\Delta_N H\left(\frac{x}{N}\right)   
    \\&+\sqrt{N}[g(\eta_s(1))-\bar\varphi_N(1)]\nabla_N^+H\left(\frac{1}{N}\right)
    \\&+ 
    \frac{\lambda N^{3/2}}{N^\theta}[\bar\varphi_N(1)-g(\eta_s(1))]H\left(\frac{1}{N}\right)
    \\&-\sqrt{N}[g(\eta_s(N-1))-\bar\varphi_N(N-1)]\nabla_N^-H\left(\frac{N-1}{N}\right)
    \\&+\frac{\delta N^{3/2}}{N^\theta}[\bar\varphi_N(N-1)-g(\eta_s(N-1))]H\left(\frac{N-1}{N}\right).
    \end{split}
\end{equation}
Assuming \eqref{assumption:g_bdd}, the term appearing in the first line of \eqref{eq:dynkin_terms} can be treated with the use of the Boltzmann-Gibbs principle (Theorem~\ref{thm:boltzmann_gibbs}), which allows us to rewrite it as 
\begin{equation}\label{eq:bg_term}
  \frac{1}{\sqrt{N}}\sum_{x=2}^{N-2}\Phi'(\rho_N(x))\Delta_N H\left(\frac{x}{N}\right)\left[\eta_s(x)-\rho_N(x)\right],
\end{equation}
which in turn is nothing but $\int_0^t \mathcal Y_s^N(\Phi'(\rho_N(N\cdot))\Delta_N H)$. It is easy to see that, for each $x\in I_N$, $\left|\Phi'(\rho_N(x))-\Phi'\left(\bar\rho_\theta\left(\frac{x}{N}\right)\right)\right|$ vanishes as $N\to\infty$, where the limit density $\bar\rho_\theta$ is defined in Remark \ref{remark:limit_density}. Then, we conclude that, as $N\to\infty$, \eqref{eq:bg_term} gives rise to the rightmost term in the first line of \eqref{eq:martingale_problem}. 

In order to treat the remaining terms appearing in \eqref{eq:dynkin_terms}, we need to take specific test functions as defined above. Recalling Definition \ref{def:S_theta}, we get the following:

\begin{enumerate}[i)]

    \item $\theta<0$: by performing a Taylor expansion of $H$ around $0$, for each positive integer $m$  we can write
    \begin{equation*}
        \frac{\lambda N^{3/2}}{N^\theta}[\bar\varphi_N(1)-g(\eta_s(1))]H\left(\frac{1}{N}\right)=\frac{\lambda N^{3/2}\partial_u^m H(x_0)}{N^{\theta+m}\, m!}
    \end{equation*}
    for some $x_0\in[0, \frac{1}{N}]$. Writing the bottom line of \eqref{eq:dynkin_terms} in a similar way and choosing $m>\frac{3}{2}-\theta$, it is easy to see that the third and fifth line of \eqref{eq:dynkin_terms} vanish in $L^2(\prob_{\bar\nu_N})$ as $N\to\infty$, so that that the remaining boundary terms are simply
    \begin{equation}\label{eq:theta<0}
    \begin{split}&\sqrt{N}[g(\eta_s(1))-\bar\varphi_N(1)]\nabla_N^+H\left(\frac{1}{N}\right)
    \\&-\sqrt{N}[g(\eta_s(N-1))-\bar\varphi_N(N-1)]\nabla_N^-H\left(\frac{N-1}{N}\right).
    \end{split}
    \end{equation}
    By the boundary replacement lemma (Lemma~\ref{lemma:repl_boundaries}), we see that the last display also vanishes in $L^2(\prob_{\bar\nu_N})$ as $N\to\infty$;
    
    \item $0\le \theta<1$: in this case, since $H(0)=H(1)=0$, we can rewrite the last four lines of \eqref{eq:dynkin_terms} as 
    \begin{equation}\label{eq:theta<1}
        \begin{split}&\sqrt{N}[g(\eta_s(1))-\bar\varphi_N(1)]\nabla_N^+H\left(\frac{1}{N}\right)\\&+\frac{\lambda \sqrt{N}}{N^\theta}[\bar\varphi_N(1)- g(\eta_s(1))]\nabla_N^+H(0)
        \\&-\sqrt{N}[g(\eta_s(N-1))-\bar\varphi_N(N-1)]\nabla_N^-H\left(\frac{N-1}{N}\right)
        \\&+\frac{\delta \sqrt{N}}{N^\theta}[\bar\varphi_N(N-1)-g(\eta_s(N-1))]\nabla_N^-H(1).
        \end{split}
    \end{equation}
    Now, by performing a Taylor expansion to replace discrete operators with continuous ones and by Lemma~\ref{lemma:repl_boundaries}, the last display vanishes in $L^2(\prob_{\bar\nu_N})$ as $N\to\infty$;
    \item $\theta=1$:
    note that the last four lines of \eqref{eq:dynkin_terms} can be written as 
    \begin{equation}\label{eq:theta=1}
        \begin{split}&\sqrt{N}[g(\eta_s(1))-\bar\varphi_N(1)]\left\{\nabla_N^+H\left(\frac{1}{N}\right)
        -\lambda H\left(\frac{1}{N}\right)\right\}
        \\&-\sqrt{N}[g(\eta_s(N-1))-\bar\varphi_N(N-1)]\left\{\nabla_N^-H\left(\frac{N-1}{N}\right)+\delta H\left(\frac{N-1}{N}\right)\right\}.
        \end{split}
    \end{equation}
    As above, we perform a Taylor expansion to replace discrete operators with continuous ones; then, recalling that $H'(0)=\lambda H(0)$ and $H'(1)=-\delta H(1)$, we conclude that the last display vanishes in $L^2(\prob_{\bar\nu_N})$ as $N\to\infty$;

    \item $\theta>1$: finally, to treat this case, we repeat the same arguments as above, namely we replace discrete operators by continuous ones, and then use the fact that $H'(0)=H'(1)=0$ to rewrite the last four lines of \eqref{eq:dynkin_terms} as
    \begin{equation}\label{eq:theta>1}
        \begin{split}
        &\frac{\lambda N^{3/2}}{N^\theta}[\bar\varphi_N(1)-g(\eta_s(1))]H\left(\frac{1}{N}\right)
        \\&+\frac{\delta N^{3/2}}{N^\theta}[\bar\varphi_N(N-1)-g(\eta_s(N-1))]H\left(\frac{N-1}{N}\right).
        \end{split}
    \end{equation}
    By Lemma~\ref{lemma:repl_boundaries}, we conclude that the last display vanishes in $L^2(\prob_{\bar\nu_N})$ as $N\to\infty$.

\end{enumerate}

As discussed above, in order to express the martingale $\mathscr{M}_t^N(H)$ as a function of the density field, we need two fundamental lemmas. We first present the replacement lemma in the bulk, also known as the Boltzmann-Gibbs principle, and then establish a similar result at the boundary points.

%%%%%%%%%%%%%%%%%%%%%%%%%%%%%%%%%%%%%%%%%%%%%%%%%%
%%%The Boltzmann-Gibbs Principle%%%%%%%%%%%%%%%%%%
%%%%%%%%%%%%%%%%%%%%%%%%%%%%%%%%%%%%%%%%%%%%%%%%%%
\subsection{The Boltzmann-Gibbs Principle}\label{subsec:bg}

For a positive integer $k$, let $\tau_k$ denote the shift by $k$ modulo $N$, namely $\tau_k\eta(x):=\eta([\![x+k]\!])$, where $[\![\,\cdot\,]\!]$ denotes the congruence class in $\{0, \ldots, N-1\}$ modulo $N$. We define the map $V_N:\Omega_N\to\mathbb{R}$ via
\begin{equation*}
    V_N(\eta):=g(\eta(1))-\Phi(\rho_N(1))-\Phi'(\rho_N(1))\left[\eta(1)-\rho_N(1)\right],
\end{equation*}
as well as its $k$-shifts
\begin{equation}
    \tau_kV_N(\eta):=g(\eta(k+1))-\Phi(\rho_N(k+1))-\Phi'(\rho_N(k+1))\left[\eta(k+1)-\rho_N(k+1)\right]
\end{equation}
for positive integers $k$.

\begin{theorem}[Boltzmann-Gibbs Principle]\label{thm:boltzmann_gibbs} If \eqref{assumption:g_bdd} holds, then for every continuous function $G:[0, 1]\to\mathbb{R}$ and every $t>0$,
\begin{equation}
    \mylim_{N\to\infty} \expected_{\bar\nu_N }\left[\left(\frac{1}{\sqrt{N}}\int_0^t\sum_{x=2}^{N-2}G\left(\frac{x}{N}\right)\tau_x V_N(\eta_s)\de s\right)^2\right]=0.
\end{equation}
\end{theorem}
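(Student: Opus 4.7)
The starting point is the stationary variance inequality for additive functionals: for any mean-zero $F\in L^2(\bar\nu_N)$,
\[
\expected_{\bar\nu_N}\!\left[\left(\int_0^t F(\eta_s)\,ds\right)^{\!2}\right]\le 20\,t\,\sup_{h}\Big\{2\langle F,h\rangle_{\bar\nu_N}-\langle h,-\mathcal{L}^{s,N}h\rangle_{\bar\nu_N}\Big\},
\]
with $\mathcal{L}^{s,N}$ the symmetric part of $\mathcal{L}^N$ in $L^2(\bar\nu_N)$. Applied to
$F_N:=\tfrac{1}{\sqrt N}\sum_{x=2}^{N-2}G(x/N)\tau_xV_N$, which is centred under $\bar\nu_N$ because each $\tau_xV_N$ is, this reduces the claim to an $H_{-1}$-type estimate for $F_N$. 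The boundary generators $\mathcal L_l^N$ and $\mathcal L_r^N$ contribute only non-negative terms to the Dirichlet form $\mathcal E^s(h,h):=\langle h,-\mathcal L^{s,N}h\rangle_{\bar\nu_N}$, so they can be discarded from the supremum without loss.

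The next step is to partition $\{2,\ldots,N-2\}$ into disjoint blocks $B_j$ of size $K=K(N)$ with $1\ll K\ll N$, decompose $F_N=\sum_jF_{N,j}$ with $F_{N,j}$ supported on $B_j$, and estimate each piece by a local one-block argument. The first difficulty is that $\mathcal{L}_0^N$ is \emph{not} reversible under $\bar\nu_N$, since the detailed-balance ratio $\bar\varphi_N(x+1)/\bar\varphi_N(x)$ differs from $1$ whenever the fugacity varies. Following the strategy announced in the introduction, I would introduce on each block a corrected generator $\widetilde{\mathcal L}_j^N$, reversible with respect to the product measure of constant fugacity $\bar\varphi_N(j^*)$ for some $j^*\in B_j$, and write $\mathcal{L}_0^N\rvert_{B_j}=\widetilde{\mathcal L}_j^N+\mathcal R_j^N$. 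The uniform estimate $\bar\varphi_N(x+1)-\bar\varphi_N(x)=O(1/N)$, valid in every regime of $\theta$, combined with Cauchy-Schwarz would allow me to absorb the quadratic contribution of $\mathcal R_j^N$ into the Dirichlet form of $\widetilde{\mathcal L}_j^N$, up to a lower-order $L^2$-remainder.

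The second difficulty is the non-translation invariance of $\bar\nu_N$, which prevents a direct reduction to a single localised test function. Here I would define a weighted translation operator and, invoking the trick inspired by Labbé announced in the introduction, use the smoothness of $\bar\varphi_N$ to rewrite $\sum_{x\in B_j}G(x/N)\tau_xV_N$ as a single local function of the block variables, up to an error of order $K/N$. At this stage each block lies in a genuinely reversible setting, and one can apply the standard $H_{-1}$ bound by the spectral gap combined with the $O(K^{-2})$ spectral-gap estimate for the symmetric zero-range process on an interval of length $K$. Under assumption \eqref{assumption:g_bdd}, the variance of the localised function is uniformly controlled in the background product measure, and summing over blocks yields a bound that vanishes as $N\to\infty$ provided $K\to\infty$ with $K/N\to0$.

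The main obstacle I foresee is the simultaneous control of the non-reversibility correction $\mathcal R_j^N$ and the error introduced by the weighted translation: both depend on the rate of variation of $\bar\varphi_N$, and they must be dominated by $\mathcal E^s(h,h)$ uniformly in $h$ without spoiling the $O(K^2)$ gain coming from the local spectral gap. Assumption \eqref{assumption:g_bdd} plays a crucial role at two places: the lower bound $g\ge m_g$ provides a uniform ellipticity for the local Dirichlet forms (needed to apply the spectral-gap estimate to $V_N$, which involves $g$ as a coefficient), while the upper bound $g\le M_g$ controls $\mathrm{Var}(V_N)$ uniformly in the fugacity, so that the final block bounds are independent of the position of the block in $\{2,\ldots,N-2\}$.
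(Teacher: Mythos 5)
Your reduction steps match the paper's: the Kipnis--Varadhan bound, the partition into mesoscopic blocks, a splitting of the bulk generator into a ``centred'' part plus a correction of size $O(1/N)$ coming from the ratio $\bar\varphi_N(y)/\bar\varphi_N(x)$ (handled by Cauchy--Schwarz), and a weighted translation operator in the spirit of Labb\'e to cope with the non-translation-invariance of $\bar\nu_N$. One technical point you gloss over but the paper must address: because $\bar\nu_N$ is not reversible, $E_{\bar\nu_N}[\mathcal{L}_{B_i}f_i]\neq 0$, so the cross terms between distinct blocks in the Cauchy--Schwarz step do not vanish and have to be estimated separately (again via the $O(1/N)$ fugacity increments). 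This is manageable, but it is part of the argument.

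The genuine gap is in your final one-block step. You propose to conclude with ``the standard $H_{-1}$ bound by the spectral gap combined with the $O(K^{-2})$ spectral-gap estimate for the symmetric zero-range process on an interval of length $K$.'' First, such a uniform estimate is not available: for zero-range dynamics the spectral gap on a box of size $K$ with $j$ particles degenerates with the density (the known bounds are of order $K^{-2}(1+j/K)^{-2}$, which is precisely the content of Assumption~2 in Section~2.3 that this paper explicitly refrains from assuming), and boundedness of $g$ does not repair this. Second, and more fundamentally, a spectral-gap/$H_{-1}$ bound can only control the component of $\sum_{x\in B^\circ}\tau_xV$ orthogonal to the kernel of the local generator; it says nothing about the component measurable with respect to the conserved quantity $\bar\eta^{B}$, which is exactly where the content of the Boltzmann--Gibbs principle lies. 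The paper instead follows Brox--Rost/Kipnis--Landim: after passing (via the weighted translation and a compactness argument in the fugacity) to a single box under a constant-fugacity product measure, the infimum over local $L^2$ functions $f$ of $E[(\sum_x\tau_xV^\varphi-\mathcal{L}_{B}f)^2]$ is identified with $E[(E[\sum_x\tau_xV^\varphi\mid\bar\eta^{B}])^2]$, and this conditional expectation is shown to be $O(1/K)$ on the event $\{\bar\eta^{B}\le A\}$ by the equivalence of ensembles, with the high-density tail controlled by a large-deviation estimate and the linear growth of $\Phi$ and $\tilde\Phi$. Your proposal omits this projection/equivalence-of-ensembles step entirely, so the argument as written cannot close; no choice of $K(N)$ rescues it.
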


This theorem is classical in interacting particle systems; however, our setting presents some peculiarities, as we lack translation invariance and reversibility of the invariant measure $\bar\nu_N$.

The next result is a local version of the one above and will be needed to show that, for $\theta<0$, the field satisfies item ii) of Proposition~\ref{prop:uniqueness_neg}. Note that, unlike in Theorem~\ref{thm:boltzmann_gibbs}, the test function is no longer continuous; however, the proof of the theorem above can be readily adapted to cover this case.

\begin{theorem}[Local Boltzmann-Gibbs Principle]\label{thm:local_boltzmann_gibbs} For $\eps>0$, define the map $\iota_\eps^0:[0, 1]\to[0, \infty)$ as in Proposition~\ref{prop:uniqueness_neg}, namely $\iota_\eps^0(u):=\frac{1}{\eps}\boldsymbol{1}_{(0,\eps]}(u)$. If \eqref{assumption:g_bdd} holds, then for every $t>0$,
\begin{equation*}
    \mylim_{N\to\infty} \expected_{\bar\nu_N }\left[\left(\frac{1}{\sqrt{N}}\int_0^t\sum_{x=1}^N \iota_\eps^0\left(\frac{x}{N}\right)\tau_x V_N(\eta_s)\de s\right)^2\right]=0.
\end{equation*}
The same result holds with $\iota_\eps^0$ replaced by $\iota_\eps^1$, where $\iota_\eps^1(u):=\frac{1}{\eps}\boldsymbol{1}_{[1-\eps,1)}(u)$.
\end{theorem}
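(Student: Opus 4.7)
The plan is to follow verbatim the strategy used to establish Theorem~\ref{thm:boltzmann_gibbs}, merely tracking the modified weight structure induced by $\iota_\eps^0$. First, I would rewrite
\[
\frac{1}{\sqrt N}\sum_{x=1}^N \iota_\eps^0\!\left(\tfrac{x}{N}\right)\tau_x V_N(\eta_s)=\frac{1}{\eps\sqrt N}\sum_{x=1}^{\lfloor \eps N\rfloor}\tau_x V_N(\eta_s),
\]
so that the sum ranges over $\lfloor\eps N\rfloor$ consecutive sites near the left boundary carrying the uniform weight $1/\eps$. In particular, $\iota_\eps^0$ plays the role of a (discontinuous) replacement of the bounded test function $G$ appearing in Theorem~\ref{thm:boltzmann_gibbs}, with support localised near the left boundary. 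Since $\eps>0$ is fixed, this introduces only an overall multiplicative factor in the estimates, harmless in the limit $N\to\infty$.

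Next, I would apply the Kipnis--Varadhan inequality adapted to the non-reversible generator $\mathcal{L}^N$. Since $\bar\nu_N$ is not reversible, as in Theorem~\ref{thm:boltzmann_gibbs} I would pass to the symmetric part of $N^2\mathcal{L}^N$, absorbing the antisymmetric remainder via Cauchy--Schwarz together with the moment bounds \eqref{assumption:g_bdd} and \eqref{eq:g_moments}. This reduces the task to estimating
\[
\frac{t}{\eps^2 N}\,\Big\|\sum_{x=1}^{\lfloor\eps N\rfloor}\tau_x V_N\Big\|_{H^{-1}(\bar\nu_N)}^2.
\]
I would then perform a mesoscopic block replacement: for a scale $K$ satisfying $1\ll K\ll \eps N$, replace each $\tau_x V_N$ by its conditional expectation given the empirical average $\bar\eta^K(x)$ over a box of size $K$ centred at $x$. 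Combining the weighted translation trick evoked in the introduction with the spatial smoothness of the fugacity profile $\bar\varphi_N$, the resulting conditional expectation is of second order in $\bar\eta^K(x)-\rho_N(x)$, delivering the standard $K^{-1}$-gain in the $H^{-1}$ bound. Together with the prefactor $(\eps^2 N)^{-1}$ and the $O(\eps N)$ terms of weight $1/\eps$, this yields an overall bound of order $(\eps K)^{-1}$, which vanishes upon sending $N\to\infty$ followed by $K\to\infty$.

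The main obstacle, relative to the global case, is ensuring that the block-averaging estimates remain uniform as $x$ approaches the boundary site $1$. Blocks overlapping with the first two sites interact with the boundary generator $\mathcal{L}^N_l$, whose rates scale as $N^{-\theta}$ and therefore contribute negligibly to the symmetric Dirichlet form at the bulk scale; the number of such boundary-touching blocks is $O(K)$, giving only a lower-order correction. I would also need to verify that no continuity property of the test function is used in the replacement step, which is indeed the case, as $\iota_\eps^0$ enters only through its uniform bound $\eps^{-1}$ and its support size $\eps N$. The argument for $\iota_\eps^1$ is identical up to reflection, using $\mathcal{L}^N_r$ and summing over $\{\lceil(1-\eps)N\rceil,\ldots,N-1\}$.
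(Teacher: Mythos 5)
Your proposal is correct and follows essentially the same route as the paper: the paper's proof of Theorem~\ref{thm:local_boltzmann_gibbs} likewise rewrites the sum as $\frac{1}{\eps\sqrt{N}}\sum_{x=1}^{\eps N}\tau_x V_N(\eta_s)$, splits $\{1,\ldots,\eps N\}$ into boxes of size $K$, and then observes that, since $\eps>0$ is fixed and only the uniform bound $\eps^{-1}$ and the support size $\eps N$ of the test function enter the estimates, the argument of Theorem~\ref{thm:boltzmann_gibbs} (local generator correction, weighted translation to constant-fugacity measures, equivalence of ensembles) goes through verbatim. Your additional remarks on boundary-touching blocks and on the absence of any continuity requirement on the test function are exactly the points the paper implicitly relies on.
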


Before proving these theorems, we will need some preliminary results, stated and proved below.

\subsubsection[$L^2$ Bounds and Dirichlet Forms]{$\boldsymbol{L^2}$ Bounds and Dirichlet Forms}

\begin{lemma}\label{lemma:f_eta_xy} Let \eqref{assumption:g_bdd} hold. If $f\in L^2(\bar\nu_N)$, then for each $x, y\in I_N$ we have that the map $\eta\mapsto f(\eta^{x, y})$ is also in $L^2(\bar\nu_N)$.  
\end{lemma}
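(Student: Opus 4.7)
The plan is to reduce the statement to a standard change-of-variables argument, exploiting the product form of $\bar\nu_N$ together with the boundedness assumption \eqref{assumption:g_bdd}. The case $x=y$ is trivial, so I focus on $x\ne y$. The key observation I would use is that the map $\eta\mapsto\eta^{x,y}$ is a bijection from $\{\eta\in\Omega_N:\eta(x)\ge 1\}$ onto $\{\xi\in\Omega_N:\xi(y)\ge 1\}$, with inverse $\xi\mapsto\xi^{y,x}$.

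First, I would decompose
$$E_{\bar\nu_N}[f(\eta^{x,y})^2]=E_{\bar\nu_N}[f(\eta^{x,y})^2\boldsymbol{1}_{\{\eta(x)\ge 1\}}]+E_{\bar\nu_N}[f(\eta^{x,y})^2\boldsymbol{1}_{\{\eta(x)=0\}}].$$
The second term is harmless under the natural convention $f(\eta^{x,y}):=f(\eta)$ when $\eta(x)=0$ (consistent with the fact that $g(0)=0$ makes such a jump dynamically non-existent), and is thus trivially bounded by $\|f\|_{L^2(\bar\nu_N)}^2$. For the first term, after performing the change of variables $\xi=\eta^{x,y}$, I would obtain
$$E_{\bar\nu_N}[f(\eta^{x,y})^2\boldsymbol{1}_{\{\eta(x)\ge 1\}}]=\sum_{\xi:\,\xi(y)\ge 1} f(\xi)^2\,\bar\nu_N(\xi^{y,x}).$$
The next step is to use the explicit product form \eqref{eq:nu} to compute the Radon-Nikodym derivative
$$\frac{\bar\nu_N(\xi^{y,x})}{\bar\nu_N(\xi)}=\frac{g(\xi(y))\,\bar\varphi_N(x)}{g(\xi(x)+1)\,\bar\varphi_N(y)},$$
where the partition functions $Z(\bar\varphi_N(\cdot))$ cancel out and only the one-site weight ratios at sites $x$ and $y$ survive.

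To conclude, I would invoke assumption \eqref{assumption:g_bdd}: since $\xi(y)\ge 1$ one has $g(\xi(y))\le M_g$, while $g(\xi(x)+1)\ge m_g$ always, so the above ratio is uniformly bounded by the finite constant $C_N:=M_g\bar\varphi_N(x)/(m_g\bar\varphi_N(y))$. Therefore,
$$E_{\bar\nu_N}[f(\eta^{x,y})^2\boldsymbol{1}_{\{\eta(x)\ge 1\}}]\le C_N\,\|f\|_{L^2(\bar\nu_N)}^2,$$
which, combined with the bound for the second summand, completes the proof. There is no real obstacle here; the argument is elementary, and the only points requiring care are the explicit cancellations in the Radon-Nikodym derivative and the role of \eqref{assumption:g_bdd} in controlling the two $g$-factors that would otherwise blow up if $g$ were allowed to vanish or grow without bound.
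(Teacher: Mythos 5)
Your proof is correct and follows essentially the same route as the paper: a change of variables $\eta\mapsto\eta^{x,y}$, the explicit Radon--Nikodym ratio $\frac{\bar\nu_N(\xi^{y,x})}{\bar\nu_N(\xi)}=\frac{\bar\varphi_N(x)}{\bar\varphi_N(y)}\frac{g(\xi(y))}{g(\xi(x)+1)}$ coming from the product form of $\bar\nu_N$, and the bound $m_g\le g\le M_g$ from \eqref{assumption:g_bdd}; your explicit handling of the $\eta(x)=0$ configurations is a small extra care the paper leaves implicit. The only difference worth noting is that the paper records a constant uniform in $N$ (via $\mysup_N\mysup_z\bar\varphi_N(z)$ and $\myinf_N\myinf_z\bar\varphi_N(z)$), which is what is actually used later in the Boltzmann--Gibbs argument, whereas your $C_N$ is stated per fixed $N$ — though your bound trivially upgrades to the uniform one since the fugacity profile is bounded above and below uniformly in $N$ and $x$.
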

\begin{proof}
We start by noting that, for all $x, y\in I_N$,
\begin{equation}\label{eq:der_radnyk}
    \frac{\bar\nu_N(\eta^{y, x})}{\bar\nu_N(\eta)}=\frac{\bar\varphi_N(x)}{\bar\varphi_N(y)}\frac{g(\eta(y))}{g(\eta(x)+1)}.
\end{equation}
Hence, we can write
\begin{align*}
    E_{\bar\nu_N}\left[f(\eta^{x, y})^2\right]&=\int_{\Omega_N} f(\eta)^2\frac{\de\bar\nu_N(\eta^{y, x})}{\de\bar\nu_N(\eta)} 
    \de\bar\nu_N(\eta)
    = \int_{\Omega_N} f(\eta)^2 \frac{\bar\varphi_N(x)}{\bar\varphi_N(y)}\frac{g(\eta(y))}{g(\eta(x)+1)}\de\bar\nu_N(\eta)
    \\& \le \frac{M_g\left\{\mysup_N\mysup_{z\in I_N}\bar\varphi_N(z)\right\}}{m_g\left\{\myinf_N \myinf_{z\in I_N}\bar\varphi_N(z)\right\}} E_{\bar\nu_N}\left[f(\eta)^2 \right],
\end{align*}
which completes the proof.
\end{proof}

Now, given a function $f\in L^2(\bar\nu_N)$, we define its \textit{Dirichlet form} with respect to the invariant measure $\bar\nu_N$ by

\begin{equation}\label{def:dirichlet_form}
\mathscr{D}_N (f):=-\scal{f, \mathcal{L}^N f}_{\bar\nu_N}.
\end{equation}
Moreover, define
\begin{align*}
    \mathscr{D}_N^0(f):=\frac{1}{2}\int_{\Omega_N}\sum_{\substack{x, y\in I_N \\ |x-y|=1}}g(\eta(x))\left[f(\eta^{x, y})-f(\eta)\right]^2 \de \bar\nu_N 
\end{align*}
and, for $\theta\in\mathbb{R}$,
\begin{align*}
    &\mathscr{D}_N^l(f):=\frac{1}{2}\int_{\Omega_N}\left\{\frac{\alpha}{N^\theta}\left[f(\eta^{1+})-f(\eta)\right]^2+\frac{\lambda g(\eta(1))}{N^\theta}\left[f(\eta^{1-})-f(\eta)\right]^2\right\}\de \bar\nu_N ,
    \\&\mathscr{D}_N^r(f):=\frac{1}{2}\int_{\Omega_N}\bigg\{\frac{\beta}{N^\theta}\left[f(\eta^{(N-1)+})-f(\eta)\right]^2
    \\&\phantom{\mathscr{D}_N^r(f):=\frac{1}{2}\int_{\Omega_N}\bigg\{}+\frac{\delta g(\eta(N-1))}{N^\theta}\left[f(\eta^{(N-1)-})-f(\eta)\right]^2\bigg\}\de \bar\nu_N .
\end{align*}

\begin{lemma}\label{lemma:dirichlet} For each $\theta\in\mathbb{R}$ and any function $f\in L^2(\bar\nu_N)$, we have that
\begin{equation*}
    \mathscr{D}_N (f)=\mathscr{D}_N ^l(f)+\mathscr{D}_N^0(f)+\mathscr{D}_N ^r(f).
\end{equation*}
\end{lemma}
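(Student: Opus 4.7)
The plan is to apply the following general identity for Dirichlet forms: for any continuous-time Markov generator $\mathcal{L}$ on a countable state space with transition rates $c(\eta,\eta')$ and any invariant probability measure $\mu$,
\begin{equation*}
-\langle f, \mathcal{L}f\rangle_\mu = \frac{1}{2}\sum_{\eta, \eta'} c(\eta, \eta')\mu(\eta)[f(\eta')-f(\eta)]^2.
\end{equation*}
I would first derive this identity directly. Invariance gives $\sum_\eta c(\eta,\eta')\mu(\eta) = \mu(\eta')\lambda(\eta')$, where $\lambda(\eta'):=\sum_{\eta''}c(\eta',\eta'')$ is the total outgoing rate from $\eta'$; after a relabelling of dummy variables, this implies the symmetry $\sum_{\eta,\eta'} c(\eta,\eta')\mu(\eta) f(\eta)^2 = \sum_{\eta,\eta'} c(\eta,\eta')\mu(\eta) f(\eta')^2$. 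Expanding $\langle f, \mathcal{L}f\rangle_\mu$ and averaging the two resulting $f^2$-contributions through this symmetry yields the sum-of-squares expression by a routine rearrangement.

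Next, I would apply this identity to $\mathcal{L}^N$, which admits $\bar\nu_N$ as invariant measure by Lemma~\ref{lemma:inv_measure}. The key structural observation is that the transitions generated by $\mathcal{L}^N$ partition into three disjoint families, classified by which sites undergo a change in occupation: moves $\eta\to\eta^{1\pm}$ from $\mathcal{L}^N_l$, two-site nearest-neighbour exchanges $\eta\to\eta^{x,y}$ from $\mathcal{L}^N_0$, and moves $\eta\to\eta^{(N-1)\pm}$ from $\mathcal{L}^N_r$. For any ordered pair $(\eta,\eta')$ the rate $c(\eta,\eta')$ is nonzero for at most one of these families, since reservoir moves alter a single occupation variable while bulk moves alter two. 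Consequently, the right-hand side of the general identity decomposes as a sum of three sub-sums, which by direct inspection match term by term the definitions of $\mathscr{D}_N^l(f)$, $\mathscr{D}_N^0(f)$, and $\mathscr{D}_N^r(f)$.

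The step requiring the most care, and the one that is easy to get wrong, is to avoid the natural-looking route of using linearity of $\mathcal{L}^N$ to write $\mathscr{D}_N(f) = -\langle f,\mathcal{L}^N_l f\rangle_{\bar\nu_N} - \langle f,\mathcal{L}^N_0 f\rangle_{\bar\nu_N} - \langle f,\mathcal{L}^N_r f\rangle_{\bar\nu_N}$ and then applying the general identity to each summand in isolation. This attempt fails because none of the three sub-generators preserves $\bar\nu_N$ on its own; correspondingly, the would-be identities $-\langle f,\mathcal{L}^N_i f\rangle_{\bar\nu_N} = \mathscr{D}_N^i(f)$ are in general false, as one can already check for $i=l$ whenever $\alpha\neq\lambda\bar\varphi_N(1)$, which is precisely the non-equilibrium regime of interest here. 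What does hold, and what closes the argument, is that the three individual discrepancies cancel exactly when summed together, which is the manifestation at the level of Dirichlet forms of the stationarity of $\bar\nu_N$ under the combined dynamics. Working via the global identity together with the disjoint-transitions splitting bypasses this subtlety entirely.
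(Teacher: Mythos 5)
Your proof is correct, and at bottom it is the same computation as the paper's: both arguments rest on the polarization identity $f(\eta)\left[f(\eta')-f(\eta)\right]=-\tfrac12\left[f(\eta')-f(\eta)\right]^2+\tfrac12\left[f(\eta')^2-f(\eta)^2\right]$ together with the single fact that $\int_{\Omega_N}\mathcal{L}^N h\,\de\bar\nu_N=0$ for the \emph{full} generator. The difference is one of packaging. You establish the sum-of-squares identity globally for $\mathcal{L}^N$ and then split the resulting sum over transitions into the three families; the paper splits by sub-generator first, deriving for each $i\in\{l,0,r\}$ the exact identity $-\scal{f,\mathcal{L}^N_i f}_{\bar\nu_N}=\mathscr{D}_N^i(f)+\tfrac12\int_{\Omega_N}\mathcal{L}^N_i f^2\,\de\bar\nu_N$, and then notes that the three correction terms sum to $\tfrac12\int_{\Omega_N}\mathcal{L}^N f^2\,\de\bar\nu_N=0$. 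So the route you warn against -- applying the identity to each $\mathcal{L}^N_i$ in isolation -- is in fact the one the paper takes, but done correctly: the per-generator discrepancy (which, as you rightly observe, is nonzero in general since no sub-generator preserves $\bar\nu_N$ on its own) is carried explicitly rather than assumed to vanish, and the cancellation you describe in your last paragraph appears there as the vanishing of $\int_{\Omega_N}\mathcal{L}^N f^2\,\de\bar\nu_N$. Your global formulation buys a cleaner, reusable statement valid for any invariant (not necessarily reversible) measure, at the mild cost of the disjointness discussion -- which is not strictly needed anyway, since the sum-of-squares expression is linear in the rate function $c=c_l+c_0+c_r$.
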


\begin{proof}
We start by writing
\begin{equation}\label{eq:sum_generators}
    \mathscr{D}_N(f)=-\int_{\Omega_N}f(\eta)\left\{\mathcal{L}^N_l f(\eta)+\mathcal{L}^N_0f(\eta)+\mathcal{L}^N_r f(\eta)\right\}\de \bar\nu_N .
\end{equation}
Now, note that
\begin{align*}
    &-\int_{\Omega_N}f(\eta)\mathcal{L}^N _lf(\eta)\de \bar\nu_N 
    \\&=-\int_{\Omega_N}f(\eta)\left\{\frac{\alpha}{N^\theta}\left[f(\eta^{1+})-f(\eta)\right]+\frac{\lambda g(\eta(1))}{N^\theta}\left[f(\eta^{1-})-f(\eta)\right]\right\}\de \bar\nu_N 
    \\&=-\int_{\Omega_N}\frac{\alpha}{N^\theta}\frac{\left[f(\eta)-f(\eta^{1+})\right]}{2}\left[f(\eta^{1+})-f(\eta)\right]\de \bar\nu_N 
    \\&\phantom{=}-\int_{\Omega_N}\frac{\alpha}{N^\theta}\frac{\left[f(\eta)+f(\eta^{1+})\right]}{2}\left[f(\eta^{1+})-f(\eta)\right]\de \bar\nu_N 
    \\&\phantom{=}-\int_{\Omega_N}\frac{\lambda g(\eta(1))}{N^\theta}\frac{\left[f(\eta)-f(\eta^{1-})\right]}{2}\left[f(\eta^{1-})-f(\eta)\right]\de \bar\nu_N 
    \\&\phantom{=}-\int_{\Omega_N}\frac{\lambda g(\eta^1)}{N^\theta}\frac{\left[f(\eta)+f(\eta^{1-})\right]}{2}\left[f(\eta^{1-})-f(\eta)\right]\de \bar\nu_N 
    \\&=\mathscr{D}_N ^l(f)+\frac{1}{2}\int_{\Omega_N}\mathcal{L}^N _l f^2(\eta)\de \bar\nu_N .
\end{align*}
Hence, by performing a similar computation for the terms with $\mathcal{L}^N_0$ and $\mathcal{L}^N_r$ in \eqref{eq:sum_generators},
we get that
\begin{align*}
    \mathscr{D}_N (f)&=\mathscr{D}_N ^l(f)+\mathscr{D}_N^0(f)+\mathscr{D}_N ^r(f)+\frac{1}{2}\int_{\Omega_N}\left\{\mathcal{L}^N _l+\mathcal{L}^N_0+\mathcal{L}^N_r\right\}f^2(\eta)\de {\bar\nu_N }
    \\&=\mathscr{D}_N^l(f)+\mathscr{D}_N^0(f)+\mathscr{D}_N ^r(f)+\frac{1}{2}\int_{\Omega_N}\mathcal{L}^N f^2(\eta)\de {\bar\nu_N }.
\end{align*}
But now, since $\bar\nu_N $ is invariant for the dynamics, we have that $\int_{\Omega_N}\mathcal{L}^N h(\eta)\de {\bar\nu_N }=0$ for every $h:\Omega_N\to\mathbb{R}$, so we conclude.
\end{proof}

\subsubsection{Proof of Theorems~\ref{thm:boltzmann_gibbs} and \ref{thm:local_boltzmann_gibbs}}

\begin{proof}[Proof of Theorem~\ref{thm:boltzmann_gibbs}]
The proof is inspired by the argument of~\cite{br84} (see also~\cite[Theorem~11.1.1]{kl99}), but requires substantial modifications due to the non-reversible and non-translation-invariant setting. 

\vspace{1em} \textbf{Reduction to Finite Boxes and Local Generators.} Given a positive integer $K$, we split the interval $I_N^\circ:=I_N\setminus \{1, N-1\}$ into $M$ consecutive boxes $\{B_i\}_{i=1}^M$ of size $2K+1$ (with $M$ necessarily of order $\frac{N}{K}$). For each $i$, we denote by $B_i^\circ$ the set of points in $B_i$ which are at distance at least $2$ from the complement of $B_i$ in $I_N^\circ$, and we set $B^c:=\cup_{i=1}^M (B_i\setminus B_i^\circ)$. Calling $y_i$ the midpoint of the box $B_i$, we write
\begin{align*}
    \frac{1}{\sqrt{N}}\sum_{x=2}^{N-2} G\left(\frac{x}{N}\right)\tau_x V_N(\eta_s)=&\,\frac{1}{\sqrt{N}}\sum_{x\in B^c}G\left(\frac{x}{N}\right)\tau_x V_N(\eta_s)
    \\&+\frac{1}{\sqrt{N}}\sum_{i=1}^M\sum_{x\in B_i^\circ}\tau_x V_N(\eta_s)\left[G\left(\frac{x}{N}\right)-G\left(\frac{y_i}{N}\right)\right]
    \\&+\frac{1}{\sqrt{N}}\sum_{i=1}^M\sum_{x\in B_i^\circ}\tau_x V_N(\eta_s)G\left(\frac{y_i}{N}\right).
\end{align*}
The first two terms vanish by the same arguments given in the proof of~\cite[Theorem~11.1.1]{kl99} as these arguments do not rely on reversibility nor translation invariance, so we only consider the last term. 

For each $1\le i\le M$, let $\mathcal{L}_{B_i}$ denote the bulk generator $\mathcal{L}^N_0$ restricted to the box $B_i$, namely the operator acting on maps $f:\Omega_N\to\mathbb{R}$ via
\begin{equation*}
    \mathcal{L} _{B_i}f(\eta):=\sum_{\substack{x, y\in B_i\\ |x-y|=1}}g(\eta(x))[f(\eta^{x, y})-f(\eta)].
\end{equation*}
We also introduce the operators $\mathcal{\widetilde L}_{B_i}^N$ and $\mathcal{\widehat L}_{B_i}^{N}$ acting on maps $f:\Omega_N\to\mathbb{R}$ via
\begin{equation}\label{eq:tilde_L}
    \mathcal{\widetilde L}_{B_i}^{N} f(\eta):=\sum_{\substack{x, y\in B_i\\ |x-y|=1}}g(\eta(x))[f(\eta^{x, y})-f(\eta)]\left[1+\frac{\frac{\bar\varphi_N(y)}{\bar\varphi_N(x)}-1}{2}\right]
\end{equation}
and
\begin{equation}\label{eq:hat_L}
    \mathcal{\widehat L}_{B_i}^{N} f(\eta):=\sum_{\substack{x, y\in B_i\\ |x-y|=1}}g(\eta(x))[f(\eta^{x, y})-f(\eta)]\left[\frac{1-\frac{\bar\varphi_N(y)}{\bar\varphi_N(x)}}{2}\right],
\end{equation}
respectively, so that in particular $\mathcal{L} _{B_i}=\mathcal{\widetilde L}_{B_i}^N+\mathcal{\widehat L}_{B_i}^N$. Finally, we define the local quadratic forms
\begin{equation*}
    \Gamma_N^{B_i} (f):=\int_{\Omega_N}\sum_{\substack{x, y\in B_i\\ |x-y|=1}}g(\eta(x))[f(\eta^{x, y})-f(\eta)]^2\de\bar\nu_N, \qquad i=1, \ldots, M.
\end{equation*}

\textbf{Measures with Constant Fugacity and Weighted Translations.} Recalling the definition of $\varphi^*$ as the radius of convergence of the partition function $Z$ in \eqref{eq:Z}, for $\varphi\in(0, \varphi^*)$, we denote by $\bar\nu_{\varphi}$ the product measure given by \eqref{eq:nu} with constant parameter $\varphi$, namely
\begin{equation*}\label{eq:nu_constant}
    \bar\nu_{\varphi}\{\eta(x)=k\}:=\frac{1}{Z(\varphi)}\frac{\varphi^k}{g(k)!}.
\end{equation*}
Recall the definition of $y_i$ as the midpoint of the box $B_i$, and let $\bar\varphi_i:=\mylim_{N\to\infty}\bar\varphi_\theta(\frac{y_i}{N})$, where $\bar\varphi_\theta$ is the limit fugacity profile described in Remark \ref{remark:limit_fugacity}. We highlight that, for $\theta>1$, the profile $\bar\varphi_\theta$ is constant, so the $\bar\varphi_i$'s are in fact all the same. Moreover, let
\begin{equation}\label{eq:eps_K}
    \eps_K\in\left(0, \mymin\left\{\frac{1}{K}, \varphi^*-\mysup_{u\in[0, 1]}\bar\varphi_\theta(u)\right\}\right),
\end{equation}
so that in particular $\mysup_{i=1, \ldots, M}\bar\varphi_i+\eps_K<\varphi^*$. For each $\varphi\in(0, \varphi^*)$, let $\pi_{\varphi, \eps_K}$ denote the operator acting on functions $f:\Omega_N\to\mathbb{R}$ which are measurable with respect to (the sigma algebra generated by) $\xi:=(\eta(1), \ldots, \eta(2K+1))$ via
\begin{equation}\label{eq:pi}
    \pi_{\varphi, \eps_K} f(\xi):=f(\xi)\sqrt{\frac{\de \bar\nu_{\bar\varphi_1+\eps_K}(\xi)}{\de \bar\nu_{\varphi+\eps_K}(\xi)}}, \qquad \xi\in\mathbb{N}^{2K+1}.
\end{equation}
It is immediate to see that $f\in L^2(\bar\nu_{\bar\varphi_1+\eps_K})$ if and only if $\pi_{\varphi, \eps_K}f\in L^2(\bar\nu_{\varphi+\eps_K})$, as
\begin{equation}\label{eq:L2_iff}
    E_{\bar\nu_{\varphi+\eps_K}}\left[\big(\pi_{\varphi, \eps_K}f(\xi)\big)^2\right]=\int_{\Omega_N} f(\xi)^2\frac{\de \bar\nu_{\bar\varphi_1+\eps_K}(\xi)}{\de \bar\nu_{\varphi+\eps_K}(\xi)}\de\bar\nu_{\varphi+\eps_K}(\xi)=E_{\bar\nu_{\bar\varphi_1+\eps_K}}\left[f(\xi)^2\right].
\end{equation}

For each $i=1, \ldots, M$, we will denote by $\xi_i$ the restriction of the configuration $\eta$ to the box $B_i$, namely $\xi_i:=(\eta(x))_{x\in B_i}$. Given a function $f_1:\Omega_N\to\mathbb{R}$ in $L^2(\bar\nu_{\bar\varphi_1+\eps_K})$ and measurable with respect to $\xi_1$, we then denote by $f_i$ the translation of $\pi_{\bar\varphi_i, \eps_K}f_1$ which makes it measurable with respect to $\xi_i$. 

\vspace{1em}
\textbf{Subtracting the Local Generators.} Recalling that $\mathcal{L} _{B_i}=\mathcal{\widetilde L}_{B_i}^N+\mathcal{\widehat L}_{B_i}^N$, by a standard convexity inequality, we see that 
\begin{align}
    &\expected_{\bar\nu_N}\left[\left(\frac{1}{\sqrt{N}}\int_0^t \sum_{i=1}^M G\left(\frac{y_i}{N}\right)\mathcal{L}_{B_i}^Nf_i(\xi_i(s))\de s\right)^2\right]\nonumber
    \\&\lesssim \expected_{\bar\nu_N}\left[\left(\frac{1}{\sqrt{N}}\int_0^t \sum_{i=1}^M G\left(\frac{y_i}{N}\right)\mathcal{\widetilde L}_{B_i}^Nf_i(\xi_i(s))\de s\right)^2\right]\label{eq:bg_kv}
    \\&\phantom{\lesssim} + \expected_{\bar\nu_N}\left[\left(\frac{1}{\sqrt{N}}\int_0^t \sum_{i=1}^M G\left(\frac{y_i}{N}\right)\mathcal{\widehat L}_{B_i}^Nf_i(\xi_i(s))\de s\right)^2\right].\label{eq:bg_cs}
\end{align}
We start by bounding $\eqref{eq:bg_kv}$: by the Kipnis-Varadhan inequality, 
\begin{equation}
    \eqref{eq:bg_kv}\lesssim t \mysup_{h\in L^2(\bar\nu_N)}\left\{\frac{2}{\sqrt{N}}\sum_{i=1}^M G\left(\frac{y_i}{N}\right)\int_{\Omega_N} \mathcal{\widetilde L}_{B_i}^Nf_i(\xi_i)h(\eta)\de \bar\nu_N-N^2\mathscr{D}_N (h)\right\}, \label{eq:supremum_boxes}
\end{equation}
where $\mathscr{D}_N$ was defined in \eqref{def:dirichlet_form}. For each $i$, we write
\begin{align}
    &2\int_{\Omega_N} \mathcal{\widetilde L}_{B_i}^Nf_i(\xi_i)h(\eta)\de \bar\nu_N
    = \int_{\Omega_N}\sum_{\substack{x, y\in B_i\\ |x-y|=1}}g(\eta(x))[f_i(\eta^{x, y})-f_i(\eta)]\left[h(\eta)-h(\eta^{x, y})\right]\de\bar\nu_N\nonumber
    \\&\phantom{=}+\int_{\Omega_N}\sum_{\substack{x, y\in B_i\\ |x-y|=1}}g(\eta(x))[f_i(\eta^{x, y})-f_i(\eta)]\left[h(\eta)+h(\eta^{x, y})\right]\de\bar\nu_N\label{eq:bg_correction_term_a}
    \\& \phantom{=}+\int_{\Omega_N} \sum_{\substack{x, y\in B_i\\ |x-y|=1}}g(\eta(x))[f_i(\eta^{x, y})-f_i(\eta)]h(\eta)\left[\frac{\bar\varphi_N(y)}{\bar\varphi_N(x)}-1\right]\de\bar\nu_N.\label{eq:bg_correction_term_b}
\end{align}
Now, performing the change of variable $\eta\mapsto\eta^{y, x}$ in the summand involving $h(\eta^{x, y})$ and recalling \eqref{eq:der_radnyk}, we see that  \eqref{eq:bg_correction_term_a} can be rewritten as
\begin{align*}
    \eqref{eq:bg_correction_term_a}&=\int_{\Omega_N}\sum_{\substack{x, y\in B_i\\ |x-y|=1}}g(\eta(x))[f_i(\eta^{x, y})-f_i(\eta)]h(\eta)\de\bar\nu_N
    \\&\phantom{=}+\int_{\Omega_N}\sum_{\substack{x, y\in B_i\\ |x-y|=1}}g(\eta(x)+1)[f_i(\eta)-f_i(\eta^{y, x})]h(\eta)\frac{\bar\varphi_N(x)}{\bar\varphi_N(y)}\frac{g(\eta(y))}{g(\eta(x)+1)}\de\bar\nu_N
    \\&=\int_{\Omega_N} \sum_{\substack{x, y\in B_i\\ |x-y|=1}}g(\eta(x))[f_i(\eta^{x, y})-f_i(\eta)]h(\eta)\left[1-\frac{\bar\varphi_N(y)}{\bar\varphi_N(x)}\right]\de \bar\nu_N,
\end{align*}
which yields $\eqref{eq:bg_correction_term_a}+\eqref{eq:bg_correction_term_b}=0$. We can then apply Young's inequality to see that, for every $1\le i\le M$ and $A_i>0$,
\begin{equation*}
    \int_{\Omega_N} \mathcal{\widetilde L}_{B_i}^N f_i(\xi_i)h(\eta)\de \bar\nu_N \le \frac{A_i}{2}\Gamma_N^{B_i}(f_i)+\frac{1}{2A_i}\Gamma_N^{B_i}(h).
\end{equation*}
Choosing $A_i=\frac{1}{N^2\sqrt{N}}\left|G\left(\frac{y_i}{N}\right)\right|$ and using the fact that $\mathscr{D}_N (h)\ge\sum_{i=1}^M\Gamma_N^{B_i}(h)$ (see Lemma~\ref{lemma:dirichlet} above), we see that
\begin{equation*}
    \eqref{eq:supremum_boxes} \le \frac{t}{N^3}\sum_{i=1}^M G\left(\frac{y_i}{N}\right)^2\Gamma_N^{B_i}(f_i)\le \frac{t\|G\|_\infty^2}{N^2 K}\mysup_{i=1, \ldots, M} \Gamma_N ^{B_i}(f_i),
\end{equation*}
But now, since $\bar\varphi_N(x)\le\bar\varphi_i+\eps_K$ for each $x\in B_i$ and for $N$ sufficiently large, we note that
\begin{align*}
    E_{\bar\nu_N}\left[f_i(\xi_i)^2\right]&=\int_{\Omega_N} f_i(\xi_i)^2 \frac{\de \bar\nu_N(\xi_i)}{\de \bar\nu_{\bar\varphi_i+\eps_K}(\xi_i)}\de \bar\nu_{\bar\varphi_i+\eps_K}(\xi_i)
    \\&=\sum_{\eta\in\Omega_N} f_i(\xi)^2 \left\{\prod_{x\in B_i}\frac{Z(\bar\varphi_i+\eps_K)}{Z(\bar\varphi_N(x))}\left(\frac{\bar\varphi_N(x)}{\bar\varphi_i+\eps_K}\right)^{\eta(x)}\right\}\bar\nu_{\bar\varphi_i+\eps_K}(\xi_i)
    \\&\lesssim E_{\bar\nu_{\bar\varphi_i+\eps_K}}\left[f_i(\xi_i)^2\right]\lesssim 1.
\end{align*}
Thus, by \eqref{assumption:g_bdd} and Lemma~\ref{lemma:f_eta_xy}, \eqref{eq:supremum_boxes} vanishes as $N\to\infty$. 

As for \eqref{eq:bg_cs}, by the stationarity of $\bar\nu_N$ and the Cauchy-Schwarz inequality,
\begin{equation}\label{eq:hat_L_bound}
    \begin{split} \eqref{eq:bg_cs}&\lesssim \frac{t^2}{N}\sum_{i=1}^M G\left(\frac{y_i}{N}\right)^2E_{\bar\nu_N}\left[\left(\mathcal{\widehat L}_{B_i}^N f_i(\xi_i)\right)^2\right]
    \\&\phantom{\lesssim}+\frac{t^2}{N}\sum_{\substack{i, j=1\\ i\ne j}}^M G\left(\frac{y_i}{N}\right)G\left(\frac{y_j}{N}\right)E_{\bar\nu_N}\left[\mathcal{\widehat L}_{B_i}^N f_i(\xi_i)\right]E_{\bar\nu_N}\left[\mathcal{\widehat L}_{B_j}^N f_j(\xi_j)\right].
    \end{split}
\end{equation}
But now, for each $i=1, \ldots, M$, by \eqref{assumption:g_bdd} and a standard convexity inequality, we have that
\begin{align*}
    E_{\bar\nu_N}\left[\left(\mathcal{\widehat L}_{B_i}^N f_i(\xi_i)\right)^2\right]&=E_{\bar\nu_N}\left[\left(\frac{1}{2}\sum_{\substack{x, y\in B_i\\ |x-y|=1}}g(\eta(x))\left[f_i(\eta^{x, y})-f_i(\eta)\right]\left[1-\frac{\bar\varphi_N(y)}{\bar\varphi_N(x)}\right]\right)^2\right]
    \\&\lesssim \frac{M_g^2}{N^2}\left\{K^2 E_{\bar\nu_N}\left[f_i(\xi_i)^2\right]+K \sum_{\substack{x, y\in B_i\\ |x-y|=1}} E_{\bar\nu_N}\left[f_i(\xi_i^{x, y})^2\right]\right\}\lesssim \frac{K^2}{N^2},
\end{align*}
where we used $\left|\frac{\bar\varphi_N(x\pm 1)}{\bar\varphi_N(x)}-1\right|\lesssim \frac{1}{N}$ and Lemma~\ref{lemma:f_eta_xy}. Similarly, for each $i=1, \ldots, M$, 
\begin{equation*}
    \left|E_{\bar\nu_N}\left[\mathcal{\widetilde L}_{B_i}^{N}f_i(\xi_i)\right]\right|\lesssim\frac{M_g}{N}  \left\{ K E_{\bar\nu_N}\left[\left|f_i(\xi_i)\right|\right]+\sum_{\substack{x, y\in B_i\\ |x-y|=1}}E_{\bar\nu_N}\left[\left|f_i(\xi_i^{x, y})\right|\right]\right\}\lesssim \frac{K}{N}.
\end{equation*}
This yields
\begin{equation*}
    \eqref{eq:hat_L_bound}\lesssim \frac{t^2\|G\|_\infty^2}{N}\cdot \frac{N}{K}\cdot\frac{K^2}{N^2}+\frac{t^2\|G\|_\infty^2}{N}\cdot \left(\frac{N}{K}\right)^2\cdot\left(\frac{K}{N}\right)^2, 
\end{equation*}
which vanishes as $N\to\infty$. 

Combining everything together, we have thus proved that 
\begin{equation}
    \myinf_{f_1}\mylim_{N\to\infty}\expected_{\bar\nu_N }\left[\left(\frac{1}{\sqrt{N}}\int_0^t \sum_{i=1}^M G\left(\frac{y_i}{N}\right)\mathcal{ L}_{B_i}f_i(\xi_i(s))\de s\right)^2\right]=0,
\end{equation}
where the infimum runs along all functions $f_1\in L^2(\bar\nu_{\bar\varphi_1+\eps_K})$ which are measurable with respect to $\xi_1$, and $f_i$ is defined as below \eqref{eq:pi}. 

\vspace{1em}
\textbf{Removing Crossed Terms.}
So far, we have reduced the proof to showing that
\begin{equation}\label{eq:remaining}
    \begin{split}\mylim_{K\to\infty}\myinf_{f_1}\mylim_{N\to\infty}\expected_{\bar\nu_N }\Bigg[&\Bigg(\frac{1}{\sqrt{N}}\int_0^t \sum_{i=1}^M G\left(\frac{y_i}{N}\right)\times
    \\&\times\Bigg(\sum_{x\in B_i^\circ}\tau_x V_N(\eta_s)-\mathcal{L}_{B_i}f_i(\xi_i(s))\Bigg)\de s\Bigg)^2\Bigg]=0,
    \end{split}
\end{equation}
where the infimum runs along all functions $f_1\in L^2(\bar\nu_{\bar\varphi_1+\eps_K})$ which are measurable with respect to $\xi_1$, and $f_i$ is defined as below \eqref{eq:pi}. Note that the supports of $\tau_x V_N-\mathcal{L}_{B_i}f_i$ and $\tau_y V_N-\mathcal{L}_{B_j}f_j$ are disjoint for $x\in B_i, y\in B_j$ when $i\ne j$. Hence, by the stationarity of $\bar\nu_N$, the Cauchy-Schwarz inequality and the fact that $E_{\bar\nu_N}[\tau_x V_N(\eta)]=0$ for all $x$, the expectation in \eqref{eq:remaining} is bounded from above by
\begin{align}
    &\frac{t^2}{N}E_{\bar\nu_N }\left[\left(\sum_{i=1}^M G\left(\frac{y_i}{N}\right)\left(\sum_{x\in B_i^\circ}\tau_x V_N(\eta)-\mathcal{L}_{B_i}f_i(\xi_i)\right)\right)^2\right]\nonumber
    \\&= \frac{t^2}{N}\sum_{i=1}^M G\left(\frac{y_i}{N}\right)^2E_{\bar\nu_N }\left[\left(\sum_{x\in B_i^\circ}\tau_x V_N(\eta)-\mathcal{L}_{B_i}f_i(\xi_i)\right)^2\right]\label{eq:i}
    \\&\phantom{=}+\frac{t^2}{N}\sum_{\substack{i, j=1\\ i\ne j}}^MG\left(\frac{y_i}{N}\right)G\left(\frac{y_j}{N}\right)E_{\bar\nu_N }\left[\mathcal{L}_{B_i}f_i(\xi_i)\right]E_{\bar\nu_N }\left[\mathcal{L}_{B_j}f_j(\xi_j)\right].\label{eq:ij}
\end{align}
Note that the crossed terms \eqref{eq:ij} do not automatically vanish as the measure $\bar\nu_N $ is not, in general, reversible for the dynamics. But now, using \eqref{eq:der_radnyk}, we can write
\begin{align*}
    E_{\bar\nu_N }\left[\mathcal{L}_{B_i}f_i(\xi_i)\right]&=\int_{\Omega_N}\sum_{\substack{x, y\in B_i \\ |x-y|=1}}g(\xi_i(x))\left[f_i\big(\xi_i^{x, y}\big)-f_i(\xi_i)\right]\de \bar\nu_N 
    \\&=\int_{\Omega_N}\sum_{\substack{x, y\in B_i \\ |x-y|=1}}f_i(\xi_i)\left[g(\xi_i(y))\frac{\bar\varphi_N(x)}{\bar\varphi_N(y)}-g(\xi_i(x))\right]\de \bar\nu_N 
    \\&=\sum_{\substack{x, y\in B_i \\ |x-y|=1}}E_{\bar\nu_N }[f_i(\xi_i)g(\xi_i(x))]\left[\frac{\bar\varphi_N(y)}{\bar\varphi_N(x)}-1\right].
\end{align*}
Thus, since $\left|\frac{\bar\varphi_N(x\pm 1)}{\bar\varphi_N(x)}-1\right|\lesssim \frac{1}{N}$, we get
\begin{align}
    \eqref{eq:ij}&\le \frac{t^2\|G\|_\infty^2}{N}\sum_{\substack{i=1\\ i\ne j}}^ME_{\bar\nu_N }\left[\mathcal{L}_{B_i}f_i(\xi_i)\right]E_{\bar\nu_N }\left[\mathcal{L}_{B_j}f_j(\xi_j)\right]\nonumber
    \\&\lesssim \frac{t^2\|G\|_\infty^2}{N^3}\sum_{\substack{i=1\\ i\ne j}}^M\sum_{x\in B_i}E_{\bar\nu_N }[f_i(\xi_i)g(\xi_i(x))]\sum_{y\in B_j}E_{\bar\nu_N }[f_j(\xi_j)g(\xi_j(y))].\label{eq:crossed}
\end{align}
But then, since $E_{\bar\nu_N }[f_i(\xi_i)g(\xi_i(x))]\le \frac{1}{2}E_{\bar\nu_N }[f_i(\xi_i)^2]+\frac{1}{2}E_{\bar\nu_N }[g(\xi_i(x))^2]$, 
\begin{align*}
    \eqref{eq:crossed}\lesssim \frac{t^2\|G\|_\infty^2}{N^3}\left(\frac{N}{K}\right)^2K^2 \left\{\mysup_{i=1,\ldots, M}E_{\bar\nu_N }\left[f_i(\xi_i)^2\right]+\mysup_{x\in I_N}E_{\bar\nu_N }\left[g(\eta(x))^2\right]\right\}^2
\end{align*}
which vanishes as $N\to\infty$ by Lemma~\ref{lemma:g_moments} and because $E_{\bar\nu_N}[f_i(\xi_i)^2]\lesssim 1$ as proved above, so we are only left to control \eqref{eq:i}.

\vspace{1em}
\textbf{Comparison to the Measures with Constant Fugacity.} Unlike the case in~\cite[Theorem~11.1.1]{kl99}, our random variables $\{\eta(x)\}_{x\in I_N}$ are independent but \textit{not} identically distributed. To get around this problem, following a similar approach as that in the proof of~\cite[Proposition~18]{lab18}, note first that \eqref{eq:i} satisfies
\begin{equation*}
    \eqref{eq:i}\le  \frac{t^2\|G\|_\infty^2}{N}\sum_{i=1}^M E_{\bar\nu_N }\left[\left(\sum_{x\in B_i^\circ}\tau_x V_N(\eta)-\mathcal{L}_{B_i}f_i(\xi_i)\right)^2\right].
\end{equation*}
Again since $\bar\varphi_N(x)\le\bar\varphi_i+\eps_K$ for all $x\in B_i$ and $N$ sufficiently large, we see that 
\begin{equation}\label{eq:lim_F^N_pre}
    \begin{split}&E_{\bar\nu_N }\left[\left(\sum_{x\in B_i^\circ}\tau_x V_N(\eta)-\mathcal{L}_{B_i}f_i(\xi_i)\right)^2\right]
    \\&\lesssim E_{\bar\nu_{ \bar\varphi_i+\eps_K}}\left[\left(\sum_{x\in B_i^\circ}\tau_x V_N(\eta)-\mathcal{L}_{B_i}f_i(\xi_i)\right)^2\right]=:F_K^{N, \eps_K}(i).
    \end{split}
\end{equation}
But now, recalling the definition of the function $R$ in \eqref{eq:R}, for each $\varphi\in(0, \varphi^*)$ let
\begin{equation*}
    V^{\varphi}(\eta):=g(\eta(1))-\varphi-\Phi'(R(\varphi))\left[\eta(1)-R(\varphi)\right].
\end{equation*}
Note first that
\begin{align}
    &E_{\bar\nu_{\bar\varphi_i+\eps_K}}\left[\left(\sum_{x\in B_i^\circ}\big\{\tau_x V_N(\eta)-\tau_x V^{\bar\varphi_i}(\eta)\big\}\right)^2\right]\nonumber
    \\&\lesssim \left(\sum_{x\in B_i^\circ}\big\{\bar\varphi_N(x)-\bar\varphi_i\big\}\right)^2    \label{eq:term_1}
    \\&\phantom{\le}+E_{\bar\nu_{\bar\varphi_i+\eps_K}}\left[\left(\sum_{x\in B_i^\circ}\bigg\{\Phi'(R(\bar\varphi_N(x)))-\Phi'(R(\bar\varphi_i))\bigg\}\eta(1)\right)^2\right]   \label{eq:term_2}
    \\&\phantom{\le}+\left(\sum_{x\in B_i^\circ}\bigg\{\Phi'(R(\bar\varphi_N(x)))R(\bar\varphi_N(x))-\Phi'(R(\bar\varphi_i))R(\bar\varphi_i)\bigg\}\right)^2.\label{eq:term_3}
\end{align}
Now, $E_{\bar\nu_\varphi}[\eta(x)]\lesssim 1$ for any $\varphi\in(0, \varphi^*)$ any $x\in I_N$. Moreover, $|\bar\varphi_N(x)-\bar\varphi_i|\lesssim N^{-|1-\theta|-\boldsymbol{1}_{\{1\}}(\theta)}$, and both $\Phi'$ and $R$ are smooth functions, so that
\begin{equation*}
    \eqref{eq:term_1}, \eqref{eq:term_2}, \eqref{eq:term_3}\lesssim \frac{K^2}{N^{|1-\theta|+\boldsymbol{1}_{\{1\}}(\theta)}}.
\end{equation*}
But then, setting 
\begin{equation*}
    F_K^{\eps_K}(i):=E_{\bar\nu_{ \bar\varphi_i+\eps_K}}\left[\left(\sum_{x\in B_i^\circ}\tau_x V^{\bar\varphi_i}(\eta)-\mathcal{L}_{B_i}f_i(\xi_i)\right)^2\right],
\end{equation*}
we have that
\begin{align*}
    &\left|F_K^{N, \eps_K}(i)-F_K^{\eps_K}(i)\right|
    \\&=\Bigg|E_{\bar\nu_{ \bar\varphi_i+\eps_K}}\Bigg[\left(\sum_{x\in B_i^\circ}\bigg\{\tau_x V_N(\eta)+\tau_x V^{\bar\varphi_i}(\eta)\bigg\}-2\mathcal{L}_{B_i}f_i(\xi_i)\right)\times
    \\&\phantom{\Bigg|E_{\bar\nu_{ \bar\varphi_i+\eps_K}}\Bigg[}\times\left(\sum_{x\in B_i^\circ}\bigg\{\tau_x V_N(\eta)-\tau_x V^{\bar\varphi_i}(\eta)\bigg\}\right)\Bigg]\Bigg|
    \lesssim \frac{K^4}{N^{|1-\theta|+\boldsymbol{1}_{\{1\}}(\theta)}}.
\end{align*}
Hence,
\begin{equation}\label{eq:lim_F^N}
    \mylim_{N\to\infty} F_K^{N, \eps_K}(i) = F_K^{\eps_K}(i).
\end{equation}
Together with \eqref{eq:lim_F^N_pre},
recalling the definition of $\pi_{\varphi, \eps_K}$ in \eqref{eq:pi}, this implies that, for each fixed $f_1$,
\begin{equation*}
    \mylim_{N\to\infty}\eqref{eq:i}\lesssim  \frac{1}{K}\mysup_{\varphi\in[\bar\varphi_a,\bar\varphi_b]}  E_{\bar\nu_{\varphi+\eps_K}}\left[\left(\sum_{x\in B_1^\circ}\tau_x V^{\varphi}(\eta)-\mathcal{L}_{B_1}\pi_{\varphi, \eps_K}f_1(\eta)\right)^2\right],
\end{equation*}
where $\bar\varphi_a:=\myinf_{u\in[0, 1]}\bar\varphi_\theta(u)$ and $\bar\varphi_b:=\mysup_{u\in[0, 1]}\bar\varphi_\theta(u)$. Now, it is not hard to check that the map
\begin{equation*}
    [\bar\varphi_a,\bar\varphi_b]\ni \varphi\mapsto E_{\bar\nu_{\varphi+\eps_K}}\left[\left(\sum_{x\in B_1^\circ}\tau_x V^{\varphi}(\eta)-\mathcal{L}_{B_1}\pi_{\varphi, \eps_K}f_1(\eta)\right)^2\right]\in[0, \infty)
\end{equation*}
is continuous, so by compactness of $[\bar\varphi^a,\bar\varphi^b]$, the supremum is achieved for some $\varphi$: we have thus reduced the proof to showing that
\begin{equation*}
    \mylim_{K\to\infty}\myinf_{f_1}\frac{1}{K}E_{\bar\nu_{\varphi+\eps_K}}\left[\left(\sum_{x\in B_1^\circ}\tau_x V^{\varphi}(\eta)-\mathcal{L}_{B_1}\pi_{\varphi, \eps_K} f_1(\eta)\right)^2\right]=0,
\end{equation*}
where the infimum runs along functions $f_1\in L^2(\bar\nu_{\bar\varphi_1+\eps_K})$ which are measurable with respect to $\xi_1$. But now, as seen in \eqref{eq:L2_iff}, $f_1\in L^2(\bar\nu_{\bar\varphi_1+\eps_K})$ if and only if $\pi_{\varphi, \eps_K} f_1\in L^2(\bar\nu_{\varphi+\eps_K})$, so the above is equivalent to showing that
\begin{equation}\label{eq:infimum_goal}
    \mylim_{K\to\infty}\myinf_{f}\frac{1}{K}E_{\bar\nu_{\varphi+\eps_K}}\left[\left(\sum_{x\in B_1^\circ}\tau_x V^{\varphi}(\eta)-\mathcal{L}_{B_1} f(\eta)\right)^2\right]=0,
\end{equation}
where the infimum now runs along functions $f\in L^2(\bar\nu_{\varphi+\eps_K})$ which are measurable with respect to $\xi_1$.

\vspace{1em}
\textbf{$\boldsymbol{L^2}$ Projection and Conditional Expectation.} Following the exact same argument given in the proof of~\cite[Theorem~11.1.1]{kl99}, the infimum in \eqref{eq:infimum_goal} is equal to
\begin{equation}\label{eq:cond_expect}
    \frac{t^2}{K}\|G\|_2^2\; E_{\bar\nu_{\varphi+\eps_K}}\left[\left\{E_{\bar\nu_{\varphi+\eps_K}}\left[\sum_{x\in B_1^\circ} \tau_x V^\varphi(\eta) \;\bigg| \;\bar\eta^{B_1}\right]\right\}^2\right]
\end{equation}
where $\bar\eta^{B_1}:=\frac{1}{2K+1}\sum_{x\in B_1}\eta(x)$ denotes the average number of particles in the box $B_1$. Given $x\in B_1^\circ$, let $\tilde \Phi(\bar\eta^{B_1})$ denote the conditional expectation of $g(\eta(x))$ given $\bar\eta^{B_1}$, namely
\begin{equation*}
    \tilde \Phi(\bar\eta^{B_1}):=E_{\bar\nu_{\varphi+\eps_K}}\left[g(\eta(x))\; \big |\; \bar\eta^{B_1}\right],
\end{equation*}
which does not depend on $x$ since the measure $\bar\nu_{\varphi+\eps_K}$ is homogeneous. With this notation, we can rewrite \eqref{eq:cond_expect} as
\begin{equation}\label{eq:cond_expression}
    \begin{split}&\frac{t^2 (2K+1)^2}{K}\|G\|_2^2  E_{\bar\nu_{\varphi+\eps_K}}\left[\left\{\tilde \Phi(\bar\eta^{B_1})-\varphi -\Phi'(R(\varphi))\left[\bar\eta^{B_1}-R(\varphi)\right]\right\}^2 \right].
    \end{split}
\end{equation}
But now, 
\begin{align}
    &\eqref{eq:cond_expression}\lesssim K E_{\bar\nu_{\varphi+\eps_K}}\left[\left\{\tilde\Phi(\bar\eta^{B_1})-\Phi(\bar\eta^{B_1})\right\}^2\right]\label{eq:equivalence_term}
    \\&\phantom{\lesssim}+K  E_{\bar\nu_{\varphi+\eps_K}}\left[\left\{\Phi(\bar\eta^{B_1})-(\varphi+\eps_K)-\Phi'(R(\varphi+\eps_K))[\bar\eta^{B_1}-R(\varphi+\eps_K)]\right\}^2\right]\label{eq:taylor_term}
    \\&\phantom{\lesssim}+K \eps_K^2\label{eq:eps_K_term1}
    \\&\phantom{\lesssim}+K\left\{\Phi'(R(\varphi))-\Phi'(R(\varphi+\eps_K))\right\}^2E_{\bar\nu_{\varphi+\eps_K}}[(\bar\eta^{B_1})^2]\label{eq:eps_K_term2}
    \\&\phantom{\lesssim}+K \left\{(\Phi'(R(\varphi)) R(\varphi)-\Phi'(R(\varphi+\eps_K))R(\varphi+\eps_K))\right\}^2.\label{eq:eps_K_term3}
\end{align}

Given $A>0$, we write the expectation in \eqref{eq:equivalence_term} as the sum of the same expectation but with the term inside multiplied by $\boldsymbol{1}_{(A, \infty)}(\bar\eta^{B_1})$ and $\boldsymbol{1}_{[0, A]}(\bar\eta^{B_1})$. By \eqref{eq:function_g}, it is easy to see that $\Phi(\rho)\le g^*\rho$ and $\tilde\Phi(\rho)\le g^*\rho$, so that 
\begin{align}
    &KE_{\bar\nu_{\varphi+\eps_K}}\left[\left\{\tilde\Phi(\bar\eta^{B_1})-\Phi(\bar\eta^{B_1})\right\}^2\boldsymbol{1}_{(A, \infty)}(\bar\eta^{B_1})\right]\label{eq:large_dev}
    \\&\lesssim  KE_{\bar\nu_{\varphi+\eps_K}}\left[(\bar\eta^{B_1})^2\boldsymbol{1}_{(A, \infty)}(\bar\eta^{B_1})\right]\nonumber
    \\&\le KE_{\bar\nu_{\varphi+\eps_K}}\left[(\bar\eta^{B_1})^4\right]^\frac{1}{2} E_{\bar\nu_{\varphi+\eps_K}}\left[\boldsymbol{1}_{(A, \infty)}(\bar\eta^{B_1})\right]^\frac{1}{2}.\nonumber
\end{align}
Choosing $A$ sufficiently large, by the large deviations principle for independent random variables, $E_{\bar\nu_{\varphi+\eps_K}}\left[\boldsymbol{1}_{(A, \infty)}(\bar\eta^{B_1})\right]$ decays exponentially in $K$, so \eqref{eq:large_dev} vanishes as $K\to\infty$.  On the other hand, by the equivalence of ensembles~\cite[Corollary~A2.1.7]{kl99}, we have that
\begin{equation*}
    \left|\tilde\Phi(\bar\eta^{B_1})-\Phi(\bar\eta^{B_1})\right|\boldsymbol{1}_{[0, A]}(\bar\eta^{B_1})\lesssim  \frac{1}{K},
\end{equation*}
which yields $\mylim_{K\to\infty}\eqref{eq:equivalence_term}=0$.

By performing a Taylor expansion up to the second order, we see that
\begin{equation*}
    \eqref{eq:taylor_term}\lesssim K E_{\bar\nu_{\varphi+\eps_K}}\left[\left\{\bar\eta^{B_1}-R(\varphi+\eps_K)\right\}^4\right]\lesssim \frac{1}{K^2},
\end{equation*}
so \eqref{eq:taylor_term} also vanishes as $K\to\infty$.

Finally, since $\Phi'$ and $R$ are smooth and recalling that $\eps_K\lesssim \frac{1}{K}$ (see \eqref{eq:eps_K}), we immediately get that \eqref{eq:eps_K_term1}, \eqref{eq:eps_K_term2} and \eqref{eq:eps_K_term3} all vanish as $K\to\infty$. This completes the proof.
\end{proof}

\begin{proof}[Proof of Theorem~\ref{thm:local_boltzmann_gibbs}]
We will only show the result for $\iota_\eps^0$, as the proof for the case $\iota_\eps^1$ is completely analogous. Throughout, even if $\eps N$ is not an integer, we will keep the notation $\eps N$ to denote its integer part $\lfloor \eps N\rfloor$. We start by noting that the integral term appearing in the statement can be rewritten as
\begin{equation*}
    \frac{1}{\eps \sqrt{N}}\int_0^t\sum_{x=1}^{\eps N} \tau_x V_N(\eta_s)\de s.
\end{equation*}
We now proceed in a similar way to the proof of Theorem~\ref{thm:boltzmann_gibbs}: given a positive integer $K$, we split the box $ I_N^\eps:=\{1,\ldots, \eps N\}$ into $M$ consecutive boxes $\{B_i\}_{i=1}^M$ of size $K$ (with $M$ necessarily of order $\frac{\eps N}{K}$). For each $i$, we denote by $B_i^\circ$ the set of points in $B_i$ which are at distance at least $r+1$ from the complement of $B_i$, and we set $B^c:=\cup_{i=1}^M (B_i\setminus B_i^\circ)$. We write
\begin{equation*}
    \frac{1}{\eps \sqrt{N}}\sum_{x=1}^{\eps N} \tau_x V_N(\eta_s)=\frac{1}{\eps \sqrt{N}}\sum_{x\in B^c} \tau_x V_N(\eta_s)+\frac{1}{\eps \sqrt{N}}\sum_{i=1}^M\sum_{x\in B_i^\circ}\tau_x V_N(\eta_s).
\end{equation*}
From this point, the proof is entirely analogous to that of Theorem~\ref{thm:boltzmann_gibbs}. The reader is invited to retrace the steps therein, and will observe that, since $\eps>0$ is kept fixed, no additional argument is required.
\end{proof}

%%%%%%%%%%%%%%%%%%%%%%%%%%%%%%%%%%%%%%%%%%%%%%%%%%
%%%Replacement Lemma at the Boundary%%%%%%%%%%%%%%
%%%%%%%%%%%%%%%%%%%%%%%%%%%%%%%%%%%%%%%%%%%%%%%%%%
\subsection{Replacement Lemma at the Boundary}\label{subsec:replacement}

\begin{lemma}[Boundary Replacement Lemma]\label{lemma:repl_boundaries} For each $\theta\in\mathbb{R}$, we have that
\begin{align}               
    &\expected_{\bar\nu_N}\left[\mysup_{t\in[0, T]}\left(\int_0^t \left\{g(\eta_s(1))-\bar\varphi_N(1)\right\}\de s\right)^2\right]\lesssim  N^{\theta-2},\label{eq:repl_left}
    \\&\expected_{\bar\nu_N}\left[\mysup_{t\in[0, T]}\left(\int_0^t \left\{g(\eta_s(N-1))-\bar\varphi_N(N-1)\right\}\de s\right)^2\right]\lesssim  N^{\theta-2}.\nonumber
\end{align}
\end{lemma}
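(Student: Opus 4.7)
I focus on the first estimate \eqref{eq:repl_left}; the second follows by an entirely analogous argument, interchanging the roles of the boundary parameters $(\alpha,\lambda)$ and $(\beta,\delta)$. The strategy is a Dynkin--martingale argument with an auxiliary linear test function, combined with the Kipnis--Varadhan inequality for stationary non-reversible Markov processes to handle the supremum in time.

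The first step is to construct $h_N:\Omega_N\to\mathbb{R}$, linear in the configuration, $h_N(\eta)=\sum_{x\in I_N}a_x\,\eta(x)$, with coefficients $\{a_x\}$ chosen so that $N^2\mathcal{L}^N h_N(\eta)=g(\eta(1))-\bar\varphi_N(1)$ pointwise. Since $\mathcal{L}^N h_N$ is a linear combination of $\{g(\eta(x))\}_{x\in I_N}$ plus a constant, matching coefficients reduces the problem to a linear system: the bulk part $\mathcal{L}^N_0$ gives the discrete Laplace equation $a_{x+1}+a_{x-1}-2a_x=0$ for $x\in\{2,\ldots,N-2\}$, forcing $a_x=A+Bx$ to be affine, while the boundary generators $\mathcal{L}^N_l$ and $\mathcal{L}^N_r$ together with the constant-term condition provide two inhomogeneous equations fixing $A$ and $B$ (and giving $\bar\varphi_N(1)$ as a consistency check). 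Solving explicitly yields
\[
\max_{x\in I_N}|a_x|\;\lesssim\;N^{\theta-2},\qquad |a_{x+1}-a_x|=|B|\;\lesssim\;\max(N^{-2},\,N^{\theta-3}),
\]
and Dynkin's formula then gives
\[
\int_0^t\bigl\{g(\eta_s(1))-\bar\varphi_N(1)\bigr\}\,\de s\;=\;h_N(\eta_t)-h_N(\eta_0)-M_t^{h_N},
\]
for a square-integrable martingale $M^{h_N}$ whose predictable bracket is given by the carré du champ of $N^2\mathcal{L}^N$.

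To obtain the supremum estimate, I would appeal to the Kipnis--Varadhan inequality for stationary (not necessarily reversible) Markov processes, which gives
\[
\expected_{\bar\nu_N}\!\!\left[\sup_{t\in[0,T]}\!\!\Bigl(\int_0^t\!\bigl\{g(\eta_s(1))-\bar\varphi_N(1)\bigr\}\de s\Bigr)^{\!2}\right]\leq 24\,T\,\sup_{f\in L^2(\bar\nu_N)}\!\bigl\{2\,E_{\bar\nu_N}[V_N f]-N^2\mathscr{D}_N(f)\bigr\},
\]
with $V_N:=g(\eta(1))-\bar\varphi_N(1)$ and $\mathscr{D}_N$ the Dirichlet form of Lemma~\ref{lemma:dirichlet}. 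The variational supremum is upper-bounded by testing against a suitable multiple of $h_N$: decomposing $\mathcal{L}^N=\mathcal{S}^N+\mathcal{A}^N$ into its symmetric and antisymmetric parts with respect to $\bar\nu_N$, using the identity $N^2\mathcal{L}^N h_N=V_N$, and applying Cauchy--Schwarz on the bilinear form $\mathscr{D}_N(\cdot,\cdot)$, the $H^{-1}$-norm of $V_N$ is controlled by $N^2\mathscr{D}_N(h_N)$ plus an error term coming from the antisymmetric part. A direct computation using the bounds on $\{a_x\}$, Lemma~\ref{lemma:dirichlet} and the moment bounds of Lemma~\ref{lemma:g_moments} then shows that $N^2\mathscr{D}_N(h_N)\lesssim N^{\theta-2}$, with the dominant contribution coming from the boundary part $\mathscr{D}_N^l(h_N)+\mathscr{D}_N^r(h_N)$.

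\textbf{Main obstacle.} The chief difficulty is the non-reversibility of $\bar\nu_N$, which prevents the direct identification of $h_N$ with the minimiser of the $H^{-1}$-variational problem. The antisymmetric part of $\mathcal{L}^N$ contributes a correction whose $H^{-1}$-norm must be estimated separately; this is done by exploiting the fact that the antisymmetric rates are proportional to the discrete gradient of the fugacity profile, which satisfies $|\bar\varphi_N(x+1)-\bar\varphi_N(x)|\lesssim N^{-\max(\theta,1)}$ uniformly in $x$, so that the correction is subleading. Coupling this smoothness with the Cauchy--Schwarz step is the technical heart of the proof.
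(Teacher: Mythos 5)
Your starting point (Kipnis--Varadhan for the stationary non-reversible process, with $\mathscr{D}_N$ the symmetric Dirichlet form of Lemma~\ref{lemma:dirichlet}) is the same as the paper's, and your computation that the affine solution of the discrete Poisson equation satisfies $\max_x|a_x|\lesssim N^{\theta-2}$ and $N^2\mathscr{D}_N(h_N)\lesssim N^{\theta-2}$ checks out. The gap is in the non-reversible correction, which you flag as ``the technical heart'' but do not actually carry out — and the step as described would fail. Writing $2\int V_Nf\,\de\bar\nu_N=2N^2\int(\mathcal{L}^Nh_N)f\,\de\bar\nu_N$ and symmetrising each jump, the gradient pieces are absorbed into $N^2\mathscr{D}_N(f)$ as you say, but the leftover pieces take the form $N^2B\sum_x\int f(\eta)W_x(\eta)\,\de\bar\nu_N$, where $W_x$ is a local, \emph{non-gradient} function near the bond $(x,x+1)$ of size $O(N^{-\max(1,\theta)})$ (proportional to the fugacity increment, as you note). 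The Kipnis--Varadhan variational problem gives no control whatsoever on $\|f\|_{L^2(\bar\nu_N)}$, only on $\mathscr{D}_N(f)$, so a plain Cauchy--Schwarz pairing of $f$ against $\sum_xW_x$ is inadmissible; to close the bound you need an $H^{-1}$-type estimate on $\sum_xW_x$, i.e.\ another estimate of exactly the kind you are trying to prove (in the paper this is done by the telescoping argument of Lemma~\ref{lemma:local_limit}, and it is a substantial piece of work). The alternative bond-wise Cauchy--Schwarz, via $\langle Ah_N,f\rangle=-\langle h_N,Af\rangle$, costs a factor $E_{\bar\nu_N}[h_N^2]\sim N\max_xa_x^2\sim N^{2\theta-3}$ and produces a contribution of order $N^{2\theta-2}$ for $\theta\le1$, which overshoots the target $N^{\theta-2}$ as soon as $\theta>0$. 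So the proposal is incomplete precisely where it matters.

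The paper's proof avoids the global Poisson equation entirely and is purely local at site $1$: after Kipnis--Varadhan, it splits $2\int\{g(\eta(1))-\bar\varphi_N(1)\}f\,\de\bar\nu_N$ using only the two moves $\eta\mapsto\eta^{1\pm}$, absorbs the two ``difference'' terms into the \emph{boundary} Dirichlet form $\mathscr{D}_N^l(f)$ by Young's inequality with constants of order $N^{2-\theta}$ (this is what produces the remainder $N^{\theta-2}$), and shows that the two ``sum'' terms cancel \emph{exactly} after the changes of variables $\eta^{1\mp}\mapsto\eta$, using the explicit identities $\bar\nu_N(\eta^{1+})/\bar\nu_N(\eta)=\bar\varphi_N(1)/g(\eta(1)+1)$ and $\bar\nu_N(\eta^{1-})/\bar\nu_N(\eta)=g(\eta(1))/\bar\varphi_N(1)$. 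Neither the bulk generator nor the fugacity gradient ever enters, so the non-reversibility issue you wrestle with simply does not arise. If you want to salvage your route, you must supply the $H^{-1}$ bound on the antisymmetric remainder; the cleaner fix is to localise the argument at the boundary site as the paper does.
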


\begin{proof}
We show the first bound, as the second one is entirely analogous. By the Kipnis-Varadhan inequality~\cite[Lemma~2.4]{klo12}, the first line of \eqref{eq:repl_left} is bounded from above by 
\begin{equation*}
    CT\mysup_{f\in L^2(\bar\nu_N)}\left\{2\int_{\Omega_N}\left\{ g(\eta(1))-\bar\varphi_N(1)\right\}f(\eta)\de \bar\nu_N -N^2\mathscr{D}_N(f)\right\}
\end{equation*}
for some positive constant $C$ independent of $N$, with $\mathscr{D}_N(f)$ defined in \eqref{def:dirichlet_form}. We write
\begin{align}
    2\int_{\Omega_N} \left\{g(\eta(1))-\bar\varphi_N(1) \right\}f(\eta)\de \bar\nu_N=&\int_{\Omega_N} g(\eta(1))\left[f(\eta)-f(\eta^{1-})\right]\de \bar\nu_N\label{eq:-A}
    \\&+\int_{\Omega_N} g(\eta(1))\left[f(\eta)+f(\eta^{1-})\right]\de \bar\nu_N\label{eq:+A}
    \\&-\int_{\Omega_N}\bar\varphi_N(1)\left[f(\eta)-f(\eta^{1+})\right]\de \bar\nu_N\label{eq:-B}
    \\&-\int_{\Omega_N}\bar\varphi_N(1)\left[f(\eta)+f(\eta^{1+})\right]\de \bar\nu_N.\label{eq:+B}
\end{align}
By applying Young's inequality $xy\le \frac{Ax^2}{2}+\frac{y^2}{2A}$ to both  and \eqref{eq:-A} and \eqref{eq:-B} with $A=2\lambda N^{2-\theta}$ and  $A=\frac{2\alpha N^{2-\theta}}{\bar\varphi_N(1)}$, respectively, we see that
\begin{align*}
    \eqref{eq:-B}+\eqref{eq:-A} &\le N^2\int_{\Omega_N}\left\{\frac{\lambda g(\eta(1))}{N^\theta}\left[f(\eta^{1-})-f(\eta)\right]^2+\frac{\alpha}{N^\theta}\left[f(\eta^{1+})-f(\eta)\right]^2\right\}\de \bar\nu_N
    \\&\phantom{\le}+\frac{N^{\theta-2}}{4}\int_{\Omega_N}\left\{\frac{ g(\eta(1))}{\lambda}+\frac{\bar\varphi_N(1)^2}{\alpha}\right\}\de \bar\nu_N
    \\& \le N^2 \mathscr{D}_N(f)+C N^{\theta-2}
\end{align*}
for some constant $C$ independent of $N$, where we used $\mathscr{D}_N(f)\ge \mathscr{D}_N^l(f)$ and $E_{\bar\nu_N}[g(\eta(1))]=\bar\varphi_N(1)\lesssim 1$. 

We now make the change of variable  $\eta^{1-}\mapsto\eta$ in \eqref{eq:+A}, so as to get 
\begin{align*}
    \eqref{eq:+A}&=\int_{\Omega_N}g(\eta(1))f(\eta)\left\{1+ \frac{g(\eta(1)+1)\bar\nu_N(\eta^{1+})}{g(\eta(1))\bar\nu_N(\eta)}\right\}\de \bar\nu_N
    \\&=\int_{\Omega_N}f(\eta)\left[ g(\eta(1))+\bar\varphi_N(1)\right]\de \bar\nu_N,
\end{align*}
where we used $\frac{\bar\nu_N(\eta^{1+})}{\bar\nu_N(\eta)}=\frac{\bar\varphi_N(1)}{g(\eta(1)+1)}$. Similarly, making the change of variable 
$\eta^{1+}\mapsto \eta$ and noting that $\frac{\bar\nu_N(\eta^{1-})}{\bar\nu_N(\eta)}=\frac{g(\eta(1))}{\bar\varphi_N(1)}$, we get
\begin{equation*}
    \eqref{eq:+B}=-\int_{\Omega_N} f(\eta)\left\{\bar\varphi_N(1)+g(\eta(1))\right\}\de \bar\nu_N.
\end{equation*}
But then, 
\begin{equation*}
    \eqref{eq:+B}+\eqref{eq:+A}=\int_{\Omega_N}f(\eta)\left\{\big(g(\eta(1))+\bar\varphi_N(1)\big)-\big(\bar\varphi_N(1)+g(\eta(1))\big)\right\}\de \bar\nu_N=0,
\end{equation*}
which completes the proof.\end{proof}

%%%%%%%%%%%%%%%%%%%%%%%%%%%%%%%%%%%%%%%%%%%%%%%%%%
%%%Tightness%%%%%%%%%%%%%%%%%%%%%%%%%%%%%%%%%%%%%%
%%%%%%%%%%%%%%%%%%%%%%%%%%%%%%%%%%%%%%%%%%%%%%%%%%
\subsection{Tightness}\label{subsec:tightness}

In the section, we show the following result.
\begin{proposition}\label{prop:tightness} For each $\theta\in\mathbb{R}$, the sequence of measures $\{\mathcal{Q}^N\}_N$ is tight in $\mathcal{D}([0, T], \mathcal{S}_\theta')$ and all limit points are concentrated on continuous paths.
\end{proposition}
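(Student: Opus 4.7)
The plan is to invoke Mitoma's criterion (see e.g.~\cite[Lemma~2.1]{kl99} or the original reference) which, since $\mathcal{S}_\theta$ is a nuclear Fréchet space by Proposition~\ref{prop:frechet}, reduces tightness of $\{\mathcal{Q}^N\}_N$ in $\mathcal{D}([0,T],\mathcal{S}_\theta')$ to tightness of the real-valued laws of $\{\mathscr{Y}^N_\cdot(H)\}_N$ in $\mathcal{D}([0,T],\mathbb{R})$ for every fixed $H\in\mathcal{S}_\theta$. Tightness in the latter space will be established by verifying Aldous' criterion, namely: (a) the sequence $\{\mathscr{Y}^N_t(H)\}_N$ is stochastically bounded uniformly in $t\in[0,T]$, and (b) for every $\varepsilon>0$,
\begin{equation*}
    \lim_{\gamma\to 0}\limsup_{N\to\infty}\sup_{\substack{\tau\in\mathcal{T}_T\\ 0\le\bar\gamma\le\gamma}}
    \mathbb{P}_{\bar\nu_N}\Big(\big|\mathscr{Y}^N_{\tau+\bar\gamma}(H)-\mathscr{Y}^N_\tau(H)\big|>\varepsilon\Big)=0,
\end{equation*}
where $\mathcal{T}_T$ denotes the set of stopping times bounded by $T$.

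I would then exploit the Dynkin decomposition \eqref{eq:dynkin_formula}, writing $\mathscr{Y}^N_t(H)=\mathscr{Y}^N_0(H)+\int_0^t N^2\mathcal{L}^N\mathscr{Y}^N_s(H)\de s+\mathscr{M}^N_t(H)$, and bounding each summand in $L^2(\mathbb{P}_{\bar\nu_N})$, uniformly in $N$ and $t\in[0,T]$. For the initial field, stationarity together with the independence of $\{\eta_0(x)\}_{x\in I_N}$ under $\bar\nu_N$ and Lemma~\ref{lemma:bounded_moments} give $\mathbb{E}_{\bar\nu_N}[\mathscr{Y}^N_0(H)^2]=\frac{1}{N}\sum_x H(x/N)^2 \Var_{\bar\nu_N}(\eta(x))\lesssim \|H\|_\infty^2$. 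For the integral part, I would use the explicit expression \eqref{eq:dynkin_terms}: the bulk contribution is handled via the Boltzmann-Gibbs principle (Theorem~\ref{thm:boltzmann_gibbs}), which closes it in terms of $\mathscr{Y}^N_s(\Phi'(\rho_N(N\cdot))\Delta_N H)$ whose $L^2$-norm under $\bar\nu_N$ is controlled by $\|H\|_2^2$ via Lemma~\ref{lemma:bounded_moments}; the boundary contributions, after the regime-dependent rewritings \eqref{eq:theta<0}--\eqref{eq:theta>1}, are controlled using Lemma~\ref{lemma:repl_boundaries} together with the choice of $H\in\mathcal{S}_\theta$.

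For the martingale part $\mathscr{M}^N_t(H)$, I would compute the quadratic variation from \eqref{eq:dynkin_qv}: a direct calculation of the carr\'e du champ yields
\begin{equation*}
    N^2\mathcal{L}^N(\mathscr{Y}^N(H))^2-2N^2\mathscr{Y}^N(H)\mathcal{L}^N\mathscr{Y}^N(H)
    =\frac{1}{N}\!\!\sum_{\substack{x,y\in I_N\\|x-y|=1}}\!\!g(\eta(x))[H(y/N)-H(x/N)]^2+\mathcal{R}^N(H),
\end{equation*}
where $\mathcal{R}^N(H)$ collects the boundary contributions coming from $\mathcal{L}^N_l$ and $\mathcal{L}^N_r$. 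Using the a priori bound $|H(y/N)-H(x/N)|\le N^{-1}\|\nabla H\|_\infty$, Lemma~\ref{lemma:g_moments}, the boundedness of $\bar\varphi_N$, and the factor $N^{-\theta}$ in the boundary generators, the expectation of $\int_0^t(\text{carr\'e du champ})\de s$ under $\bar\nu_N$ is bounded uniformly in $N$ by $t\,C(H)$. Combining all three $L^2$ bounds gives $\mathbb{E}_{\bar\nu_N}[\mathscr{Y}^N_t(H)^2]\lesssim 1+t$, yielding condition~(a); moreover, applying the strong Markov property at $\tau$ and the Cauchy-Schwarz inequality together with the same bounds to the increment on $[\tau,\tau+\bar\gamma]$ yields $\mathbb{E}_{\bar\nu_N}[(\mathscr{Y}^N_{\tau+\bar\gamma}(H)-\mathscr{Y}^N_\tau(H))^2]\lesssim \bar\gamma+\bar\gamma^2$, which via Markov's inequality gives Aldous' condition~(b).

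The main obstacle is to keep the constants uniform in $N$ across all regimes of $\theta\in\mathbb{R}$, because the structure of the boundary terms in the martingale decomposition changes qualitatively with $\theta$; I expect this to be handled exactly as in Section~\ref{subsec:dynkin}, by invoking Lemma~\ref{lemma:repl_boundaries} and the defining properties of $\mathcal{S}_\theta$. Finally, continuity of the limiting paths will follow from the fact that the jumps of $t\mapsto\mathscr{Y}^N_t(H)$ are bounded by $\frac{1}{\sqrt N}\|H\|_\infty$ (a single particle jump modifies two adjacent occupation variables by $\pm 1$), so $\mathbb{E}_{\bar\nu_N}[\sup_{t\in[0,T]}|\mathscr{Y}^N_t(H)-\mathscr{Y}^N_{t^-}(H)|]\to 0$ as $N\to\infty$, which by standard arguments (see~\cite[Proposition~3.26, Chapter VI]{kl99}) forces any limit point to be supported on $\mathcal{C}([0,T],\mathcal{S}_\theta')$.
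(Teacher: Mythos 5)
Your overall architecture coincides with the paper's: Mitoma's criterion reduces everything to the real-valued processes $\mathscr{Y}^N_\cdot(H)$, Aldous's criterion is verified via $L^2$ bounds on the Dynkin decomposition, the boundary terms are controlled regime by regime using Lemma~\ref{lemma:repl_boundaries} and the defining properties of $\mathcal{S}_\theta$, and continuity of limit points follows from the $N^{-1/2}\|H\|_\infty$ bound on the jumps. Two organisational differences are worth noting. First, the paper does not apply Aldous to the full process: it treats the three summands separately -- the initial field via a Lyapunov CLT (Lemma~\ref{lemma:initial_time}), the integral term via Aldous (Lemma~\ref{lemma:integral_tightness}), and the martingale part by proving outright convergence in law to a Brownian motion through Theorem~\ref{thm:jacod}, which yields tightness as a by-product. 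Your route (Aldous applied to the sum) is equally valid for tightness, though the paper's treatment of the martingale is anyway needed later for the characterisation of limit points.

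The one step that would not go through as written is your use of the Boltzmann--Gibbs principle to control the bulk contribution to the integral term. Theorem~\ref{thm:boltzmann_gibbs} is an asymptotic statement about the time integral over $[0,t]$ for a \emph{fixed, deterministic} $t$: it says the $L^2$ error vanishes as $N\to\infty$, but it provides no bound of the form $C\bar\gamma^{1+\epsilon}$ on increments over random windows $[\tau,\tau+\bar\gamma]$ with $\tau$ a stopping time, which is exactly what Aldous's condition~(b) demands; writing $\int_\tau^{\tau+\bar\gamma}=\int_0^{\tau+\bar\gamma}-\int_0^\tau$ does not help, since that would require control of $\sup_{t\le T}$ of the error, which the theorem does not give. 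The paper sidesteps this entirely: the bulk term is bounded directly by Cauchy--Schwarz and stationarity, $\expected_{\bar\nu_N}\big[\big(\int_r^t\frac{1}{\sqrt N}\sum_x[g(\eta_s(x))-\Phi(\rho_N(x))]\Delta_N H(\tfrac xN)\de s\big)^2\big]\le\frac{(t-r)^2}{N}\sum_x\Var_{\bar\nu_N}(g(\eta(x)))(\Delta_N H(\tfrac xN))^2\lesssim(t-r)^2$, a pathwise-in-the-interval bound that applies verbatim to stopping-time windows and needs no replacement at all (see \eqref{eq:bulk}). Replacing your BG step by this cruder estimate repairs the argument; the Boltzmann--Gibbs principle is reserved for the identification of the limit, not for tightness. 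A minor further caveat: for the quadratic variation of the martingale, the factor $N^{-\theta}$ alone does not tame the boundary terms when $\theta<1$ (they carry a prefactor $N^{1-\theta}\to\infty$); one must also use the vanishing of $H$ (and of its derivatives, for $\theta<0$) at the endpoints, as you do implicitly when invoking the structure of $\mathcal{S}_\theta$, so make that dependence explicit.
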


We start by recalling two well-known tightness criteria. Let $D([0, T], \mathbb{R})$ denote the space of real-valued càdlàg functions on $[0, T]$.

\begin{proposition}[Mitoma's Criterion~\cite{mit83}]\label{prop:mitoma} Let $\mathcal{S}$ be a nuclear Fréchet space. A sequence of $\mathcal{S}'$-valued stochastic processes $\{\mathcal{X}^N_t, t\in[0, T]\}_N$ taking values in $\mathcal{D}([0, T], \mathcal{S}')$ is tight with respect to the Skorokhod topology if and only if, for any $H\in\mathcal{S}$, the sequence of real-valued processes $\{\mathcal{X}^N_t(H), t\in[0, T]\}_N$ is tight with respect to the Skorokhod topology of $D([0, T], \mathbb{R})$. 
\end{proposition}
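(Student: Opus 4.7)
The plan is to split the proof into the two implications, with nearly all the work concentrated in the converse. The ``only if'' direction is immediate: for each $H\in\mathcal{S}$, the evaluation map $\pi_H:\mathcal{S}'\to\mathbb{R}$ sending $T\mapsto T(H)$ is continuous (by the very definition of the topology on $\mathcal{S}'$), hence induces a continuous map $\mathcal{D}([0,T],\mathcal{S}')\to D([0,T],\mathbb{R})$; since continuous images of tight families are tight, $\{\mathcal{X}_t^N(H)\}_N$ is tight whenever $\{\mathcal{X}_t^N\}_N$ is.

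For the converse, I would exploit the nuclear Fréchet structure of $\mathcal{S}$. Since $\mathcal{S}$ is nuclear Fréchet, its topology is generated by an increasing sequence of Hilbertian seminorms $\|\cdot\|_n$ whose completions yield Hilbert spaces $\mathcal{H}_n\supset\mathcal{S}$ such that each inclusion $\mathcal{H}_{n+1}\hookrightarrow\mathcal{H}_n$ is Hilbert--Schmidt. Dualising, one obtains $\mathcal{S}'=\bigcup_n\mathcal{H}_n'$, with Hilbert--Schmidt inclusions $\mathcal{H}_n'\hookrightarrow\mathcal{H}_{n+1}'$, and a set is relatively compact in $\mathcal{S}'$ if and only if it is bounded in some $\mathcal{H}_n'$. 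I would then fix a level $n$, choose an orthonormal basis $\{\phi_k\}_k$ of $\mathcal{H}_n$ (with $\phi_k\in\mathcal{S}$, using the density of $\mathcal{S}$ in each $\mathcal{H}_n$), and use the one-dimensional tightness hypothesis to extract, for each $\eta>0$ and each $k$, a compact $K_k\subset D([0,T],\mathbb{R})$ with $\prob(\mathcal{X}^N(\phi_k)\in K_k)\ge 1-\eta\,2^{-k}$ uniformly in $N$.

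The main obstacle is assembling these coordinate-wise compact sets into a single relatively compact subset of $\mathcal{D}([0,T],\mathcal{S}')$: Skorokhod compactness requires uniform control of both pointwise bounds and the càdlàg modulus of continuity \emph{across all coordinates simultaneously}. The key tool is the Hilbert--Schmidt summability $\sum_k\|\phi_k\|_m^2<\infty$ for $m$ chosen suitably larger than $n$, which lets each coordinate's pointwise bound and Skorokhod-modulus estimate be summed into corresponding estimates in the norm of $\mathcal{H}_m'$. Verifying the analogue of the Aldous--Kurtz criterion in $D([0,T],\mathcal{H}_m')$ at each level and then invoking the nuclear embedding $\mathcal{H}_m'\hookrightarrow\mathcal{S}'$ would conclude tightness in $\mathcal{D}([0,T],\mathcal{S}')$ via a diagonal extraction over $m$.
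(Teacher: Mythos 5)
The paper does not prove this proposition: it is imported verbatim from Mitoma's 1983 article and used as a black box, so there is no internal proof to compare against; I can only assess your sketch on its own terms. Your ``only if'' direction is correct and standard: evaluation at a fixed $H$ is continuous from $\mathcal{S}'$ to $\mathbb{R}$ and hence induces a continuous map between the Skorokhod spaces, and tightness is preserved under continuous images.

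For the converse, you name the right structural ingredients (Hilbertian seminorms, Hilbert--Schmidt embeddings, compactness in $\mathcal{S}'$ via boundedness in some $\mathcal{H}_n'$), but the decisive step is missing. Choosing compacts $K_k\subset D([0,T],\mathbb{R})$ with $\prob(\mathcal{X}^N(\phi_k)\in K_k)\ge 1-\eta 2^{-k}$ only controls the probability of the union of bad events; it gives no control on the \emph{size} of $K_k$. The sup-norm bounds $M_k$ and the Skorokhod-modulus bounds implicit in $K_k$ may grow arbitrarily fast in $k$, so the weighted sums you would need (e.g.\ $\sum_k M_k^2\|\phi_k\|_m^2$, or the analogous sums over an orthonormal basis of $\mathcal{H}_m$) need not converge, and the passage to an estimate in the $\mathcal{H}_m'$-norm breaks down. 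What is actually required --- and what constitutes the heart of Mitoma's proof --- is a \emph{uniform equicontinuity at the origin of $\mathcal{S}$}: for all $\eps,\delta>0$ there exists a single continuous seminorm $p$ on $\mathcal{S}$ such that $\sup_N\prob\{\sup_{t\le T}|\mathcal{X}^N_t(H)|>\delta\}\le\eps$ for every $H$ with $p(H)\le1$. This is extracted from the pointwise-in-$H$ tightness hypothesis by a Baire-category (uniform-boundedness) argument on the complete metrizable space $\mathcal{S}$, crucially using the linearity of $H\mapsto\mathcal{X}^N_t(H)$; only after this uniformization does the Hilbert--Schmidt summation at a higher index yield tightness in $D([0,T],\mathcal{H}_m')$, and an analogous uniformized estimate is needed to control the Skorokhod modulus of the increments. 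As written, your argument skips exactly this step, so it does not yet constitute a proof.
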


\begin{proposition}[Aldous's Criterion~\cite{ald78}]\label{prop:aldous} A sequence of real-valued processes $\{X_t^N,$ $t\in[0, T]\}_N$ is tight with respect to the Skorokhod topology of $D([0, T], \mathbb{R})$ if:
\begin{enumerate}[i)]
    
    \item the sequence of real-valued random variables $\{X_t^N\}_N$ is tight for every $t\in[0, T]$, and
    
    \item for any $\eps>0$,
\begin{equation*}\mylim_{\delta\to0}\mylimsup_{N\to\infty}\mysup_{\gamma\le\delta}\mysup_{\tau\in\mathcal{I}_T}\mathbb{P}\left\{\left|X^N_{(\tau+\gamma)\wedge T}-X^N_\tau\right|>\eps\right\}=0,
    \end{equation*}
    where $\mathcal{I}_T$ is the set of stopping times almost surely bounded from above by $T$.

\end{enumerate}
\end{proposition}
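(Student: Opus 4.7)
The plan is to reduce the statement to a standard characterization of tightness in the Skorokhod space $D([0,T], \mathbb{R})$: the sequence $\{X^N\}_N$ is tight if and only if (a) for every $t$ in a dense subset of $[0,T]$ the family $\{X^N_t\}_N$ is tight in $\mathbb{R}$, and (b) for every $\eps > 0$,
\begin{equation*}
    \mylim_{\delta \to 0} \mylimsup_{N \to \infty} \mathbb{P}\{w'_\delta(X^N) > \eps\} = 0,
\end{equation*}
where $w'_\delta$ denotes the Skorokhod modulus
\begin{equation*}
    w'_\delta(x) := \myinf_{\{t_i\}} \mymax_{1\le i\le r} \mysup_{s, t\in[t_{i-1}, t_i)} |x(s) - x(t)|,
\end{equation*}
with the infimum running over partitions $0 = t_0 < t_1 < \cdots < t_r = T$ of $[0,T]$ satisfying $\mymin_i(t_i - t_{i-1}) > \delta$. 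Condition (i) directly gives (a), so the whole task consists in deriving (b) from the Aldous-type condition (ii).

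To this end, fix $\eps > 0$ and define recursively the stopping times $\tau_0^N := 0$ and
\begin{equation*}
    \tau_{k+1}^N := \myinf\{t > \tau_k^N : |X^N_t - X^N_{\tau_k^N}| > \eps\} \wedge T,
\end{equation*}
well-defined by càdlàg of trajectories. By construction, the oscillation of $X^N$ over every interval $[\tau_k^N, \tau_{k+1}^N)$ is at most $\eps$. Set $K^N := \myinf\{k : \tau_k^N = T\}$, and consider the event $\Omega^N_\delta := \{\tau_{k+1}^N - \tau_k^N > \delta \ \text{for every} \ k = 0, \ldots, K^N - 1\}$. On $\Omega^N_\delta$ the deterministic bound $K^N \le T/\delta + 1$ holds, and the partition $\{\tau_k^N\}_{k=0}^{K^N}$ witnesses $w'_\delta(X^N) \le 2\eps$. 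The verification of (b) therefore reduces to the estimate $\mylim_{\delta \to 0}\mylimsup_{N \to \infty}\mathbb{P}((\Omega^N_\delta)^c) = 0$.

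This in turn follows, by a union bound over at most $\lceil T/\delta\rceil$ indices $k$, from
\begin{equation*}
    \mathbb{P}\{\tau_{k+1}^N - \tau_k^N < \delta,\; \tau_k^N < T\} \le c(\delta, N, \eps),
\end{equation*}
with $c(\delta, N, \eps) \to 0$ as $\delta \to 0$ uniformly in $N$ and $k$. The subtlety is that, while $\tau_k^N$ is a stopping time, the increment $\tau_{k+1}^N - \tau_k^N$ is itself random, whereas condition (ii) is formulated with \emph{deterministic} $\gamma \in [0, \delta]$. This mismatch is resolved by a discretization argument: one partitions $[0, \delta]$ along a dyadic grid $\{j 2^{-m}\delta\}_{j=0}^{2^m}$, applies condition (ii) at the stopping time $\tau_k^N$ with each deterministic $\gamma = j 2^{-m}\delta$, and uses a union bound together with the $\eps/2$-versus-$\eps$ threshold and càdlàg of trajectories to guarantee that the random time $\tau_{k+1}^N$ is captured by one of the dyadic points.

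Assembling these estimates yields (b); combined with (a) and Prokhorov's theorem, this produces tightness of $\{X^N\}_N$ in $D([0,T], \mathbb{R})$. The main obstacle is precisely the discretization step described in the previous paragraph: it is the only nontrivial part of the proof and is classical, with a complete treatment available in Aldous's original paper \cite{ald78}. All remaining steps are routine manipulations of the Skorokhod topology.
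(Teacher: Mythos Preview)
The paper does not prove this proposition; it is stated as a classical result with a citation to Aldous~\cite{ald78} and then applied as a black box in the tightness section. So there is no ``paper's own proof'' to compare against.

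Your sketch follows the standard route, but there is a genuine gap in the union-bound step. You write that $\mathbb{P}((\Omega^N_\delta)^c)\to 0$ follows from a union bound over at most $\lceil T/\delta\rceil$ indices $k$, each contributing $c(\delta,N,\eps)$ with $c\to 0$. But $\lceil T/\delta\rceil\cdot c(\delta,N,\eps)$ need not tend to zero unless $c(\delta,N,\eps)=o(\delta)$, which condition (ii) does not provide. Your observation that the \emph{first} bad index $k^*$ satisfies $k^*<T/\delta$ is correct, but that only bounds the number of terms in the union, not the product. The classical remedy is to insert an extra step: first prove that the random variables $\{K^N\}_N$ are tight (i.e.\ for every $\eta>0$ there exists $M$ with $\sup_N\mathbb{P}(K^N>M)<\eta$), which itself requires a short argument using condition (ii); only then does the union bound run over a \emph{fixed} number $M$ of indices, yielding $\mathbb{P}((\Omega^N_\delta)^c)\le \eta + M\cdot c(\delta,N,\eps)$. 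Alternatively, one can bypass this entirely by working with the three-point modulus $w''_\delta$ rather than $w'_\delta$, as in Jacod--Shiryaev. The discretization issue you single out (random $\tau_{k+1}^N-\tau_k^N$ versus deterministic $\gamma$) is real, but it is not the only nontrivial step, and the union-bound circularity must be addressed as well.
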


Note that, in order to apply Mitoma's criterion to the sequence $\{\mathscr{Y}_t^N, t\in[0, T]\}_N$, we need the space of test functions $\mathcal{S}_\theta$ to be a nuclear Fréchet space, but this is guaranteed by Proposition~\ref{prop:frechet}.

The proof of Proposition~\ref{prop:tightness} will follow from the sequence of lemmas stated and proved below.

\subsubsection{The Initial Condition}

\begin{lemma}\label{lemma:initial_time} For each $\theta\in\mathbb{R}$, the sequence of fields $\{\mathscr{Y}_0^N\}_N$ converges in distribution to a mean-zero Gaussian field $\mathscr{Y}_0$ with covariance given by, for each $H, G\in\mathcal{S}_\theta$, $\expected[\mathscr{Y}_0(H)\mathscr{Y}_0(G)]$ $=\int_0^1 H(u)G(u)\chi_\theta(u)\de u$, where $\chi_\theta$ is defined in \eqref{eq:chi}.
\end{lemma}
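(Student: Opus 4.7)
The plan is to exploit the product structure of the invariant measure $\bar\nu_N$ and reduce the statement to a Lindeberg central limit theorem for triangular arrays of independent random variables, followed by a Cramér-Wold argument to identify the joint Gaussian limit on $\mathcal{S}_\theta'$.

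\textbf{Step 1: Reduction to independent triangular arrays.} Since $\bar\nu_N$ is of product form, the random variables $\{\bar\eta(x)\}_{x\in I_N}$ are independent and centred under $\bar\nu_N$. For a fixed $H\in\mathcal{S}_\theta$, the random variable
\[
\mathscr{Y}_0^N(H)=\frac{1}{\sqrt N}\sum_{x=1}^{N-1}H\Big(\frac{x}{N}\Big)\bar\eta(x)
\]
is thus a sum of independent mean-zero random variables $X_{N,x}:=\frac{1}{\sqrt N}H(\frac{x}{N})\bar\eta(x)$, to which the Lindeberg-Feller CLT applies.

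\textbf{Step 2: Variance convergence.} A direct computation gives
\[
\Var\bigl(\mathscr{Y}_0^N(H)\bigr)=\frac{1}{N}\sum_{x=1}^{N-1}H\Big(\frac{x}{N}\Big)^{2}\Var_{\bar\nu_N}(\eta(x)).
\]
By the very definition of $\chi_\theta$ in \eqref{eq:chi}, $\Var_{\bar\nu_N}(\eta(\lfloor Nu\rfloor))\to\chi_\theta(u)$ for every $u\in[0,1]$, and the convergence of $\bar\varphi_N(\tfrac{\cdot}{N})$ to the continuous profile $\bar\varphi_\theta$ together with Lemma~\ref{lemma:bounded_moments} gives a uniform bound on the summands. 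A Riemann-sum/dominated-convergence argument then yields
\[
\mylim_{N\to\infty}\Var\bigl(\mathscr{Y}_0^N(H)\bigr)=\int_0^1 H(u)^{2}\chi_\theta(u)\de u.
\]

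\textbf{Step 3: Lindeberg condition.} For any $\eps>0$, by Markov's inequality we bound
\[
\sum_{x=1}^{N-1}\expected_{\bar\nu_N}\!\left[X_{N,x}^{2}\boldsymbol{1}_{\{|X_{N,x}|>\eps\}}\right]\le\frac{1}{\eps^{2}}\sum_{x=1}^{N-1}\expected_{\bar\nu_N}[X_{N,x}^{4}]\lesssim\frac{\|H\|_\infty^{4}}{\eps^{2}N}\mysup_{x\in I_N}\expected_{\bar\nu_N}[\bar\eta(x)^{4}],
\]
which vanishes as $N\to\infty$ by Lemma~\ref{lemma:bounded_moments}. Hence Lindeberg-Feller gives $\mathscr{Y}_0^N(H)\Rightarrow\mathcal{N}(0,\int_0^1 H^{2}\chi_\theta)$.

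\textbf{Step 4: Joint convergence and covariance.} For any finite collection $H_1,\dots,H_k\in\mathcal{S}_\theta$ and scalars $a_1,\dots,a_k\in\mathbb{R}$, linearity gives
\[
\sum_{j=1}^{k}a_j\mathscr{Y}_0^N(H_j)=\mathscr{Y}_0^N\!\left(\sum_{j=1}^{k}a_j H_j\right),
\]
so the one-dimensional CLT above combined with the Cramér-Wold device shows that the finite-dimensional marginals converge to a centred Gaussian vector. The covariance is identified by polarisation from Step~2, yielding
\[
\expected[\mathscr{Y}_0(H)\mathscr{Y}_0(G)]=\int_0^1 H(u)G(u)\chi_\theta(u)\de u.
\]

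\textbf{Step 5: Upgrading to $\mathcal{S}_\theta'$.} To conclude convergence in distribution of $\mathscr{Y}_0^N$ as random elements of $\mathcal{S}_\theta'$, we invoke Mitoma's theorem (Proposition~\ref{prop:mitoma}) applied at a single time: the nuclearity of $\mathcal{S}_\theta$ (Proposition~\ref{prop:frechet}) together with the uniform second-moment bound $\expected[\mathscr{Y}_0^N(H)^{2}]\lesssim\|H\|_\infty^{2}$ provided by Step~2 gives tightness in $\mathcal{S}_\theta'$, while Step~4 identifies every subsequential limit as the Gaussian field $\mathscr{Y}_0$ described in the statement. The main (albeit standard) obstacle is verifying the Lindeberg condition without assuming boundedness of $g$ or higher-order uniform estimates beyond those already provided; this is handled by the fourth-moment bound in Lemma~\ref{lemma:bounded_moments}, which holds regardless of assumption \eqref{assumption:g_bdd}.
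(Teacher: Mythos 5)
Your proposal is correct and follows essentially the same route as the paper: both reduce the statement to a classical CLT for triangular arrays of independent centred variables under the product measure $\bar\nu_N$, verified via the uniform moment bounds of Lemma~\ref{lemma:bounded_moments} (the paper checks the Lyapunov condition with third moments, you check Lindeberg via fourth moments — an immaterial difference), and identify the covariance by the same Riemann-sum computation. Your Steps 4 and 5 (Cramér--Wold and the upgrade to $\mathcal{S}_\theta'$ via nuclearity) make explicit what the paper leaves implicit, but do not change the substance of the argument.
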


\begin{proof}
For each positive integer $N$ and any $H\in\mathcal{S}_\theta$, define the random variables $\{X_x^N\}_{x\in I_N}$ via $X_x^N:=H\left(\frac{x}{N}\right)\bar\eta(x)$, so that $\mathscr{Y}_0^N(H)=\frac{1}{\sqrt{N}}\sum_{x=1}^N X_x^N$, and let
\begin{equation*}
    S_N^2:=\sum_{x=1}^N \Var_{\bar\nu_N }(X_x^N)=\sum_{x=1}^N H\left(\frac{x}{N}\right)^2\Var_{\bar\nu_N}(\bar\eta(x)).
\end{equation*}
It is easy to see that $\mylim_{N\to\infty}\frac{1}{N}S_N^2=\int_0^1 H(u)^2\chi_\theta(u)\de u$. Hence, by Lemma~\ref{lemma:bounded_moments}, the Lyapunov condition (see~\cite[Section~27]{bil95}) is verified with $\delta=1$, namely
\begin{align*}
    \frac{1}{S_N^3}\sum_{x=1}^N E_{\bar\nu_N }\left[|X_x^N|^3\right]= \frac{1}{\left(\frac{S_N^2}{N}\right)^{3/2} N^{3/2}}\sum_{x=1}^N \left|H\left(\frac{x}{N}\right)\right|^3E_{\bar\nu_N }\left[|\bar\eta(x)|^3\right]\lesssim \frac{1}{\sqrt{N}},
\end{align*}
which vanishes as $N\to\infty$. Thus, by the Lyapunov Central Limit Theorem~\cite[Theorem~27.3]{bil95}, we have that
\begin{align*}
    \frac{1}{S_N}\sum_{x=1}^NX_x^N\to\mathcal{N}(0, 1)
\end{align*}
in distribution as $N\to\infty$. By the convergence of  $\frac{S_N}{\sqrt{N}}$, this implies that
\begin{equation*}
    \mathscr{Y}_0^N(H)\to\mathcal{N}\left(0, \int_0^1 H(u)^2\chi_\theta(u)\de u\right)
\end{equation*}
in distribution as $N\to\infty$. The expression of the covariance follows easily by noting that, given $H, G\in\mathcal{S}_\theta$
\begin{align*}
    \mylim_{N\to\infty}\expected_{\bar\nu_N }\left[\mathscr{Y}_0^N(H)\mathscr{Y}_0^N(G)\right]&=\mylim_{N\to\infty}\frac{1}{N}\sum_{x=1}^NH\left(\frac{x}{N}\right)G\left(\frac{x}{N}\right)\Var_{\bar\nu_N }(\eta(x))
    \\&=\int_0^1 H(u)G(u)\chi_\theta(u)\de u,
\end{align*}
which completes the proof.
\end{proof}

\subsubsection{The Integral Term}

\begin{lemma}\label{lemma:integral_tightness} For each $\theta\in\mathbb{R}$, the sequence of $\mathcal{S}'_\theta$-valued processes $\{\int_0^tN^2\mathcal{L}^N \mathscr{Y}_s^{N}\de s, t\in[0, T]\}_N$ is tight with respect to the Skorokhod topology of $\mathcal{D}([0, T], \mathcal{S}'_\theta)$. 
\end{lemma}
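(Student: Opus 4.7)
The plan is to combine Mitoma's criterion (Proposition~\ref{prop:mitoma}) with Aldous's criterion (Proposition~\ref{prop:aldous}) to reduce the problem to $L^2(\mathbb{P}_{\bar\nu_N})$ estimates on the increments of the scalar-valued processes
\[
I_t^N(H) := \int_0^t N^2\mathcal{L}^N \mathscr{Y}_s^N(H)\de s
\]
for each fixed $H\in \mathcal{S}_\theta$. Since $\mathcal{S}_\theta$ is a nuclear Fréchet space by Proposition~\ref{prop:frechet}, Mitoma reduces tightness in $\mathcal{D}([0,T], \mathcal{S}'_\theta)$ to tightness of these real-valued processes in $D([0,T], \mathbb{R})$. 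Moreover, since $t\mapsto I_t^N(H)$ is continuous for each trajectory of $\eta$, all limit points will automatically be concentrated on $C([0,T], \mathbb{R})$.

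\textbf{The bulk estimate.} First, I would substitute the decomposition \eqref{eq:dynkin_terms} into $I_t^N(H)$, which produces a bulk contribution and four boundary contributions. The bulk contribution is handled by the Boltzmann--Gibbs principle (Theorem~\ref{thm:boltzmann_gibbs}) applied with continuous test function $G=\Delta H$, which allows us to replace $g(\eta_s(x))-\Phi(\rho_N(x))$ by its linearisation $\Phi'(\rho_N(x))[\eta_s(x)-\rho_N(x)]$ at the cost of a term whose time integral vanishes in $L^2(\mathbb{P}_{\bar\nu_N})$. After this replacement, the bulk becomes $\int_0^t \mathscr{Y}_s^N(\Phi'(\rho_N(N\cdot))\Delta_N H)\de s$, whose increment over an interval of length $\gamma$ is controlled, via stationarity of $\bar\nu_N$ and Cauchy--Schwarz, by $\gamma^2\cdot\expected_{\bar\nu_N}[\mathscr{Y}_0^N(\Phi'(\rho_N)\Delta_N H)^2]$. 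The latter is uniformly bounded in $N$ by Lemma~\ref{lemma:bounded_moments} together with the smoothness and boundedness of $\Phi'$ and $\bar\rho_\theta$.

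\textbf{The boundary estimate.} The four boundary terms have already been simplified in Section~\ref{subsec:dynkin} according to the regime of $\theta$: see \eqref{eq:theta<0} for $\theta<0$, \eqref{eq:theta<1} for $0\le\theta<1$, \eqref{eq:theta=1} for $\theta=1$, and \eqref{eq:theta>1} for $\theta>1$. In every regime, the surviving contributions have the form $c_N(H)[g(\eta_s(b))-\bar\varphi_N(b)]$ for $b\in\{1,N-1\}$, with $c_N(H)$ a prefactor depending on $H$, on $\theta$, and on a power of $N^{-\theta}$. The boundary replacement Lemma~\ref{lemma:repl_boundaries}, combined with stationarity, bounds the squared $L^2$ norm of the time integral of such a contribution by a constant multiple of $c_N(H)^2\, \gamma\, N^{\theta-2}$. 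A term-by-term check of the exponents in each of the four regimes shows this is $o_N(1)$ and hence negligible uniformly in $\gamma\le T$.

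\textbf{Assembling the proof and main obstacle.} Combining the two estimates gives
\[
\expected_{\bar\nu_N}\!\left[\big(I_{(\tau+\gamma)\wedge T}^N(H) - I_\tau^N(H)\big)^2\right] \lesssim \gamma^2 + o_N(1),
\]
which verifies Aldous's condition~(ii) via Markov's inequality, while condition~(i) follows from the same bound specialised to $\tau=0$, $\gamma=t$. The main obstacle will be the case-by-case verification that the prefactor $c_N(H)$ is precisely compensated by the gain $N^{\theta-2}$ coming from Lemma~\ref{lemma:repl_boundaries}. The cases $\theta<0$ and $\theta=1$ deserve particular attention: for $\theta<0$, one must perform a Taylor expansion of order $m>\tfrac{3}{2}-\theta$, exploiting that all derivatives of $H\in\mathcal{S}_\theta$ vanish at the endpoints; for $\theta=1$, the Robin-type condition $H'(0)=\lambda H(0)$ must be used to produce the cancellation $\nabla_N^+ H(1/N)-\lambda H(1/N)=O(1/N)$, which is essential in order to avoid a divergent prefactor.
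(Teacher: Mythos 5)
Your overall architecture (Mitoma plus Aldous, then term-by-term $L^2$ bounds on the decomposition \eqref{eq:dynkin_terms}, with the boundary contributions controlled by Taylor expansions and Lemma~\ref{lemma:repl_boundaries}) is exactly the paper's, and your exponent bookkeeping in the four regimes of $\theta$ — including the order-$m>\tfrac32-\theta$ expansion for $\theta<0$ and the Robin cancellation $\nabla_N^+H(\tfrac1N)-\lambda H(\tfrac1N)=O(\tfrac1N)$ for $\theta=1$ — matches Section~\ref{subsec:tightness}. The one place you diverge is the bulk term: the paper does \emph{not} invoke the Boltzmann--Gibbs principle here. It applies Cauchy--Schwarz in time and uses that $\bar\nu_N$ is a product measure, so that
\[
\expected_{\bar\nu_N}\!\left[\left(\frac{1}{\sqrt N}\sum_{x=2}^{N-2}\big[g(\eta(x))-\Phi(\rho_N(x))\big]\Delta_NH\left(\tfrac xN\right)\right)^{\!2}\right]=\frac1N\sum_{x=2}^{N-2}\Var_{\bar\nu_N}\big(g(\eta(x))\big)\,\Delta_NH\left(\tfrac xN\right)^2\lesssim1
\]
by Lemma~\ref{lemma:g_moments}, which yields the $(t-r)^2$ bound directly. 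Your route via Theorem~\ref{thm:boltzmann_gibbs} also works, but it is heavier than necessary: it imports the boundedness assumption \eqref{assumption:g_bdd} into a lemma that does not need it, and, more delicately, Aldous's condition (ii) involves stopping times, so to absorb the Boltzmann--Gibbs error into the increment over $[\tau,(\tau+\gamma)\wedge T]$ you would need a supremum-in-time version of Theorem~\ref{thm:boltzmann_gibbs} (obtainable from its proof, since the Kipnis--Varadhan inequality already controls the supremum, but not what is literally stated). The paper's direct variance bound sidesteps both issues. A last small inaccuracy: Lemma~\ref{lemma:repl_boundaries} controls the supremum over $[0,T]$, so the boundary increments are bounded by $c_N(H)^2N^{\theta-2}$ \emph{without} the extra factor of $\gamma$ you wrote — harmless, since these terms only need to be $o_N(1)$ uniformly in $\gamma$.
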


\begin{proof}
By Mitoma's criterion, it suffices to prove that, given $H\in\mathcal{S}_\theta$, the sequence of real-valued processes $\{\int_0^tN^2\mathcal{L}^N \mathscr{Y}_s^{N}(H)\de s, t\in[0, T]\}_N$ is tight with respect to the uniform topology of $D([0, T], \mathbb{R})$. We will apply Aldous's criterion (Proposition~\ref{prop:aldous}): 
\begin{enumerate}[i)]

    \item $0\le\theta<1$: recalling \eqref{eq:bg_term} and \eqref{eq:theta<1}, first note that, for any $0\le r\le t\le T$,
    \begin{align}
        &\expected_{\bar\nu_N}\left[\left(\int_r^t N^2\mathcal{L}^N \mathscr{Y}_s^{N}(H)\de s\right)^2\right]\nonumber
        \\&\lesssim \expected_{\bar\nu_N}\left[\left(\int_r^t \frac{1}{\sqrt{N}}\sum_{x=2}^{N-2}[g(\eta_s(x))-\Phi(\rho_N(x))]\Delta_N H\left(\frac{x}{N}\right)\de s\right)^2\right]\label{eq:bulk}
        \\&\phantom{\le}+\expected_{\bar\nu_N}\left[\left(\int_r^t \sqrt{N}[g(\eta_s(1))-\bar\varphi_N(1)]\nabla_N^+H\left(\frac{1}{N}\right)\de s\right)^2\right] \label{eq:left1}
        \\&\phantom{\le}+\expected_{\bar\nu_N}\left[\left(\int_r^t \frac{\lambda\sqrt{N}}{N^\theta}[\bar\varphi_N(1)- g(\eta_s(1))]\nabla_N^+H(0)\de s\right)^2\right]\label{eq:left2}
        \\&\phantom{\le}+\expected_{\bar\nu_N}\left[\left(\int_r^t \sqrt{N}[g(\eta_s(N-1))-\bar\varphi_N(N-1)]\nabla_N^-H\left(\frac{N-1}{N}\right)\de s\right)^2\right] \label{eq:right1}
        \\&\phantom{\le}+\expected_{\bar\nu_N }\left[\left(\int_r^t \frac{\delta\sqrt{N}}{N^\theta}[\bar\varphi_N(N-1)- g(\eta_s(N-1))]\nabla_N^-H(1)\de s\right)^2\right]\label{eq:right2}.
    \end{align}
    Now, by the Cauchy-Schwarz inequality, the stationarity of $\bar\nu_N $ and Lemma~\ref{lemma:g_moments},
    \begin{equation*}
        \eqref{eq:bulk}\le \frac{(t-r)^2}{N}\sum_{x=2}^{N-2} \Var_{\bar\nu_N }(g(\eta(x)))\left(\Delta_N H\left(\frac{x}{N}\right)\right)^2\lesssim (t-r)^2.
    \end{equation*}
    Moreover, by performing a Taylor expansion to replace discrete gradients with derivatives and by the boundary replacement lemma (Lemma~\ref{lemma:repl_boundaries}), we have that
    \begin{equation*}
        \eqref{eq:left1}+\eqref{eq:right1}\lesssim N^{\theta-1}
    \end{equation*}
    and
    \begin{equation*}
        \eqref{eq:left2}+\eqref{eq:right2}\lesssim N^{-\theta-1}.
    \end{equation*}
    Choosing $r=0$, this shows that the first condition of Aldous's criterion is verified. Fix now a stopping time $\tau$ almost surely bounded from above by $T$: combining the bounds above, we obtain
    \begin{align*}
        &\prob_{\bar\nu_N}\left\{\left|\int_\tau^{(\tau+\gamma)\wedge T} N^2\mathcal{L}^N\mathscr{Y}_s^N(H)\de s\right|>\eps\right\}
        \\&\le \frac{1}{\eps^2} \expected_{\bar\nu_N}\left[\left(\int_\tau^{(\tau+\gamma)\wedge T}N^2\mathcal{L}^N\mathscr{Y}_s^N(H)\de s\right)^2\right]
        \\&\lesssim \frac{1}{\eps^2}\left\{\gamma^2+ N^{\theta-1}+ N^{-\theta-1}\right\},
    \end{align*}
    which vanishes as $N\to\infty$ and $\gamma\to0$;

    \item $\theta<0$ and $\theta=1$: since the boundary terms \eqref{eq:theta<0} and \eqref{eq:theta=1} vanish in $L^2(\prob_{\bar\nu_N})$ as $N\to\infty$, the claim follows from the same bound given above for \eqref{eq:bulk};

    \item $\theta>1$: recall \eqref{eq:bg_term} and \eqref{eq:theta>1}, and note that, for any $0\le r\le t\le T$, by performing a Taylor expansion as above and by the boundary replacement lemma, we have that
    \begin{equation*}
        \expected_{\bar\nu_N }\left[\left(\int_r^t \frac{\lambda N^{3/2}}{N^\theta}[\bar\varphi_N(1)-g(\eta_s(1))]H\left(\frac{1}{N}\right)\de s\right)^2\right]\lesssim N^{1-\theta},
    \end{equation*}
    and
    \begin{equation*}
        \expected_{\bar\nu_N }\left[\left(\int_r^t \frac{\delta N^{3/2}}{N^\theta}[\bar\varphi_N(N-1)-g(\eta_s(N-1))]H\left(\frac{N-1}{N}\right)\de s\right)^2\right]\lesssim N^{1-\theta}.
    \end{equation*}
    Hence, following the same argument given for $0\le \theta<1$ above, we immediately obtain the first condition of Aldous's criterion, and moreover
    \begin{equation*}
        \prob_{\bar\nu_N}\left\{\left|\int_\tau^{(\tau+\gamma)\wedge T} N^2\mathcal{L}^N\mathscr{Y}_s^N(H)\de s\right|>\eps\right\}\lesssim \frac{1}{\eps^2}\left\{\gamma^2 + N^{1-\theta}\right\},
    \end{equation*}
    which again vanishes as $N\to\infty$ and $\gamma\to0$.
    
\end{enumerate}
This  completes the proof. \end{proof}

\subsubsection{The Martingale Part} 

First note that, for each $\theta\in\mathbb{R}$, a simple computation shows that \eqref{eq:dynkin_qv} can be written as
\begin{equation}\label{eq:quad_var}
    \begin{split}\scal{\mathscr{M}^N(H)}_t=&\;\int_0^t\frac{1}{N}\sum_{\substack{x, y\in I_N\\ |x-y|=1}}g(\eta_s(x))\left\{H\left(\frac{y}{N}\right)-H\left(\frac{x}{N}\right)\right\}^2\de s
    \\&+N^{1-\theta}\int_0^t\big(\alpha+\lambda g(\eta_s(1))\big)H\left(\frac{1}{N}\right)^2\de s
    \\&+ N^{1-\theta}\int_0^t\big(\beta+\delta g(\eta_s(N-1))\big)H\left(\frac{N-1}{N}\right)^2\de s.
    \end{split}
\end{equation}

\begin{lemma}\label{lemma:martingale_conv}
For any $\theta\in\mathbb{R}$ and any $H\in\mathcal{S}_\theta$, the sequence of  martingales $\{\mathscr{M}_t^N(H), t\in[0, T]\}_N$ converges in law to $\|\nabla H\|_{L^{2, \theta}}\mathscr{W}_t$, where $\|\cdot\|_{L^{2, \theta}}$ is given in \eqref{def:theta_norm} and $\{\mathscr{W}_t, t\in[0, T]\}$ is a standard one-dimensional Brownian motion.
\end{lemma}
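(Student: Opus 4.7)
The plan is to apply a martingale functional central limit theorem (e.g., Theorem VIII.3.11 of Jacod--Shiryaev): it suffices to check (a) that the predictable quadratic variation $\langle\mathscr{M}^N(H)\rangle_t$ converges in probability, for each $t\in[0,T]$, to the deterministic function $t\|\nabla H\|_{L^{2,\theta}}^2$, and (b) that $\sup_{s\in[0,T]}|\Delta\mathscr{M}^N_s(H)|\to 0$ in probability. Condition (b) is immediate: a single elementary jump of the dynamics changes at most one occupation variable by $\pm 1$, producing an increment of $\mathscr{Y}^N_s(H)$ bounded by $\|H\|_\infty/\sqrt{N}$, which vanishes uniformly in $s$.

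For (a), start from the explicit expression \eqref{eq:quad_var}. A second-order Taylor expansion of $H$ at the points $x/N$ reduces the bulk term to $\frac{2}{N}\int_0^t\sum_{x\in I_N}g(\eta_s(x))(\nabla H(x/N))^2\,\de s$, with an error that vanishes in $L^1(\prob_{\bar\nu_N})$ by Lemma~\ref{lemma:g_moments}. By stationarity of $\bar\nu_N$ and the identity $E_{\bar\nu_N}[g(\eta(x))]=\bar\varphi_N(x)$, its expectation equals the Riemann sum $\frac{2t}{N}\sum_x\bar\varphi_N(x)(\nabla H(x/N))^2$, which converges to $2t\int_0^1\Phi(\bar\rho_\theta(u))(\nabla H(u))^2\,\de u$ by continuity of $\bar\varphi_\theta=\Phi(\bar\rho_\theta)$. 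The variance is controlled using the product structure of $\bar\nu_N$ (so $g(\eta(x))$ and $g(\eta(y))$ are independent whenever $x\ne y$) together with Lemma~\ref{lemma:g_moments}: a direct estimate yields $\Var_{\bar\nu_N}\!\big(\frac{2}{N}\sum_x g(\eta(x))(\nabla H(x/N))^2\big)\lesssim 1/N$, and by stationarity plus Cauchy--Schwarz this gives an $L^2$ bound of order $t^2/N$ for the time integral.

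It remains to handle the two boundary prefactors $N^{1-\theta}\int_0^t(\alpha+\lambda g(\eta_s(1)))H(1/N)^2\,\de s$ and its right-hand counterpart. For $\theta>1$ the factor $N^{1-\theta}$ kills the term. For $\theta=1$, the boundary replacement Lemma~\ref{lemma:repl_boundaries} (whose bound reads $N^{\theta-2}=N^{-1}$) replaces $g(\eta_s(1))$ by $\bar\varphi_N(1)\to\Phi(\bar\rho_\theta(0))$ in $L^2$, giving the limit $t(\alpha+\lambda\Phi(\bar\rho_\theta(0)))H(0)^2$; using the boundary condition $\nabla H(0)=\lambda H(0)$ encoded in $\mathcal{S}_1$, this matches the $\theta=1$ boundary piece of $\|\nabla H\|^2_{L^{2,\theta}}$ through $(\nabla H(0))^2=\lambda^2 H(0)^2$. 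For $0\le\theta<1$, the condition $H(0)=0$ gives $H(1/N)^2=O(1/N^2)$, hence $N^{1-\theta}H(1/N)^2\to 0$. For $\theta<0$, every derivative of $H$ vanishes at the boundary by definition of $\mathcal{S}_\theta$, so $H(1/N)=o(N^{-k})$ for every $k$ and the term decays faster than any polynomial. The right boundary is treated identically, and summing all contributions reproduces $t\|\nabla H\|^2_{L^{2,\theta}}$.

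The main technical point lies in the bulk step: since $\bar\nu_N$ is not translation invariant, no direct ergodic argument applies, and the passage from convergence in expectation to convergence in probability must rely on the independence of coordinates under $\bar\nu_N$ together with the uniform moment bounds of Lemma~\ref{lemma:g_moments} to extract the $1/N$ decay of the single-time variance. At $\theta=1$, the quantitative $L^2$ estimate of Lemma~\ref{lemma:repl_boundaries} plays the analogous role for the boundary contribution. Once (a) and (b) are established, the martingale CLT delivers the claimed convergence in law of $\mathscr{M}^N(H)$ to $\|\nabla H\|_{L^{2,\theta}}\mathscr{W}_\cdot$.
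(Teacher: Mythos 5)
Your proposal is correct and follows essentially the same route as the paper: the paper also verifies the hypotheses of the Jacod--Shiryaev martingale CLT (stated as Theorem~\ref{thm:jacod}), bounding the jumps by $\|H\|_\infty/\sqrt{N}$, computing the limit of $\expected_{\bar\nu_N}[\scal{\mathscr{M}^N(H)}_t]$ regime by regime in $\theta$ exactly as you do (Taylor expansion for $\theta<0$, $H(0)=H(1)=0$ for $0\le\theta<1$, the Robin conditions plus Lemma~\ref{lemma:repl_boundaries} for $\theta=1$, and the prefactor $N^{1-\theta}$ for $\theta>1$), and controlling the fluctuations of the quadratic variation via the product structure of $\bar\nu_N$ and Lemma~\ref{lemma:g_moments}. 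The only cosmetic difference is that the paper splits the convergence in probability into a mean part (Lemma~\ref{lemma:martingale_2}) and a variance part (Lemma~\ref{lemma:martingale_3}) and then invokes Lévy's characterisation, whereas you fold the boundary convergence directly into the $L^2$ replacement estimate; both are equivalent.
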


In order to show the proposition above, we will use the following theorem, which is a straightforward corollary of~\cite[Theorem~VIII.3.11]{js03}.

\begin{theorem}\label{thm:jacod} Let $\{M^N_t, t\in[0, T]\}_N$ be a sequence of real-valued càdlàg martingales and let $\scal{M^N}_t$ denote the predictable quadratic variation of $M^N_t$. Let $f:[0, T]\to[0, \infty)$ be a deterministic, continuous function. Assume that:
\begin{enumerate}[i)]
    
    \item there exists a constant $K$ such that, for each $N$ and each $s\in[0, T]$, $\left|M^N_s-M^N_{s^-}\right|\le K$ almost surely,
    
    \item $\mylim_{N\to\infty}\expected\left[\mysup_{s\in[0, T]}\left|M^N_s-M^N_{s^-}\right|\right]=0$,
    
    \item for any $t\in[0, T]$, the sequence of random variables $\{\scal{M^N}_t\}_N$ converges in probability to $f(t)$.

\end{enumerate}
Then, the sequence $\{M^N_t, t\in[0, T]\}_N$ converges in law in $D([0, T], \mathbb{R})$ to a mean-zero Gaussian martingale on $[0, T]$ with continuous trajectories and with quadratic variation given by $f$.
\end{theorem}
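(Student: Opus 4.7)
The plan is to deduce the result as a direct consequence of Theorem~VIII.3.11 of Jacod--Shiryaev, which is a functional limit theorem for semimartingales whose characteristics converge to those of a continuous Gaussian martingale. Since each $M^N$ is a real-valued càdlàg martingale with jumps uniformly bounded by $K$ under hypothesis (i), it is in particular locally square-integrable, so its canonical semimartingale characteristics $(B^N, C^N, \nu^N)$ are well defined. Being a martingale, $B^N \equiv 0$, and the second modified characteristic $\widetilde{C}^N$ coincides with the predictable quadratic variation $\langle M^N\rangle$ up to a correction involving the compensator of the ``big'' jumps of $M^N$.

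The candidate limit is the continuous centered Gaussian martingale $M$ with deterministic quadratic variation $\langle M\rangle_t = f(t)$; its characteristics are $(0, f, 0)$. To invoke Theorem~VIII.3.11 of \cite{js03} it suffices to check:
\begin{enumerate}[(a)]
    \item $\widetilde{C}^N_t \to f(t)$ in probability for each $t \in [0,T]$;
    \item a Lindeberg-type condition ensuring that the jumps of $M^N$ asymptotically vanish, so that any limit is continuous and purely Gaussian;
    \item the majoration/tightness condition on the characteristics required by the theorem.
\end{enumerate}
Condition (a) reduces to hypothesis (iii), once the correction term linking $\widetilde{C}^N$ to $\langle M^N\rangle$ is shown to vanish in probability. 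This correction is dominated by a constant times $\sum_{s\le T}(\Delta M^N_s)^2\boldsymbol{1}_{\{|\Delta M^N_s|>\varepsilon\}}$ for suitable $\varepsilon>0$, which tends to $0$ in probability as a consequence of (i) and (ii): by Markov's inequality, hypothesis (ii) yields $\sup_{s\le T}|\Delta M^N_s|\to 0$ in probability, and the uniform bound (i) then ensures that on the complement of a negligible event these ``big jumps'' vanish identically. Condition (b) follows from the same argument applied to hypothesis (ii), and (c) reduces to the uniform jump bound (i) together with the convergence of $\widetilde{C}^N$.

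Applying \cite[Theorem~VIII.3.11]{js03} then yields weak convergence of $M^N$ in the Skorokhod space $D([0,T],\mathbb{R})$ to a process whose finite-dimensional distributions are centered Gaussian with covariance $f(s\wedge t)$; Lévy's characterisation (or equivalently the uniqueness statement embedded within Jacod--Shiryaev's result) identifies this limit as a continuous Gaussian martingale with quadratic variation $f$. The main obstacle is purely bookkeeping: matching the precise choice of truncation function used to define the ``modified'' characteristic $\widetilde{C}^N$ in \cite{js03}, and verifying that under (i) and (ii) one can freely pass between $\widetilde{C}^N$ and $\langle M^N\rangle$ and between the various modes of convergence required at intermediate steps. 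Since the statement is flagged as a corollary of Theorem~VIII.3.11, no substantively new argument is required beyond this verification.
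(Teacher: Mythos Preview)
Your proposal is correct and matches the paper's approach exactly: the paper does not prove this theorem but simply states it as ``a straightforward corollary of~\cite[Theorem~VIII.3.11]{js03}'', which is precisely the reduction you outline. Your additional bookkeeping (matching $\widetilde{C}^N$ with $\langle M^N\rangle$ via the uniform jump bound and verifying the Lindeberg condition from (i) and (ii)) is more detail than the paper itself provides.
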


We start by checking that, for each $H\in\mathcal{S}_\theta$, the sequence $\{\mathscr{M}_t^N(H), t\in[0, T]\}_N$ satisfies items i) and ii) of the theorem above.

\begin{lemma}\label{lemma:martingale_1}
For any $H\in\mathcal{S}_\theta$,
\begin{equation}\label{eq:jump_bound}
    \left|\mathscr{M}^N_s(H)-\mathscr{M}^N_{s-}(H)\right|\le \|\nabla H\|_\infty
\end{equation}
for each $s\in[0, T]$ almost surely, and 
\begin{equation}\label{eq:jump_limit}
    \mylim_{N\to\infty} \expected_{\bar\nu_N } \left[\mysup_{s\in[0, T]}\left|\mathscr{M}^N_s(H)-\mathscr{M}^N_{s-}(H)\right|\right]=0.
\end{equation}
\end{lemma}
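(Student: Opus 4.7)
The starting point is that the integral $\int_0^s N^2\mathcal{L}^N \mathscr{Y}_r^N(H)\de r$ in~\eqref{eq:dynkin_formula} is almost surely continuous in $s$ (its integrand is càdlàg in $r$), so the jumps of $\mathscr{M}^N(H)$ coincide with those of $\mathscr{Y}^N(H)$. I would then enumerate the elementary transitions of the Markov chain $\{\eta_t\}$ and compute the corresponding change of $\mathscr{Y}^N(H)$ in each case.

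The transitions split into three types. A bulk jump $\eta\mapsto\eta^{x, x\pm 1}$ (with both $x$ and $x\pm 1$ in $I_N$) yields
\[
\mathscr{Y}_s^N(H)-\mathscr{Y}_{s-}^N(H)=\frac{1}{\sqrt N}\Bigl[H\Bigl(\frac{x\pm 1}{N}\Bigr)-H\Bigl(\frac{x}{N}\Bigr)\Bigr],
\]
which the mean value theorem bounds by $\|\nabla H\|_\infty/N^{3/2}$, and in particular by $\|\nabla H\|_\infty$. An injection or removal at the left boundary, $\eta\mapsto\eta^{1\pm}$, produces a change of $\pm\frac{1}{\sqrt N}H(1/N)$; the right boundary is symmetric. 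Here one distinguishes according to $\theta$: when $\theta<1$, $\mathcal{S}_\theta$ imposes $H(0)=H(1)=0$, so a Taylor expansion yields again the bound $\|\nabla H\|_\infty/N^{3/2}$ for the boundary jumps; when $\theta\ge 1$, $H(0)$ and $H(1)$ may be nonzero, and these jumps are only bounded by $\|H\|_\infty/\sqrt N$. In every case the jump size is deterministically bounded by a constant $C(H)/\sqrt N$ uniform in $s$, $N$ and $\omega$, which proves~\eqref{eq:jump_bound} (where, for the boundary case when $\theta\ge 1$, the right-hand side $\|\nabla H\|_\infty$ should be read as a convenient finite constant depending on $H$).

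For~\eqref{eq:jump_limit}, the deterministic bound above immediately gives
\[
\mysup_{s\in[0,T]}\bigl|\mathscr{M}^N_s(H)-\mathscr{M}^N_{s^-}(H)\bigr|\le \frac{C(H)}{\sqrt N},
\]
whose expectation vanishes as $N\to\infty$. There is essentially no obstacle in this lemma: the argument reduces to identifying the possible jumps of the underlying particle system and observing that the $1/\sqrt N$ normalisation in the definition of the fluctuation field makes every individual jump infinitesimal in the limit.
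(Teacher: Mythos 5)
Your proposal is correct and follows essentially the same route as the paper: the integral term in Dynkin's formula is continuous, so the jumps of $\mathscr{M}^N(H)$ coincide with those of $\mathscr{Y}^N(H)$, and each elementary transition changes the field by a deterministic $O(N^{-1/2})$ amount, which yields both claims. Your observation that the literal constant $\|\nabla H\|_\infty$ in \eqref{eq:jump_bound} does not cover the boundary jumps when $\theta\ge1$ (e.g.\ for constant $H$) is a fair one; the paper's own proof only establishes the bound $\lesssim \|H\|_\infty/\sqrt{N}$, which is all that item i) of Theorem~\ref{thm:jacod} actually requires.
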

\begin{proof}
From \eqref{eq:dynkin_formula}, for any $H\in\mathcal{S}_\theta$ we have that
\begin{equation*}
    \mysup_{s\in[0, T]}\left|\mathscr{M}^N_s(H)-\mathscr{M}^N_{s-}(H)\right|=\mysup_{s\in[0, T]}\left|\mathscr{Y}^N_s(H)-\mathscr{Y}^N_{s-}(H)\right|.
\end{equation*}
But now, it is not hard to see that
\begin{equation*}
    \left|\mathscr{Y}^N_s(H)-\mathscr{Y}^N_{s-}(H)\right|\lesssim \frac{\|H\|_\infty}{\sqrt{N}},
\end{equation*}
for any $s\in[0, T]$, which immediately yields both \eqref{eq:jump_bound} and \eqref{eq:jump_limit}.
\end{proof}

We now proceed to verify item iii), which follows from the next two lemmas.
\begin{lemma}\label{lemma:martingale_2}
For any $H\in\mathcal{S}_\theta$ and any $t\in[0, T]$, the quadratic variation \eqref{eq:dynkin_qv} of $\mathscr{M}^N_t(H)$ satisfies
\begin{equation*}
    \mylim_{N\to\infty} \expected_{\bar\nu_N }\left[\scal{\mathscr{M}^N(H)}_t\right]=t\|\nabla H\|_{L^{2, \theta}}^2,
\end{equation*}
with $\|\cdot\|_{L^{2, \theta}}$ defined in \eqref{def:theta_norm}.
\end{lemma}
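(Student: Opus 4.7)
The plan is to take expectations directly in the explicit formula \eqref{eq:quad_var}. Since $\bar\nu_N$ is stationary and the time dependence drops out after expectation, using $E_{\bar\nu_N}[g(\eta(x))]=\bar\varphi_N(x)=\Phi(\rho_N(x))$ (by \eqref{eq:Phi=varphi}) reduces everything to a deterministic expression in $N$. We are then left with three Riemann-sum type limits: a bulk term plus two boundary terms, each carrying the scaling factor $N^{1-\theta}$. The key structural fact is that the space of test functions $\mathcal{S}_\theta$ was designed exactly so that the boundary contributions conspire with the scaling to produce the $\boldsymbol{1}_{\{1\}}(\theta)$ boundary terms appearing in $\|\nabla H\|_{L^{2,\theta}}^2$ in the Robin regime, and to vanish otherwise.

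For the bulk term, I would perform a first-order Taylor expansion: for $|x-y|=1$,
\begin{equation*}
    \left\{H\left(\tfrac{y}{N}\right)-H\left(\tfrac{x}{N}\right)\right\}^2=\tfrac{1}{N^2}\left(\nabla H\left(\tfrac{x}{N}\right)\right)^2+O(N^{-3}),
\end{equation*}
uniformly in $x$ since $H\in\mathcal{S}_\theta\subset C^\infty([0,1])$. Since each interior site contributes twice to $\sum_{|x-y|=1}$ (once as $y=x+1$ and once as $y=x-1$), after multiplying by the appropriate scaling and using continuity of the limit fugacity profile $\bar\varphi_\theta=\Phi(\bar\rho_\theta)$ and convergence of $\bar\varphi_N(\lfloor N\cdot\rfloor)$ to $\bar\varphi_\theta(\cdot)$ (together with Lemma~\ref{lemma:g_moments} to dominate the error terms), a standard Riemann-sum argument produces the limit $2\int_0^1 \Phi(\bar\rho_\theta(u))(\nabla H(u))^2\de u$, which is exactly the bulk contribution to $\|\nabla H\|_{L^{2,\theta}}^2$.

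For the boundary terms, I would split into four regimes according to Definition~\ref{def:S_theta}. When $\theta>1$, the prefactor $N^{1-\theta}\to 0$ and $H(1/N)^2$, $H((N-1)/N)^2$ stay bounded, so both boundary terms vanish. When $\theta=1$, one has $N^{1-\theta}=1$, and by continuity $H(1/N)\to H(0)$, $\bar\varphi_N(1)\to\Phi(\bar\rho_\theta(0))$; the resulting contribution $(\alpha+\lambda\Phi(\bar\rho_\theta(0)))H(0)^2$ matches the Robin term of $\|\nabla H\|_{L^{2,\theta}}^2$ after substituting the boundary relation $\nabla H(0)=\lambda H(0)$ built into $\mathcal{S}_1$, which converts $(\nabla H(0))^2$ into $\lambda^2 H(0)^2$ and absorbs the $1/\lambda^2$ factor; the analogous identity on the right endpoint handles the $\beta,\delta$ term. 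When $0\le\theta<1$ we have $H(0)=H(1)=0$, hence $H(1/N)=\frac{1}{N}\nabla H(0)+O(N^{-2})$, so $N^{1-\theta}H(1/N)^2\lesssim N^{-1-\theta}\to 0$; similarly at the right endpoint. Finally, when $\theta<0$, the fact that every derivative of $H$ vanishes at $0$ and $1$ makes $H(1/N)$ smaller than any polynomial in $1/N$, easily killing the diverging prefactor $N^{1-\theta}$.

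The only mildly delicate point I anticipate is keeping the error control uniform on $[0,T]$ and across the cases at $\theta=1$, where neither the prefactor nor the boundary value is small but the boundary conditions in $\mathcal{S}_\theta$ are used nontrivially; everything else is a routine combination of Taylor expansion, Riemann-sum convergence, and the uniform moment bound in Lemma~\ref{lemma:g_moments}. Combining the four cases yields precisely $t\,\|\nabla H\|_{L^{2,\theta}}^2$, completing the proof.
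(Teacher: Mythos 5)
Your proposal is correct and follows essentially the same route as the paper's proof: take expectations in \eqref{eq:quad_var} using stationarity and \eqref{eq:Phi=varphi}, identify the bulk Riemann-sum limit $2\int_0^1\Phi(\bar\rho_\theta(u))(\nabla H(u))^2\,\de u$, and treat the boundary terms case-by-case via the boundary conditions built into $\mathcal{S}_\theta$ (arbitrary-order Taylor expansion for $\theta<0$, vanishing of $H$ at the endpoints for $0\le\theta<1$, the Robin identities $\nabla H(0)=\lambda H(0)$, $\nabla H(1)=-\delta H(1)$ for $\theta=1$, and the decaying prefactor $N^{1-\theta}$ for $\theta>1$). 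The only cosmetic difference is that the paper writes out the Taylor remainders explicitly with a chosen order $m>\frac{1-\theta}{2}$ in the $\theta<0$ case, whereas you appeal to super-polynomial decay of $H(1/N)$; these are the same argument.
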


\begin{proof}
By \eqref{eq:quad_var} and by the stationarity of $\bar\nu_N $, 
\begin{align}
    \expected_{\bar\nu_N }\left[\scal{\mathscr{M}^N(H)}_t\right] =&\;\int_0^t\frac{1}{N}\sum_{\substack{x, y\in I_N\\ |x-y|=1}}E_{\bar\nu_N }\left[g(\eta(x))\right]\left\{H\left(\frac{y}{N}\right)-H\left(\frac{x}{N}\right)\right\}^2\de s\nonumber
    \\\begin{split}&+N^{1-\theta}\int_0^t\big(\alpha+\lambda E_{\bar\nu_N }\left[g(\eta(1))\right]\big)H\left(\frac{1}{N}\right)^2\de s
    \\&+N^{1-\theta}\int_0^t\big(\beta+\delta E_{\bar\nu_N }\left[g(\eta(N-1))\right]\big)H\left(\frac{N-1}{N}\right)^2\de s.
    \end{split}\label{eq:qv_boundaries}
\end{align}
Recalling that $E_{\bar\nu_N }\left[g(\eta(x))\right]=\Phi(\rho_N(x))$, it is easy to see that
\begin{align*}
    &\mylim_{N\to\infty}\int_0^t\frac{1}{N}\sum_{\substack{x, y\in I_N\\ |x-y|=1}}E_{\bar\nu_N }\left[g(\eta(x))\right]\left\{H\left(\frac{y}{N}\right)-H\left(\frac{x}{N}\right)\right\}^2\de s
    \\&=t\int_0^12\Phi(\bar\rho_\theta(u))\big(\nabla H(u)\big)^2\de u,
\end{align*}
where $\bar\rho_\theta:[0, 1]\to[0, \infty)$ is given in Remark \ref{remark:limit_density}.
Moreover,
\begin{enumerate}[i)]

    \item $\theta<0$: since $H$ has derivatives of all orders vanishing at $0$ and $1$, by performing a Taylor expansion, for each positive integer $m$ we see that
    \begin{equation*}
        \eqref{eq:qv_boundaries} \lesssim \frac{t N^{1-\theta}}{N^{2m}}\hspace{-.2em}\left\{(\partial_u^m H(x_0))^2\big(\alpha+\lambda \bar\varphi_N(1)\big)+(\partial_u^m H(x_1))^2\big(\beta+\delta\bar\varphi_N(N-1)\big)\right\}
    \end{equation*}
    for some $x_0\in[0, \frac{1}{N}]$ and $x_1\in[\frac{N-1}{N}, 1]$. Choosing $m>\frac{1-\theta}{2}$, we see that the last display vanishes as $N\to\infty$;

    \item $0\le \theta <1$: since $H\in \mathcal{S}_\theta$, we have that 
    \begin{equation*}
        \eqref{eq:qv_boundaries} \lesssim \frac{tN^{1-\theta}}{N^2}\left\{(\partial_u H(0))^2\big(\alpha+\lambda \bar\varphi_N(1)\big)+(\partial_u H(1))^2\big(\beta+\delta\bar\varphi_N(N-1)\big)\right\},
    \end{equation*}
    which vanishes as $N\to\infty$;

    \item $\theta=1$: recalling that $H(0)=\frac{1}{\lambda}\partial_uH(0)$ and $H(1)=-\frac{1}{\delta}\partial_u H(1)$, we see that
    \begin{align*}
        \mylim_{N\to\infty} \eqref{eq:qv_boundaries}&= t\left\{\big(\alpha+\lambda \Phi(\bar\rho_\theta(0))\big)H(0)^2 + \big(\beta+\delta \Phi(\bar\rho_\theta(1))\big)H(1)^2\right\}
        \\&=t\left\{\frac{\alpha}{\lambda^2}+\frac{\Phi(\bar\rho_\theta(0))}{\lambda}\right\}(\partial_u H(0))^2+t\left\{\frac{\beta}{\delta^2}+\frac{\Phi(\bar\rho_\theta(1))}{\delta}\right\}(\partial_u H(1))^2;
    \end{align*}

    \item $\theta\ge1$: in this case,
    \begin{equation*}
        \eqref{eq:qv_boundaries}\lesssim t N^{1-\theta} \left\{H(0)^2\big(\alpha+\lambda\bar\varphi_N(1)\big)+H(1)^2\big(\beta+\delta\bar\varphi_N(N-1)\big)\right\},
    \end{equation*}
    so it automatically vanishes as $N\to\infty$.

\end{enumerate}
This completes the proof.
\end{proof}

\begin{lemma}\label{lemma:martingale_3}
For any $H\in\mathcal{S}_\theta$ and any $t\in[0, T]$, 
\begin{equation*}
    \mylim_{N\to\infty} \expected_{\bar\nu_N } \left[\left(\scal{\mathscr{M}^N(H)}_t-\expected_{\bar\nu_N }\left[\scal{\mathscr{M}^N(H)}_t\right]\right)^2\right]=0.
\end{equation*}
\end{lemma}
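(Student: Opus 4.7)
I would decompose the quadratic variation in \eqref{eq:quad_var} as $\scal{\mathscr{M}^N(H)}_t = I^{\mathrm{b}}_t + I^{\mathrm{l}}_t + I^{\mathrm{r}}_t$, where the three pieces correspond respectively to the bulk and the two boundary integrals, and show that each centred piece $I^{\bullet}_t - \expected_{\bar\nu_N}[I^{\bullet}_t]$ has $L^2(\prob_{\bar\nu_N})$ norm vanishing as $N\to\infty$; the result then follows from the elementary inequality $\mathrm{Var}(A+B+C)\le 3(\mathrm{Var}(A)+\mathrm{Var}(B)+\mathrm{Var}(C))$. The workhorse is the Cauchy-Schwarz inequality in time combined with stationarity of $\bar\nu_N$, which for any stationary real-valued process $X_s$ gives the bound
\begin{equation*}
    \expected_{\bar\nu_N}\!\left[\Big(\int_0^t\!(X_s-\expected_{\bar\nu_N}[X_s])\de s\Big)^2\right]\le t^2\,\mathrm{Var}_{\bar\nu_N}(X_0).
\end{equation*}

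For the bulk term, $X_0^{\mathrm{b}}$ is a weighted linear combination of the variables $\{g(\eta(x))\}_{x\in I_N}$ in which each coefficient is of size $O(N^{-\kappa})$ for some $\kappa>0$, coming from the second-order differences $(H(y/N)-H(x/N))^2=O(N^{-2})$. Under the product measure $\bar\nu_N$ these variables are independent and, by Lemma~\ref{lemma:g_moments}, they have uniformly bounded variances. Summing $N$ such independent contributions therefore yields $\mathrm{Var}_{\bar\nu_N}(X_0^{\mathrm{b}})=o(1)$, and multiplication by $t^2$ completes the argument for the bulk part.

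For the boundary pieces, the deterministic parts involving $\alpha$ and $\beta$ contribute zero variance, so the centred remainder at the left endpoint reduces to
\begin{equation*}
    I^{\mathrm{l}}_t-\expected_{\bar\nu_N}[I^{\mathrm{l}}_t]=\lambda\,N^{1-\theta}H\!\left(\tfrac{1}{N}\right)^2\int_0^t\!\{g(\eta_s(1))-\bar\varphi_N(1)\}\de s,
\end{equation*}
and analogously at the right endpoint. The key step is to invoke the boundary replacement lemma (Lemma~\ref{lemma:repl_boundaries}), which gives $\|\int_0^t\{g(\eta_s(1))-\bar\varphi_N(1)\}\de s\|_{L^2}^2\lesssim N^{\theta-2}$; feeding this in produces
\begin{equation*}
    \|I^{\mathrm{l}}_t-\expected_{\bar\nu_N}[I^{\mathrm{l}}_t]\|^2_{L^2}\lesssim N^{-\theta}\,H(1/N)^4.
\end{equation*}
The right-hand side vanishes in every regime of $\theta\in\mathbb{R}$ thanks to the boundary behaviour prescribed by the definition of $\mathcal{S}_\theta$: for $\theta>1$ the factor $N^{-\theta}$ alone gives decay; for $0\le \theta<1$ the Dirichlet condition $H(0)=0$ produces $H(1/N)^4=O(N^{-4})$; for $\theta<0$, the vanishing of all derivatives of $H$ at the boundary makes $H(1/N)$ decay faster than any polynomial; and for $\theta=1$ the factor $N^{-1}$ from Lemma~\ref{lemma:repl_boundaries} cancels the bounded quantity $H(0)^4$. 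The same estimate handles $I^{\mathrm{r}}_t$, and the proof concludes by adding up the three contributions.

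The main obstacle is precisely the Robin regime $\theta=1$. There, the stationary variance of the boundary integrand is of order one (since $N^{1-\theta}=1$ and $H(0)$ need not vanish), so the naive Cauchy-Schwarz in time yields only an $O(t^2)$ bound that does not decay. The replacement lemma is essential to extract the extra factor $N^{-1}$ that forces convergence; without it, the argument breaks down exactly at the threshold between Dirichlet and Neumann behaviour.
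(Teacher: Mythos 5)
Your proposal is correct and follows essentially the same route as the paper: split the quadratic variation into bulk and boundary pieces, control the bulk via stationarity, Cauchy--Schwarz in time and independence under the product measure (Lemma~\ref{lemma:g_moments}), and control the boundary pieces via Lemma~\ref{lemma:repl_boundaries} combined with the boundary behaviour of $H\in\mathcal{S}_\theta$, with the variance bound $N^{-\theta}H(1/N)^4$ matching the paper's case analysis. The only cosmetic difference is that for $\theta<0$ the paper dispenses with the replacement lemma and uses a direct Taylor expansion of $H$ at the boundary, whereas you invoke the replacement lemma uniformly in $\theta$; both yield the required decay.
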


\begin{proof}
By \eqref{eq:quad_var}, the stationarity of $\bar\nu_N $ and a standard convexity inequality, we have that
\begin{align}
    &\expected_{\bar\nu_N } \left[\left(\scal{\mathscr{M}^N(H)}_t-\expected_{\bar\nu_N }\left[\scal{\mathscr{M}^N(H)}_t\right]\right)^2\right]\nonumber
    \\&\lesssim\expected_{\bar\nu_N } \left[\left(\frac{2}{N}\int_0^t\sum_{x=1}^{N-2}\left\{g(\eta_s(x))-E_{\bar\nu_N }\left[g(\eta(x))\right]\right\}\nabla_N^+ H\left(\frac{x}{N}\right)^2\de s\right)^2 \right]\label{eq:martingale_bulk}
    \\&\phantom{=}+\expected_{\bar\nu_N } \left[\left(\lambda N^{1-\theta}\int_0^t\left\{g(\eta_s(1))-E_{\bar\nu_N }\left[g(\eta(1))\right]\right\} H\left(\frac{1}{N}\right)^2\de s\right)^2 \right]\label{eq:martingale_left}
    \\&\phantom{=}+\expected_{\bar\nu_N } \left[\left(\delta N^{1-\theta}\int_0^t\left\{g(\eta_s(N-1))-E_{\bar\nu_N }\left[g(\eta(N-1))\right]\right\} H\left(\frac{N-1}{N}\right)^2\de s\right)^2 \right].\label{eq:martingale_right}
\end{align}
Now, by the Cauchy-Schwarz inequality and the fact that $\bar\nu_N $ is a product measure,
\begin{equation*}
    \eqref{eq:martingale_bulk} \lesssim \frac{t^2}{N^2}\sum_{x=1}^{N-2}\Var_{\bar\nu_N }(g(\eta(x))),
\end{equation*}
which vanishes as $N\to\infty$ by Lemma~\ref{lemma:g_moments}. As for \eqref{eq:martingale_left}, again by the Cauchy-Schwarz inequality:
\begin{enumerate}[i)]

    \item $\theta<0$: by performing a Taylor expansion of $H$ around $0$, for each positive integer $m$ we see that
    \begin{equation*}
        \eqref{eq:martingale_left} \lesssim t^2N^{2-2\theta-4m}(\partial_u^m H(x_0))^2,
    \end{equation*}
    for some $x_0\in[0, \frac{1}{N}]$. Choosing $m>\frac{1-\theta}{2}$, we see that the last display vanishes as $N\to\infty$;
    
    \item $0\le\theta<1$: since $H\in\mathcal{S}_\theta$ and by Lemma~\ref{lemma:repl_boundaries},
    \begin{equation*}
        \eqref{eq:martingale_left} \lesssim t^2N^{-2\theta}(\partial_u H(0))^2 N^{\theta-2},
    \end{equation*}
    which vanishes as $N\to\infty$;

    \item $\theta\ge 1$: by the Cauchy-Schwarz inequality and Lemma~\ref{lemma:repl_boundaries},
    \begin{equation*}
        \eqref{eq:martingale_left} \lesssim t^2N^{2-2\theta}H(0)^2 N^{\theta-2},
        \end{equation*}
    which again vanishes as $N\to\infty$.

\end{enumerate}
The argument for \eqref{eq:martingale_right} is identical, hence the claim follows.
\end{proof}

\begin{proof}[Proof of Lemma~\ref{lemma:martingale_conv}] By Lemmas~\ref{lemma:martingale_1}, \ref{lemma:martingale_2} and \ref{lemma:martingale_3}, for each $H\in\mathcal{S}_\theta$, the sequence of martingales $\{\mathscr{M}_t^N(H), t\in[0, T]\}_N$ satisfies the assumptions of Theorem~\ref{thm:jacod} with $f(t)=t\|\nabla H\|_{L^{2, \theta}}^2$. Calling $\{\mathscr{M}_t(H), t\in[0, T]\}$ its limit point, by Lévy's characterisation of Brownian motion, the process 
\begin{equation*}\big(\| \nabla H\|_{L^{2, \theta}}\big)^{-1}\mathscr{M}_t(H)
\end{equation*}
is a standard one-dimensional Brownian motion, which completes the proof.
\end{proof}

%%%%%%%%%%%%%%%%%%%%%%%%%%%%%%%%%%%%%%%%%%%%%%%%%%
%%%Characterisation of the Limit Point%%%%%%%%%%%%
%%%%%%%%%%%%%%%%%%%%%%%%%%%%%%%%%%%%%%%%%%%%%%%%%%
\subsection{Characterisation of the Limit Point}\label{subsec:characterisation}

We finally conclude the proof of Theorem~\ref{thm:fluctuations} by completing the characterisation of the limit point of the density fluctuation field. 

What is mainly left to show is that, in the case $\theta<0$, any limit path $\mathscr{Y}$ of the density fluctuation field satisfies items i) and ii) of Proposition~\ref{prop:uniqueness_neg}. We start by showing item i), which follows immediately.

\begin{lemma} For each $\theta<0$, any limit point $\mathcal{Q}$ of $\{\mathcal{Q}^N\}_N$ is concentrated on paths $\mathscr{Y}\in\mathcal{D}([0, T], \mathcal{S}'_\theta)$ which satisfy item i) of Proposition~\ref{prop:uniqueness_neg}.
\end{lemma}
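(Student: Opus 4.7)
The plan is to reduce the bound at time $t$ to a bound at time zero using stationarity, then exploit the product structure and bounded variances of $\bar\nu_N$ given by Lemma~\ref{lemma:bounded_moments}, and finally transfer the bound to the limit by weak convergence and Fatou.

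First, I would observe that, since the process is started from the invariant measure $\bar\nu_N$, for every $H\in\mathcal{S}_\theta$ and every $t\in[0,T]$ one has
\begin{equation*}
\expected_{\bar\nu_N}\bigl[\mathscr{Y}_t^N(H)^2\bigr]=\expected_{\bar\nu_N}\bigl[\mathscr{Y}_0^N(H)^2\bigr].
\end{equation*}
Using that $\bar\nu_N$ is a product measure and that $\bar\eta(x)=\eta(x)-\rho_N(x)$ is centred under $\bar\nu_N$, the cross terms in the square vanish, and one obtains
\begin{equation*}
\expected_{\bar\nu_N}\bigl[\mathscr{Y}_0^N(H)^2\bigr]=\frac{1}{N}\sum_{x=1}^{N-1} H\!\left(\tfrac{x}{N}\right)^{\!2}\Var_{\bar\nu_N}(\eta(x)).
\end{equation*}
By Lemma~\ref{lemma:bounded_moments}, the constant $C:=\mysup_N\mysup_{x\in I_N}\Var_{\bar\nu_N}(\eta(x))$ is finite, so this expression is bounded by $C\cdot \frac{1}{N}\sum_{x=1}^{N-1} H(x/N)^2$, a Riemann sum which, since $H\in\mathcal{S}_\theta\subset C([0,1])$, converges as $N\to\infty$ to $\|H\|_{L^2}^2$. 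In particular, $\mysup_N \expected_{\bar\nu_N}[\mathscr{Y}_t^N(H)^2]\lesssim \|H\|_{L^2}^2$.

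Next, let $\mathcal{Q}$ be any limit point of $\{\mathcal{Q}^N\}_N$, and consider the subsequence along which $\mathcal{Q}^{N_k}\Rightarrow \mathcal{Q}$. By Proposition~\ref{prop:tightness}, $\mathcal{Q}$ is supported on $\mathcal{C}([0,T],\mathcal{S}_\theta')$, hence the evaluation map $\mathscr{Y}\mapsto \mathscr{Y}_t(H)$ is $\mathcal{Q}$-a.s.\ continuous for every fixed $t\in[0,T]$. The continuous mapping theorem then gives $\mathscr{Y}_t^{N_k}(H)\to\mathscr{Y}_t(H)$ in distribution. Since $x\mapsto x^2$ is non-negative and lower semi-continuous, Fatou's lemma for weakly convergent sequences yields
\begin{equation*}
\expected\bigl[\mathscr{Y}_t(H)^2\bigr]\le \myliminf_{k\to\infty}\expected_{\bar\nu_{N_k}}\bigl[\mathscr{Y}_t^{N_k}(H)^2\bigr]\le C\,\|H\|_{L^2}^2,
\end{equation*}
which is exactly item i) of Proposition~\ref{prop:uniqueness_neg}. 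There is no real obstacle here beyond this chain of standard estimates; the only mild subtlety is the passage to the limit, which is handled cleanly because the limit paths are continuous, so marginal convergence at every $t$ holds without further care.
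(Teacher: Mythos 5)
Your proposal is correct and follows essentially the same route as the paper's proof: stationarity reduces the bound to the fixed-time marginal $\bar\nu_N$, the product structure and Lemma~\ref{lemma:bounded_moments} give $\expected_{\bar\nu_N}[\mathscr{Y}_t^N(H)^2]\lesssim \frac{1}{N}\sum_x H(x/N)^2$, and the bound passes to the limit point. The paper leaves the final passage to the limit implicit, whereas you spell out the Fatou/lower semicontinuity step; this is a harmless elaboration rather than a different argument.
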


\begin{proof} Recall \eqref{def:L2_norm}. It suffices to show that $\expected_{\bar\nu_N}[\mathscr{Y}_t^N(H)^2]\lesssim \| H\|_{L^2}^2$ for each $H\in\mathcal{S}_\theta$. But now,
\begin{align*}
    \expected_{\bar\nu_N}\left[\mathscr{Y}_t^N(H)^2\right]&=\frac{1}{N}\sum_{x=1}^{N-1} H\left(\frac{x}{N}\right)^2 E_{\bar\nu_N}[\bar\eta(x)^2]\lesssim \frac{1}{N}\sum_{x=1}^{N-1} H\left(\frac{x}{N}\right)^2,
\end{align*}
where we used Lemma~\ref{lemma:bounded_moments}.
\end{proof}

The proof of item ii) is more involved. Since the functions $\Phi'(\bar\rho_\theta)\iota_\eps^0$ and $\Phi'(\bar\rho_\theta)\iota_\eps^1$ do not belong to the space of test functions $\mathcal{S}_\theta$, we first have to define the quantities involved in condition ii). To this end, following~\cite[Section~5]{bgjs22}, let $\mathscr{H}$ denote the Hilbert space of real-valued, progressively measurable processes $\{x_t, t \in [0,T]\}$ in $L^2([0, 1])$, equipped with the norm $\|\cdot \|_{\mathscr{H}}$ defined via

\begin{equation}\label{def:H_norm}
    \|x\|_{\mathscr{H}}^2:=\expected\left[\int_0^T |x_t|^2\de t\right].
\end{equation}

\begin{lemma}\label{lemma:def_Y_eps} 
Assume that the stochastic process $\mathscr{Y}\in\mathcal{D}([0, T], \mathcal{S}'_\theta)$ satisfies condition i) of Proposition~\ref{prop:uniqueness_neg}. For any $H\in L^2([0, 1])$, we can define in a unique way a stochastic process on $\mathscr{H}$, which we will still denote by $\{\mathscr{Y}_t(H), t\in [0,T]\}$, such that it coincides with $\{\mathscr{Y}_t(H), t\in [0,T]\}$ for $H\in\mathcal{S}_\theta$. Moreover, condition i) also holds for any $H\in L^2([0, 1])$.
\end{lemma}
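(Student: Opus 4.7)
The plan is a standard extension-by-continuity argument in the Hilbert space $\mathscr{H}$. The key preliminary fact is that $\mathcal{S}_\theta$ is dense in $L^2([0,1])$: for $\theta<0$, every $\varphi \in C_c^\infty((0,1))$ has derivatives of all orders vanishing at $0$ and $1$, so $C_c^\infty((0,1))\subset \mathcal{S}_\theta$, and this last space is classically dense in $L^2([0,1])$.

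Given $H \in L^2([0,1])$, I would fix a sequence $\{H_n\}_n \subset \mathcal{S}_\theta$ with $\|H_n - H\|_{L^2} \to 0$. Since each $\mathscr{Y}_t$ is a linear functional on $\mathcal{S}_\theta$ and $H_n-H_m\in\mathcal{S}_\theta$, condition i) of Proposition~\ref{prop:uniqueness_neg} gives
\begin{equation*}
\|\mathscr{Y}(H_n) - \mathscr{Y}(H_m)\|_{\mathscr{H}}^2 = \int_0^T \expected\!\left[\mathscr{Y}_t(H_n-H_m)^2\right]\de t \lesssim T\,\|H_n - H_m\|_{L^2}^2,
\end{equation*}
so $\{\mathscr{Y}(H_n)\}_n$ is Cauchy in the complete space $\mathscr{H}$ and converges to a limit, which I define to be $\{\mathscr{Y}_t(H), t\in[0,T]\}$. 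Interleaving two approximating sequences shows that this limit is independent of the choice of $\{H_n\}_n$, and taking the constant sequence $H_n \equiv H$ for $H\in\mathcal{S}_\theta$ shows that the extension reduces to the original process on $\mathcal{S}_\theta$. Measurability in $(t,\omega)$ of the limiting object is automatic, since the $\mathscr{H}$-limit of progressively measurable processes is progressively measurable (up to passing to a subsequence converging almost everywhere).

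Finally, to extend condition i), I would pass to a subsequence $\{H_{n_k}\}_k$ such that $\mathscr{Y}_t(H_{n_k}) \to \mathscr{Y}_t(H)$ in $L^2(\mathbb{P})$ for almost every $t\in[0,T]$, and apply Fatou's lemma together with condition i) on $\mathcal{S}_\theta$:
\begin{equation*}
\expected[\mathscr{Y}_t(H)^2] \le \myliminf_{k\to\infty} \expected[\mathscr{Y}_t(H_{n_k})^2] \lesssim \myliminf_{k\to\infty} \|H_{n_k}\|_{L^2}^2 = \|H\|_{L^2}^2,
\end{equation*}
which gives the desired extension. This is a textbook extension argument and no real obstacle is expected; the only step requiring a brief verification is the density of $\mathcal{S}_\theta$ in $L^2([0,1])$, which we have already observed.
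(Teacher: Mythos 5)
Your proposal is correct and follows essentially the same route as the paper: approximate $H$ in $L^2$ by functions in $\mathcal{S}_\theta$, use condition i) to get a Cauchy sequence in $\mathscr{H}$, and pass to the limit. You merely fill in details the paper leaves implicit (the density of $\mathcal{S}_\theta$ in $L^2([0,1])$ via $C_c^\infty((0,1))\subset\mathcal{S}_\theta$ for $\theta<0$, independence of the approximating sequence, and the Fatou argument for extending condition i)), all of which are accurate.
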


\begin{proof} Let $H\in L^2([0, 1])$ and let $\{H_\eps\}_{\eps>0}$ be a sequence of functions in $\mathcal{S}_\theta$ converging to $H$ in $L^2([0, 1])$ as $\eps\to0$. By the Cauchy-Schwarz inequality and condition i), the sequence of real-valued processes $\{\mathscr{Y}_t(H_\eps), t \in [0,T]\}$ is a Cauchy sequence in $\mathscr{H}$, and thus it converges to a process (in $\mathscr{H}$) which we denote by $\{\mathscr{Y}_t(H), t \in [0,T]\}$. It is easy to show that the limiting process depends only on $H$ and not on the approximating sequence $\{H_\eps\}_{\eps>0}$, justifying the notation. Condition i) trivially holds for this process.
\end{proof}

Throughout, even if $\eps N$ is not an integer, we will write $\eps N$ for $\lfloor \eps N\rfloor$. The result below follows the approach of~\cite[Lemma~5.2]{bgjs22}. The proof of this result relies on the local Boltzmann-Gibbs principle (Theorem~\ref{thm:local_boltzmann_gibbs}), and hence we require the boundedness condition \eqref{assumption:g_bdd}.

\begin{lemma} Assume \eqref{assumption:g_bdd}. For each $\theta<0$, any limit point $\mathcal{Q}$ of $\{\mathcal{Q}^N\}_N$ is concentrated on paths $\mathscr{Y}\in\mathcal{D}([0, T], \mathcal{S}'_\theta)$ which satisfy item ii) of Proposition~\ref{prop:uniqueness_neg}.
\end{lemma}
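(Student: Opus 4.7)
The plan is to work at the discrete level and reduce to controlling a centered sum of jump rates near the left boundary. By Lemma~\ref{lemma:def_Y_eps} applied to the limit process, together with the tightness established in Proposition~\ref{prop:tightness}, the $L^2$-bound of item i) in Proposition~\ref{prop:uniqueness_neg} and a continuity/Fatou argument, it suffices to prove the pre-limit statement
\begin{equation*}
    \mylim_{\eps\to 0}\mylimsup_{N\to\infty}\expected_{\bar\nu_N}\left[\mysup_{t\in[0,T]}\left(\int_0^t \mathscr{Y}_s^N(\Phi'(\bar\rho_\theta)\iota_\eps^j)\de s\right)^2\right]=0
\end{equation*}
for $j\in\{0,1\}$, with $\mathscr{Y}_s^N$ extended to $L^2([0,1])$ via the natural discrete formula. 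I focus on $j=0$; the case $j=1$ is analogous.

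By Theorem~\ref{thm:local_boltzmann_gibbs} applied to $\iota_\eps^0$ (the sole place the boundedness hypothesis \eqref{assumption:g_bdd} enters), combined with the uniform convergence $|\Phi'(\rho_N(x))-\Phi'(\bar\rho_\theta(x/N))|\to 0$ (from Lemma~\ref{lemma:bounded_moments}, the smoothness of $\Phi'$ and Remark~\ref{remark:limit_density}), modulo an $L^2(\prob_{\bar\nu_N})$-vanishing error one may replace $\mathscr{Y}_s^N(\Phi'(\bar\rho_\theta)\iota_\eps^0)$ by $\tfrac{1}{\sqrt{N}\eps}\sum_{x=1}^{\eps N}[g(\eta_s(x))-\bar\varphi_N(x)]$. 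I then perform the telescoping decomposition
\begin{equation*}
    \sum_{x=1}^{\eps N}[g(\eta_s(x))-\bar\varphi_N(x)] = \eps N\,[g(\eta_s(1))-\bar\varphi_N(1)] + \sum_{y=1}^{\eps N-1}(\eps N - y)\big\{\tilde g_s(y+1)-\tilde g_s(y)\big\},
\end{equation*}
with $\tilde g_s(y) := g(\eta_s(y))-\bar\varphi_N(y)$. After dividing by $\sqrt{N}\eps$, the first summand becomes $\sqrt{N}[g(\eta_s(1))-\bar\varphi_N(1)]$; by Lemma~\ref{lemma:repl_boundaries}, its supremum-squared $L^2$-norm is bounded by $N\cdot N^{\theta-2}=N^{\theta-1}$, which vanishes for $\theta<1$ (in particular for $\theta<0$) as $N\to\infty$.

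For the telescoping remainder, I apply a Kipnis-Varadhan estimate in the spirit of Lemma~\ref{lemma:repl_boundaries}, but exploiting the bulk Dirichlet form $\mathscr{D}_N^0$. For each bond $(y, y+1)$, the change of variables $\eta\mapsto \eta^{y, y+1}$ and \eqref{eq:der_radnyk} give
\begin{equation*}
    \int\big[g(\eta(y+1))-g(\eta(y))\big]f(\eta)\de\bar\nu_N = \int g(\eta(y))\big[f(\eta^{y, y+1})-f(\eta)\big]\de\bar\nu_N + \mathcal{E}_y(f),
\end{equation*}
where $\mathcal{E}_y(f)$ is a non-reversibility correction of magnitude $O(1/N)$ (stemming from $\bar\varphi_N(y+1)/\bar\varphi_N(y)-1=O(1/N)$). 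The main term is absorbed into $N^2\mathscr{D}_N^0(f)$ via Young's inequality with weight $A_y\propto (\eps N-y)/(N^{5/2}\eps)$; summation over $y$ yields a remainder of order $\sum_{y}(\eps N-y)^2/(N^3\eps^2)\lesssim \eps$ by Lemma~\ref{lemma:g_moments}. The contribution of $\mathcal{E}_y$, once time-integrated and squared, is also $O(\eps)$ by a direct $L^2$-computation combined with Lemmas~\ref{lemma:bounded_moments} and~\ref{lemma:f_eta_xy}. Letting $N\to\infty$ and then $\eps\to0$ gives the claim.

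The main obstacle is the Kipnis-Varadhan step for the bulk telescoping sum: since $\bar\nu_N$ is \emph{not} reversible for the bulk dynamics, differences $g(\eta(y+1))-g(\eta(y))$ cannot be expressed directly through the symmetric Dirichlet form, and the non-reversibility correction $\mathcal{E}_y$ must be handled separately. Its smallness rests on the ``smoothness in space'' of the fugacity $\bar\varphi_N(y+1)/\bar\varphi_N(y)=1+O(1/N)$, mirroring the adaptation via the auxiliary operator $\mathcal{\widehat L}_{B_i}^N$ in the proof of Theorem~\ref{thm:boltzmann_gibbs}.
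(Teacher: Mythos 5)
Your proposal is correct and follows essentially the same route as the paper: reduction to the pre-limit statement, the local Boltzmann--Gibbs principle plus the replacement of $\Phi'(\rho_N(x))$ by $\Phi'(\bar\rho_\theta(\tfrac{x}{N}))$, telescoping to the boundary site handled by Lemma~\ref{lemma:repl_boundaries}, and a Kipnis--Varadhan bound for the weighted-gradient part of the telescoping remainder together with a direct $L^2$/Cauchy--Schwarz bound of order $\eps$ for the non-reversibility correction built from the centred variable $\bar g$ and the factor $\frac{\bar\varphi_N(y)}{\bar\varphi_N(y+1)}-1=O(N^{-1})$ (this is precisely the paper's Lemma~\ref{lemma:local_limit}). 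The only point to tighten is that the correction must be split off as a separate additive functional of $\eta_s$ \emph{before} invoking Kipnis--Varadhan, rather than appearing as a term $\mathcal{E}_y(f)$ inside the variational formula where $f$ is the (unbounded) variational test function; your subsequent ``time-integrated and squared'' computation already presupposes this, so the fix is purely presentational.
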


\begin{proof} We will only show the result for $\iota_\eps^0$, as the proof for $\iota_\eps^1$ is completely analogous. Fix $\eps>0$ and let $\{H_\eps^m\}_{m\ge1}$ be a sequence of functions in $\mathcal{S}_\theta$ such that $\mylim_{m\to\infty}\|H_\eps^m-\iota_\eps^0\|_{L^2}^2=0$. By condition i) of Proposition~\ref{prop:uniqueness_neg} and the Cauchy-Schwarz inequality, we see that
\begin{align*}
    \expected\left[\mysup_{t\in[0, T]}\left(\int_0^t \mathscr{Y}_s(\Phi'(\bar\rho_\theta)\iota_\eps^0)\de s\right)^2\right]&\lesssim T^2 \left\|\Phi'(\bar\rho_\theta)(H_\eps^m-\iota_\eps^0)\right\|_{L^2}^2
    \\&\phantom{\le}+ \expected\left[\mysup_{t\in[0, T]}\left(\int_0^t \mathscr{Y}_s(\Phi'(\bar\rho_\theta)H_\eps^m)\de s\right)^2\right].
\end{align*}
Note now that, if we equip $\mathcal{D}([0, T], \mathcal{S}_\theta')$ with the uniform topology, for any $H\in\mathcal{S}_\theta$, the map
\begin{equation*}
    \mathcal{D}([0, T], \mathcal{S}_\theta') \ni \mathscr{Z}\mapsto \mysup_{t\in[0, T]}\left(\int_0^t\mathscr{Z}_s(H)\de s\right)^2
\in [0, \infty)
\end{equation*}
is continuous, and in particular it is lower semi-continuous and bounded from below. Hence, by the convergence of $\{\mathscr{Y}^N_t, t\in[0, T]\}$ to $\{\mathscr{Y}_t, t\in[0, T]\}$ in distribution, we have that
\begin{equation*}
    \expected\left[\mysup_{t\in[0, T]}\hspace{-.1em}\left(\int_0^t \mathscr{Y}_s(\Phi'(\bar\rho_\theta)H_\eps^m)\de s\right)^2\right]\hspace{-.2em}\le\myliminf_{N\to\infty} \expected_{\bar\nu_N}\hspace{-.2em}\left[\mysup_{t\in[0, T]}\hspace{-.1em}\left(\int_0^t \mathscr{Y}_s^N(\Phi'(\bar\rho_\theta)H_\eps^m)\de s\right)^2\right].
\end{equation*}
By the Cauchy-Schwarz inequality and the stationarity of $\{\mathscr{Y}^N_t, t\in[0, T]\}$ with respect to $\bar\nu_N$, we have that
\begin{align*}
    \expected_{\bar\nu_N}\left[\mysup_{t\in[0, T]}\left(\int_0^t \mathscr{Y}_s^N(\Phi'(\bar\rho_\theta)H_\eps^m)\de s\right)^2\right]&\lesssim T^2
    \left\|\Phi'(\bar\rho_\theta)(H_\eps^m-\iota_\eps^0)\right\|_{L^2}^2
    \\&\phantom{\lesssim}+\expected_{\bar\nu_N}\left[\mysup_{t\in[0, T]}\left(\int_0^t \mathscr{Y}_s^N(\Phi'(\bar\rho_\theta)\iota_\eps^0)\de s\right)^2\right].
\end{align*}
Then, by the linearity of $\{\mathscr{Y}_t(\cdot), t\in[0, T]\}$, a standard convexity inequality and Lemma~\ref{lemma:def_Y_eps}, we see that
\begin{align*}
    \expected\left[\mysup_{t\in[0, T]}\left(\int_0^t \mathscr{Y}_s(\Phi'(\bar\rho_\theta)\iota_\eps^0)\de s\right)^2\right]&\lesssim T^2\|\Phi'(\bar\rho_\theta)\|_{\infty}^2
    \|H_\eps^m-\iota_\eps^0\|_{L^2}^2
    \\&\phantom{\le}+\,\myliminf_{N\to\infty}\expected_{\bar\nu_N}\left[\mysup_{t\in[0, T]}\left(\int_0^t \mathscr{Y}_s^N(\Phi'(\bar\rho_\theta)\iota_\eps^0)\de s\right)^2\right].
\end{align*}
Sending $m\to\infty$, we get that it suffices to show that
\begin{equation*}
    \mylim_{\eps\to0}\mylim_{N\to\infty}\expected_{\bar\nu_N}\left[\mysup_{t\in[0, T]}\left(\int_0^t\mathscr{Y}_s^N(\Phi'(\bar\rho_\theta)\iota_\eps^0)\de s\right)^2\right]=0.
\end{equation*}
To this end, we write
\begin{align}
    &\expected_{\bar\nu_N}\left[\mysup_{t\in[0, T]}\left(\int_0^t\mathscr{Y}_s^N(\Phi'(\bar\rho_\theta)\iota_\eps^0)\de s\right)^2\right]\nonumber 
    \\&=\expected_{\bar\nu_N}\left[\mysup_{t\in[0, T]}\left(\int_0^t\frac{1}{\eps\sqrt{N}}\sum_{x=1}^{\eps N} \Phi'\left(\bar\rho_\theta\left(\frac{x}{N}\right)\right)\bar\eta_s^N(x)\de s\right)^2\right]\nonumber 
    \\&\lesssim \expected_{\bar\nu_N}\left[\mysup_{t\in[0, T]}\left(\int_0^t\frac{1}{\eps\sqrt{N}}\sum_{x=1}^{\eps N} \left\{\Phi'\left(\bar\rho_\theta\left(\frac{x}{N}\right)\right)-\Phi'(\rho_N(x))\right\}\bar\eta_s^N(x)\de s\right)^2\right]\label{eq:new_1}
    \\&\phantom{\lesssim}+\expected_{\bar\nu_N}\left[\mysup_{t\in[0, T]}\left(\int_0^t\frac{1}{\eps\sqrt{N}}\sum_{x=1}^{\eps N} \left\{\Phi'(\rho_N(x))\bar\eta_s^N(x)-\big(g(\eta_s(x))-\varphi_N(x)\big)\right\}\de s\right)^2\right]\label{eq:new_2} \\&\phantom{\lesssim}+\expected_{\bar\nu_N}\left[\mysup_{t\in[0, T]}\left(\frac{1}{\eps\sqrt{N}}\int_0^t\sum_{x=1}^{\eps N} \big(g(\eta_s(x))-\bar\varphi_N(x)\big)\de s\right)^2\right]. \nonumber 
\end{align}
Now, for each $x\in I_N$, $\left|\Phi'\left(\bar\rho_\theta\left(\frac{x}{N}\right)\right)-\Phi'(\rho_N(x))\right|$ goes to zero as $N\to\infty$ (by definition of $\bar\rho_\theta$), and hence \eqref{eq:new_1} vanishes as $N\to\infty$. By the local Boltzmann-Gibbs principle (Theorem~\ref{thm:local_boltzmann_gibbs}), \eqref{eq:new_2} also vanishes as $N\to\infty$. Thus, it only remains to show that
\begin{equation*}
    \mylim_{\eps\to0}\mylim_{N\to\infty}\expected_{\bar\nu_N}\left[\mysup_{t\in[0, T]}\left(\frac{1}{\eps\sqrt{N}}\int_0^t\sum_{x=1}^{\eps N} \big(g(\eta_s(x))-\bar\varphi_N(x)\big)\de s\right)^2\right]=0.
\end{equation*}
This is achieved by Lemma~\ref{lemma:local_limit} below.
\end{proof}

\begin{lemma}\label{lemma:local_limit} For each $\theta<0$, we have that
\begin{equation*}
    \mylim_{\eps\to0}\mylim_{N\to\infty}\expected_{\bar\nu_N}\left[\mysup_{t\in[0, T]}\left(\frac{1}{\eps\sqrt{N}}\int_0^t\sum_{x=1}^{\eps N} \big(g(\eta_s(x))-\bar\varphi_N(x)\big)\de s\right)^2\right]=0.
\end{equation*}
\end{lemma}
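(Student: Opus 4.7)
The plan is to reduce the estimate to a Dirichlet-type variational bound via the Kipnis--Varadhan inequality~\cite[Lemma~2.4]{klo12}. Writing
\begin{equation*}
    V_N^\eps(\eta):=\frac{1}{\eps\sqrt{N}}\sum_{x=1}^{\eps N}G_x(\eta), \qquad G_x(\eta):=g(\eta(x))-\bar\varphi_N(x),
\end{equation*}
the identity $E_{\bar\nu_N}[G_x]=0$ allows me to bound the expectation in the statement by a constant multiple of $T\|V_N^\eps\|_{-1, N}^2$, where $\|V_N^\eps\|_{-1, N}^2:=\mysup_{f\in L^2(\bar\nu_N)}\{2\int V_N^\eps f\de\bar\nu_N-N^2\mathscr{D}_N(f)\}$. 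Throughout, I will use that, for $N$ large, $\bar\varphi_N$ is uniformly bounded above and below by positive constants.

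The first step is a recursive identity for $\int G_x f\de\bar\nu_N$. Performing the change of variable $\eta\mapsto\eta^{x-1,x}$ in $\int g(\eta(x))f(\eta^{x,x-1})\de\bar\nu_N$ and using \eqref{eq:der_radnyk}, exactly as in the proof of Lemma~\ref{lemma:repl_boundaries}, I would obtain
\begin{equation*}
    \int G_x f\de\bar\nu_N=\int g(\eta(x))\big[f(\eta)-f(\eta^{x,x-1})\big]\de\bar\nu_N+\frac{\bar\varphi_N(x)}{\bar\varphi_N(x-1)}\int G_{x-1}f\de\bar\nu_N.
\end{equation*}
Iterating this down to $x=1$, summing over $x\in\{1,\ldots,\eps N\}$ and exchanging the order of summation, I would rewrite
\begin{equation*}
    2\int V_N^\eps f\de\bar\nu_N=\frac{2}{\eps\sqrt{N}}\sum_{y=2}^{\eps N}W_y\,\Lambda_y(f)+\frac{2W_0}{\eps\sqrt{N}}\int G_1 f\de\bar\nu_N=:S_1(f)+S_2(f),
\end{equation*}
where $\Lambda_y(f):=\int g(\eta(y))[f(\eta)-f(\eta^{y,y-1})]\de\bar\nu_N$, $W_y:=\sum_{x=y}^{\eps N}\bar\varphi_N(x)/\bar\varphi_N(y)\lesssim \eps N-y+1$, and $W_0:=\sum_{x=1}^{\eps N}\bar\varphi_N(x)/\bar\varphi_N(1)\lesssim \eps N$.

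Two Cauchy--Schwarz inequalities, the relation $\sum_y\Gamma_y(f)\le 2\mathscr{D}_N^0(f)\le 2\mathscr{D}_N(f)$ (Lemma~\ref{lemma:dirichlet}, with $\Gamma_y(f):=\int g(\eta(y))[f(\eta)-f(\eta^{y,y-1})]^2\de\bar\nu_N$), and the estimate $\sum_y W_y^2\bar\varphi_N(y)\lesssim(\eps N)^3$ would then give $|S_1(f)|\lesssim\sqrt{\eps}\sqrt{N^2\mathscr{D}_N(f)}$, and Young's inequality would yield $\mysup_f\{|S_1(f)|-N^2\mathscr{D}_N(f)/2\}\lesssim\eps$. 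For $S_2$, the quadratic homogeneity of $\|\cdot\|_{-1,N}^2$ together with Lemma~\ref{lemma:repl_boundaries}, which provides $\|G_1\|_{-1,N}^2\lesssim N^{\theta-2}$, would give
\begin{equation*}
    \mysup_f\left\{S_2(f)-\frac{N^2\mathscr{D}_N(f)}{2}\right\}\lesssim\left(\frac{W_0}{\eps\sqrt{N}}\right)^2\|G_1\|_{-1,N}^2\lesssim N\cdot N^{\theta-2}=N^{\theta-1}.
\end{equation*}
Combining the two pieces with the subadditivity $\|V_N^\eps\|_{-1,N}^2\le\mysup_f\{S_1(f)-N^2\mathscr{D}_N(f)/2\}+\mysup_f\{S_2(f)-N^2\mathscr{D}_N(f)/2\}$, I would conclude $\|V_N^\eps\|_{-1,N}^2\lesssim\eps+N^{\theta-1}$, from which the double limit in the statement follows immediately, as $\theta<0$.

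The main technical hurdle will be the non-reversibility of $\bar\nu_N$: the bulk terms $G_x$ are not directly controlled by any symmetric microscopic jump and hence cannot be absorbed into the Dirichlet form in the usual way. The recursion above circumvents this by transferring the weight of each $G_x$ onto bulk gradients (which are absorbable into $N^2\mathscr{D}_N^0(f)$) together with the fast boundary term $G_1$, whose smallness in the regime $\theta<0$, quantified by Lemma~\ref{lemma:repl_boundaries}, exactly compensates the large prefactor $W_0/(\eps\sqrt{N})\lesssim\sqrt{N}$ produced by summing over the box of size $\eps N$.
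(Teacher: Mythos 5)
Your proof is correct, and the three load-bearing ingredients are the same as in the paper's argument: the Kipnis--Varadhan inequality, a telescoping of the bulk observable down to the boundary site $1$ that exploits the exact identity \eqref{eq:der_radnyk} for the fugacity ratios, and the smallness of $g(\eta(1))-\bar\varphi_N(1)$ in the $\theta<0$ regime coming from Lemma~\ref{lemma:repl_boundaries}. The organisation is genuinely different, though. The paper first splits the observable additively (adding and subtracting $g(\eta_s(1))-\bar\varphi_N(1)$ at every site, then writing the difference as a double sum of the ``exactly integrable by parts'' increments $g(\eta(y))-\tfrac{\bar\varphi_N(y)}{\bar\varphi_N(y+1)}g(\eta(y+1))$ plus a correction proportional to $\tfrac{\bar\varphi_N(y)}{\bar\varphi_N(y+1)}-1$), and then estimates the three resulting pieces separately: Kipnis--Varadhan for the exact-gradient piece, a plain Cauchy--Schwarz in time for the $O(N^{-1})$ correction, and Lemma~\ref{lemma:repl_boundaries} for the boundary piece. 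You instead apply Kipnis--Varadhan once to the full sum and run the telescoping \emph{inside} the variational formula via the multiplicative recursion $\int G_x f = \Lambda_x(f)+\tfrac{\bar\varphi_N(x)}{\bar\varphi_N(x-1)}\int G_{x-1}f$; since this recursion is exact, no correction term appears at all, at the price of carrying the weights $W_y\lesssim \eps N-y+1$, which your Cauchy--Schwarz in $y$ handles and which produce the same $O(\eps)$ contribution as the paper's. Your treatment of the boundary piece via quadratic homogeneity of the $H^{-1}$-type norm is also a clean alternative to the paper's direct time-integral estimate; both yield $O(N^{\theta-1})$. One small point of attribution: the bound $\|G_1\|_{-1,N}^2\lesssim N^{\theta-2}$ is not the \emph{statement} of Lemma~\ref{lemma:repl_boundaries} but the variational estimate established inside its proof (the statement only gives the bound after Kipnis--Varadhan has been applied), so you should cite the proof rather than the lemma; this is cosmetic and does not affect correctness.
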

\begin{proof} By a standard convexity inequality, we have that
\begin{align}
    &\expected_{\bar\nu_N}\left[\mysup_{t\in[0, T]}\left(\frac{1}{\eps\sqrt{N}}\int_0^t\sum_{x=1}^{\eps N} \big(g(\eta_s(x))-\bar\varphi_N(x)\big)\de s\right)^2\right]\nonumber
    \\&\le 2\,\expected_{\bar\nu_N}\left[\mysup_{t\in[0, T]}\left(\frac{1}{\eps\sqrt{N}}\int_0^t\sum_{x=1}^{\eps N} \big(g(\eta_s(x))-g(\eta_s(1))+\bar\varphi_N(1)-\bar\varphi_N(x)\big)\de s\right)^2\right]\label{eq:long_term}
    \\&\phantom{\le}+2\,\expected_{\bar\nu_N}\left[\mysup_{t\in[0, T]}\left(\frac{1}{\eps\sqrt{N}}\int_0^t\sum_{x=1}^{\eps N} \big(g(\eta_s(1))-\bar\varphi_N(1)\big)\de s\right)^2\right].\label{eq:rl_term}
\end{align}
By Lemma~\ref{lemma:repl_boundaries}, we immediately see that
\begin{equation*}
    \eqref{eq:rl_term}=2\,\expected_{\bar\nu_N}\left[\mysup_{t\in[0, T]}\left(\sqrt{N}\int_0^t\big(g(\eta_s(1))-\bar\varphi_N(1)\big)\de s\right)^2\right]\lesssim \frac{1}{\eps^2} N^{\theta-1},
\end{equation*}
which vanishes as $N\to\infty$ for any $\theta<0$. 

Let now $\bar g(\eta(x)):=g(\eta(x))-\bar\varphi_N(x)$ for each $x\in I_N$ (so that $E_{\bar\nu_N}[\bar g(\eta(x))]=0$). It is not hard to check that we can write
\begin{align*}
    &\sum_{x=1}^{\eps N} \big(g(\eta(x))-g(\eta(1))+\bar\varphi_N(1)-\bar\varphi_N(x)\big)
    \\&=\sum_{x=1}^{\eps N} \sum_{y=1}^{x-1}\left\{g(\eta(y))-\frac{\bar\varphi_N(y)}{\bar\varphi_N(y+1)}g(\eta(y+1))\right\}
    \\&\phantom{=}+\sum_{x=1}^{\eps N} \sum_{y=1}^{x-1}\bar g(\eta(y+1))\left\{\frac{\bar\varphi_N(y)}{\bar\varphi_N(y+1)}-1\right\}.\label{eq:cs_term}
\end{align*}
Hence, by the same convexity inequality as above, it only remains to bound
\begin{equation}\label{eq:kv_term}
    \expected_{\bar\nu_N}\left[\mysup_{t\in[0, T]}\left(\frac{1}{\eps\sqrt{N}}\int_0^t\sum_{x=1}^{\eps N} \sum_{y=1}^{x-1}\left\{g(\eta_s(y))-\frac{\bar\varphi_N(y)}{\bar\varphi_N(y+1)}g(\eta_s(y+1))\right\}\de s\right)^2\right]
\end{equation}
and
\begin{equation}\label{eq:cs_term}
    \expected_{\bar\nu_N}\left[\mysup_{t\in[0, T]}\left(\frac{1}{\eps\sqrt{N}}\int_0^t\sum_{x=1}^{\eps N} \sum_{y=1}^{x-1}\bar g(\eta_s(y+1))\left\{\frac{\bar\varphi_N(y)}{\bar\varphi_N(y+1)}-1\right\}\de s\right)^2\right].
\end{equation}
Now, by the Kipnis-Varadhan inequality~\cite[Lemma~2.4]{klo12}, we have that
\begin{equation}\label{eq:long_sup}
    \begin{split}
    \eqref{eq:kv_term}\lesssim T\mysup_{f\in L^2(\bar\nu_N)}\Bigg\{&\int_{\Omega_N}\frac{2}{\eps\sqrt{N}}\sum_{x=1}^{\eps N}\sum_{y=1}^{x-1}\left\{g(\eta(y))-\frac{\bar\varphi_N(y)}{\bar\varphi_N(y+1)}g(\eta(y+1))\right\}f(\eta)\de \bar\nu_N 
    \\&-N^2\mathscr{D}_N(f)\Bigg\},
    \end{split}
\end{equation}
with $\mathscr{D}_N(f)$ defined in \eqref{def:dirichlet_form}. We now write
\begin{align}
    &\int_{\Omega_N}\frac{2}{\eps\sqrt{N}}\sum_{x=1}^{\eps N}\sum_{y=1}^{x-1}g(\eta(y))f(\eta)\de \bar\nu_N \nonumber
    \\&=\int_{\Omega_N}\frac{2}{\eps\sqrt{N}}\sum_{x=1}^{\eps N}\sum_{y=1}^{x-1}g(\eta(y))\left[f(\eta)-f(\eta^{y, y+1})\right]\de \bar\nu_N \label{eq:young_term}
    \\&\phantom{=}+\int_{\Omega_N}\frac{2}{\eps\sqrt{N}}\sum_{x=1}^{\eps N}\sum_{y=1}^{x-1}g(\eta(y))f(\eta^{y, y+1})\de \bar\nu_N.\label{eq:cancellation_term}
\end{align}
By Young's inequality, for each $A>0$ we see that
\begin{align}
    \eqref{eq:young_term}&\le \frac{A}{\eps\sqrt{N}}\sum_{x=1}^{\eps N}\sum_{y=1}^{x-1}\int_{\Omega_N}g(\eta(y))\left[f(\eta)-f(\eta^{y, y+1})\right]^2\de \bar\nu_N\label{eq:dir_term}
    \\&\phantom{\le}+\frac{1}{A\eps\sqrt{N}}\sum_{x=1}^{\eps N}\sum_{y=1}^{x-1}\int_{\Omega_N}g(\eta(y))\de \bar\nu_N.\label{eq:vanishing_term}
\end{align}
Choosing $A=N^{3/2}$, we get $\eqref{eq:dir_term}\le N^2\mathscr{D}_N(f)$. Moreover, since $E_{\bar\nu_N}[g(\eta(y))]$ is uniformly bounded in $N$ and $y\in I_N$, this choice yields $\eqref{eq:vanishing_term}\lesssim \frac{(\eps N)^2}{N^{3/2}\eps\sqrt{N}}=\eps$, which vanishes as $N\to\infty$ and $\eps\to0$.

As for \eqref{eq:cancellation_term}, by performing the change of variable $\eta\mapsto\eta^{y+1, y}$ and using the identity
\begin{equation*}
    \frac{\bar\nu_N(\eta^{y+1, y})}{\bar\nu_N(\eta)}=\frac{\bar\varphi_N(y)}{\bar\varphi_N(y+1)}\frac{g(\eta(y+1))}{g(\eta(y)+1)}
\end{equation*}
(which follows immediately from \eqref{eq:nu}), we see that
\begin{equation*}
    \eqref{eq:cancellation_term}=\int_{\Omega_N}\frac{2}{\eps\sqrt{N}}\sum_{x=1}^{\eps N}\sum_{y=1}^{x-1}g(\eta(y+1))f(\eta)\frac{\bar\varphi_N(y)}{\bar\varphi_N(y+1)}\de \bar\nu_N.
\end{equation*}
But then, this term cancels with the remaining contribution in the supremum in \eqref{eq:long_sup}, so that this all yields $\mylim_{\eps\to0}\mylim_{N\to\infty}\eqref{eq:kv_term}=0$.

Finally, it remains to treat \eqref{eq:cs_term}: by the Cauchy-Schwarz inequality, since $\bar\nu_N$ is a product measure and by Lemma~\ref{lemma:g_moments}, we have that
\begin{equation*}
    \eqref{eq:cs_term}\lesssim \frac{T^2}{\eps^2 N} \sum_{x=1}^{\eps N}\sum_{y=1}^{x-1}\sum_{z=1}^{\eps N}\left\{\frac{\bar\varphi_N(y)}{\bar\varphi_N(y+1)}-1\right\}^2E_{\bar\nu_N}[\bar g(\eta(y))^2]\lesssim \frac{T^2(\eps N)^3}{\eps^2 N\cdot N^2}=T^2\eps ,
\end{equation*}
where we used $\left|\frac{\bar\varphi_N(y)}{\bar\varphi_N(y+1)}-1\right|\lesssim\frac{1}{N}$. Sending $\eps\to0$ completes the proof.
\end{proof}

We are now ready to conclude the proof of Theorem~\ref{thm:fluctuations}.

\begin{proof}[Proof of Theorem~\ref{thm:fluctuations}]
By the arguments given in Section~\ref{subsec:dynkin}, together with Lemmas~\ref{lemma:initial_time} and \ref{lemma:martingale_conv}, we immediately get that any limit point $\mathcal{Q}$ of $\{\mathcal{Q}^N\}_N$ is concentrated on stationary paths $\mathscr{Y}\in\mathcal{D}([0, T], \mathcal{S}'_\theta)$ solving equation \eqref{eq:general_OU} with operators \eqref{eq:OU_operators} started at a Gaussian field $\mathscr{Y}_0$ satisfying \eqref{eq:Y_covariance}. But now, it follows from Lemma~\ref{lemma:martingale_conv} that, for each $H\in\mathcal{S}_\theta$, $\mathscr{Y}(H)$ has continuous trajectories: by a standard argument, this implies that $\mathscr{Y}$ has continuous trajectories with respect to the strong topology of $\mathcal{S}'_\theta$. Hence, we get the following:
\begin{enumerate}[i)]
    
    \item $\theta\ge0$: by Proposition~\ref{prop:uniqueness}, $\mathcal{Q}$ is concentrated on the \textit{unique} stationary path $\mathscr{Y}\in\mathcal{C}([0, T], \mathcal{S}'_\theta)$ solving equation \eqref{eq:general_OU} with operators \eqref{eq:OU_operators} and started at $\mathscr{Y}_0$ satisfying \eqref{eq:Y_covariance};

    \item $\theta<0$: by Proposition~\ref{prop:uniqueness_neg}, the same argument as for $\theta\ge0$ and the sequence of results above, the claim follows.
    
\end{enumerate}

This completes the proof.
\end{proof}

\section{Proof of Proposition~\ref{prop:fluctuations_new}}\label{sec:martingale_new}

The proof of this proposition follows the same steps as the proof of Theorem~\ref{thm:fluctuations} presented in the previous section. We therefore do not repeat it in full, but only highlight the terms that require additional arguments. These are precisely the terms $\mathscr{Z}_t^{\theta, i}(H)$ appearing in \eqref{eq:martingale_problem_new}.

\begin{proof}[Proof of Proposition~\ref{prop:fluctuations_new}] Recall \eqref{eq:dynkin_terms}. We present the proof only for the case $\theta=1$, since the case $\theta>1$ is completely analogous and, in fact, simpler. Observe that the space of test functions is now $\widetilde{S}$, and therefore we cannot use boundary conditions to simplify the expression of the martingale. When $\theta=1$, the terms that require further analysis are those appearing in the second through fourth lines of \eqref{eq:dynkin_terms}. We focus on the left boundary terms, as the right boundary terms can be treated in the same way. We thus need to analyse
\begin{equation*}
    \sqrt{N}[g(\eta_s(1))-\bar\varphi_N(1)]
    \left\{\nabla_N^+H\left(\frac{1}{N}\right)
    -\lambda H\left(\frac{1}{N}\right)\right\}.
\end{equation*}
We first claim that
\begin{equation*}
    \mylim_{N\to\infty} \expected_{\bar\nu_N}\left[\left(\int_0^t\Big\{\sqrt{N}[g(\eta_s(1))-\bar\varphi_N(1)]-\mathscr{Y}_s^N(\Phi'(\bar\rho_\theta)\iota_\eps^0)\Big\}\de s\right)^2\right] \lesssim \eps(T^2+1).
\end{equation*}
To prove this claim, we apply a standard convexity inequality and bound the expectation above by the sum of the terms in \eqref{eq:new_1}, \eqref{eq:new_2}, and \eqref{eq:long_term}. Collecting these bounds yields the desired estimate. By the triangle inequality and for $\delta<\eps$, we further obtain
\begin{equation*}
    \mylim_{N\to\infty}
    \expected_{\bar\nu_N}\left[\left(\int_0^t\Big\{\mathscr{Y}_s^N(\Phi'(\bar\rho_\theta)\iota_\eps^0)-\mathscr{Y}_s^N(\Phi'(\bar\rho_\theta)\iota_\delta^0)\Big\}\de s\right)^2\right] \lesssim  \eps(T^2+1).
\end{equation*}
Since upper moment bounds are stable under convergence in distribution, by passing to the limit along a subsequence we obtain
\begin{equation}
    \expected\left[\left(\int_0^t\Big\{\mathscr{Y}_s(\Phi'(\bar\rho_\theta)\iota_\eps^0)-\mathscr{Y}_s(\Phi'(\bar\rho_\theta)\iota_\delta^0)\Big\}\de s\right)^2\right] \lesssim  \eps(T^2+1).
\end{equation}
Therefore, the family
$\{\mathscr{Y}_s(\Phi'(\bar\rho_\theta)\iota_\eps^0)\}_{\eps>0}$
is a Cauchy sequence, and hence it converges in $L^2$. We denote its limit by $\mathscr{Z}_t^{\theta, 0}(H)$. For simplicity, we have assumed that the test functions are time-independent; however, the argument extends verbatim to the time-dependent case. Finally, to handle the quadratic variation, it suffices to repeat the same steps leading to \eqref{eq:quad_var}, which do not rely on boundary conditions for the test functions. This shows that the limiting martingale problem satisfies \eqref{eq:martingale_problem_new}, which completes the proof.
\end{proof}

\appendix
%%%%%%%%%%%%%%%%%%%%%%%%%%%%%%%%%%%%%%%%%%%%%%%%%%
%%%UNIQUENESS OF THE MARTINGALE PROBLEM%%%%%%%%%%%
%%%%%%%%%%%%%%%%%%%%%%%%%%%%%%%%%%%%%%%%%%%%%%%%%%
\section{Uniqueness of the Martingale Problems}\label{sec:app_uniqueness}

In this appendix we prove Propositions~\ref{prop:uniqueness} and \ref{prop:uniqueness_neg}. Our method to show the former can be seen as a generalisation of the approach used to show an analogue uniqueness result in~\cite{fgn19}, whereas for the latter we follow \cite{bgjs22}.

Throughout, we will denote by $\|\cdot\|_\infty$ the $L^\infty$ norm on $[0, 1]$, namely $\|f(u)\|_\infty:=\mysup_{u\in[0, 1]}|f(u)|$ for each $f:[0, 1]\to\mathbb{R}$. For the sake of completeness, we start by showing the following property of the map $\Phi'(\bar\rho_\theta(\cdot))$. 

\begin{lemma}\label{lemma:Phi'_positive} For each $\theta\in\mathbb{R}$, we have that $\Phi'(\bar\rho_\theta(u))>0$ for every $u\in [0, 1]$. 
\end{lemma}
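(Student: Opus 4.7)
The plan is to leverage the identity $\Phi = R^{-1}$ from \eqref{eq:Phi}, together with Remark~\ref{remark:Phi_smooth} and the inverse function theorem, which gives $\Phi'(\rho) = 1/R'(\Phi(\rho))$ at any point where $R' > 0$. Since $\bar\rho_\theta(u) = R(\bar\varphi_\theta(u))$ by Remark~\ref{remark:limit_density}, this reads $\Phi'(\bar\rho_\theta(u)) = 1/R'(\bar\varphi_\theta(u))$, so everything reduces to proving $R'(\bar\varphi_\theta(u)) > 0$ for every $u \in [0,1]$.

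The key step is the probabilistic interpretation of $R'$. Writing $R(\varphi) = A(\varphi)/Z(\varphi)$ with $A(\varphi) := \sum_{j\ge 0} j\varphi^j / g(j)!$, a direct computation using $\varphi Z'(\varphi) = A(\varphi)$ and $\varphi A'(\varphi) = \sum_{j\ge 0} j^2 \varphi^j/g(j)!$ yields
\begin{equation*}
\varphi R'(\varphi) = \frac{\varphi A'(\varphi)}{Z(\varphi)} - \frac{A(\varphi)\,\varphi Z'(\varphi)}{Z(\varphi)^2} = E_{\bar\nu_\varphi}[\eta(x)^2] - E_{\bar\nu_\varphi}[\eta(x)]^2 = \Var_{\bar\nu_\varphi}(\eta(x)),
\end{equation*}
where $\bar\nu_\varphi$ is the product measure with constant fugacity $\varphi$. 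For $\varphi > 0$, this variance is strictly positive because $\bar\nu_\varphi$ assigns strictly positive mass to both $\{\eta(x)=0\}$ and $\{\eta(x)=1\}$ (since $g$ is strictly positive on positive integers), making the marginal genuinely non-degenerate; hence $R'(\varphi) > 0$ for every $\varphi \in (0, \varphi^*)$.

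Finally, I would combine this with the fact that, under the standing assumption $\alpha, \beta, \lambda, \delta > 0$, the explicit formulas in Remark~\ref{remark:limit_fugacity} show that $\bar\varphi_\theta(u) > 0$ for every $u \in [0,1]$ and every $\theta \in \mathbb{R}$; moreover, condition \eqref{eq:params_condition} in Lemma~\ref{lemma:inv_measure}, together with the convergence of $\bar\varphi_N$ to the continuous profile $\bar\varphi_\theta$, guarantees $\bar\varphi_\theta(u) < \varphi^*$ uniformly on $[0,1]$. Consequently $\bar\varphi_\theta(u) \in (0, \varphi^*)$, which by the previous step forces $R'(\bar\varphi_\theta(u)) > 0$ and hence $\Phi'(\bar\rho_\theta(u)) > 0$, as claimed. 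There is no real obstacle here: the argument is a clean combination of the inverse function theorem with the classical variance identity, and the hypothesis that all reservoir parameters are strictly positive rules out the only borderline case $\varphi = 0$.
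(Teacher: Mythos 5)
Your proposal is correct and follows essentially the same route as the paper's proof: both reduce to the identity $\Phi'(\rho)=1/R'(\Phi(\rho))$ and the variance formula $\varphi R'(\varphi)=\Var_{\bar\nu_\varphi}(\eta(x))$, then use that $\bar\varphi_\theta$ takes values in $(0,\varphi^*)$. Your explicit justification of strict positivity of the variance (positive mass at $\{\eta(x)=0\}$ and $\{\eta(x)=1\}$) is a small but welcome elaboration of a point the paper leaves implicit.
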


\begin{proof} Recalling the definition of $\Phi$ as the inverse of the function $R$ defined in \eqref{eq:R}, we can write 
\begin{equation}\label{eq:Phi'=1/R}
    \Phi'(\rho)=\frac{1}{R'(\Phi(\rho))}
\end{equation}
for each $\rho\ge0$. For $\varphi\in [0, \varphi^*)$, let $\mu_\varphi$ denote the probability distribution on $\mathbb{N}$ given by $\mu_\varphi(j):=Z(\varphi)^{-1}\frac{\varphi^j}{g(j)!}$, and let
\begin{equation*}
    Z_1(\varphi):=\sum_{j=0}^\infty \frac{j\varphi^j}{g(j)!} \quad \text{and} \quad Z_2(\varphi):=\sum_{j=0}^\infty \frac{j^2\varphi^j}{g(j)!}.
\end{equation*}
Then, writing $R=\frac{Z_1}{Z}$ and denoting by $E_\varphi[\,\cdot\,]$ and $\Var_\varphi(\cdot)$ the expectation and the variance with respect to $\mu_\varphi$, respectively, we see that
\begin{align}
    R'(\varphi) &= \frac{Z_1'(\varphi)Z(\varphi)-Z_1(\varphi)Z'(\varphi)}{Z(\varphi)^2}\nonumber
     = \frac{1}{\varphi}\frac{Z_2(\varphi)Z(\varphi)-Z_1(\varphi)^2}{Z(\varphi)^2}\nonumber
    \\&=\frac{1}{\varphi}\left(E_\varphi[j^2]-E_\varphi[j]^2\right)
    =\frac{\Var_\varphi(j)}{\varphi},\label{eq:var_varphi}
\end{align}
which, by \eqref{eq:Phi'=1/R}, immediately gives $\Phi'(\rho)\ge 0$ for each $\rho\ge0$. But now, since
\begin{equation*}
    0<\myinf_{u\in[0, 1]}\bar\varphi_\theta(u)\le \mysup_{u\in[0, 1]}\bar\varphi_\theta(u)\le\varphi^*
\end{equation*}
and since the expression in \eqref{eq:var_varphi} is strictly positive on $\varphi\in(0, \varphi^*]$, our claim follows.
\end{proof}

%%%%%%%%%%%%%%%%%%%%%%%%%%%%%%%%%%%%%%%%%%%%%%%%%%
%%%Sturm-Liouville Problems%%%%%%%%%%%%%%%%%%%%%%%
%%%%%%%%%%%%%%%%%%%%%%%%%%%%%%%%%%%%%%%%%%%%%%%%%%
\subsection{Sturm-Liouville Problems}

Let $A$ be a smooth, positive function on $[0, 1]$ and consider the following \textit{Sturm-Liouville (SL) problems} on $[0, 1]$:
\begin{equation}\label{eq:SL_0}
    \left\{ 
    \begin{array}{ll}
    A(u)\psi''(u) +\gamma \psi(u)=0 &\mbox{ for } u\in(0, 1),
    \\\psi(0) =\psi(1) = 0, &
    \end{array}
    \right.
\end{equation}
\begin{equation}\label{eq:SL_1}
    \left\{ 
    \begin{array}{ll}
    A(u)\psi''(u) +\gamma \psi(u)=0 &\mbox{ for } u\in(0, 1),
    \\\psi'(0) = \lambda\psi(0), &
    \\\psi'(1) = -\delta\psi(1), &
    \end{array}
    \right.
\end{equation}
\begin{equation}\label{eq:SL_2}
    \left\{ 
    \begin{array}{ll}
    A(u)\psi''(u) +\gamma \psi(u)=0 & \mbox{ for } u\in(0, 1),
    \\\psi'(0) =\psi'(1) = 0 &
    \end{array}
    \right.
\end{equation}
(with $\lambda, \delta>0$), which we will refer to as $\textsc{SL}_0(A)$, $\textsc{SL}_1(A)$ and $\textsc{SL}_2(A)$, respectively; see~\cite[Chapter~10]{br89} for a detailed exposition of the topic. 

\begin{lemma}\label{lemma:SL_facts} For each $j=0, 1, 2$, the system $\textsc{SL}_j(A)$ satisfies the following:
\begin{enumerate}[i)]
    
    \item the spectrum is real and an ordered sequence $\gamma_1<\gamma_2<\ldots \to\infty$; 
    
    \item the sequence of normalised eigenfunctions $\{\psi_n\}_n$ forms a complete, orthonormal basis of the $A^{-1}$-weighted Hilbert space      \begin{equation}\label{def:weighted_hilbert}
        L^2([0, 1], A(u)^{-1}\de u):=\left\{f:[0, 1]\to\mathbb{R}: \|f\|^2_{L^2([0, 1], A^{-1})}<\infty\right\},
    \end{equation}
    where $\|\cdot\|_{L^2([0, 1], A^{-1})}$ denotes the norm associated to the inner product
    \begin{equation*}
        \langle f, h\rangle_{L^2([0, 1], A^{-1})}:=\int_0^1 f(u)A(u)^{-1}h(u)\de u;
    \end{equation*} 
    
    \item the sequence $\{\psi_n\}_n$ defined above satisfies the bound $\|\psi_n\|_\infty \lesssim \sqrt{\gamma_n}\vee 1$.

\end{enumerate}
\end{lemma}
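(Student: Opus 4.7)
The plan is to recognise all three problems as regular Sturm--Liouville systems on the compact interval $[0,1]$ and then invoke standard theory for items (i) and (ii), reserving a direct energy estimate for item (iii).

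First I would rewrite the eigenvalue equation $A(u)\psi''(u)+\gamma\psi(u)=0$ in the self-adjoint form $-\psi''(u)=\gamma w(u)\psi(u)$ with weight $w(u):=A(u)^{-1}$. Since $A$ is smooth and strictly positive on the compact interval $[0,1]$, both $w$ and $w^{-1}$ are bounded, so this is a regular Sturm--Liouville problem. The boundary conditions in $\textsc{SL}_0(A)$, $\textsc{SL}_1(A)$ and $\textsc{SL}_2(A)$ are separated and self-adjoint (for $\textsc{SL}_1$ one uses $\lambda,\delta>0$ so the associated bilinear form is non-negative). Classical regular Sturm--Liouville theory (see e.g.\ Brown--Pearson or Zettl) then yields at once item (i): a discrete, real, simple sequence of eigenvalues $\gamma_1<\gamma_2<\cdots\to\infty$ bounded below. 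Item (ii) is also standard: the corresponding normalised eigenfunctions form a complete orthonormal basis of the weighted Hilbert space $L^2([0,1], A(u)^{-1}\de u)$ defined in \eqref{def:weighted_hilbert}, which is precisely the space naturally attached to the weight $w=A^{-1}$.

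For item (iii), the key will be an $H^1$ energy estimate followed by the one-dimensional Sobolev embedding $H^1([0,1])\hookrightarrow C([0,1])$. Multiplying the equation $A\psi_n''=-\gamma_n\psi_n$ by $\psi_n$, integrating over $[0,1]$ and integrating by parts, I obtain
\begin{equation*}
\int_0^1 A(u)(\psi_n'(u))^2\,\de u = \bigl[A\psi_n\psi_n'\bigr]_0^1 - \int_0^1 A'(u)\psi_n(u)\psi_n'(u)\,\de u + \gamma_n\int_0^1 \psi_n(u)^2\,\de u.
\end{equation*}
The boundary contribution is non-positive in every case: it vanishes for Dirichlet and Neumann, while for Robin it equals $-\delta A(1)\psi_n(1)^2-\lambda A(0)\psi_n(0)^2\le0$. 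The cross-term is absorbed via Young's inequality using that $A$ is bounded away from zero, and the last term is controlled by the normalisation $\|\psi_n\|_{L^2([0,1],A^{-1})}=1$, which gives $\|\psi_n\|_{L^2}\lesssim 1$ since $A\in L^\infty$. Combining everything yields $\|\psi_n'\|_{L^2}^2\lesssim 1+\gamma_n$, and Sobolev embedding then delivers $\|\psi_n\|_\infty\lesssim \|\psi_n\|_{H^1}\lesssim \sqrt{\gamma_n}\vee 1$.

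I do not expect any genuine obstacle: all three items are essentially textbook content once the problem is recast in standard SL form. The only mild care needed is the sign of the boundary term in the Robin case (where positivity of $\lambda,\delta$ is essential) and the absorption of the $A'\psi_n\psi_n'$ term, both of which are routine.
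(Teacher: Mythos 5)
Your proposal is correct and follows essentially the same route as the paper: items i) and ii) by citing regular Sturm--Liouville theory, and item iii) by an integration-by-parts energy estimate (with the boundary term non-positive in the Robin case thanks to $\lambda,\delta>0$) combined with the one-dimensional Sobolev embedding. The only cosmetic difference is that you test the equation against $\psi_n$ keeping the weight $A$ inside the integral, which produces the extra $A'\psi_n\psi_n'$ cross-term you then absorb by Young's inequality, whereas the paper integrates $\psi_n''\psi_n$ directly and avoids that term altogether.
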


\begin{proof} All three systems $\textsc{SL}_0(A), \textsc{SL}_1(A)$ and $\textsc{SL}_2(A)$ are \textit{regular} SL problems; see~\cite[Section~10.1]{br89}. Standard results of the theory, then, immediately yield items i) and ii), so we only need to verify item iii).

Recalling \eqref{def:L2_norm}, we start by noting that
\begin{equation}\label{eq:L_infty_bound}
    \|\psi_n\|_\infty \le \|\psi_n\|_{L^2}+\|\psi_n'\|_{L^2}.
\end{equation} 
Since $A$ is smooth and positive, the spaces $L^2([0, 1])$ and $L^2([0, 1], A(u)^{-1}\de u)$ coincide, and the norms $\|\cdot\|_{L^2}$ and $\|\cdot\|_{L^2([0, 1], A^{-1})}$ are equivalent. Then, since $\{\psi_n\}_n$ is an orthonormal basis of $L^2([0, 1], A(u)^{-1}\de u)$, we immediately get a uniform upper bound for $\|\psi_n\|_{L^2}$. In order to estimate $\|\psi_n'\|_{L^2}$, performing an integration by parts and using $A(u)\psi_n''(u)+\gamma_n \psi_n(u)=0$ for $u\in(0, 1)$, we write
\begin{equation}\label{eq:psi'_bound}
    \|\psi_n'\|_{L^2}^2=\psi_n\psi_n'\big|_0^1-\int_0^1 \psi_n''(u)\psi_n(u)\de u = \psi_n\psi_n'\big|_0^1+\gamma_n\int_0^1 \frac{\psi_n(u)^2}{A(u)}\de u.
\end{equation}
But now, for both problems \eqref{eq:SL_0} and \eqref{eq:SL_2}, the boundary conditions imply that $\psi_n\psi_n'\big|_0^1=0$; nonetheless, for the problem \eqref{eq:SL_1} we get
\begin{equation*}
    \psi_n\psi_n'\big|_0^1=-\delta\psi_n(1)^2-\lambda\psi_n(0)^2\le 0.
\end{equation*}
Hence, since $\myinf_{u\in[0, 1]}A(u)>0$, combining \eqref{eq:L_infty_bound} and \eqref{eq:psi'_bound} completes the proof.
\end{proof}

Below, for two continuous functions $A, \widetilde A$ on $[0, 1]$, we write $A\le \widetilde A$ if $A(u)\le \widetilde A(u)$ for (almost) every $u\in [0, 1]$.

\begin{lemma}\label{lemma:SL_ineuqality} For each $j=0, 1, 2$, the following holds. If $A\le \widetilde A$, then the sequences of eigenvalues $\{\gamma_n\}_n, \{\widetilde\gamma_n\}_n$ of $\textsc{SL}_j(A)$ and $\textsc{SL}_j(\widetilde A)$, respectively, satisfy $\gamma_{n-1}\le\widetilde\gamma_n$ for each $n\ge2$.

\begin{proof} Fix $j\in\{0, 1, 2\}$ and consider the differential equations on $(0, 1)$
\begin{equation*}
    \psi_n''(u)+A(u)^{-1}\widetilde \gamma_n \psi_n(u)=0 \quad \text{and} \quad \widetilde\psi_n''(u)+ \widetilde A(u)^{-1} \widetilde \gamma_n\widetilde\psi_n(u)=0.
\end{equation*}
Since $A(u)^{-1} \widetilde \gamma_n\ge \widetilde  A(u)^{-1} \widetilde \gamma_n $ for each $u\in(0, 1)$, by the Sturm Comparison Theorem~\cite[Theorem~10.6.3]{br89}, the number of zeroes of any solution to the first equation is at least the number of zeroes of any solution to the second one. Moreover, the same theorem implies that the number of zeroes of any solution to the differential equation
\begin{equation*}
    \psi_n''(u)+A(u)^{-1} \gamma \psi_n(u)=0
\end{equation*}
is a non-decreasing function of $\gamma$. But now, by~\cite[Theorem~10.8.5]{br89}, any eigenfunction $\psi_n$ of $\textsc{SL}_j(A)$ corresponding to $\gamma_n$ has exactly $n-1$ zeroes in the open interval $(0, 1)$. This completes the proof.
\end{proof}
\end{lemma}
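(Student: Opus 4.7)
The plan is to reduce all three boundary condition cases to a single comparison argument on a common ODE in standard form. Writing the eigenvalue equations for $\textsc{SL}_j(A)$ and $\textsc{SL}_j(\widetilde A)$ as $\psi''+A(u)^{-1}\gamma\,\psi=0$ and $\widetilde\psi''+\widetilde A(u)^{-1}\gamma\,\widetilde\psi=0$ respectively, the hypothesis $A\le \widetilde A$ (together with the strict positivity of both coefficients, which is available since $A,\widetilde A$ are smooth and positive) yields the pointwise inequality $A(u)^{-1}\gamma\ge \widetilde A(u)^{-1}\gamma$ for every $\gamma>0$. This is precisely the setup of Sturm comparison: solutions of the first equation oscillate at least as fast as those of the second.

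I would then fix $n\ge 2$ and apply this observation at the specific parameter $\gamma=\widetilde\gamma_n$. By the Sturm oscillation property (see Lemma~\ref{lemma:SL_facts} and~\cite[Theorem~10.8.5]{br89}), the eigenfunction $\widetilde\psi_n$ has exactly $n-1$ zeros in the open interval $(0,1)$. Applying the Sturm Comparison Theorem~\cite[Theorem~10.6.3]{br89} with $p(u)=\widetilde A(u)^{-1}\widetilde\gamma_n$ and $q(u)=A(u)^{-1}\widetilde\gamma_n\ge p(u)$ then forces at least one zero of every non-trivial solution of $\psi''+A(u)^{-1}\widetilde\gamma_n\,\psi=0$ between any two consecutive zeros of $\widetilde\psi_n$, so every such solution has at least $n-1$ zeros in $(0,1)$.

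To close the argument I would invoke the monotonicity in $\gamma$ of the zero-counting function for the one-parameter family $\psi''+A(u)^{-1}\gamma\,\psi=0$, which is itself a corollary of Sturm comparison. Since any eigenfunction of $\textsc{SL}_j(A)$ associated to $\gamma_{n-1}$ has only $n-2$ interior zeros, this monotonicity prevents the number of zeros from reaching $n-1$ at any parameter strictly below $\gamma_{n-1}$. Combined with the conclusion of the previous paragraph, this forces $\widetilde\gamma_n\ge \gamma_{n-1}$, which is the stated inequality.

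The step I expect to be most delicate is making sure the chain of implications is genuinely uniform across $j\in\{0,1,2\}$: Sturm comparison gives information about zeros of arbitrary solutions, whereas the spectral counting statement is tied to the boundary conditions. The clean fix is to work throughout with the family of solutions pinned by a single boundary condition (say the left one), and to use that the number of zeros of such a pinned solution, viewed as a function of $\gamma$, increments precisely as $\gamma$ crosses each eigenvalue of $\textsc{SL}_j(A)$. This classical fact holds uniformly for the three regular Sturm–Liouville problems under consideration, so the same chain of arguments delivers $\gamma_{n-1}\le\widetilde\gamma_n$ in all three regimes simultaneously.
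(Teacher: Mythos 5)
Your argument is correct and follows essentially the same route as the paper's proof: rewrite both eigenvalue equations in the form $\psi''+A^{-1}\gamma\psi=0$, apply the Sturm Comparison Theorem at $\gamma=\widetilde\gamma_n$ together with the monotonicity of the zero count in $\gamma$, and close with the oscillation theorem identifying $\gamma_n$ with the eigenfunction having exactly $n-1$ interior zeros. Your extra remark about pinning solutions at the left boundary to make the zero-counting uniform across the three boundary conditions is a sensible refinement of a point the paper also leaves implicit.
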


%%%%%%%%%%%%%%%%%%%%%%%%%%%%%%%%%%%%%%%%%%%%%%%%%%
%%%Semigroup Results%%%%%%%%%%%%%%%%%%%%%%%%%%
%%%%%%%%%%%%%%%%%%%%%%%%%%%%%%%%%%%%%%%%%%%%%%%%%%
\subsection{Semigroup Results}

For $\theta\ge0$, let $P_t^\theta$ denote the semigroup associated to the operator $\mathfrak{A}_\theta$ given in Definition \ref{def:S_theta}.

\begin{lemma}\label{lemma:semigroup} Let $h\in L^2([0, 1])$. For each $\theta\ge0$, the semigroup $(P_t^\theta h)(u)$ can be written as
\begin{equation}\label{eq:explicit_semigroup}
    (P_t^\theta h)(u)=\sum_{n=1}^\infty a_n e^{-\gamma_n t} \psi_n(u),
\end{equation}
for $(t, u)\in[0, \infty)\times[0, 1]$, where:
\begin{enumerate}[i)]

    \item $\{\psi_n\}_n$ is a complete, orthonormal basis of the $\Phi'(\bar\rho_\theta)^{-1}$-weighted Hilbert space $L^2([0, 1], \Phi'(\bar\rho_\theta(u))^{-1}\de u)$, defined in \eqref{def:weighted_hilbert};

    \item $\{a_n\}_n$ are the coefficients of $h$ in that basis, namely $a_n=\langle h, \psi_n\rangle_{L^2([0, 1], \Phi'(\bar\rho_\theta)^{-1})}$;

    \item the sequence $\{\gamma_n\}_n$ satisfies
    \begin{equation}\label{eq:eigen_sandwich}
        \kappa_1((n-2)\pi)^2\le \gamma_n\le \kappa_2((n+1)\pi)^2
    \end{equation}
    for each $n\ge 2$, where
    \begin{equation}\label{eq:kappas}
        0<\kappa_1:=\myinf_{u\in[0,1]}\Phi'(\bar\rho_\theta(u))\le \mysup_{u\in[0,1]}\Phi'(\bar\rho_\theta(u))=:\kappa_2<\infty.
    \end{equation}

\end{enumerate}
Consequently, the series \eqref{eq:explicit_semigroup} converges exponentially fast for $t>0$, and thus $(P_t^\theta h)(u)$ is smooth $(0, \infty)\times[0, 1]$. Moreover, if $h\in\mathcal{S}_\theta$, then $(P_t^\theta h)(u)$ is smooth $[0, \infty)\times[0, 1]$.
\end{lemma}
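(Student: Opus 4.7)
The plan is to recognise $\mathfrak{A}_\theta$ as a Sturm-Liouville operator in disguise and then quote the toolbox built in Lemmas~\ref{lemma:SL_facts} and \ref{lemma:SL_ineuqality}. Setting $A(u):=\Phi'(\bar\rho_\theta(u))$, we have $\mathfrak{A}_\theta=A(u)\Delta$, where $A$ is smooth (Remark~\ref{remark:Phi_smooth}) and strictly positive (Lemma~\ref{lemma:Phi'_positive}). The eigenvalue problem $\mathfrak{A}_\theta\psi=-\gamma\psi$ is therefore exactly $A(u)\psi''(u)+\gamma\psi(u)=0$, and inspecting Definition~\ref{def:S_theta} for $k=0$ shows that the boundary conditions carried by elements of $\mathcal{S}_\theta$ reduce to $\textsc{SL}_0(A)$ when $0\le\theta<1$, to $\textsc{SL}_1(A)$ when $\theta=1$, and to $\textsc{SL}_2(A)$ when $\theta>1$. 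Lemma~\ref{lemma:SL_facts} then furnishes the orthonormal basis $\{\psi_n\}_n$ of $L^2([0,1],A(u)^{-1}\de u)$ together with the eigenvalues $\{\gamma_n\}_n$, so that for $h\in L^2([0,1])$ (which coincides with $L^2([0,1],A^{-1}\de u)$ up to equivalent norms, since $A$ is smooth and positive) we may define the semigroup by the spectral formula $(P_t^\theta h)(u):=\sum_n a_n e^{-\gamma_n t}\psi_n(u)$ with $a_n:=\langle h,\psi_n\rangle_{L^2([0,1],A^{-1})}$; one checks that this indeed defines the strongly continuous semigroup generated by $\mathfrak{A}_\theta$ on the appropriate domain.

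The eigenvalue sandwich \eqref{eq:eigen_sandwich} will come from comparing with the constant-coefficient SL problems with weights $\kappa_1$ and $\kappa_2$. Since $\kappa_1\le A(u)\le\kappa_2$ on $[0,1]$, Lemma~\ref{lemma:SL_ineuqality} applied twice yields $\gamma_{n-1}^{\kappa_1}\le\gamma_n\le\gamma_{n+1}^{\kappa_2}$, where $\gamma_m^{\kappa}$ denotes the $m$-th eigenvalue of $\textsc{SL}_j(\kappa)$ for the relevant $j\in\{0,1,2\}$. For constant coefficient $\kappa$, one computes explicitly that the eigenvalues are $\kappa(m\pi)^2$ in the Dirichlet case, $\kappa((m-1)\pi)^2$ in the Neumann case, and (by standard Sturm-Liouville interlacing with the preceding two) satisfy $\kappa((m-1)\pi)^2\le\gamma_m^\kappa\le\kappa(m\pi)^2$ in the Robin case. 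In every regime this gives $\kappa_1((n-2)\pi)^2\le\gamma_n\le\kappa_2((n+1)\pi)^2$ for $n\ge2$.

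With the eigenvalue bound and the pointwise bound $\|\psi_n\|_\infty\lesssim\sqrt{\gamma_n}\vee1$ from Lemma~\ref{lemma:SL_facts}(iii), smoothness for $t>0$ is routine: for any multi-index of derivatives in $(t,u)$, one shows that the corresponding termwise-differentiated series has coefficients bounded by a polynomial in $n$ times $e^{-\gamma_n t}$, which by \eqref{eq:eigen_sandwich} decays like $e^{-c n^2 t}$ and is summable; uniform convergence on compacts of $(0,\infty)\times[0,1]$ then justifies differentiation under the sum to any order.

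For the smoothness of $(P_t^\theta h)(u)$ up to $t=0$ when $h\in\mathcal{S}_\theta$, the key observation is that $\mathcal{S}_\theta$ is stable under $\mathfrak{A}_\theta$ (noted explicitly after Definition~\ref{def:S_theta}), and that the boundary conditions defining $\mathcal{S}_\theta$ match precisely those of the SL problem, so that two integrations by parts give $\langle\mathfrak{A}_\theta h,\psi_n\rangle_{L^2([0,1],A^{-1})}=\langle h,\mathfrak{A}_\theta\psi_n\rangle_{L^2([0,1],A^{-1})}=-\gamma_n a_n$ with no boundary contribution. Iterating this identity with $\mathfrak{A}_\theta^k h\in\mathcal{S}_\theta\subset L^2$ for every $k\ge0$ gives $|a_n|\le\gamma_n^{-k}\|\mathfrak{A}_\theta^k h\|_{L^2([0,1],A^{-1})}$, so the coefficients decay faster than any polynomial in $n$; the series for $P_t^\theta h$ and each of its space-time derivatives then converges uniformly on $[0,\infty)\times[0,1]$, proving the final assertion. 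The main obstacle is the eigenvalue sandwich: verifying the monotonicity input for Lemma~\ref{lemma:SL_ineuqality} is straightforward, but one must be careful that the shift in indices arising from the mixed-case asymptotics (Dirichlet versus Neumann counting) is absorbed by the looser constants $n-2$ and $n+1$, which is why the bound \eqref{eq:eigen_sandwich} is stated only for $n\ge2$.
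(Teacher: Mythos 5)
Your proposal is correct and follows essentially the same route as the paper: identify $\mathfrak{A}_\theta$ with the regular Sturm--Liouville problems $\textsc{SL}_j(\Phi'(\bar\rho_\theta))$, expand in the resulting weighted orthonormal basis, sandwich the eigenvalues between the constant-coefficient problems with weights $\kappa_1,\kappa_2$ via the comparison lemma, and for $h\in\mathcal S_\theta$ iterate the boundary-term-free integration by parts (the paper phrases this as self-adjointness of $\partial_u^2$ on $\mathcal S_\theta$ in $L^2$) to get superpolynomial decay of the $a_n$. The details you flag as requiring care (index shifts in the eigenvalue counting, the Robin case) are handled at the same level of rigour as in the paper.
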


\begin{proof} We start by noting that the semigroup $P_t^\theta$ is such that, given $h\in L^2([0, 1])$, $(P_t^\theta h)(u)$ is the solution to the following PDEs:

\begin{enumerate}[i)]
    
    \item $0\le\theta<1$:
    \begin{equation}\label{eq:pde_theta<1}
        \left\{ 
        \begin{array}{ll}
        \partial_t \rho_t(u) = \Phi'(\bar\rho_\theta(u))\partial_u^2 \rho_t(u)&\mbox{ for } u \in (0,1) \mbox{ and } t>0,
        \\\rho_t(0) =\rho_t(1) = 0 &\mbox{ for } t>0,
        \\\rho_0(u) = h(u) &\mbox{ for } u \in [0,1];
        \end{array}
        \right.
    \end{equation}
    
    \item $\theta=1$:
    \begin{equation}\label{eq:pde_theta=1}
        \left\{ 
        \begin{array}{ll}
        \partial_t \rho_t(u) = \Phi'(\bar\rho_\theta(u))\partial_u^2 \rho_t(u) &\mbox{ for } u \in (0,1) \mbox{ and } t>0,
        \\ \partial_u\rho_t(0) = \lambda\rho_t(0) &\mbox{ for } t>0,
        \\ \partial_u\rho_t(1) = -\delta\rho_t(1) &\mbox{ for } t>0,
        \\\rho_0(u) = h(u) &\mbox{ for } u \in [0,1];
        \end{array}
        \right.
    \end{equation}
    
    \item $\theta\ge1$:
    \begin{equation}\label{eq:pde_theta>1}
        \left\{ 
        \begin{array}{ll}
        \partial_t \rho_t(u) = \Phi'(\bar\rho_\theta(u))\partial_u^2 \rho_t(u)&\mbox{ for } u \in (0,1) \mbox{ and } t>0,
        \\\partial_u \rho_t(0)= \partial_u \rho_t(1) = 0 &\mbox{ for } t>0,
        \\\rho_0(u) = h(u) &\mbox{ for } u \in [0,1].
        \end{array}
        \right.
    \end{equation}

\end{enumerate}
Consider now the following SL problems on $[0, 1]$:
\begin{enumerate}[i)]
    
    \item $0\le\theta<1$:
    \begin{equation}\label{eq:SL_theta<1}
        \left\{ 
        \begin{array}{ll}
        \Phi'(\bar\rho_\theta(u))\psi''(u) +\gamma \psi(u)=0 &\mbox{ for } u\in(0, 1),
        \\\psi(0)  =\psi(1) =0; &
        \end{array}
        \right.
    \end{equation}
    
    \item $\theta=1$:
    \begin{equation}\label{eq:SL_theta=1}
        \left\{ 
        \begin{array}{ll}
        \Phi'(\bar\rho_\theta(u))\psi''(u) +\gamma \psi(u)=0 &\mbox{ for } u\in(0, 1),
        \\\psi'(0) = \lambda\psi(0), &
        \\\psi'(1) = -\delta\psi(1); &
        \end{array}
        \right.
    \end{equation}

    \item $\theta>1$:
    \begin{equation}\label{eq:SL_theta>1}
        \left\{ 
        \begin{array}{ll}
        \Phi'(\bar\rho_\theta(u))\psi''(u) +\gamma \psi(u)=0 & \mbox{ for } u\in(0, 1),
        \\\psi'(0) = \psi'(1) =0, &
        \end{array}
        \right.
    \end{equation}

\end{enumerate} 
which are associated to \eqref{eq:pde_theta<1}, \eqref{eq:pde_theta=1} and \eqref{eq:pde_theta>1}, respectively. Since $\Phi'(\bar\rho_\theta(\cdot))$ is smooth and positive (see Lemma~\ref{lemma:Phi'_positive} above), the systems \eqref{eq:SL_theta<1}, \eqref{eq:SL_theta=1} and \eqref{eq:SL_theta>1} all satisfy items i), ii) and iii) of Lemma~\ref{lemma:SL_facts}. In particular, applying the classical method of separation of variables and by the completeness of the basis $\{\psi_n\}_n$, we immediately get \eqref{eq:explicit_semigroup}. 

Now, by Lemma~\ref{lemma:Phi'_positive}, the eigenvalues of the SL problems \eqref{eq:SL_theta<1}, \eqref{eq:SL_theta=1} and \eqref{eq:SL_theta>1} can be ``sandwiched" between those of the SL problems given by the differential equations
\begin{equation*}
    \kappa_1\psi''_{(1)}(u)+\gamma \psi_{(1)}(u)=0 \quad \text{and} \quad \kappa_2\psi''_{(2)}(u)+\gamma \psi_{(2)}(u)=0
\end{equation*}
with corresponding boundary conditions, with $\kappa_1, \kappa_2$ defined in \eqref{eq:kappas}. It is not hard to show that, for each of our three sets of boundary conditions, calling $\{\gamma_{n, (1)}\}_n$ and $\{\gamma_{n, (2)}\}_n$ the respective sequences of eigenvalues of the problems described above,
\begin{equation*}
    (n-1)\pi\le \sqrt{\frac{\gamma_{n, (1)}}{\kappa_1}}\le n\pi \quad \text{and} \quad  (n-1)\pi\le \sqrt{\frac{\gamma_{n, (2)}}{\kappa_2}}\le n\pi
\end{equation*}
for each $n\ge 1$, which yields \eqref{eq:eigen_sandwich}.

To show the remaining part of the statement, we now have all the ingredients to apply an analogue argument to the one given at the end of the proof of~\cite[Proposition~3.1]{fgn19}, which we present for the sake of completeness. If $h\in L^2([0, 1])$, then by the Cauchy-Schwarz inequality and the fact that $\{\psi_n\}_n$ is an orthonormal basis of the weighted space $L^2([0, 1], \Phi'(\bar\rho_\theta(u))^{-1}\de u)$, we immediately get \begin{equation*}
    |a_n|=\left|\langle h, \psi_n\rangle_{L^2([0, 1], \Phi'(\bar\rho_\theta)^{-1})}\right|=\left|\langle  h \Phi'(\bar\rho_\theta)^{-1}, \psi_n\rangle_{L^2}\right|\le \frac{\kappa_2}{\kappa_1}\|h\|_{L^2},
\end{equation*}
with $\kappa_1, \kappa_2$ given in \eqref{eq:kappas}, so that the Fourier coefficients $\{a_n\}_n$ are bounded in absolute value. By \eqref{eq:eigen_sandwich} and since $\|\psi_n\|_{\infty}$ grows at most polynomially in $n$ (recall \eqref{eq:eigen_sandwich} and item iii) of Lemma~\ref{lemma:SL_facts}), the series \eqref{eq:explicit_semigroup} converges exponentially fast, which gives smoothness of $(P_t^\theta h)(u)$ on $(0, \infty)\times[0, 1]$. If, moreover, we assume that $h\in\mathcal{S}_\theta$, then
\begin{align*}
    a_n&=\langle h, \psi_n\rangle_{L^2([0, 1], \Phi'(\bar\rho_\theta)^{-1})}=\left\langle h \Phi'(\bar\rho_\theta)^{-1}, \psi_n\right\rangle_{L^2}
    \\&=\frac{1}{-\gamma_n}\left\langle h \Phi'(\bar\rho_\theta)^{-1}, -\gamma_n\psi_n\right\rangle_{L^2}=\frac{1}{-\gamma_n}\left\langle  h \Phi'(\bar\rho_\theta)^{-1}, \Phi'( \bar\rho_\theta)\partial_u^2\psi_n\right\rangle_{L^2}
    \\&=\frac{1}{-\gamma_n}\left\langle  \partial_u^2h, \psi_n\right\rangle_{L^2},
\end{align*}
where in the last equality we used the fact that, for any $\theta\ge0$, the operator $\partial_u^2$ is self-adjoint in $\mathcal{S}_\theta$ with the respect to the inner product $\langle\cdot, \cdot\rangle_{L^2}$. Again by the Cauchy-Schwarz inequality, we get $|a_n|\le \frac{c_1}{n^2}$ for some positive constant $c_1$ independent of $n$. Since $\mathfrak{A}_\theta^k h\in\mathcal{S}_\theta$ for each non-negative integer $k$, the trick above can be performed inductively, so as to show that
\begin{equation*}
    a_n=\frac{1}{(-\gamma_n)^k}\left\langle  \mathfrak A_\theta^k h, \partial_u^2\psi_n\right\rangle_{L^2}=\frac{1}{(-\gamma_n)^k}\left\langle  \partial_u^2(\mathfrak A_\theta^k h), \psi_n\right\rangle_{L^2}
\end{equation*}
for each non-negative integer $k$. Hence, for each such $k$, there exists a positive constant $c_k$ such that $|a_n|\le\frac{c_k}{n^{2k}}$. But now, again recalling that $\|\psi_n\|_\infty\lesssim \sqrt{\gamma_n}\vee 1$, this implies that the series
\begin{equation*}
    \sum_{n=1}^\infty |a_n|(-\gamma_n)^k\|\psi_n\|_\infty
\end{equation*}
converges for each non-negative integer $k$, and thus $(P_t^\theta h)(u)$ is of class $C^k$ on $[0, \infty)\times[0, 1]$ for each non-negative integer $k$.\end{proof}

From Lemma~\ref{lemma:semigroup} we can immediately deduce the following corollaries, which will be important in the proof of uniqueness.

\begin{corollary}\label{corol:semigroup_belonging}
    For any $\theta\ge0$, if $H\in \mathcal{S}_\theta$, then $P_t^\theta H\in\mathcal{S}_\theta$. 
\end{corollary}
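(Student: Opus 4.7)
The strategy is to use the eigenfunction expansion \eqref{eq:explicit_semigroup} provided by Lemma~\ref{lemma:semigroup}, together with the fact that each basis function $\psi_n$ already satisfies, by construction, the relevant boundary conditions entering the definition of $\mathcal{S}_\theta$. First, since $H\in\mathcal{S}_\theta$, Lemma~\ref{lemma:semigroup} tells us that $(t,u)\mapsto (P_t^\theta H)(u)$ is smooth on $[0,\infty)\times[0,1]$, so for each fixed $t\ge 0$ we have $P_t^\theta H\in C^\infty([0,1])$. It remains to verify the boundary identities defining $\mathcal{S}_\theta$.

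The plan is to show that the series \eqref{eq:explicit_semigroup}, as well as the series obtained after applying any number of spatial derivatives and any iterate of $\mathfrak{A}_\theta$, converges absolutely and uniformly on $[0,1]$, so that all these operations may be carried out termwise at the boundary. More precisely, for $H\in\mathcal{S}_\theta$ the last part of Lemma~\ref{lemma:semigroup} shows that for every integer $k\ge 0$ there is a constant $c_k$ with $|a_n|\le c_k/n^{2k}$, and by item~iii) of Lemma~\ref{lemma:SL_facts} we have $\|\psi_n\|_\infty\lesssim \sqrt{\gamma_n}\vee 1$. Since $\mathfrak{A}_\theta\psi_n=-\gamma_n\psi_n$ and the $\psi_n$ are eigenfunctions of a regular Sturm--Liouville problem with smooth coefficient, bootstrapping through the ODE $\Phi'(\bar\rho_\theta)\psi_n''=-\gamma_n\psi_n$ gives $\|\partial_u^j\psi_n\|_\infty\lesssim \gamma_n^{j/2+1/2}$ for every $j\ge 0$. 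Combined with \eqref{eq:eigen_sandwich}, this ensures that, for any integers $k,j\ge 0$,
\begin{equation*}
    \sum_{n\ge 1} |a_n|\,e^{-\gamma_n t}\,\gamma_n^k\,\|\partial_u^j\psi_n\|_\infty<\infty
\end{equation*}
either thanks to the rapid decay of $a_n$ (when $t=0$) or to the exponential factor $e^{-\gamma_n t}$ (when $t>0$). Consequently, the identity
\begin{equation*}
    \mathfrak{A}_\theta^k(P_t^\theta H)(u)=\sum_{n=1}^\infty a_n e^{-\gamma_n t}(-\gamma_n)^k\psi_n(u)
\end{equation*}
holds pointwise on $[0,1]$, and similarly $\partial_u\mathfrak{A}_\theta^k(P_t^\theta H)(u)=\sum_n a_n e^{-\gamma_n t}(-\gamma_n)^k\psi_n'(u)$.

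Once termwise evaluation at $u\in\{0,1\}$ is justified, the conclusion follows at once from the boundary conditions of the SL problems \eqref{eq:SL_theta<1}, \eqref{eq:SL_theta=1}, \eqref{eq:SL_theta>1}. For $0\le\theta<1$, each $\psi_n$ vanishes at $0$ and $1$, so every term in the series for $\mathfrak{A}_\theta^k(P_t^\theta H)$ vanishes at the boundary, giving $\mathfrak{A}_\theta^k(P_t^\theta H)(0)=\mathfrak{A}_\theta^k(P_t^\theta H)(1)=0$. For $\theta=1$, each $\psi_n$ satisfies $\psi_n'(0)=\lambda\psi_n(0)$ and $\psi_n'(1)=-\delta\psi_n(1)$, so the same linear combination identity transfers to $\mathfrak{A}_\theta^k(P_t^\theta H)$ via the termwise series, yielding $\partial_u\mathfrak{A}_\theta^k(P_t^\theta H)(0)=\lambda\mathfrak{A}_\theta^k(P_t^\theta H)(0)$ and analogously at $1$. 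For $\theta>1$, each $\psi_n'(0)=\psi_n'(1)=0$ gives $\partial_u\mathfrak{A}_\theta^k(P_t^\theta H)(0)=\partial_u\mathfrak{A}_\theta^k(P_t^\theta H)(1)=0$. In all three regimes the boundary identities in Definition~\ref{def:S_theta} are satisfied by $P_t^\theta H$, so $P_t^\theta H\in\mathcal{S}_\theta$.

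The only subtle point is the justification of the termwise operations; this is essentially the content of the last paragraph of the proof of Lemma~\ref{lemma:semigroup}, and once $|a_n|$ decays faster than any polynomial (a property granted by $H\in\mathcal{S}_\theta$ through iterated self-adjointness of $\partial_u^2$ on $\mathcal{S}_\theta$), no further analytical difficulty arises.
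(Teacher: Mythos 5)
Your proposal is correct and follows exactly the route the paper intends: the corollary is stated as an immediate consequence of Lemma~\ref{lemma:semigroup}, and your argument simply makes explicit the termwise evaluation at the boundary that the paper leaves implicit, using the rapid decay of the coefficients $a_n$ (granted by $H\in\mathcal{S}_\theta$), the polynomial growth of $\|\partial_u^j\psi_n\|_\infty$ obtained by bootstrapping through the Sturm--Liouville ODE, and the fact that each eigenfunction $\psi_n$ satisfies the boundary conditions defining $\mathcal{S}_\theta$ together with $\mathfrak{A}_\theta^k\psi_n=(-\gamma_n)^k\psi_n$. No gaps.
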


\begin{corollary}\label{corol:semigroup_taylor} For any $\theta\ge0$ and $H\in \mathcal{S}_\theta$, we have 
\begin{equation*}
    P_{t+\eps}^\theta H=P_t^\theta H+\eps \mathfrak{A}_\theta P_t^\theta H+o(\eps, t),
\end{equation*}
where $o(\eps, t)$ denotes a function in $\mathcal{S}_\theta$ such that $\mylim_{\eps\to0}\frac{o(\eps, t)}{\eps}=0$ in the topology of $\mathcal{S}_\theta$. Moreover, the limit is uniform on compact time intervals.
\end{corollary}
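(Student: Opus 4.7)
The plan is to argue via the spectral decomposition established in Lemma~\ref{lemma:semigroup}. Writing $H = \sum_{n=1}^\infty a_n \psi_n$ with $a_n = \langle H, \psi_n\rangle_{L^2([0,1],\Phi'(\bar\rho_\theta)^{-1})}$, the three semigroup terms admit explicit series representations, and a direct computation yields
\begin{equation*}
    o(\eps, t) := P_{t+\eps}^\theta H - P_t^\theta H - \eps\mathfrak{A}_\theta P_t^\theta H = \sum_{n=1}^\infty a_n e^{-\gamma_n t}\bigl(e^{-\gamma_n \eps}-1+\gamma_n \eps\bigr)\psi_n.
\end{equation*}
The elementary bound $|e^{-x}-1+x|\le x^2/2$ for $x\ge 0$ then gives
\begin{equation*}
    \|o(\eps, t)\|_k \le \frac{\eps^2}{2}\sum_{n=1}^\infty |a_n|\,\gamma_n^2\,e^{-\gamma_n t}\,\|\partial_u^k \psi_n\|_\infty
\end{equation*}
for each integer $k\ge 0$, so that the Corollary reduces to showing summability of this series and uniform control in $t$ on compacts.

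For the summability, I would combine two ingredients. First, the argument in the proof of Lemma~\ref{lemma:semigroup} yields, for $H\in\mathcal{S}_\theta$ and every integer $j\ge 0$, the identity $a_n = (-\gamma_n)^{-j}\langle \partial_u^2(\mathfrak{A}_\theta^j H), \psi_n\rangle_{L^2}$, so that $|a_n|\le c_j \,\gamma_n^{-j}$. Second, the equation $\Phi'(\bar\rho_\theta)\psi_n'' = -\gamma_n \psi_n$ can be differentiated repeatedly, together with the smoothness of $\Phi'(\bar\rho_\theta)$, to express $\partial_u^k\psi_n$ as a combination of $\psi_n$ and $\psi_n'$ with coefficients of order $\gamma_n^{\lfloor k/2\rfloor}$; then item iii) of Lemma~\ref{lemma:SL_facts} and the identity \eqref{eq:psi'_bound} (which yields $\|\psi_n'\|_{L^2}^2 \lesssim \gamma_n$) give a bound $\|\partial_u^k \psi_n\|_\infty \lesssim \gamma_n^{(k+1)/2}$. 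Since $\gamma_n \sim n^2$ by \eqref{eq:eigen_sandwich}, choosing $j$ large enough in terms of $k$ makes the series $\sum_n |a_n|\gamma_n^2\|\partial_u^k\psi_n\|_\infty$ absolutely convergent independently of $t\ge 0$. The factor $\eps^2$ in front then gives $\|o(\eps, t)\|_k/\eps \to 0$ as $\eps\to 0$, uniformly in $t$ on compact time intervals (indeed, uniformly in $t\ge 0$, thanks to the decaying factor $e^{-\gamma_n t}$).

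It remains to check that $o(\eps,t) \in \mathcal{S}_\theta$. By Corollary~\ref{corol:semigroup_belonging}, both $P_t^\theta H$ and $P_{t+\eps}^\theta H$ lie in $\mathcal{S}_\theta$, and the remark following Definition~\ref{def:S_theta} shows that $\mathfrak{A}_\theta$ maps $\mathcal{S}_\theta$ into itself, whence $\mathfrak{A}_\theta P_t^\theta H \in \mathcal{S}_\theta$ as well. The main obstacle is the derivative estimate on the eigenfunctions $\psi_n$: the bound must be only polynomial in $\gamma_n$, since any super-polynomial growth could not be absorbed by the decay of the Fourier coefficients $a_n$. Once this polynomial estimate is established by induction on $k$ directly from the Sturm–Liouville equation, the remainder of the argument reduces to the bookkeeping outlined above.
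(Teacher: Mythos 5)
Your proposal is correct and fleshes out exactly the deduction the paper leaves implicit: the paper offers no written proof of this corollary, asserting only that it follows ``immediately'' from Lemma~\ref{lemma:semigroup}, and the intended route is precisely your spectral computation -- expand $o(\eps,t)$ in the eigenbasis, use $|e^{-x}-1+x|\le x^2/2$, and absorb the resulting factor $\gamma_n^2$ (and the polynomially growing derivative norms of $\psi_n$) into the super-polynomial decay $|a_n|\le c_j\gamma_n^{-j}$ established in the proof of that lemma. One small imprecision: to control $\|\partial_u^k\psi_n\|_\infty$ you need an $L^\infty$ bound on $\psi_n'$, whereas you only invoke the $L^2$ bound $\|\psi_n'\|_{L^2}^2\lesssim\gamma_n$ from \eqref{eq:psi'_bound}; this is repaired by the same inequality \eqref{eq:L_infty_bound} applied to $\psi_n'$, namely $\|\psi_n'\|_\infty\le\|\psi_n'\|_{L^2}+\|\psi_n''\|_{L^2}$ together with $\|\psi_n''\|_{L^2}=\gamma_n\|\psi_n/\Phi'(\bar\rho_\theta)\|_{L^2}\lesssim\gamma_n$, which keeps the growth polynomial in $\gamma_n$ and leaves your conclusion intact.
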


%%%%%%%%%%%%%%%%%%%%%%%%%%%%%%%%%%%%%%%%%%%%%%%%%%
%%%Proof of Uniqueness%%%%%%%%%%%%%%%%%%%%%%%%%%%%
%%%%%%%%%%%%%%%%%%%%%%%%%%%%%%%%%%%%%%%%%%%%%%%%%%
\subsection{Proof of Proposition~\ref{prop:uniqueness}}

We are now ready to show the uniqueness of the martingale problem in Theorem~\ref{thm:fluctuations} for $\theta\ge0$. Our proof closely follows that of~\cite[Proposition~5.3]{fgn19}, which in turn has the structure of the proof of~\cite[Theorem~11.0.2]{kl99}, based on a more general approach due to Holley and Stroock~\cite{hs76}.

\begin{proof}[Proof of Proposition~\ref{prop:uniqueness}] Fix $H\in\mathcal{S}_\theta$ and $s\in(0, T]$. Note first that the process defined by
\begin{equation*}
   \left(\|\nabla H\|_{L^{2, \theta}}\right)^{-1} \mathscr{M}_t(H)
\end{equation*}
is a standard one-dimensional Brownian motion: hence, by Itô's formula, the $\mathbb{C}$-valued process $\{\mathscr{X}_{t, s}(H), t\in [s, T]\}$ defined by
\begin{equation*}
    \mathscr{X}_{t, s}(H):=e^{\frac{1}{2}(t-s)\|\nabla H\|_{L^{2, \theta}}^2+i\big(\mathscr{Y}_t(H)-\mathscr{Y}_s(H)-\int_s^t\mathscr{Y}_r(\mathfrak{A}_\theta H)\de r\big)}
\end{equation*}
is a martingale. We claim that, for each fixed $S \in (0, T]$ and $H\in\mathcal{S}_\theta$, the $\mathbb{C}$-valued process $\{\mathscr{Z}_{t, S}(H), t\in [0, S]\}$ defined by
\begin{equation*}
    \mathscr{Z}_{t, S}(H):=e^{\frac{1}{2}\int_0^t\left\|\nabla P_{S-r}^\theta H\right\|_{L^{2, \theta}}^2\de r + i \mathscr{Y}_t\left(P_{S-t}^\theta H\right)}
\end{equation*}
is also a martingale (where this process is well defined in view of Corollary~\ref{corol:semigroup_belonging}). In order to prove this, fix $0\le t_1<t_2\le S$ and, for each positive integer $n$ and each integer $0\le j\le n$, set $s_j^n:=t_1+\frac{j}{n}(t_2-t_1)$. This implies that the time sequence $t_1\le s_1^n< s_2^n<\ldots <s_n^n\le t_2$ partitions the interval $[t_1, t_2]$ into $n$ subintervals of equal size. Now, we can write
\begin{align}
    &\prod_{j=0}^{n-1} \mathscr{X}_{s_{j+1}^n, s_j^n}\left(P_{S-s_j^n}^\theta H\right) \nonumber
    \\& = e^{\frac{1}{2}\sum_{j=0}^{n-1}(s_{j+1}^n-s_j^n)\big\|\nabla P_{S-s_j^n}^\theta H\big\|_{L^{2, \theta}}^2} \times \label{eq:exp_sum1}
    \\&\phantom{=}\times e^{i\sum_{j=0}^{n-1}\left(\mathscr{Y}_{s_{j+1}^n}\big(P_{S-s_j^n}^\theta H\big)-\mathscr{Y}_{s_j^n}\big(P_{S-s_j^n}^\theta H\big)-\int_{s_j^n}^{s_{j+1}^n} \mathscr{Y}_r\big(\mathfrak{A}_\theta P_{S-s_j^n}^\theta H\big)\de r\right)}. \label{eq:exp_sum2}
\end{align}
By the smoothness of the semigroup $P_t^\theta$, guaranteed by Lemma~\ref{lemma:semigroup}, we see that
\begin{equation}\label{eq:exp_sum_lim}
    \mylim_{n\to\infty} \eqref{eq:exp_sum1} = e^{\frac{1}{2}\int_{t_1}^{t_2} \left\|\nabla P_{S-r}^\theta H\right\|_{L^{2, \theta}}^2\de r}
\end{equation}
almost surely. Moreover, we can write the sum in \eqref{eq:exp_sum2} as
\begin{equation}\label{eq:nonexp_sum}
    \begin{split}&\mathscr{Y}_{t_2}\left(P_{S-t_2+\frac{t_2-t_1}{n}}^\theta H\right) - \mathscr{Y}_{t_1}\left(P_{S-t_1}^\theta H\right) 
    \\&+\sum_{j=1}^{n-1}\left(\mathscr{Y}_{s_j^n}\left(P_{S-s_{j-1}^n}^\theta H- P_{S-s_{j}^n}^\theta H\right)-\int_{s_j^n}^{s_{j+1}^n} \mathscr{Y}_r\left(\mathfrak{A}_\theta P_{S-s_j^n}^\theta H\right)\de r\right),
    \end{split}
\end{equation}
where we used the linearity of $\{\mathscr{Y}_t, t\in[0, T]\}$ in the test function. Now, by Corollary~\ref{corol:semigroup_taylor}, we can further rewrite the sum in \eqref{eq:nonexp_sum} as
\begin{equation*}
    \begin{split}\sum_{j=1}^{n-1}\Bigg( &\int_{s_j^n}^{s_{j+1}^n} \left\{\mathscr{Y}_{s_j^n}\left(\mathfrak{A}_\theta P_{S-s_j^n}^\theta H\right)- \mathscr{Y}_r\left(\mathfrak{A}_\theta P_{S-s_j^n}^\theta H\right) \right\}\de r 
    \\&+ \mathscr{Y}_{s_j^n}\left(o\left(\frac{t_2-t_1}{n}, S-s_j^n\right)\right)\Bigg).
    \end{split}
\end{equation*}
By Lemma~\ref{lemma:semigroup}, for each fixed $H\in\mathcal{S}_\theta$, the map $t\mapsto \mathfrak{A}_\theta P_t^\theta H$ is uniformly continuous on $[0, T]$. Moreover, since $\mathscr{Y}\in\mathcal{C}([0, T], \mathcal{S}_\theta')$, the map $(s, t)\mapsto \mathscr{Y}_s(\mathfrak{A}_\theta P_t^\theta H)$ is uniformly continuous on the compact set $[0, T]\times[s, T]$. This implies that the last display vanishes as $n\to\infty$, which in turn implies that $\mylim_{n\to\infty}\eqref{eq:nonexp_sum}=\mathscr{Y}_{t_2}(P_{S-t_2}^\theta H)-\mathscr{Y}_{t_1}(P_{S-t_1}^\theta H)$ almost surely. Recalling \eqref{eq:exp_sum_lim}, we have thus proved that 
\begin{equation*}
    \mylim_{n\to\infty} \prod_{j=0}^{n-1} \mathscr{X}_{s_{j+1}^n, s_j^n}\left(P_{S-s_j^n}^\theta H\right)  = e^{\frac{1}{2}\int_{t_1}^{t_2} \left\|\nabla P_{S-r}^\theta H\right\|_{L^{2, \theta}}^2\de r + i \big(\mathscr{Y}_{t_2}\left(P_{S-t_2}^\theta H\right)-\mathscr{Y}_{t_1}\left(P_{S-t_1}^\theta H\right)\big)}
\end{equation*}
almost surely, which in turn is equal to $\frac{\mathscr{Z}_{t_2, S}(H)}{\mathscr{Z}_{t_1, S}(H)}$ almost surely.

\vspace{.6em}
Let now $G$ be a bounded, $\mathcal{F}_{t_1}$-measurable function, where $\mathcal{F}_t=\sigma\{\mathscr{Y}_s(H): s\le t, H\in\mathcal{S}_\theta\}$. By the Dominated Convergence Theorem, the convergence above takes place in $L^1$, and thus
\begin{equation}\label{eq:lim_prodX}
    \expected\left[G\frac{\mathscr{Z}_{t_2, S}(H)}{\mathscr{Z}_{t_1, S}(H)}\right]=\mylim_{n\to\infty}\expected\left[G \prod_{j=0}^{n-1} \mathscr{X}_{s_{j+1}^n, s_j^n}\left(P_{S-s_j^n}^\theta H\right) \right].
\end{equation}
Since $\{\mathscr{X}_{t, s}(H), t\in [s, T]\}$ is a martingale, taking conditional expectations with respect to $\mathcal{F}_{s_{n-1}^n}$ yields
\begin{equation*}
    \expected\left[G \prod_{j=0}^{n-1} \mathscr{X}_{s_{j+1}^n, s_j^n}\left(P_{S-s_j^n}^\theta H\right) \right]=\expected\left[G \prod_{j=0}^{n-2} \mathscr{X}_{s_{j+1}^n, s_j^n}\left(P_{S-s_j^n}^\theta H\right) \right].
\end{equation*}
Iterating this identity inductively, by \eqref{eq:lim_prodX} we deduce that
\begin{equation*}
    \expected\left[G\frac{\mathscr{Z}_{t_2, S}(H)}{\mathscr{Z}_{t_1, S}(H)}\right]=\expected[G]
\end{equation*}
for any bounded, $\mathcal{F}_{t_1}$-measurable function $G$. This shows that $\{\mathscr{Z}_{t, S}(H), t\in [0, S]\}$ is indeed a martingale. But then, since $\expected[\mathscr{Z}_{t, S}(H)|\mathcal{F}_s]=\mathscr{Z}_{s, S}(H)$, we obtain
\begin{equation*}
    \expected\left[e^{\frac{1}{2}\int_0^t \big\|\nabla P_{S-r}^\theta H\big\|_{L^{2, \theta}}^2\de r+i\mathscr{Y}_t\left(P_{S-t}^\theta H\right)}\;\Big|\; \mathcal{F}_s\right]=e^{\frac{1}{2}\int_0^s \big\|\nabla P_{S-r}^\theta H\big\|_{L^{2, \theta}}^2\de r+i\mathscr{Y}_s\left(P_{S-s}^\theta H\right)},
\end{equation*}
which in turn yields
\begin{equation*}
    \expected\left[e^{i\mathscr{Y}_t\left(P_{S-t}^\theta H\right)} \;\Big|\;  \mathcal{F}_s\right]=e^{-\frac{1}{2}\int_s^t \big\|\nabla P_{S-r}^\theta H\big\|_{L^{2, \theta}}^2\de r+i\mathscr{Y}_s\left(P_{S-s}^\theta H\right)}.
\end{equation*}
Note that $P_{S-s}^\theta H=P_{t-s}^\theta P_{S-t}^\theta H$, and hence, writing $h:=P_{S-t}^\theta H$, we get
\begin{equation*}
    \expected\left[e^{i\mathscr{Y}_t\left(h\right)} \;\Big|\;  \mathcal{F}_s\right]=e^{-\frac{1}{2}\int_s^t \big\|\nabla P_{t-r}^\theta h\big\|_{L^{2, \theta}}^2\de r+i\mathscr{Y}_s\left(P_{t-s}^\theta h\right)}.
\end{equation*}
This means that, for each $H\in\mathcal{S}_\theta$ and conditionally on $\mathcal{F}_s$, the random variable $\mathscr{Y}_t(H)$ has a Gaussian distribution of mean $\mathscr{Y}_s(P_{t-s}^\theta H)$ and variance $\int_s^ t\|\nabla P_{t-r}^\theta H\|_{L^{2, \theta}}^2\de r$. But then, since the time-zero distribution is determined by \eqref{eq:Y_covariance}, a standard Markov argument yields the uniqueness of finite-dimensional distributions of $\{\mathscr{Y}_t(H), t\in [0, T]\}$, which in turn ensures the uniqueness in distribution of the stochastic process $\{\mathscr{Y}_t, t\in[0, T]\}$.
\end{proof}

%%%%%%%%%%%%%%%%%%%%%%%%%%%%%%%%%%%%%%%%%%%%%%%%%%
%%%Proof of Uniqueness Neg%%%%%%%%%%%%%%%%%%%%%%%%
%%%%%%%%%%%%%%%%%%%%%%%%%%%%%%%%%%%%%%%%%%%%%%%%%%
\subsection{Proof of Proposition~\ref{prop:uniqueness_neg}}

Finally, we conclude by showing uniqueness of the martingale problem in Theorem~\ref{thm:fluctuations} for $\theta<0$. Our proof follows the approach of~\cite[Section~5.1.2]{bgjs22}. Throughout, we will assume $\theta<0$ and we will call $\mathcal{S}:=\mathcal{S}_\theta$ (since this space of test functions has in fact no dependence on $\theta$), which we recall is given by
\begin{equation*}
    \mathcal{S}=\left\{f\in C^\infty([0, 1]): \begin{array}{ll}\partial_u^k f(0)=0, \\ \partial_u^k f(1)=0\end{array}  \text{for all integers} \ k\ge0\right\}.
\end{equation*}
Moreover, we will denote by $\mathcal{S}_{\theta, \Dir}$ the space of test functions given in Definition \ref{def:S_theta} but for $0\le\theta<1$, namely
\begin{equation*}
    \mathcal{S}_{\theta, \Dir}:=\left\{f\in C^\infty([0, 1]): \begin{array}{ll}\mathfrak{A}_\theta^k f(0)=0, \\ \mathfrak{A}_\theta^k f(1)=0\end{array}  \text{for all integers} \ k\ge0\right\}.
\end{equation*}
By the arguments given in the previous sections, we have already proved the existence of a solution to the martingale problem stated in Proposition~\ref{prop:uniqueness_neg}. The idea is then the following: take any $\mathcal{S}'$-valued solution to this martingale problem, show that, for any time $t\in[0, T]$, there exists an $\mathcal{S}_{\theta, \Dir}'$-valued solution $\mathscr{\tilde Y}$ which extends it, and then show that the extension belongs to the space of continuous paths $\mathcal{C}([0, T], \mathcal{S}_{\theta, \Dir}')$. Since we already know (from the argument given to show Proposition~\ref{prop:uniqueness}) that solutions to the martingale problem in Proposition~\ref{prop:uniqueness} are unique in law, this would then imply uniqueness of the $\mathcal{S}'$-valued limit point.

Let $\mathcal{H}^2([0, 1])$ denote the Sobolev space of square integrable and twice differentiable functions $f:[0, 1]\to\mathbb{R}$ such that $\partial_u f$ and $\partial_u^2 f$ are also square integrable, equipped with the norm $\|\cdot\|_{\mathcal{H}^2}$ defined via
\begin{equation*}
    \|f\|_{\mathcal{H}^2}^2:=\int_0^2 f(u)^2\de u+\int_0^2 (\partial_u f(u))^2\de u+\int_0^2(\partial_u^2 f(u))^2\de u.
\end{equation*}
We define the ``intermediate" space of test functions
\begin{equation*}
    \mathcal{\tilde S}:=\left\{f\in \mathcal{H}^2([0, 1]): \begin{array}{ll}f(0)=\partial_u f(0)=0, \\ f(1)=\partial_u f(1)=0\end{array}\right\}. 
\end{equation*} 
This space is a closed vector subspace of $\mathcal{H}^2([0,1])$ and contains $\mathcal{S}$. 

Recall the definition of the space $L^2([0, 1])$ in Section~\ref{subsec:mg_problems}. The following approximation result can be found in~\cite[Lemma~A.4]{bgjs22}.
\begin{lemma}\label{lemma:approximation} Let $H \in \mathcal{\tilde S}$. There exists a sequence of functions $\{H_\eps\}_{\eps>0}$ in $\mathcal{S}$ such that, for $k=0,1,2$, $\mylim_{\eps\to 0} \partial_u^k H_{\eps} = \partial_u^k H$ in $L^2([0, 1])$, namely $\{H_\eps\}_{\eps>0}$ converges to $H$ in $\mathcal{H}^2([0, 1])$.
\end{lemma}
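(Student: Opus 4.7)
The plan is to approximate $H$ by smooth functions whose supports lie strictly inside $(0,1)$; such functions automatically vanish, together with all their derivatives, in neighbourhoods of $\{0,1\}$, and therefore belong to $\mathcal{S}$. The construction will combine a rescaling step, which pushes the support of $H$ inside the open interval, with a standard mollification.

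First I would extend $H$ by zero outside $[0,1]$. Since $H\in\mathcal{H}^2([0,1])$ satisfies $H(0)=\partial_u H(0)=H(1)=\partial_u H(1)=0$ (the boundary values being well defined by the Sobolev embedding $\mathcal{H}^2([0,1])\hookrightarrow C^1([0,1])$), its zero extension $\tilde H$ belongs to $\mathcal{H}^2(\mathbb{R})$ with compact support contained in $[0,1]$: the vanishing of $H$ and of $\partial_u H$ at each endpoint precisely prevents Dirac contributions from appearing in the first and second distributional derivatives of $\tilde H$. This is where the specific choice of boundary conditions defining $\mathcal{\tilde S}$ is crucial, and it is the only delicate point of the construction.

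Next, for each $\eps>0$, I would rescale by setting $\tilde H^\eps(u):=\tilde H\bigl(\tfrac{u-\eps}{1-2\eps}\bigr)$, which is again in $\mathcal{H}^2(\mathbb{R})$ and is supported in $[\eps,1-\eps]$; by the strong continuity of affine changes of variable on $\mathcal{H}^2$, one has $\tilde H^\eps\to H$ in $\mathcal{H}^2([0,1])$ as $\eps\downarrow 0$. I would then convolve with a standard mollifier $\rho_\delta(u):=\delta^{-1}\rho(u/\delta)$, where $\rho\in C_c^\infty(\mathbb{R})$ is non-negative, supported in $[-1,1]$, and has unit integral, setting $H^{\eps,\delta}:=\tilde H^\eps*\rho_\delta$. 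The choice $\delta<\eps/2$ ensures that $H^{\eps,\delta}$ is smooth on $\mathbb{R}$ and supported in $[\eps/2,1-\eps/2]\subset(0,1)$; in particular, it vanishes identically in neighbourhoods of $0$ and $1$, so all its derivatives vanish at the boundary and $H^{\eps,\delta}\in\mathcal{S}$. Standard mollification then yields $H^{\eps,\delta}\to\tilde H^\eps$ in $\mathcal{H}^2(\mathbb{R})$ as $\delta\downarrow 0$.

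Finally, a diagonal argument closes the proof: choose $\delta=\delta(\eps)$ small enough (so that, for instance, $\|H^{\eps,\delta(\eps)}-\tilde H^\eps\|_{\mathcal{H}^2([0,1])}\le\eps$), and set $H_\eps:=H^{\eps,\delta(\eps)}\in\mathcal{S}$. The triangle inequality then gives $H_\eps\to H$ in $\mathcal{H}^2([0,1])$, which by definition of the $\mathcal{H}^2$-norm yields the desired $L^2$-convergence of $\partial_u^k H_\eps$ to $\partial_u^k H$ for $k=0,1,2$. The hard part, as noted above, is the zero-extension step: without \emph{both} of the conditions $H=0$ and $\partial_u H=0$ at each endpoint, the extension would develop a jump in either $H$ or $\partial_u H$, producing a $\delta$- or $\delta'$-singularity in the second distributional derivative and destroying the approximation in $\mathcal{H}^2$.
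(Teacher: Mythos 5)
Your proof is correct: the zero extension lies in $\mathcal{H}^2(\mathbb{R})$ precisely because the boundary conditions $H=\partial_u H=0$ at both endpoints kill the boundary terms in the two integrations by parts, and the subsequent dilation-plus-mollification produces functions supported in a compact subset of $(0,1)$, which indeed belong to $\mathcal{S}$ and converge in $\mathcal{H}^2([0,1])$. The paper does not prove this lemma but imports it from the cited reference, and your argument is the standard one used there, so nothing further is needed.
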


Let now $\mathscr{B}$ denote the Banach space of real-valued processes $\{x_t, t \in [0,T]\}$ with continuous trajectories, equipped with the norm $\|\cdot\|_{\mathscr{B}}$ defined by 
\begin{equation*}
    \|x\|_{\mathscr{B}}^2:=\expected\left[\mysup_{t\in[0, T]} |x_t|^2\right].
\end{equation*}
Recalling the definition of $\mathscr{H}$ in Section~\ref{subsec:characterisation} and the norm $\|\cdot\|_{\mathscr{H}}$ in \eqref{def:H_norm}, we see that $\mathscr{B} \subseteq \mathscr{H}$, and moreover $\|x\|_{\mathscr{H}} \le \sqrt{T}\|x\|_{\mathscr{B}}$. In particular, if a sequence of processes $\{x^\eps\}_{\eps>0}$ converges to $x$ in $\mathscr{B}$ as $\eps\to 0$, then the convergence also holds in $\mathscr{H}$. 

We will keep the notation $\mathfrak{A}_\theta$ to denote the extension of the operator $\mathfrak{A}_\theta$ to $\mathcal{H}^2([0, 1])$, and similarly for the norm $\|\cdot\|_{L^{2, \theta}}$ defined in \eqref{def:theta_norm}, which we recall is simply given by
\begin{equation*}
    \|H\|_{L^{2, \theta}}^2=2\int_0^1 \Phi(\bar\rho_\theta(u))H^2(u)\de u
\end{equation*}
for $\theta<0$. The result below follows \cite[Lemma~5.3]{bgjs22}.

\begin{lemma}\label{lemma:mg_problem_tilde_S} Let $\mathscr{Y}$ be an $\mathcal{S}'$-valued stochastic process satisfying \eqref{eq:martingale_problem} and item i) of Proposition~\ref{prop:uniqueness_neg} for each $H\in\mathcal{S}$. Then, for any $H\in\mathcal{\tilde S}$, the process $\{\mathscr{\tilde M}(H)_t, t\in[0, T]\}$ defined via
\begin{equation}\label{eq:tilde_M}
    \mathscr{\tilde M}_t(H):=\mathscr{Y}_t(H)-\mathscr{Y}_0(H)-\int_0^t\mathscr{Y}_s(\mathfrak{A}_\theta H)\de s
\end{equation}
is a martingale with continuous trajectories and with quadratic variation given by
\begin{equation}\label{eq:qv_tilde_M}
    \scal{\mathscr{\tilde M}(H)}_t=t\|\partial_u H\|_{L^{2, \theta}}^2.
\end{equation}
Moreover, the process $\{\mathscr{Y}_t(H), t\in[0, T]\}$ has continuous trajectories.
\end{lemma}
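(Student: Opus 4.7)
The plan is to proceed by density, using Lemma~\ref{lemma:approximation} to approximate any $H\in\mathcal{\tilde S}$ by a sequence $\{H_\eps\}_{\eps>0}\subset \mathcal{S}$ converging to $H$ in $\mathcal{H}^2([0,1])$, and then to transfer the known martingale properties from $H_\eps$ to $H$ by a Cauchy-sequence argument in the Banach space $\mathscr{B}$. For each fixed $\eps$, the process $\{\mathscr{M}_t(H_\eps),\, t\in[0,T]\}$ defined in \eqref{eq:martingale_problem} is, by assumption, a continuous martingale with quadratic variation $t\|\partial_u H_\eps\|_{L^{2,\theta}}^2$.

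The first step is to show that $\{\mathscr{M}_\cdot(H_\eps)\}_{\eps>0}$ is a Cauchy sequence in $\mathscr{B}$. By the linearity of the fluctuation field and of the operator $\mathfrak{A}_\theta$, the difference $\mathscr{M}_t(H_\eps)-\mathscr{M}_t(H_{\eps'})$ is itself a continuous martingale with quadratic variation $t\|\partial_u(H_\eps-H_{\eps'})\|_{L^{2,\theta}}^2$, so Doob's maximal inequality gives
\begin{equation*}
\expected\left[\mysup_{t\in[0,T]}\bigl(\mathscr{M}_t(H_\eps)-\mathscr{M}_t(H_{\eps'})\bigr)^2\right]\le 4T\|\partial_u(H_\eps-H_{\eps'})\|_{L^{2,\theta}}^2,
\end{equation*}
which tends to $0$ by Lemma~\ref{lemma:approximation}. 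Hence $\mathscr{M}_\cdot(H_\eps)$ converges in $\mathscr{B}$ to a continuous process, which we provisionally call $\mathscr{\tilde M}_\cdot(H)$.

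The second step is to identify this limit with \eqref{eq:tilde_M}. For this, I would use condition i) of Proposition~\ref{prop:uniqueness_neg} to control the remaining pieces of the Dynkin decomposition. Writing $\mathscr{M}_t(H_\eps)=\mathscr{Y}_t(H_\eps)-\mathscr{Y}_0(H_\eps)-\int_0^t \mathscr{Y}_s(\mathfrak{A}_\theta H_\eps)\de s$, stationarity and the Cauchy-Schwarz inequality yield
\begin{equation*}
\expected\left[\mysup_{t\in[0,T]}\left(\int_0^t \mathscr{Y}_s\bigl(\mathfrak{A}_\theta(H_\eps-H_{\eps'})\bigr)\de s\right)^2\right]\le T\int_0^T\expected\bigl[\mathscr{Y}_s(\mathfrak{A}_\theta(H_\eps-H_{\eps'}))^2\bigr]\de s\lesssim T^2\|\Phi'(\bar\rho_\theta)\|_\infty^2\|\Delta(H_\eps-H_{\eps'})\|_{L^2}^2,
\end{equation*}
which again vanishes as $\eps,\eps'\to 0$ by $\mathcal{H}^2$ convergence. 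Similarly, $\expected[(\mathscr{Y}_0(H_\eps)-\mathscr{Y}_0(H_{\eps'}))^2]\lesssim \|H_\eps-H_{\eps'}\|_{L^2}^2\to 0$. Combining these three bounds shows that $\mathscr{Y}_\cdot(H_\eps)$ is itself Cauchy in $\mathscr{B}$, and its limit defines a continuous version of the extended process $\{\mathscr{Y}_t(H),\,t\in[0,T]\}$ obtained from Lemma~\ref{lemma:def_Y_eps} (the two limits coincide since convergence in $\mathscr{B}$ implies convergence in $\mathscr{H}$). Subtracting these Cauchy limits identifies $\mathscr{\tilde M}_t(H)$ with the expression \eqref{eq:tilde_M}, and the martingale property is preserved by $L^2$ convergence.

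Finally, for the quadratic variation \eqref{eq:qv_tilde_M}, I would use that $\mathscr{M}_t(H_\eps)^2-t\|\partial_u H_\eps\|_{L^{2,\theta}}^2$ is a martingale for every $\eps>0$; convergence of $\mathscr{M}_t(H_\eps)\to \mathscr{\tilde M}_t(H)$ in $L^2$ (uniformly in $t$) together with the convergence $\|\partial_u H_\eps\|_{L^{2,\theta}}^2\to\|\partial_u H\|_{L^{2,\theta}}^2$ (which follows from $\partial_u H_\eps\to\partial_u H$ in $L^2$ and the boundedness of $\Phi(\bar\rho_\theta)$) imply that $\mathscr{\tilde M}_t(H)^2-t\|\partial_u H\|_{L^{2,\theta}}^2$ is also a martingale, giving the claimed quadratic variation. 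The main technical point will be ensuring that all convergences take place in $\mathscr{B}$ rather than merely in $\mathscr{H}$, since this is what guarantees that the limiting processes $\mathscr{\tilde M}_t(H)$ and $\mathscr{Y}_t(H)$ admit continuous trajectories; this is precisely what Doob's inequality and the supremum-in-time estimates above are designed to deliver.
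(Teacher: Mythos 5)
Your proposal is correct and follows essentially the same route as the paper's proof: approximate $H$ by $\{H_\eps\}\subset\mathcal{S}$ via Lemma~\ref{lemma:approximation}, show the martingales and the integral terms are Cauchy in $\mathscr{B}$ via Doob's inequality and condition i) of Proposition~\ref{prop:uniqueness_neg} respectively, and pass the martingale property and the quadratic variation to the limit. The only cosmetic slip is attributing the bound on the integral term to ``stationarity'' — what is actually used is Cauchy--Schwarz in time together with condition i), which holds for every $t$ by hypothesis.
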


\begin{remark} Since, for $H\in\mathcal{\tilde S}$, the functions $H$ and $\mathfrak{A}_\theta H$ may not be in $\mathcal{S}$, the terms on the right-hand side of \eqref{eq:tilde_M} are to be understood in the sense of Lemma~\ref{lemma:def_Y_eps}.
\end{remark}

\begin{proof}[Proof of Lemma~\ref{lemma:mg_problem_tilde_S}] Given $H\in\mathcal{\tilde S}$, by Lemma~\ref{lemma:approximation} we can approximate $H$ by a sequence $\{H_\eps\}_{\eps>0}$ in $\mathcal{S}$ such that, for $k=0, 1, 2$, $\|\partial_u^k H_\eps - \partial_u^k H\|_{L^2}^2\to0$ as $\eps\to0$, so that in particular $\|\partial_u H_\eps - \partial_u H\|_{L^{2, \theta}}^2\le 2\|\Phi(\bar\rho_\theta)\|_\infty\|\partial_u H_\eps - \partial_u H\|_{L^2}^2\to0$ as $\eps\to0$. Using \eqref{eq:martingale_problem}, the process $\{\mathscr{M}_t(H_\eps), t\in[0, T]\}$ defined via
\begin{equation}\label{eq:extended_mg_eps}
	\mathscr{M}_t (H_\eps) := \mathscr{Y}_t (H_\eps) - \mathscr{Y}_0 (H_\eps) - \int_0^t \mathscr{Y}_s (\mathfrak{A}_\theta H_\eps) \de s
\end{equation}
is a martingale with continuous trajectories and with quadratic variation $\scal{\mathscr{M}(H_\eps)}_t = t\|\partial_u H_\eps\|_{L^{2, \theta}}^2$.

We claim that the sequence of real-valued martingales $\{\mathscr{M}_t(H_\eps), t\in[0, T]\}_{\eps>0}$ converges in $\mathscr{B}$ to a martingale, denoted by $\{\mathscr{\tilde M}_t(H), t\in[0, T]\}$, with continuous trajectories and with quadratic variation given by \eqref{eq:qv_tilde_M}. To this end, for any $\eps, \eps'>0$ note that, by Doob's inequality, 
\begin{align*}
	\| \mathscr{M}(H_\eps) - \mathscr{M}(H_{\eps'})\|_{\mathscr{B}}^2 & \lesssim \mysup_{t\in [0,T]} \expected \left[\big(\mathscr{M}_t (H_\eps) - \mathscr{M}_t (H_{\eps'})\big)^2 \right]
    \\&=\mysup_{t\in [0,T]} \expected \left[\big(\mathscr{M}_t (H_\eps -H_{\eps'})\big)^2 \right] 
    \\&= T \| \partial_u H_\eps - \partial_u H_{\eps'}\|_{L^{2, \theta}}^2,
\end{align*}
and the last term converges to $0$ as $\eps, \eps' \to 0$. Hence, $\{\mathscr{M}(H_\eps)\}_{\eps}$ is a Cauchy sequence in $\mathscr{B}$, and thus it converges to a process in $\mathscr{B}$, which we denote by $\mathscr{\tilde M}(H)$. One can easily check that the limit only depends on $H$ and not on the approximating sequence $\{H_\eps\}_{\eps>0}$, so that the notation is justified. Moreover, we have that 
\begin{equation*}
    \expected \left[\mathscr{\tilde M}_t (H)^2\right]=\mylim_{\eps\to 0} \expected \left[\mathscr{M}_t (H_\eps)^2\right] = t \mylim_{\eps\to 0} \|\partial_u H_\eps\|_{L^{2, \theta}}^2 =t \|\partial_u  H \|_{L^{2, \theta}}^2,
\end{equation*}
so that the quadratic variation of $\mathscr{\tilde M} (H)$ is given by \eqref{eq:qv_tilde_M}.
	
Now, by Lemma~\ref{lemma:def_Y_eps} and since $\mathfrak{A}_\theta H_\eps\in\mathcal{S}$, the processes $\mathscr{Y}(H_\eps)$ and $\mathscr{Y}(\mathfrak{A}_\theta H_\eps)$ converge in $\mathscr{H}$ to $\mathscr{Y}(H)$ and $\mathscr{Y}(\mathfrak{A}_\theta H)$, respectively. By the Cauchy-Schwarz inequality and item i) of Proposition~\ref{prop:uniqueness_neg}, the sequence of real-valued processes $\{\int_0^{t} \mathscr{Y}_s (\mathfrak{A}_\theta H_\eps)\de s, t \in [0,T]\}_{\eps>0}$ is a Cauchy sequence in $\mathscr{B}$, which follows from the fact that 
\begin{equation*}
    \|\mathfrak{A}_\theta H_\eps - \mathfrak{A}_\theta    
    H\|_{L^2}^2\le \|\Phi'(\bar\rho_\theta)\|_\infty^2\|\partial_u^2H_\eps-\partial_u^2 H\|_{L^2}^2 \to 0
\end{equation*}
as $\eps\to 0$. Thus, as $\eps\to0$, this sequence converges to a process $\{\mathscr{Z}_t(\mathfrak{A}_\theta H), t\in [0,T]\}$ in $\mathscr{B}$ (where, again, the limiting process is independent of the approximating sequence), and the convergence holds also in $\mathscr{H}$.
	
Hence, from \eqref{eq:extended_mg_eps} we get that in $\mathscr{H}$, and consequently almost everywhere in time and $\mathbb P$-almost surely, the equality
\begin{equation*}
	\mathscr{\tilde M}_t (H) = \mathscr{Y}_t (H) - \mathscr{Y}_0 (H)  - \mathscr{Z}_t (\mathfrak{A}_\theta H)
\end{equation*}
holds. Since $\mathscr{\tilde M}(H)$ and $\mathscr{Z} (\mathfrak{A}_\theta H)$ have continuous trajectories,  this implies that the same holds for the process $\mathscr{Y}(H)$. Finally, it is easy to show that $\mathbb P$-almost surely we have the equality 
\begin{equation*}
	\mathscr{Z}_t (\mathfrak{A}_\theta H) =\int_0^t \mathscr{Y}_s (\mathfrak{A}_\theta H) \de s,
\end{equation*}
which completes the proof.
\end{proof}

Following \cite[Lemma~5.5]{bgjs22}, we are now going to construct a function which is neither in $\mathcal{S}_{\theta, \Dir}$ nor in $ \mathcal{\tilde S}$ for which we can properly define the martingale problem of Proposition~\ref{prop:uniqueness_neg}. This intermediate step will allow us to define the martingale problem for any test function $H \in \mathcal{S}_{\theta, \Dir}$. 

Let $a:\mathbb{R}\to\mathbb{R}$ be the map defined via
\begin{equation*}
    a(u):=c e^{-\frac{1}{u(1-u)}}\boldsymbol{1}_{(0,1)}(u), \quad  \text{where } c:=\left(\int_0^1 e^{-\frac{1}{u(1-u)}}\de u\right)^{-1},
\end{equation*}
and define $\phi:\mathbb{R} \to \mathbb{R}$ by
\begin{equation}\label{eq:function_phi}
     \phi(u):=1-\int_0^u a(t)\de t. 
\end{equation}
For $\beta\in(0, 1)$ and $\alpha>\frac{1}{1-\beta}$, let $\psi_{\alpha, \beta}:\mathbb{R}\to\mathbb{R}$ be defined via
\begin{equation}\label{eq:psi_alpha_beta}
    \psi_{\alpha, \beta}(u):=u\,\phi(\alpha(u-\beta))
\end{equation}
(so that, in particular, $\psi_{\alpha, \beta} (u) = u$ for $u \in [0,\beta]$ and $\psi_{\alpha, \beta}(u)=0$ for $u\in [\beta+\tfrac{1}{\alpha},1]$; see Figure~\ref{fig:psi_alpha_beta}). We set $\tilde{\psi}_{\alpha, \beta}:=\psi_{\alpha, \beta} \circ i$, where $i:[0, 1]\mapsto[0, 1]$ is given by $i(u):=1-u$ (so that $\tilde{\psi}_{\alpha, \beta}$ is simply the reflection of $\psi_{\alpha, \beta}$ with respect to the axis $x=\frac{1}{2}$).

\begin{figure}[htbp]
    \centering
    \begin{subfigure}[t]{0.49\textwidth}
        \centering
        \includegraphics[width=\textwidth]{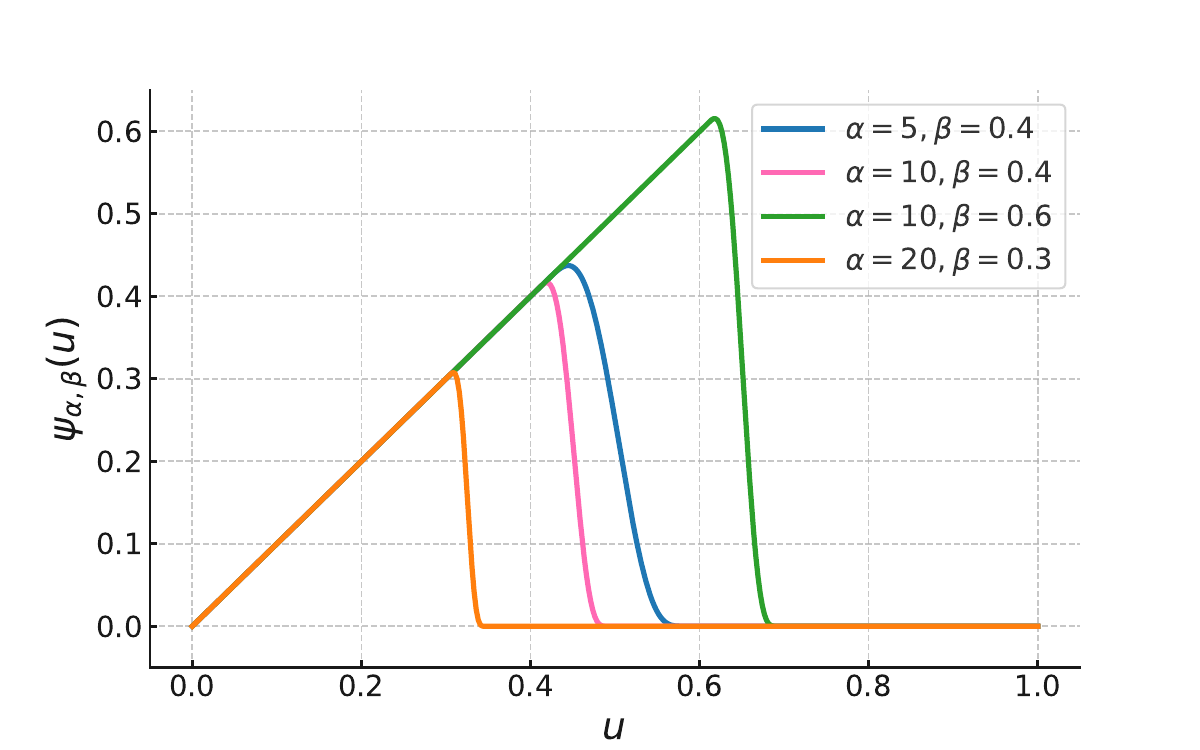}
        \caption{Plot of $\psi_{\alpha,\beta}$.}
        \label{fig:psi_alpha_beta}
    \end{subfigure}
    \hfill
    \begin{subfigure}[t]{0.49\textwidth}
        \centering
        \includegraphics[width=\textwidth]{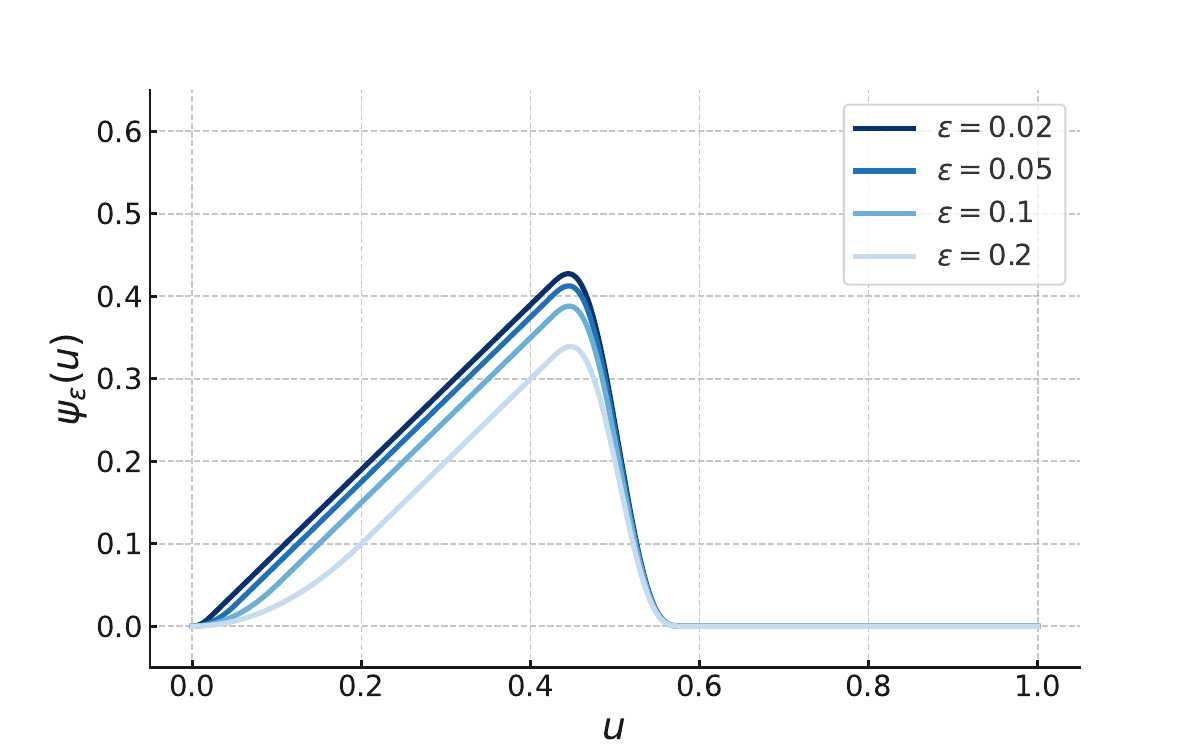}
        \caption{Plot of $\psi_\eps$ for $\alpha=5$ and $\beta=0.4$.}
        \label{fig:psi_eps}
    \end{subfigure}
    \caption{Plot of the functions $\psi_{\alpha,\beta}$ and $\psi_\eps$ for some values of the parameters $\alpha, \beta, \eps$.}
\end{figure}

\begin{lemma}\label{lemma:mg_problem_psi} Let $\mathscr{Y}$ be an $\mathcal{S}'$-valued stochastic process satisfying \eqref{eq:martingale_problem} and items i) and ii) of Proposition~\ref{prop:uniqueness_neg} for each $H\in\mathcal{S}$. Fix $\beta\in(0, 1)$ and $\alpha>\frac{1}{1-\beta}$. For $\psi\in\{\psi_{\alpha, \beta}, \tilde\psi_{\alpha, \beta}\}$, the process $\{\mathscr{\tilde M}_t(\psi), t\in[0, T]\}$ defined via 
\begin{equation}\label{eq:tilde_M_psi}
    \mathscr{\tilde M}_t(\psi):=\mathscr{Y}_t(\psi)-\mathscr{Y}_0(\psi)-\int_0^t\mathscr{Y}_s(\mathfrak{A}_\theta \psi)\de s
\end{equation}
is a martingale with continuous trajectories and with quadratic variation given by
\begin{equation}\label{eq:qv_tilde_M_psi}
    \scal{\mathscr{\tilde M}(\psi)}_t=t\|\partial_u \psi\|_{L^{2, \theta}}^2.
\end{equation}
Moreover, the process $\{\mathscr{Y}_t(\psi), t\in[0, T]\}$ has continuous trajectories.
\end{lemma}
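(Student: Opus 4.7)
The function $\psi_{\alpha,\beta}$ belongs neither to $\mathcal{S}_{\theta,\Dir}$ nor to $\mathcal{\tilde S}$: although $\psi_{\alpha,\beta}(0) = 0$, one has $\psi_{\alpha,\beta}'(0) = 1 \ne 0$, so the left boundary condition required by $\mathcal{\tilde S}$ fails, whereas near $u = 1$ the function vanishes identically and the right endpoint is unproblematic. The plan is to approximate $\psi_{\alpha,\beta}$ by a sequence $\{\psi_\eps\}_{\eps>0} \subset \mathcal{\tilde S}$ obtained by multiplication with a smooth cutoff near $u = 0$, apply Lemma~\ref{lemma:mg_problem_tilde_S} to each approximant to obtain a martingale identity, and then pass to the limit $\eps \to 0$, invoking item ii) of Proposition~\ref{prop:uniqueness_neg} to control the boundary contribution that remains in the drift. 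The case of $\tilde\psi_{\alpha,\beta}$ will be entirely symmetric, with the roles of $\iota_\eps^0$ and $\iota_\eps^1$ exchanged.

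Concretely, I would pick a smooth cutoff $\rho_\eps: [0,1] \to [0,1]$ with $\rho_\eps \equiv 0$ on $[0, \eps/2]$, $\rho_\eps \equiv 1$ on $[\eps, 1]$, and $\|\partial_u^k \rho_\eps\|_\infty \lesssim \eps^{-k}$, and set $\psi_\eps := \rho_\eps \psi_{\alpha,\beta}$. Then $\psi_\eps \in \mathcal{\tilde S}$, so by Lemma~\ref{lemma:mg_problem_tilde_S} the process $\mathscr{\tilde M}_t(\psi_\eps)$ is a martingale with continuous trajectories and quadratic variation $t\|\partial_u \psi_\eps\|_{L^{2,\theta}}^2$. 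By construction $\psi_\eps \to \psi_{\alpha,\beta}$ and $\partial_u \psi_\eps \to \partial_u \psi_{\alpha,\beta}$ in $L^2$, so Lemma~\ref{lemma:def_Y_eps} and item i) of Proposition~\ref{prop:uniqueness_neg} yield the convergence of $\mathscr{Y}_0(\psi_\eps)$ and $\mathscr{Y}_t(\psi_\eps)$ in $\mathscr{H}$, together with $t\|\partial_u\psi_\eps\|_{L^{2,\theta}}^2 \to t\|\partial_u \psi_{\alpha,\beta}\|_{L^{2,\theta}}^2$.

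The main obstacle is the convergence of the drift $\int_0^t \mathscr{Y}_s(\mathfrak{A}_\theta \psi_\eps)\,ds$. Since $\psi_{\alpha,\beta}(u) = u$ on $[0,\beta]$, $\Delta \psi_{\alpha,\beta} \equiv 0$ on $[0,\eps]$ for $\eps < \beta$, while $\psi_\eps$ and $\psi_{\alpha,\beta}$ coincide on $[\eps, 1]$. Hence
\begin{equation*}
\mathfrak{A}_\theta \psi_\eps - \mathfrak{A}_\theta \psi_{\alpha,\beta} = \Phi'(\bar\rho_\theta)\bigl(u\rho_\eps''(u) + 2\rho_\eps'(u)\bigr)\mathbf{1}_{[0,\eps]}(u) =: \Phi'(\bar\rho_\theta)\, d_\eps(u),
\end{equation*}
and $\|d_\eps\|_{L^2}^2 \sim 1/\eps$, so Cauchy-Schwarz with item i) alone diverges. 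The saving observation is that $\int_0^\eps d_\eps(u)\,du = \psi_\eps'(\eps) - \psi_\eps'(0) = 1$, matching exactly the mass of $\iota_\eps^0$; this is the mechanism by which the boundary data of $\psi_{\alpha,\beta}$ (its non-vanishing derivative at $0$) enters the limit. Splitting
\begin{equation*}
\Phi'(\bar\rho_\theta)\, d_\eps = [\Phi'(\bar\rho_\theta) - \Phi'(\bar\rho_\theta(0))]\, d_\eps + \Phi'(\bar\rho_\theta(0))(d_\eps - \iota_\eps^0) + \Phi'(\bar\rho_\theta(0))\, \iota_\eps^0,
\end{equation*}
the first term gains a factor of $\eps$ pointwise from the Lipschitz continuity of $\Phi' \circ \bar\rho_\theta$ at $0$, yielding an $O(\eps)$ bound in $L^2$ after Cauchy-Schwarz and item i); the last term vanishes by item ii). The middle term is the delicate point: although $d_\eps - \iota_\eps^0$ has zero mean on $[0,\eps]$, its $L^2$ norm is still of order $1/\sqrt\eps$, so neither direct bound suffices.

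To handle it, I would either adapt the local Boltzmann-Gibbs principle (Theorem~\ref{thm:local_boltzmann_gibbs}) and Lemma~\ref{lemma:local_limit} to arbitrary test profiles supported on $[0,\eps]$ with bounded total mass (the proofs use only that the test function is bounded for fixed $\eps$ and concentrated near the boundary, not its specific indicator form), or write $d_\eps - \iota_\eps^0 = \partial_u F_\eps$ with $F_\eps$ vanishing at both endpoints of $[0,\eps]$ and exploit the resulting telescoping at the microscopic level following the scheme of Lemma~\ref{lemma:local_limit}. This is the main technical obstacle. Once it is settled, passing to the limit in the martingale identity for $\psi_\eps$ in exactly the manner of the proof of Lemma~\ref{lemma:mg_problem_tilde_S} (convergence in $\mathscr{B}$ via Doob's inequality combined with the convergences above) yields that $\mathscr{\tilde M}_t(\psi_{\alpha,\beta})$ is a martingale with continuous trajectories and quadratic variation $t\|\partial_u \psi_{\alpha,\beta}\|_{L^{2,\theta}}^2$, and the continuity of $\mathscr{Y}(\psi_{\alpha,\beta})$ then follows.
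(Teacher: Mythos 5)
There is a genuine gap, and it lies exactly where you flag it: the middle term $\Phi'(\bar\rho_\theta(0))(d_\eps-\iota_\eps^0)$ of your decomposition cannot be closed with the tools that are actually available in this lemma. Keep in mind that Lemma~\ref{lemma:mg_problem_psi} is a statement about an \emph{arbitrary} $\mathcal{S}'$-valued process satisfying the martingale problem together with items i) and ii) of Proposition~\ref{prop:uniqueness_neg}; it is an ingredient of the uniqueness proof, so the only admissible inputs are the abstract martingale identity, the $L^2$ bound of item i), and the statement of item ii) for the \emph{specific} profiles $\Phi'(\bar\rho_\theta)\iota_\eps^j$. Your two proposed fixes --- adapting the local Boltzmann--Gibbs principle or telescoping ``at the microscopic level'' --- both appeal to the particle system $\mathscr{Y}^N$, which is simply not available here: an abstract solution of the martingale problem carries no microscopic dynamics. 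Item i) gives only $\expected[(\int_0^t\mathscr{Y}_s(\Phi'(\bar\rho_\theta(0))(d_\eps-\iota_\eps^0))\de s)^2]\lesssim \|d_\eps-\iota_\eps^0\|_{L^2}^2\sim \eps^{-1}$, which diverges, and the mean-zero structure of $d_\eps-\iota_\eps^0$ cannot be exploited at the level of a distribution-valued process without further information (note also that the first moments of $d_\eps$ and $\iota_\eps^0$ on $[0,\eps]$ do not match, so $d_\eps-\iota_\eps^0$ is not the second derivative of a compactly supported small function either). A minor additional point: item ii) controls $\mathscr{Y}_s(\Phi'(\bar\rho_\theta)\iota_\eps^0)$ with the full profile $\Phi'(\bar\rho_\theta(\cdot))$, not the constant $\Phi'(\bar\rho_\theta(0))$; that discrepancy is harmless (it costs $O(\eps)$ in $L^2$ by Lipschitz continuity and item i)), but it should be stated.

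The paper avoids the residual term altogether by choosing the approximation differently. Writing $\psi_{\alpha,\beta}(u)=u\,\phi_{\alpha,\beta}(u)$, one regularises the linear factor rather than multiplying by a cutoff: set $h_\eps(u)=\tfrac{u^2}{2\eps}$ on $[0,\eps]$ and $h_\eps(u)=u-\tfrac{\eps}{2}$ on $[\eps,1]$, and take $\psi_\eps:=h_\eps\phi_{\alpha,\beta}\in\mathcal{\tilde S}$. The point is that $\partial_u^2 h_\eps=\tfrac{1}{\eps}\boldsymbol{1}_{[0,\eps]}=\iota_\eps^0$ \emph{exactly}, and since $\phi_{\alpha,\beta}\equiv1$ on $[0,\beta]\supset[0,\eps]$, the entire boundary contribution to $\mathfrak{A}_\theta\psi_\eps$ is precisely $\Phi'(\bar\rho_\theta)\iota_\eps^0$ --- the object item ii) controls --- with no remainder; the other two terms in the Leibniz expansion converge in $L^2$ to $\mathfrak{A}_\theta\psi_{\alpha,\beta}$ and are handled by item i). Your framework (approximate in $\mathcal{\tilde S}$, apply Lemma~\ref{lemma:mg_problem_tilde_S}, pass to the limit in $\mathscr{B}$) is the right one, but the specific choice of approximating sequence is not a technicality here: it must be engineered so that the singular part of the drift is \emph{identically} the profile appearing in item ii).
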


\begin{proof} Since the proof for $\tilde{\psi}_{\alpha, \beta}$ is completely analogous, we will only show the result for $\psi=\psi_{\alpha, \beta}$. For $0<\eps<1$, define the map
\begin{equation*}
	h_\eps(u):=\begin{cases}
	\frac{u^2}{2\eps}, & \text{ for } u \in [0, \eps],\\
	u-\frac{\eps}{2}, & \text{ for } u \in [\eps, 1],
	\end{cases}
\end{equation*}
which belongs to $\mathcal{H}^2([0,1])$, and let ${\phi}_{\alpha,\beta}(u):= \phi(\alpha(u-\beta))$ (with $\phi$ given in \eqref{eq:function_phi}). Recalling \eqref{eq:psi_alpha_beta}, it is not hard to show that $\psi_{\eps}:=h_{\eps}{\phi}_{\alpha,\beta} \in \mathcal{\tilde S}$; see Figure~\ref{fig:psi_eps}.

Moreover, defining the Sobolev space $\mathcal{H}^1([0,1])$ in analogy to $\mathcal{H}^2([0,1])$, the sequence $\{\psi_\eps\}_{\eps>0}$ converges in $\mathcal{H}^1([0,1])$ to $\psi$ as $\eps\to0$: indeed, a simple computation shows that
\begin{equation*}
	\mylim_{\eps\to0}\left(\int_0^1 (\psi_\eps(u)-\psi(u))^2 \de u + \int_0^1 (\partial_u\psi_\eps(u)-\partial_u\psi(u))^2 \de u\right)=0.
\end{equation*}

Now, for $\eps>0$ sufficiently small, by Lemma~\ref{lemma:mg_problem_tilde_S} we have that the process $\{\mathscr{\tilde M}_t(\psi_\eps),$ $t\in[0, T]\}$ defined via
\begin{equation*}
	\mathscr{\tilde M}_t(\psi_\eps) := \mathscr{Y}_t (\psi_\eps) - \mathscr{Y}_0 (\psi_\eps) - \int_0^t \mathscr{Y}_s(\mathfrak{A}_\theta \psi_\eps) \de s
\end{equation*}
is a real-valued martingale with continuous trajectories and quadratic variation given by $\scal{\mathscr{\tilde M}(\psi_\eps)}_t=t\|\partial_u \psi_\eps\|_{L^{2, \theta}}^2$, and moreover the process $\mathscr{Y}(\psi_\eps)$ has continuous trajectories. Also, since $\mylim_{\eps \to 0} \|\partial_u\psi_\eps - \partial_u\psi\|_{L^{2, \theta}}^2 = 0$, the sequence of martingales $\{\mathscr{\tilde M} (\psi_\eps)\}_{\eps>0}$ converges in $\mathscr{B}$ as $\eps\to0$ to a martingale, which we denote by $\mathscr{\tilde M}(\psi)$, which has continuous trajectories and quadratic variation given by \eqref{eq:qv_tilde_M_psi}. Moreover, we have that $\mathscr{Y}(\psi_\eps)$ converges to $\mathscr{Y}(\psi)$ in $\mathscr{H}$ as $\eps\to0$. This can be seen as in the proof of Lemma~\ref{lemma:mg_problem_tilde_S}.
	
Setting now
\begin{equation*}
    x^\eps_t :=\int_0^t {\mathscr{Y}}_s (\mathfrak{A}_\theta \psi_\eps)\de s,
\end{equation*}
it only remains to show the convergence of the sequence $\{ x^\eps_t, t\in [0,T]\}_{\eps>0} \subseteq \mathscr{B}$ to the process $\{ x_t, t\in [0, T]\}$ given by $x_t:=\int_0^t \mathscr{Y}_s (\mathfrak{A}_\theta \psi) \de s$ as $\eps\to0$. Note that
\begin{align*}
	\mathfrak{A}_\theta \psi_\eps(u)&=\Phi'(\bar\rho_\theta(u))\partial_u^2\psi_\eps(u)
    \\&=\Phi'(\bar\rho_\theta(u))\phi_{\alpha,\beta}(u) \partial_u^2h_\eps(u) 
    \\&\phantom{=}+ \Phi'(\bar\rho_\theta(u))2\partial_u\phi_{\alpha,\beta}(u)\partial_u h_\eps(u)+\Phi'(\bar\rho_\theta(u))h_\eps(u)\partial_u^2 {\phi}_{\alpha,\beta}(u).
\end{align*} 
The function given by the bottom line of the last display converges in $L^2([0, 1])$ to $ \Phi'(\bar\rho_\theta)\partial_u^2\psi$ $=\mathfrak{A}_\theta\psi$ as $\eps\to0$, which is easy to see from the identity $\partial_u^2\psi(u)= 2\partial_u\phi_{\alpha,\beta} (u)+u\, \partial_u^2\phi_{\alpha,\beta}(u)$ for any $u \in[0,1]$. At the same time, we have that $\Phi'(\bar\rho_\theta)\phi_{\alpha,\beta}\, \partial_u^2h_\eps = \Phi'(\bar\rho_\theta)\iota^0_\eps$, and hence, by condition ii) of Proposition~\ref{prop:uniqueness_neg}, we have that
\begin{equation*}
	\mylim_{\eps\to0} \expected \left[ \mysup_{t \in [0,T]} \left(\int_0^t \mathscr{Y}_s(\Phi'(\bar\rho_\theta)\phi_{\alpha,\beta}\,\partial_u^2 h_\eps )\de s\right)^2\right] =0.
\end{equation*}
Hence, we get that $\{x^\eps\}_{\eps>0}$ converges to $x$ as $\eps\to 0$ in $\mathscr{B}$.
\end{proof}

From Lemmas~\ref{lemma:mg_problem_tilde_S} and \ref{lemma:mg_problem_psi}, we deduce the following corollary.

\begin{corollary} Let $\mathscr{Y}$ be an $\mathcal{S}'$-valued stochastic process satisfying \eqref{eq:martingale_problem} and items i) and ii) of Proposition~\ref{prop:uniqueness_neg} for each $H\in\mathcal{S}$. Then, the process $\{\mathscr{Y}_t, t\in[0, T]\}$ can be extended to a process $\{\mathscr{\tilde Y}_t, t\in[0, T]\}$ belonging to $\mathcal{C}([0,T], \mathcal{S}_{\theta, \Dir})$ and solution to the martingale problem of Proposition~\ref{prop:uniqueness} (with $\theta<0$ and with $\mathcal{S}_\theta$ substituted with $\mathcal{S}_{\theta, \Dir}$).
\end{corollary}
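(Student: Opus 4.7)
The plan is to decompose an arbitrary test function in $\mathcal{S}_{\theta, \Dir}$ into pieces each handled by Lemmas~\ref{lemma:mg_problem_tilde_S} and \ref{lemma:mg_problem_psi}, and then assemble the extension by linearity. Concretely, I would fix $\beta\in(0,1/2)$ and $\alpha>\tfrac{1}{1-2\beta}$ so that the supports of $\psi_{\alpha, \beta}$ and $\tilde\psi_{\alpha, \beta}$ are disjoint, and observe that $\psi_{\alpha, \beta}(0)=0$, $\partial_u\psi_{\alpha, \beta}(0)=1$, $\tilde\psi_{\alpha, \beta}(1)=0$, $\partial_u\tilde\psi_{\alpha, \beta}(1)=-1$. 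Since any $H\in\mathcal{S}_{\theta, \Dir}$ satisfies $H(0)=H(1)=0$, the function $G:=H-\partial_u H(0)\,\psi_{\alpha, \beta}+\partial_u H(1)\,\tilde\psi_{\alpha, \beta}$ has vanishing value and derivative at both endpoints, so it belongs to $\mathcal{\tilde S}$.

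I would then set $\mathscr{\tilde Y}_t(H):=\mathscr{Y}_t(G)+\partial_u H(0)\,\mathscr{Y}_t(\psi_{\alpha, \beta})-\partial_u H(1)\,\mathscr{Y}_t(\tilde\psi_{\alpha, \beta})$, interpreting the right-hand side via the extensions provided by Lemmas~\ref{lemma:mg_problem_tilde_S} and \ref{lemma:mg_problem_psi}. Because each piece coincides in $\mathscr{H}$ with the $L^2([0,1])$-extension of $\mathscr{Y}_t$ constructed in Lemma~\ref{lemma:def_Y_eps}, this definition is independent of the auxiliary parameters $\alpha$ and $\beta$. The martingale equation is then verified by linearity: writing $\mathscr{\tilde M}_t(H):=\mathscr{\tilde M}_t(G)+\partial_u H(0)\,\mathscr{\tilde M}_t(\psi_{\alpha, \beta})-\partial_u H(1)\,\mathscr{\tilde M}_t(\tilde\psi_{\alpha, \beta})$ with the individual objects as in \eqref{eq:tilde_M} and \eqref{eq:tilde_M_psi} gives a continuous martingale satisfying $\mathscr{\tilde M}_t(H)=\mathscr{\tilde Y}_t(H)-\mathscr{\tilde Y}_0(H)-\int_0^t\mathscr{\tilde Y}_s(\mathfrak{A}_\theta H)\de s$, and polarising the identity $\scal{\mathscr{\tilde M}(f)}_t=t\|\partial_u f\|_{L^{2, \theta}}^2$ on $f\in\{G,\psi_{\alpha,\beta},\tilde\psi_{\alpha,\beta}\}$ yields the quadratic variation $\scal{\mathscr{\tilde M}(H)}_t=t\|\partial_u H\|^2_{L^{2, \theta}}$ required by Proposition~\ref{prop:uniqueness}. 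The Gaussian covariance \eqref{eq:Y_covariance} extends automatically from $\mathcal{S}$ to $\mathcal{S}_{\theta, \Dir}\subset L^2([0,1])$ via the Cauchy-Schwarz/approximation argument of Lemma~\ref{lemma:def_Y_eps}.

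To conclude that the extension lives in $\mathcal{C}([0,T], \mathcal{S}_{\theta, \Dir}')$, I would argue that if $H_n\to H$ in $\mathcal{S}_{\theta, \Dir}$ then both boundary derivatives converge and the corresponding $G_n\to G$ in $\mathcal{H}^2$; Doob's inequality together with the quadratic-variation identity above then yields $\expected\big[\sup_{t\in[0, T]}|\mathscr{\tilde Y}_t(H_n)-\mathscr{\tilde Y}_t(H)|^2\big]\to 0$, which suffices for continuity of the map $H\mapsto\mathscr{\tilde Y}_\cdot(H)$ into $\mathcal{C}([0,T], \mathbb{R})$. The main technical point that needs care is precisely this last step, namely transferring the $L^2$-type approximation bounds built into Lemmas~\ref{lemma:mg_problem_tilde_S} and \ref{lemma:mg_problem_psi} to the strong Fréchet topology of $\mathcal{S}_{\theta, \Dir}$; the remaining parts are essentially formal manipulations by linearity. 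Uniqueness in law of $\mathscr{\tilde Y}$, and hence of $\mathscr{Y}$, is then inherited from Proposition~\ref{prop:uniqueness} applied to the space $\mathcal{S}_{\theta, \Dir}$.
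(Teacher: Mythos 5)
Your decomposition is exactly the paper's: $H=\partial_u H(0)\psi_{\alpha,\beta}-\partial_u H(1)\tilde\psi_{\alpha,\beta}+G_{\alpha,\beta}$ with $G_{\alpha,\beta}\in\mathcal{\tilde S}$, handled by Lemmas~\ref{lemma:mg_problem_tilde_S} and \ref{lemma:mg_problem_psi} and assembled by linearity. Where you diverge is in identifying the quadratic variation, and that is where your argument has a gap. You keep $\alpha,\beta$ fixed and claim $\scal{\mathscr{\tilde M}(H)}_t=t\|\partial_u H\|_{L^{2,\theta}}^2$ by ``polarising the identity on $f\in\{G,\psi_{\alpha,\beta},\tilde\psi_{\alpha,\beta}\}$''. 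Polarisation of quadratic variations requires knowing $\scal{\mathscr{\tilde M}(f+g)}_t=t\|\partial_u(f+g)\|_{L^{2,\theta}}^2$ for $f\in\mathcal{\tilde S}$ and $g$ in the span of $\psi_{\alpha,\beta},\tilde\psi_{\alpha,\beta}$; but such a sum is (up to the boundary-derivative coefficients) precisely the general $H\in\mathcal{S}_{\theta,\Dir}$ whose quadratic variation you are trying to compute, so the step is circular as stated. The claim is true, but the correct route is to polarise at the level of the $\mathcal{S}$-approximants, where \eqref{eq:martingale_problem} applies to arbitrary sums, and pass to the $\mathscr{B}$-limit; equivalently, observe that the assembled martingale is itself a $\mathscr{B}$-limit of $\mathscr{M}(H_\eps)$ with $H_\eps\in\mathcal{S}$ and $\partial_u H_\eps\to\partial_u H$ in $L^2$, so the isometry $\expected[\mathscr{M}_t(H_\eps)^2]=t\|\partial_u H_\eps\|_{L^{2,\theta}}^2$ passes to the limit directly. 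The paper avoids the issue altogether by a different device: it sets $\beta=\tfrac{1}{\alpha}$ and sends $\alpha\to\infty$, so that $\|\partial_u\psi_{\alpha,1/\alpha}\|_{L^{2,\theta}}\to0$, the $\psi$-contributions vanish in $\mathscr{B}$ by Doob's inequality, and the quadratic variation is read off from the $G_{\alpha,\beta}$-part alone via $\|\partial_u G_{\alpha,\beta}-\partial_u H\|_{L^{2,\theta}}\to0$.

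Two minor points. Your condition $\alpha>\tfrac{1}{1-2\beta}$ does not make the supports of $\psi_{\alpha,\beta}$ and $\tilde\psi_{\alpha,\beta}$ disjoint (you would need $\beta+\tfrac{1}{\alpha}<\tfrac12$); in any case disjointness of supports is irrelevant, since the martingales are driven by the same field and their bracket is not controlled by support considerations. Finally, your closing paragraph on continuity in $\mathcal{C}([0,T],\mathcal{S}_{\theta,\Dir}')$ is more than is needed: once $t\mapsto\mathscr{\tilde Y}_t(H)$ is continuous for every $H\in\mathcal{S}_{\theta,\Dir}$, continuity in the strong topology follows by the standard argument already invoked in the proof of Theorem~\ref{thm:fluctuations}.
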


\begin{proof} For any $\beta\in (0,1)$ and $\alpha>\frac{1}{1-\beta}$, we can decompose any function $H$ in $\mathcal{S}_{\theta, \Dir}$ as
\begin{equation*}
	H=\partial_u H(0)\psi_{\alpha, \beta}-\partial_u H(1)\tilde{\psi}_{\alpha, \beta} + (H-\partial_u H(0)\psi_{\alpha, \beta}+\partial_u H(1)\tilde{\psi}_{\alpha, \beta}).
\end{equation*}
By the properties of $H$, $\psi_{\alpha, \beta}$ and $\tilde \psi_{\alpha, \beta}$, it is easy to check that the map
\begin{equation*}
    G_{\alpha, \beta}:=H-\partial_u H(0)\psi_{\alpha, \beta}+\partial_u H(1)\tilde{\psi}_{\alpha, \beta}
\end{equation*} 
belongs to $\mathcal{\tilde S}$: indeed, both $G_{\alpha, \beta}(0)=G_{\alpha, \beta}(1)=0$ and $\partial_u G_{\alpha, \beta}(0)=\partial_u G_{\alpha, \beta}(1)=0$ follow immediately. 

Also, by the linearity of the martingale problem, the maps $\partial_u H(0)\psi_{\alpha, \beta}$ and $\partial_u H(1)\tilde{\psi}_{\alpha, \beta}$ are suitable for Lemma~\ref{lemma:mg_problem_psi}. This implies that there exists a continuous martingale, which we denote by $\{\mathscr{\tilde M}^{\alpha, \beta}_t(H), t\in[0, T]\}$, and given by
\begin{equation*}
    \mathscr{\tilde M}^{\alpha, \beta}_t(H):=\partial_uH(0) \mathscr{\tilde M}_t (\psi_{\alpha, \beta}) - \partial_uH(1) \mathscr{\tilde M}_t (\tilde \psi_{\alpha, \beta}) + \mathscr{\tilde M}_t (G_{\alpha, \beta}),
\end{equation*}
(with $\mathscr{\tilde M}(\psi_{\alpha, \beta}), \mathscr{\tilde M} (\tilde \psi_{\alpha, \beta})$ and $\mathscr{\tilde M}(G_{\alpha, \beta})$ defined via \eqref{eq:tilde_M_psi} and \eqref{eq:tilde_M}) which has continuous trajectories and satisfies 
\begin{equation*}
	\mathscr{\tilde M}^{\alpha, \beta}_t(H)=\mathscr{Y}_t(H)-\mathscr{Y}_0(H)-\int_0^t \mathscr{Y}_s (\mathfrak{A}_\theta H)\de s.
\end{equation*}
Moreover, for any $H \in \mathcal{S}_{\theta, \Dir}$, the real-valued process $\{\mathscr{Y}_t(H), t\in [0,T]\}$ has continuous trajectories, which implies that the process $\{\mathscr{Y}_t(H), t\in [0,T]\}$ belongs to  $\mathcal{C}([0,T], \mathcal{S}_{\theta, \Dir}')$. 
	
We now claim that, by choosing $\beta=\frac{1}{\alpha}$, the sequence of martingales $\{ \mathscr{\tilde M}^{\alpha, \beta}(H)\}_{\alpha>0}$ converges in $\mathscr{B}$, as $\alpha\to\infty$, to a martingale $\mathscr{\tilde M}(H)$ whose quadratic variation is given by $\scal{\mathscr{\tilde M}(H)}_t=t\|\partial_u H\|_{L^{2, \theta}}^2$. It is easy to check that, if $\beta=\frac{1}{\alpha}$, then
\begin{equation*}
	\mylim_{\alpha \to \infty} \|\partial_u \psi_{\alpha, \beta}\|_{L^{2, \theta}}^2 = \mylim_{\alpha \to \infty} \|\partial_u \tilde \psi_{\alpha, \beta}\|_{L^{2, \theta}}^2 = 0,
\end{equation*}
since the supports of $\psi_{\alpha, \frac{1}{\alpha}}$ and $\tilde \psi_{\alpha, \frac{1}{\alpha}}$ converge to the empty set as $\alpha$ goes to infinity. This implies that
\begin{equation*} 
	\mylim_{\alpha \to \infty} \| \partial_u G_{\alpha, \beta} - \partial_u H \|_{L^{2, \theta}}^2 = 0.  
\end{equation*}
Moreover, recalling that $\scal{\mathscr{\tilde M} (\psi_{\alpha, \beta})}_t = t\|\partial_u \psi_{\alpha, \beta}\|_{L^{2, \theta}}^2$, $\scal{\mathscr{\tilde M} (\tilde\psi_{\alpha, \beta})}_t = t\| \partial_u \tilde\psi_{\alpha, \beta}\|_{L^{2, \theta}}^2$ and $\scal{\mathscr{\tilde M} (G_{\alpha, \beta} )}_t = t\|\partial_u G_{\alpha, \beta}\|_{L^{2, \theta}}^2$, by Doob's inequality we conclude that $\{\mathscr{\tilde M}^{\alpha, \beta}\}_{\alpha>0}$ is a Cauchy sequence in $\mathscr{B}$. Thus, this sequence converges to a martingale $\mathscr{\tilde M}(H)$ in $\mathscr{B}$ whose quadratic variation satisfies
\begin{equation*}
	\scal{\mathscr{\tilde M}(H)}_t = \mylim_{\alpha \to \infty} \scal{\mathscr{\tilde M}^{\alpha, \beta} (H)}_t = t \| \partial_u H \|_{L^{2, \theta}}^2, 
\end{equation*}
as required.
\end{proof}

This completes the proof of Proposition~\ref{prop:uniqueness_neg}.

%%%%%%%%%%%%%%%%%%%%%%%%%%%%%%%%%%%%%%%%%%%%%%%%%%
%%%REFERENCES%%%%%%%%%%%%%%%%%%%%%%%%%%%%%%%%%%%%%
%%%%%%%%%%%%%%%%%%%%%%%%%%%%%%%%%%%%%%%%%%%%%%%%%%
\bibliographystyle{Martin} 
\bibliography{references}

\end{document}